\documentclass[english]{article}
\usepackage[latin9]{inputenc}
\usepackage{amsmath}
\usepackage{amssymb}
\usepackage{esint}

\makeatletter
\title{Newtheorem and theoremstyle test}
\author{Michael Downes\\updated by Barbara Beeton}
\usepackage[exercise]{amsthm}
\newtheorem{thm}{Theorem}[section]
\newtheorem{cor}[thm]{Corollary}

\newtheorem{lem}[thm]{Lemma}

\theoremstyle{remark}

\theoremstyle{plain}
\newtheorem{Def}{Definition}
\newtheoremstyle{note}
  {3pt}
  {3pt}
  {}
  {}
  {\itshape}
  {:}
  {.5em}
  {}
\theoremstyle{note}

\newtheoremstyle{citing}
  {3pt}
  {3pt}
  {\itshape}
  {}
  {\bfseries}
  {.}
  {.5em}
  {\thmnote{#3}}
\theoremstyle{citing}
\newtheoremstyle{break}
  {9pt}
  {9pt}
  {\itshape}
  {}
  {\bfseries}
  {.}
  {\newline}
  {}
\theoremstyle{break}

\theoremstyle{exercise}

\swapnumbers
\theoremstyle{plain}

\let\lvert=|\let\rvert=|

\newcommand{\finelim}{\operatornamewithlimits{finelim}}
\usepackage{babel}
\usepackage{babel}
\usepackage{babel}
\usepackage{babel}
\DeclareMathOperator*{\osc}{osc}

\makeatother

\usepackage{babel}
\begin{document}
\title{Cohn-Vossen theory for locally conformally flat manifolds}
\author{Shiguang Ma}
\maketitle
\begin{abstract}
We establish a refined singularity estimate for nonnegative $n$-superharmonic
functions. For complete noncompact locally conformally flat manifolds
with nonnegative Ricci curvature, we analytically characterize the
volume growth, verify Yau\textquoteright s conjecture on the Cohn\textendash Vossen
inequality, and prove a sharp gap theorem that removes all auxiliary
assumptions from earlier works.
\end{abstract}

\section{Introduction}

The Cohn-Vossen inequality stands as a foundational result in global
differential geometry, forging a profound link between the total curvature
integral and topological invariants of complete non-compact 2-dimensional
Riemannian surfaces. For a complete, non-compact connected surface
$(M^{2},g)$ with nonnegative Gaussian curvature $K\ge0$, this inequality
asserts that
\begin{equation}
\int_{M}Kd\mu_{g}\le2\pi\chi(M),\label{classical Cohn-Vossen inequality}
\end{equation}
where $\chi(M)$ denotes the Euler characteristic of $M$. Cohn-Vossen
\cite{CV1935} proved this inequality for surfaces $M$ with analytic
metrics. Later, Huber \cite{Huber1957} extended this inequality to
metrics with weaker regularity. Finn \cite{Finn1965} characterized
the deficit of (\ref{classical Cohn-Vossen inequality}) as 
\[
\chi(M)-\frac{1}{2\pi}\int_{M}Kd\mu_{g}=\sum\nu_{j},
\]
with $\nu_{j}$ being the isoperimetric ratio of the $j$th ends.

Extending this classical $2$-dimensional result to higher-dimensional
manifolds, and establishing a precise correspondence between curvature
integral asymptotics, topological invariants and asymptotic geometric
quantities, has long been a central open problem in global differential
geometry.

In this paper, $B^{g}_{l}(p)$ is the geodesic ball centered at $p$
with radius $l$, with respect to the metric $g$ and $B_{r}(p)$
or $B(p,r)$ is a geodesic ball of $\mathbb{R}^{n}$. ${\rm meas}(\Omega)$
represents the Lebesgue measure of $\Omega\subset\mathbb{R}^{n}$.
$|\cdot|$ denotes the Euclidean norm on $\mathbb{R}^{n}$, the order
of a group, or the $n-1$ dimensional Hausdorff measure of a hypersurface
in $\mathbb{R}^{n}$; no ambiguity will arise. $\mu_{g}$ denotes
the volume form of a metric $g$. $V_{g}(\Omega)$ is the volume of
$\Omega$ with respect to the metric $g$. 

In 1992, Yau\cite{Yau1992} conjectured that for a complete non-compact
$n$-dimensional manifold $(M^{n},g)$ with $Ric_{g}\ge0$, the integral
of the scalar curvature $R_{g}$ over geodesic balls $B^{g}_{l}(p)$
satisfies
\[
\limsup_{l\to\infty}\frac{\int_{B^{g}_{l}(p)}R_{g}d\mu_{g}}{l^{n-2}}<\infty.
\]
He further generalized this conjecture to the elementary symmetric
polynomials $\sigma_{k},(k=1,\cdots,n)$ of the Ricci tensor, seeking
a unified high-dimensional analog of the Cohn-Vossen inequality that
connects the asymptotic behavior of curvature tensors to the global
geometric properties of manifolds. However, Yang\cite{Yang2013} constructed
explicit counterexamples showing that Yau\textquoteright s conjecture
fails for $\sigma_{k}$ with $k\ge2$. 

Subsequent research has made progress on Yau\textquoteright s conjecture
for the scalar curvature case in various specialized geometric settings.
Xu\cite{Xu2020,Xu2024} and Zhu \cite{Zh2022} focused on $3$-dimensional
manifolds with a pole. In particular, \cite{Xu2024} showed that,
if $(M^{3},g)$ is a manifold with a pole and $Ric_{g}\ge0,$ then
\[
\lim_{l\to\infty}\frac{1}{l}\int_{B^{g}_{l}(p)}R_{g}=8\pi(1-\nu).
\]
In this paper $\nu$ is always defined to be the asymptotic volume
ratio of $(M^{n},g)$, i.e. 
\[
\nu:=\lim_{l\to\infty}\frac{V_{g}(B^{g}_{l}(p))}{\omega_{n}l^{n}}\in[0,1].
\]
 For 3-dimensional general manifolds, Munteanu and Wang\cite{MuWa2025}
proved that, for complete non-compact 3-manifolds $(M,g)$, if $Ric_{g}\ge0$
and $0<C_{1}\le R_{g}\le C_{2}$, then 
\[
\limsup_{l\to\infty}\frac{1}{l}\int_{B^{g}_{l}(p)}R_{g}\le8\pi.
\]
Liu \cite{Liu2022} focused on Kähler manifolds with $Ric\ge0$. If
it has Euclidean volume growth and satisfies either (I) $M$ has nonnegative
bisectional curvature or (II) the asymptotic volume ratio is no smaller
than $1-\varepsilon(n)$, where $\varepsilon(n)$ is a small number
depending only on $n$. Then $l^{-2n+2k}\int_{B^{g}_{l}(p)}Ric^{k}\wedge\omega^{n-k}$
is bounded as $l\to\infty$. Very recently, \cite{De2026} has also
obtained several results of the Cohn-Vossen type. 

Since Cohn-Vossen inequality in dimension 2 is conformal in nature,
extending the Cohn-Vossen inequality to locally conformally flat manifolds
in higher dimensions is an important direction. For locally conformally
flat manifolds of dimension $4$, \cite{CQY2000A,CQY2000B} proved
the corresponding inequality for $Q_{4}$ curvature and characterized
the deficit also as certain isoperimetric ratio of the ends. Very
recently, \cite{LWX2025} extended Cohn-Vossen inequality to $Q_{2k},1\le k\le\frac{n}{2}$
curvature on $\mathbb{R}^{n}$ for even $n\ge4$. In this paper, we
study Cohn-Vossen inequality for scalar curvature. 

The paper also studies a gap theorem, for which, the readers may refer
to \cite{MSY1981,GW1982,GPZ1994,CZ2002,Ni2012,Ma2016,HL2025,LWX2025}
and the reference therein. In particular, \cite{Ni2012} proved that
for complete noncompact Kähler manifold $(M,g)$ of complex dimension
$m$ and with nonnegative holomorphic bisectional curvature, if 
\[
\frac{1}{V_{g}(B^{g}_{l}(p))}\int_{B^{g}_{l}(p)}R_{g}d\mu_{g}=o(\frac{1}{l^{2}}),
\]
 then $(M,g)$ is flat. In the setting of locally conformally flat
manifolds, \cite[Main Theorem]{CZ2002} showed that, if $Ric\ge0$,
$R_{g}\le C$ and 
\begin{equation}
\frac{1}{V_{g}(B^{g}_{l}(0))}\int_{B^{g}_{l}(0)}R_{g}=o(\frac{1}{l^{2}}),\label{Scalar curvature decay assumption 1}
\end{equation}
 then $g$ is flat. Other gap theorems in the setting of locally conformally
flat manifolds with $Ric\ge0$ use
\begin{equation}
\int^{\infty}_{0}\frac{l}{V_{g}(B^{g}_{l}(0))}\left(\int_{B^{g}_{l}(0)}R_{g}\right)dl<+\infty,\label{Scalar curvature decay assumption 2}
\end{equation}
instead of (\ref{Scalar curvature decay assumption 1}). Under condition
(\ref{Scalar curvature decay assumption 2}), $Ric\le C$ and non-parabolicity,
\cite{Ma2016} proved a gap theorem. And recently, under $n\ge5$
and 
\[
\int^{\infty}_{0}\frac{l}{V_{g}(B^{g}_{l}(0))}\left(\int_{B^{g}_{l}(0)}R_{g}\right)dl<\varepsilon_{0}(n),
\]
\cite{HL2025} proved a gap theorem. It is interesting to remove the
auxiliary assumptions from \cite{CZ2002}, \cite{Ma2016} and \cite{HL2025},
and prove a clean theorem like \cite{Ni2012}. 

Our main results are obtained by converting geometric problems on
locally conformally flat manifolds into singularity problems for the
$n$-Laplace equation via conformal change of metrics and inversion.

Recall that for a conformal metric on $\mathbb{R}^{n}$, $\bar{g}=e^{2w}g_{edu}$,
as is shown in \cite{MQ2021,MQ2022}, the conformal factor $w$ satisfies
the key quasilinear equation 
\[
-\Delta_{n}w=|\nabla w|^{n-2}Ric_{\bar{g}}\left(\frac{\bar{\nabla}w}{|\bar{\nabla}w|_{\bar{g}}},\frac{\bar{\nabla}w}{|\bar{\nabla}w|_{\bar{g}}}\right)e^{2w},
\]
where $\Delta_{n}w={\rm div}(|\nabla w|^{n-2}\nabla w)$. If we do
an inversion $u(x)=w(x/|x|^{2})-2\log|x|$, we can consider the asymptotic
behavior of $u$ at the origin. This places $n$-superharmonic functions
and their singular behavior at the core of our approach.

Our first main result is proved in Section 2. We establish the refined
singularity estimates for $n$-superharmonic functions and introduce
the class of strong $\mathcal{E}$-sets, which provide tight control
over the exceptional sets. 

For a function $f(x):B_{\delta}(0)\to\mathbb{R},$ we define 
\[
\underline{f}(r)=\inf\{f(x);|x|=r\}.
\]

\begin{thm}\label{main thm1 refined analytic result of u}

If $u\ge0$ is $n$-superharmonic in $B_{1}(0)$, then 

\[
u(x)=\underline{u}(|x|)+o(1),x\to0,x\notin E,
\]
for some strong $\mathcal{E}$-set $E$. 

\end{thm}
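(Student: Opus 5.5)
The plan is to combine the Wolff potential (Kilpeläinen–Malý) representation of $n$-superharmonic functions with a Harnack-type comparison on dyadic annuli. This reduces the statement to showing that the Wolff potential of the Riesz measure is uniformly small off a thin exceptional set.

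\textbf{Step 1: Riesz measure and Wolff bounds.} Let $\mu=-\Delta_n u\ge0$ be the Riesz measure of $u$ on $B_1(0)$. The Kilpeläinen--Malý estimates give, for $x$ near $0$ and $|x|=r$,
\[
c_1 W^{\mu}_{1,n}(x,r/4)+\inf_{B(x,r/2)}u\;\le\; u(x)\;\le\; c_2 W^{\mu}_{1,n}(x,r/4)+\inf_{B(x,r/4)} u ,
\]
where $W^{\mu}_{1,n}(x,\rho)=\int_0^{\rho}\big(\mu(B(x,t))/t^{0}\big)^{1/(n-1)}\frac{dt}{t}$. Note that in the borderline case $p=n$ the exponent $n-p$ vanishes, so the Wolff potential is $\int_0^\rho \mu(B(x,t))^{1/(n-1)}dt/t$.

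\textbf{Step 2: Splitting the measure.} I would split $\mu=\mu_{\rm near}+\mu_{\rm far}$, where $\mu_{\rm near}$ is the restriction to the annulus $A_r=\{r/2<|y|<2r\}$. The far part is $n$-harmonic-like in $A_r$ after comparison. Harnack's inequality on annuli, combined with a standard chaining argument (the $n$-Laplacian is conformally invariant, so the annuli are uniformly comparable), gives
\[
\sup_{|x|=r}u-\underline u(r)\le C\,\sup_{|x|=r}W^{\mu|_{A_r}}_{1,n}(x,r/4)+\text{oscillation of the solution of the homogeneous problem}.
\]
More precisely, I would solve the $n$-Laplace Dirichlet problem on $A_r$ with the data $u$. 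By weak Harnack for the $n$-superharmonic part, the oscillation of $u$ on $|x|=r$ relative to $\underline u(r)$ is controlled by $\mu(A_{r})^{1/(n-1)}$ plus the near-part Wolff potential. Because $u\ge0$ is superharmonic, $\mu(B_{1/2})<\infty$, and hence $\mu(A_r)\to0$ as $r\to 0$.

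\textbf{Step 3: Constructing the exceptional set.} It remains to show that $W^{\mu|_{A_r}}(x,r/4)\to0$ outside a strong $\mathcal{E}$-set. The set is
\[
E=\bigcup_k\big\{x\in A_{2^{-k}}:\;W^{\mu|_{A_{2^{-k}}}}(x,2^{-k}/4)>\varepsilon_k\big\},
\]
with $\varepsilon_k\to0$ chosen slowly relative to $\mu(A_{2^{-k}})$. A covering (Vitali/Cartan-type) lemma bounds the thinness of each piece. Wherever the Wolff potential exceeds $\varepsilon_k$, some scale $t$ has $\mu(B(x,t))^{1/(n-1)}\gtrsim \varepsilon_k/\log$-weight. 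This yields a covering of $E\cap A_{2^{-k}}$ by balls $B(x_i,t_i)$ whose ``capacity sum'' (e.g.\ $\sum (\log(2^{-k}/t_i))^{1-n}$, or whatever gauge defines a strong $\mathcal{E}$-set) is bounded by $C\mu(A_{2^{-k}})/\varepsilon_k^{\,n-1}$. Choosing $\varepsilon_k=\mu(\tilde A_{2^{-k}})^{1/(2(n-1))}$ makes these sums summable/tending to zero, which is exactly the strong $\mathcal{E}$-set condition.

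\textbf{Main obstacle.} The hardest step is Step 3. The logarithmic nature of $W_{1,n}$ means the covering must be done with a scale-dependent capacity gauge rather than Hausdorff content. The choice of $\varepsilon_k$ must also be tuned to make the sum match the precise definition of strong $\mathcal{E}$-set. I would first try the Cartan-lemma style covering of the set where $\int_0^{r/4}\mu(B(x,t))^{1/(n-1)}dt/t$ is large, dyadically decomposing $t$ and selecting maximal balls.
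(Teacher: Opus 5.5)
The gap is in Steps 1--2. You need $\sup_{|x|=r}u-\underline u(r)=o(1)$ off the exceptional set, but every tool you invoke gives only $O(1)$.

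\textbf{Step 1.} Your upper bound is false as written. Kilpel\"ainen--Mal\'y give $u(x)\le C\inf_{B(x,R)}u+CW^{\mu}_{1,n}(x,2R)$ with $C>1$. With coefficient $1$ on the infimum, the inequality already fails for affine (hence $n$-harmonic) $u$. If you normalize by subtracting $\inf_{B(x,3R)}u$, you pay $C(\inf_{B(x,R)}u-\inf_{B(x,3R)}u)$. For $R\sim r$ this is of order $m$ (recall $u\approx m\log\frac{1}{|x|}$, and $m>0$ is allowed), not $o(1)$.

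\textbf{Step 2.} Harnack or weak Harnack with chaining on annuli has the same defect: its constants are multiplicative and $u\to\infty$, so it only bounds the oscillation by $O(1)$. The splitting $\mu=\mu_{\mathrm{near}}+\mu_{\mathrm{far}}$, borrowed from Hayman's linear argument, corresponds to no decomposition of $u$, because $-\Delta_n$ is nonlinear. The leftover ``oscillation of the solution of the homogeneous problem'' is exactly the quantity that must be shown small. That solution is determined by $u$ on $\partial A_r$, so the estimate is circular. Your ``more precise'' bound would even force $\sup_{|x|=r}u=\underline u(r)$ for small $r$ whenever $\mu=c\delta_0$ near $0$. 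This is false already for $n=2$: take $u=\log\frac{1}{|x|}+x_1+1$.

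\textbf{What is missing.} You need a rigidity statement for blow-ups. The paper proves that $u_r(\xi)=u(r\xi)-\underline u(5r)$ converges in $W^{1,p}_{loc}(\mathbb{R}^n)$ to $h(\xi)=m\log\frac{5}{|\xi|}$. It also proves that $\underline u_r\to h$ uniformly on $A_{\frac12,6}$ (Lemmas~\ref{uniform convergence} and~\ref{main convergence}). This requires:
\begin{itemize}
\item a priori logarithmic bounds off exceptional sets;
\item a Dal Maso--Murat--Orsina--Prignet type compactness argument for the truncations;
\item the Kichenassamy--V\'eron classification of $n$-harmonic functions on $\mathbb{R}^n\setminus\{0\}$ with logarithmic bounds;
\item a weak Harnack argument to fix the additive constant.
\end{itemize}
Only then can Kilpel\"ainen--Mal\'y be applied on balls of radius $\kappa\varepsilon$ in the scaled variable. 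There $\osc h\le\varepsilon$, and the infima of $u_r$ over $B_i$ and $6B_i$ lie within $\varepsilon$ of those of $h$, so the multiplicative constant becomes harmless (Lemma~\ref{u_r estimate further}).

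\textbf{Step 3.} This step is essentially sound, and would be a more elementary substitute for the paper's route through capacity and the Adams--Hedberg capacity--content comparison (Lemma~\ref{thinness and strong epsilon comparison}). A Cartan/Besicovitch covering with weight $(\log\frac{2\rho}{t})^{-1-\eta'}$ does produce balls with $\sum_i(\log\frac{|x_i|}{r_i})^{1-n-\eta}\lesssim\mu(\tilde A)/\varepsilon^{n-1}$. Two corrections are needed:
\begin{itemize}
\item the exponent must be $1-n-\eta$, not $1-n$;
\item your choice $\varepsilon_k=\mu(\tilde A_{2^{-k}})^{1/(2(n-1))}$ leaves $\sum_k\mu(\tilde A_{2^{-k}})^{1/2}$, which can diverge. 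Take instead $\varepsilon_k^{n-1}=\big(\sum_{j\ge k}\mu(\tilde A_{2^{-j}})\big)^{1/2}$, or a diagonal choice as in the paper.
\end{itemize}
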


Theorem \ref{main thm1 refined analytic result of u} is a generalization
of \cite[Theorem 2]{Ha1960}. See also \cite{HK1976}. This theorem
is the engine of the entire paper. 

In Section 3, we apply Theorem \ref{main thm1 refined analytic result of u}
to locally conformally flat manifolds and derive a sharp, explicit
relation between the asymptotic exponent of the conformal factor and
the asymptotic volume ratio. Our second main result is as follows.

\begin{thm}\label{main thm2 volume ratio}

Suppose that $(\mathbb{R}^{n},g=e^{2w}g_{edu})$ is a complete manifold
with $Ric_{g}\ge0$. Then 
\[
\nu=(1-\tilde{m})^{n-1},
\]
 where 
\[
\tilde{m}=-\liminf_{x\to\infty}\frac{w(x)}{\log|x|}\in[0,1].
\]

\end{thm}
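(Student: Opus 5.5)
We pass to the inverted function $u(x)=w(x/|x|^{2})-2\log|x|$ on $B_{1}(0)\setminus\{0\}$, which we want to compare with the function $\underline{u}(r)$. Since $Ric_{g}\ge0$, the equation recalled in the introduction gives $-\Delta_{n}w\ge0$. The $n$-Laplacian is conformally invariant in dimension $n$, and $\log|x|$ is $n$-harmonic away from the origin. Hence $u$ is $n$-superharmonic on the punctured ball.

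\emph{Step 1: extend $u$ across the origin and fix its sign.} Completeness of $g$ at infinity corresponds to $u$ blowing up at $0$ in a suitable averaged sense. We first show that $u$ is bounded below near $0$: the minimum principle on annuli applied to $u-\underline{u}$, together with the monotonicity of $\underline{u}$ in $\log r$ coming from $n$-superharmonicity, should give this. After adding a constant we may then assume $u\ge0$, and $u$ extends to an $n$-superharmonic function on $B_{1}(0)$, since isolated points have zero $n$-capacity. The Kichenassamy--Véron type structure then gives $u=\underline{u}+o(1)$ off a strong $\mathcal{E}$-set $E$, by Theorem \ref{main thm1 refined analytic result of u}. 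In terms of $w$ this reads $w(x)=\underline{w}(|x|)+o(1)$ for $x\to\infty$ outside the inverted set $E^{*}$. Moreover $\underline{w}(r)=\underline{u}(1/r)-2\log r$, and
\[
\tilde m=-\liminf_{x\to\infty}\frac{w(x)}{\log|x|}=-\lim_{r\to\infty}\frac{\underline{w}(r)}{\log r}.
\]
This limit exists because $\underline{w}$ is concave in $t=\log r$: $n$-superharmonicity plus the maximum principle on annuli compared with the radial $n$-harmonic functions $a+b\log r$. Concavity also gives $\tilde m\ge0$, and completeness forces $\tilde m\le1$, since otherwise rays have finite length $\int e^{\underline{w}}dr$.

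\emph{Step 2: compute volumes and distances.} Off $E^{*}$ we have $e^{w(x)}=e^{\underline{w}(|x|)+o(1)}=|x|^{-\tilde m+o(1)}$. The strong $\mathcal{E}$-set property should be exactly the thinness needed for two things:
\begin{enumerate}
\item[(a)] radial segments avoiding $E^{*}$ exist in almost all directions, and $E^{*}$ contributes negligibly to Euclidean volume in dyadic annuli (its thinness gives density $o(1)$);
\item[(b)] the $g$-distance from a base point to $\{|x|=r\}$ is $\frac{r^{1-\tilde m+o(1)}}{1-\tilde m}$ along good rays, up to factors $1+o(1)$, with a uniform lower bound obtained from the concavity and monotone structure.
\end{enumerate}
The main obstacle is controlling the volume of $E^{*}$ with weight $e^{nw}$. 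There $w$ may be large, and superharmonicity only bounds $w$ from below. Here the Bishop--Gromov inequality from $Ric\ge0$ should substitute: it makes the volume of the part of a $g$-ball over $E^{*}$ comparable to that of nearby good regions. Alternatively, one uses $e^{nw}\in L^{1}_{loc}$ with the $\mathcal{E}$-set covering by balls $B(x_{i},r_{i})$ with $\sum(r_{i}/|x_{i}|)^{n}<\infty$, combined with a mean value inequality for the $n$-subharmonic function $e^{cw}$. That subharmonicity is suggested by $Ric\ge0$, but it needs checking.

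\emph{Step 3: assemble the asymptotic volume ratio.} Granting Step 2, the ball $B^{g}_{l}$ is, up to $o(1)$ relative errors, the Euclidean ball of radius $r$ with $l=\frac{r^{1-\tilde m}}{1-\tilde m}$. Its volume is
\[
|S^{n-1}|\int_{0}^{r}s^{n-1-n\tilde m}\,ds=\frac{|S^{n-1}|\,r^{n(1-\tilde m)}}{n(1-\tilde m)}=\omega_{n}\frac{((1-\tilde m)l)^{n}}{1-\tilde m}.
\]
Dividing by $\omega_{n}l^{n}$ gives $\nu=(1-\tilde m)^{n-1}$. The case $\tilde m=1$ should be handled separately, showing $\nu=0$ by the same volume estimate with a logarithmic distance.
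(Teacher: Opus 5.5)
Your overall architecture matches the paper's. You compare $g$ with the radial metric $e^{2\underline{w}(|x|)}|dx|^{2}$, use $w\ge\underline{w}$ to get $B^{g}_{l}(0)\subset B_{\alpha}(0)$ with $l=\int_{0}^{\alpha}e^{\underline{w}}$, and use $w=\underline{w}+o(1)$ off the strong $\mathcal{E}$-set. However, the step you flag as the main obstacle is a genuine gap, and neither proposed remedy closes it. The upper bound $\limsup_{l}V_{g}(B^{g}_{l})/\omega_{n}l^{n}\le(1-\tilde m)^{n-1}$ needs $\int_{B_{\alpha}\cap E^{*}}e^{nw}dx=o\big(\int_{B_{\alpha}}e^{n\underline{w}}dx\big)$. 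That is an \emph{upper} bound on $w$ inside the bad balls, and superharmonicity alone gives none.
\begin{itemize}
\item For $c>0$, $e^{cw}$ is not $n$-subharmonic: $\Delta_{n}e^{cw}=c^{n-1}e^{c(n-1)w}\big(\Delta_{n}w+c(n-1)|\nabla w|^{n}\big)$, and $\Delta_{n}w\le0$ has the wrong sign. Only $c<0$ works, and that bounds $w$ from below.
\item Bishop--Gromov controls $g$-volumes of $g$-balls. To use it on a Euclidean ball $B(x_{i},r_{i})$ you must bound its $g$-diameter, i.e.\ $\int e^{w}$ along curves entering it, which is the same unknown.
\item $e^{nw}\in L^{1}_{loc}$ together with $\sum(r_{i}/|x_{i}|)^{n}<\infty$ gives nothing uniform. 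Superharmonicity allows logarithmic spikes of strength $a=(\mu(\{x_{0}\})/|\mathbb{S}^{n-1}|)^{1/(n-1)}$, near which $e^{nw}\sim|x-x_{0}|^{-na}$ is not controlled by Euclidean volume (and is not even integrable if $a\ge1$).
\end{itemize}
What excludes this is that $\mu=-\Delta_{n}w$ has small mass near infinity.

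The missing ingredient, which the paper supplies, is quantitative potential theory:
\begin{itemize}
\item $\mu(\mathbb{R}^{n})<\infty$, from $w\ge\tilde m\log\frac{1}{|x|}-C$ and the lower Wolff estimate;
\item the Kilpel\"ainen--Mal\'y bound $w(x)\le\inf_{A}w+C+C\,W^{\mu}_{1,n}(x,\delta_{1}2^{k+1})$ for $x$ in the dyadic annulus $A$;
\item after splitting the Wolff potential at scale $3r_{i}$, the Brezis--Merle type integrability $\int_{B_{3r_{i}}(x_{i})}\exp\big(nW^{\mu}_{1,n}(x,3r_{i})\big)dx\le Cr_{i}^{n}$ for small mass.
\end{itemize}
These give $\int_{B(x_{i},r_{i})}e^{nw}\le Ce^{n\inf_{A}w}|x_{i}|^{n}(r_{i}/|x_{i}|)^{n-o(1)}$. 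The strong $\mathcal{E}$ condition plus Stolz's theorem then make the bad part negligible.

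Your lower-bound half, by contrast, is sound and lighter than the paper's. You do not need the all-directions ray estimate of Lemma~\ref{integral along a line}. It suffices that directions whose ray beyond radius $R$ meets $E^{*}$ have solid angle $\lesssim\sum_{|x_{i}|\ge R/2}(r_{i}/|x_{i}|)^{n-1}\to0$, which the strong $\mathcal{E}$ condition implies. Then for $l$ large, $B^{g}_{l}$ contains $\{te:e\in G_{R},\int_{0}^{t}e^{\underline{w}}<(1-\varepsilon)l\}$ with $|G_{R}|$ close to $|\mathbb{S}^{n-1}|$, and $e^{nw}\ge e^{n\underline{w}}$ finishes.

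Two smaller fixes are needed.
\begin{itemize}
\item In Step 3 you cannot replace $e^{\underline{w}(s)}$ by $s^{-\tilde m}$, since the error $s^{o(1)}$ is not $1+o(1)$. Keep $\underline{w}$ and apply l'H\^opital with $s\underline{w}'(s)\to-\tilde m$ (from concavity in $\log s$). This gives $(1-\tilde m)^{n-1}$ directly, with $\tilde m=1$ included.
\item $\tilde m\le1$ does not follow from ``rays have finite length $\int^{\infty}e^{\underline{w}}$'', because $\int^{\infty}e^{w(te)}dt\ge\int^{\infty}e^{\underline{w}(t)}dt$ points the wrong way. It needs a separate argument; the paper imports it from earlier work.
\end{itemize}
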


As the third main result, in Section 4, we obtain a complete resolution
of Yau\textquoteright s conjecture for locally conformally flat manifolds,
with explicit limiting formulas that depend only on the dimension
and the asymptotic volume ratio of the manifold.

\begin{thm}\label{main thm 3}

For $n\ge3$, suppose $(M^{n},g)$ is a complete non-compact locally
conformally flat manifold with nonnegative Ricci curvature. Let $\nu$
denote its asymptotic volume ratio. Then for any fixed point $p\in M$,
the limit 
\[
\lim_{l\to\infty}l^{2-n}\int_{B^{g}_{l}(p)}R_{g}d\mu_{g}
\]
exists and is given by an explicit expression:
\begin{itemize}
\item If $n>3$, the limit is 
\[
(n-1)(\nu^{\frac{n-3}{n-1}}-\nu)|\mathbb{S}^{n-1}|;
\]
\item If $n=3$, the limit depends on the universal cover:
\begin{itemize}
\item If the universal cover is not $\mathbb{S}^{2}_{a}\times\mathbb{R}$
or flat $\mathbb{R}^{3}$, then the limit is $8\pi(1-\nu);$
\item If the universal cover is $\mathbb{S}^{2}_{a}\times\mathbb{R}$, then
$\pi_{1}(M)$ is finite and the limit equals $16\pi/|\pi_{1}(M)|;$
\item If $M$ is flat, then the limit is $0$. 
\end{itemize}
\end{itemize}
\end{thm}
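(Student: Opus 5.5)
Following the paper's strategy, I would reduce everything to the case $(M,g)$ conformal to $(\mathbb{R}^{n},e^{2w}g_{edu})$, using Theorem \ref{main thm2 volume ratio} to translate $\nu$ into $\tilde m$. The reduction rests on the classification of complete locally conformally flat manifolds with $Ric\ge0$ (Schoen--Yau, Zhu, Carron--Herzlich). The universal cover $\tilde M$ is either conformally flat $\mathbb{R}^{n}$ with a complete metric, or $\mathbb{S}^{n-1}\times\mathbb{R}$ with the product metric (hence $\mathbb{S}^{2}_{a}\times\mathbb{R}$ when $n=3$), or flat.

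In the product case, the scalar curvature is the constant $(n-1)(n-2)/a^{2}$ and $\pi_{1}(M)$ acts cocompactly on the sphere factor. Noncompactness of $M$ then forces $\pi_{1}(M)$ to be finite. Indeed, an infinite group would contain translations in the $\mathbb{R}$ direction, making $M$ compact or a quotient with bounded geometry. So volume grows like $2l\,|\mathbb{S}^{n-1}_{a}|/|\pi_{1}|$, and for $n=3$ we get $l^{-1}\int R=2\cdot\frac{2}{a^{2}}\cdot4\pi a^{2}/|\pi_{1}|=16\pi/|\pi_{1}(M)|$. For $n>3$ the same integral is $O(l)=o(l^{n-2})$, matching $\nu=0$ in the formula. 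The flat case is trivial.

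In the main case, a finite quotient does not change the limit of $l^{2-n}\int_{B_l}R$ relative to $\nu$: both scale by $|\pi_{1}|^{-1}$, and $\nu^{\frac{n-3}{n-1}}-\nu$ is not linear, so this needs care. I would therefore first argue that in the $\mathbb{R}^{n}$ case $\pi_{1}(M)$ is trivial or handled directly, using that $M$ itself is still conformally flat outside a compact set. So assume $g=e^{2w}g_{edu}$ on $\mathbb{R}^{n}$. The key computation is a flux identity. With $\Delta_{n}$ as in the displayed equation and the scalar curvature formula
\[
R_{g}e^{2w}=-2(n-1)\Delta w-(n-1)(n-2)|\nabla w|^{2},
\]
I integrate over the Euclidean region $\Omega_{l}=B^{g}_{l}(p)$, giving
\[
\int_{B^{g}_{l}}R_{g}\,d\mu_{g}=\int_{\Omega_{l}}e^{(n-2)w}\bigl(-2(n-1)\Delta w-(n-1)(n-2)|\nabla w|^{2}\bigr)dx.
\]
By the divergence theorem this becomes $-2(n-1)\int_{\partial\Omega_{l}}e^{(n-2)w}\partial_{\nu}w+(n-1)(n-2)\int_{\Omega_l}e^{(n-2)w}|\nabla w|^{2}$.

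The asymptotic input is Theorem \ref{main thm1 refined analytic result of u} applied to the inverted function $u$. It gives $w(x)=-\tilde m\log|x|+c(|x|)+o(1)$ off a strong $\mathcal{E}$-set, with $c$ slowly varying; together with Theorem \ref{main thm2 volume ratio} this yields $l\sim r^{1-\tilde m}e^{c}/(1-\tilde m)$ for geodesic spheres, which are asymptotically Euclidean spheres $|x|=r$. Formally, $\partial_{r}w\approx-\tilde m/r$. The boundary term then contributes $2(n-1)\tilde m\,r^{n-2}e^{(n-2)w}|\mathbb{S}^{n-1}|$, and the bulk term contributes $(n-1)(n-2)\tilde m^{2}\int^{r}s^{n-3}e^{(n-2)w}ds\,|\mathbb{S}^{n-1}|$. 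Since $e^{(n-2)w}r^{n-2}\approx((1-\tilde m)l)^{n-2}$, we get, after dividing by $l^{n-2}$,
\[
(n-1)|\mathbb{S}^{n-1}|(1-\tilde m)^{n-2}\bigl(2\tilde m+\tfrac{n-2}{n-2}\cdot\tfrac{\tilde m^{2}}{1-\tilde m}\bigr)=(n-1)|\mathbb{S}^{n-1}|\bigl((1-\tilde m)^{n-3}-(1-\tilde m)^{n-1}\bigr).
\]
With $\nu=(1-\tilde m)^{n-1}$ this is exactly $(n-1)(\nu^{\frac{n-3}{n-1}}-\nu)|\mathbb{S}^{n-1}|$, and for $n=3$ it equals $8\pi(1-\nu)$ (with $|\mathbb{S}^{2}|=4\pi$).

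The main obstacle is making these formal asymptotics rigorous. The gradient $\partial_r w$ is only controlled in an averaged sense, the exceptional set $E$ must be shown negligible for both the boundary flux and the bulk gradient integral, and the case $\tilde m=1$ ($\nu=0$) is degenerate. My first attempt would avoid pointwise gradients entirely. I would choose Euclidean balls $B_{r}$ rather than geodesic balls and use monotonicity of the $n$-Laplace flux $\int_{\partial B_r}|\nabla w|^{n-2}\partial_r w$, which converges to $-\tilde m^{n-1}|\mathbb{S}^{n-1}|$ since $Ric\ge0$ makes $w$ $n$-superharmonic. Combining this with Hölder and the strong $\mathcal{E}$-set estimates of Theorem \ref{main thm1 refined analytic result of u} should control $\int|\nabla w|^{2}$ on dyadic annuli. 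I would then compare $B_{r}$ with $B^{g}_{l}$ using the two-sided distance estimates already used to prove Theorem \ref{main thm2 volume ratio}.
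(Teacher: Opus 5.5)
Your reduction via the Carron--Herzlich classification, your treatment of the cylinder and flat cases, and your identity
\[
\int_{B_r}R_g\,d\mu_g=-2(n-1)\int_{\partial B_r}e^{(n-2)w}\partial_r w\,dS+(n-1)(n-2)\int_{B_r}e^{(n-2)w}|\nabla w|^2dx
\]
match the paper. The paper writes this identity as $\frac{4(n-1)}{n-2}\bigl(\int_{B_r}|\nabla v|^2-\int_{\partial B_r}v\,\partial_\nu v\bigr)$ with $v=e^{\frac{n-2}{2}w}$. Your heuristic reproduces the paper's exact computation for the radial model $\underline g=e^{2\underline w(|x|)}|dx|^2$. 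Two side remarks. The worry about finite quotients in the main case is unnecessary, since the classification applies to $M$ itself. The paper avoids your singular factor $\tilde m^2/(1-\tilde m)$ at $\tilde m=1$ by computing with $\psi(l)=l\theta(l)$ and L'Hospital, where $\theta(\infty)=1-\tilde m$ may vanish.

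The real content of the theorem is the passage from $\underline g$ to $g$, and there your plan has a genuine gap. Theorem \ref{main thm1 refined analytic result of u} gives $w=\underline w+o(1)$ only off a strong $\mathcal E$-set. Moreover, $\nabla w$ is controlled only in $L^p$, $p<n$, on rescaled annuli. Nothing in this controls $\int_{\partial B_r}e^{(n-2)w}\partial_r w\,dS$ on an individual sphere. The $n$-Laplace flux identity $\int_{\partial B_r}|\nabla w|^{n-2}\partial_r w\,dS=-\mu(B_r)\to-\tilde m^{n-1}|\mathbb{S}^{n-1}|$ is correct but does not help. It weights $\partial_r w$ by $|\nabla w|^{n-2}$ rather than $e^{(n-2)w}$. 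At best it gives a lower bound for $\int_{\partial B_r}|\nabla w|^{n-1}$, whereas you need two-sided asymptotics. It gives no upper bound for the bulk term either.

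The missing idea is how the paper handles the boundary term.
\begin{enumerate}
\item It rewrites the term as $\frac{|\mathbb{S}^{n-1}|}{2}r^{n-1}h'(r)$, with $h(r)=\fint_{\mathbb{S}^{n-1}}v^2(r\Theta)\,dS(\Theta)$.
\item It proves $h/\underline h\to1$, where $\underline h=e^{(n-2)\underline w}$ (Lemma \ref{area ratio go to 1}). This itself needs a capacity and exponential-integrability estimate for $w$ on the spherical caps cut out by $E$.
\item It selects radii $r_j\to\infty$ with $r_{j+1}/r_j\to1$ at which $r_j(h/\underline h)'(r_j)\to0$ (Lemma \ref{f'(yN) small}).
\item Since $r(\log\underline h)'\to-(n-2)\tilde m\ne0$ for nonflat $g$, this yields $h'(r_j)/\underline h'(r_j)\to1$.
\item Because $R_g\ge0$, the function $l\mapsto\int_{B^{\underline g}_l(0)}R_g\,d\mu_g$ is nondecreasing. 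Together with $l_{j+1}/l_j\to1$, this passes the limit from the sequence to all $l$.
\end{enumerate}
Some device of this kind is needed, either good radii or averaging in $r$ combined with monotonicity.

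The bulk term also needs more than "H\"older plus strong $\mathcal E$-set estimates".
\begin{itemize}
\item On the balls of $E$, where $w-\underline w$ is not small, the paper bounds $w\le\inf w+C+CW^{\mu}_{1,n}$ (Kilpel\"ainen--Mal\'y).
\item It combines exponential integrability of the Wolff potential with the Duzaar--Mingione gradient potential estimate. This bounds $\int e^{(n-2)w}|\nabla w|^2$ on each ball by a power of $r_i/|x_i|$ times the model energy of the annulus.
\item It sums these bounds using the strong $\mathcal E$-set condition.
\item Off $E$, it uses $W^{1,p}$ convergence of the rescalings with $2<p<n$ (Lemma \ref{gradient ratio go to 1}).
\end{itemize}
As it stands, your proposal identifies the correct constant but does not prove that the actual integrals match the radial ones.
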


In particular, Theorem \ref{main thm 3} confirms Yau\textquoteright s
conjecture for all locally conformally flat manifolds with nonnegative
Ricci curvature.

As the fourth main result, in Section 4, we prove a sharp gap theorem
that removes the auxiliary assumptions of \cite{CZ2002}, \cite{Ma2016}
and \cite{HL2025}.

\begin{thm}\label{main thm 4 generalization of Chen-Zhu's theorem}

Let $n\ge3$, and let $(M^{n},g)$ be a complete noncompact locally
conformally flat manifold with nonnegative Ricci curvature. If (\ref{Scalar curvature decay assumption 1})
or (\ref{Scalar curvature decay assumption 2}) holds, $(M,g)$ is
flat. 

\end{thm}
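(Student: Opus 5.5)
Assuming Theorem \ref{main thm 3}, I would prove Theorem \ref{main thm 4 generalization of Chen-Zhu's theorem} by showing that either decay assumption forces the limit $L:=\lim_{l\to\infty}l^{2-n}\int_{B^{g}_{l}(p)}R_{g}d\mu_{g}$ to vanish. I would then read off from the explicit formulas that $M$ is flat. By Bishop--Gromov, $V_{g}(B^{g}_{l}(p))\le\omega_{n}l^{n}$ when $Ric_{g}\ge0$, and $V_{g}(B^{g}_{l}(p))/(\omega_{n}l^{n})\downarrow\nu$. Changing the center point only shifts balls by a bounded radius, so the base point in (\ref{Scalar curvature decay assumption 1}) and (\ref{Scalar curvature decay assumption 2}) is irrelevant.

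The first step is to get $L=0$. Under (\ref{Scalar curvature decay assumption 1}),
\[
l^{2-n}\int_{B^{g}_{l}}R_{g}=l^{2}\cdot\frac{1}{V_{g}(B^{g}_{l})}\int_{B^{g}_{l}}R_{g}\cdot\frac{V_{g}(B^{g}_{l})}{l^{n}}\le\omega_{n}\,l^{2}\cdot o(l^{-2})\to0,
\]
so $L=0$. Under (\ref{Scalar curvature decay assumption 2}), I use $V_{g}(B^{g}_{l})\le\omega_{n}l^{n}$ to bound the integrand below:
\[
\frac{l}{V_{g}(B^{g}_{l})}\int_{B^{g}_{l}}R_{g}\ge\frac{1}{\omega_{n}}\,l^{1-n}\int_{B^{g}_{l}}R_{g}=\frac{1}{\omega_{n}}\cdot\frac{1}{l}\cdot l^{2-n}\int_{B^{g}_{l}}R_{g}.
\]
If $L>0$, the right side is at least $c/l$ for large $l$, which is not integrable at infinity. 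This contradicts (\ref{Scalar curvature decay assumption 2}), so again $L=0$. Both steps rely on the existence of the limit supplied by Theorem \ref{main thm 3}.

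The second step is a case analysis of $L=0$ using Theorem \ref{main thm 3}.
\begin{itemize}
\item For $n>3$, $L=(n-1)(\nu^{\frac{n-3}{n-1}}-\nu)|\mathbb{S}^{n-1}|$. Since $\frac{n-3}{n-1}\in(0,1)$, this vanishes only when $\nu\in\{0,1\}$.
\item If $\nu=1$, the rigidity case of Bishop--Gromov gives that $M$ is isometric to $\mathbb{R}^{n}$, hence flat.
\item If $\nu=0$ with $n>3$, I must rule it out unless $M$ is flat.
\item For $n=3$, the $\mathbb{S}^{2}_{a}\times\mathbb{R}$ case gives $L=16\pi/|\pi_{1}(M)|>0$, so it is excluded. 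The generic case $8\pi(1-\nu)=0$ forces $\nu=1$, hence flat. The remaining case is flat by assumption.
\end{itemize}

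The main obstacle is the case $n>3$, $\nu=0$, where the formula gives $L=0$ but flatness is not automatic. Here I would go back into the proof of Theorem \ref{main thm 3}, where $\nu$ is linked to the asymptotic exponent $\tilde m$ (Theorem \ref{main thm2 volume ratio}, $\nu=(1-\tilde m)^{n-1}$). The case $\nu=0$ corresponds to $\tilde m=1$, a degenerate cylinder-like end.

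My first attempt would be to lift to the universal cover, which is conformally flat with $Ric\ge0$. By the Schoen--Yau and Zhu classification, this cover is $\mathbb{R}^{n}$ with a conformal metric, or $\mathbb{S}^{n-1}\times\mathbb{R}$, or a quotient-type space. For $\mathbb{S}^{n-1}\times\mathbb{R}$-type ends, I would check directly that $\int_{B_{l}}R\sim c\,l$, and $l$ grows faster than $l^{n-2}$ only when $n=3$. This suggests that the formula's value at $\nu=0$ hides a lower-order contribution. Given that, the real task is to show that (\ref{Scalar curvature decay assumption 1}) and (\ref{Scalar curvature decay assumption 2}) fail in these cases. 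For (\ref{Scalar curvature decay assumption 1}) with $V\sim l$, the required decay $\frac1l\int R=o(l^{-2})$ forces $\int_{B_{l}}R=o(1/l)$, hence $R\equiv0$. With $Ric\ge0$, this gives $Ric\equiv0$, and local conformal flatness then gives flatness. For (\ref{Scalar curvature decay assumption 2}), I would use the $n$-superharmonic representation $-\Delta_{n}u=\mu$ with $\mu$ a measure built from $Ric$. I would show that the total mass $\mu(\mathbb{R}^{n})$ controls $\liminf_{l}\frac{l^{2}}{V}\int_{B_{l}}R$ from below, so finiteness of (\ref{Scalar curvature decay assumption 2}) forces $\mu=0$, i.e.\ $Ric\equiv0$.
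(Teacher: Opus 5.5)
Your Step 1 is correct and is essentially the paper's argument when $\nu>0$; the paper computes $\lim l^{2}\int_{B^{g}_{l}}R_{g}/V_{g}(B^{g}_{l})$ explicitly from Theorems \ref{main thm2 volume ratio} and \ref{main thm 3}. That step settles $n=3$, excludes $0<\nu<1$ for $n>3$, and your direct treatment of the cylinder quotients is fine.

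The gap is the case $n\ge4$, $\nu=0$, in its essential instance: $M=(\mathbb{R}^{n},e^{2w}|dx|^{2})$ non-flat with $\tilde m=1$. This $M$ is simply connected, so passing to the universal cover gives nothing. The end is also not cylinder-like. The radial model $\underline{g}=dl^{2}+\psi(l)^{2}g_{\mathbb{S}^{n-1}}$ has $\psi$ concave, nondecreasing, with $\psi'\to0$, but $\psi$ may be unbounded. Rotationally symmetric examples with $\psi(l)\sim l^{\beta}$, $0<\beta<1$, give $V\sim l^{1+(n-1)\beta}$.

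So your argument ``$V\sim l$ forces $\int_{B_{l}}R=o(1/l)$'' does not cover (\ref{Scalar curvature decay assumption 1}) here. For (\ref{Scalar curvature decay assumption 2}), you assert without a mechanism that $\mu(\mathbb{R}^{n})$ bounds $\liminf_{l}\frac{l^{2}}{V}\int_{B_{l}}R$ from below. A fixed positive quantity of that kind can at most give $\int_{B_{l}}R_{g}\ge c>0$, which only yields $\frac{l^{2}}{V}\int_{B_{l}}R_{g}\gtrsim l^{2}/V$. That tends to $0$ when $V\gg l^{2}$, which is allowed for $n\ge4$ and $\nu=0$. What you need is a \emph{growth rate} for $\int_{B_{l}}R_{g}$, not a positive lower bound.

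The missing idea is the one the paper uses: transfer both hypotheses to $\underline{g}$ and exploit the warped-product form of its scalar curvature. The transfer uses four ingredients:
\begin{itemize}
\item the inclusion $B^{g}_{l}\subset B^{\underline{g}}_{l}$;
\item the comparability of the $g$- and $\underline{g}$-volumes of $B^{\underline{g}}_{l}$;
\item the slow-variation bound $V_{\underline{g}}(B^{\underline{g}}_{l})\le C(a)V_{\underline{g}}(B^{\underline{g}}_{al})$, which comes from concavity of $\psi$;
\item the ratio $\int_{B^{\underline{g}}_{l_{i}}}R_{g}\,d\mu_{g}\big/\int_{B^{\underline{g}}_{l_{i}}}\underline{R}\,d\mu_{\underline{g}}\to1$ along a sequence with $l_{i+1}/l_{i}\to1$.
\end{itemize}
Both terms of $\underline{R}=(n-1)\bigl((n-2)\frac{1-\psi'(l)^{2}}{\psi(l)^{2}}-2\frac{\psi''(l)}{\psi(l)}\bigr)$ are nonnegative. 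Hence
\[
\frac{l^{2}}{V_{\underline{g}}(B^{\underline{g}}_{l})}\int_{B^{\underline{g}}_{l}}\underline{R}\,d\mu_{\underline{g}}\ge(n-1)(n-2)\frac{l^{2}\int_{0}^{l}(1-\psi'(s)^{2})\psi(s)^{n-3}ds}{\int_{0}^{l}\psi(s)^{n-1}ds}\to+\infty
\]
by L'H\^opital, because $\psi(l)/l\to\theta(\infty)=1-\tilde m=0$ and $\psi'(l)\to0$.

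Geometrically, when $\nu=0$ the cross-sections are small compared with $l$, so their curvature $(1-\psi'^{2})/\psi^{2}$ is much larger than $l^{-2}$. This forces $\frac{l^{2}}{V}\int_{B_{l}}R\to\infty$, which violates both (\ref{Scalar curvature decay assumption 1}) and (\ref{Scalar curvature decay assumption 2}). Without this estimate your argument does not prove the theorem for any $n\ge4$.
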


These results provide the first complete high-dimensional Cohn\textendash Vossen
theory in the locally conformally flat setting.

The mechanism developed in this paper appears to extend beyond the
specific setting of the $n$-Laplace equation. A fundamental feature
underlying the argument is the emergence of logarithmic singularities
at the critical conformal scaling order. In this regime, geometric
quantities at infinity become encoded by logarithmic masses after
inversion, while asymptotic rigidity can be recovered through refined
control outside exceptional sets. This suggests a broader correspondence
between critical conformally invariant curvature equations and logarithmic
singularity rigidity. For example, similar phenomena are expected
to arise for four-dimensional $Q$-curvature equations associated
with the Paneitz operator, whose Green kernel also exhibits logarithmic
asymptotics.

This work initiates an extension of higher-dimensional Cohn\textendash Vossen
theory beyond locally conformally flat manifolds. It indicates that
quasiconformal geometry may serve as a viable framework for broader
cases, and we expect to apply quasiconformal mappings to explore the
asymptotic geometry of relevant manifolds, though the existence of
well-behaved such mappings remains a key challenge.

\section{Refined singularity estimates for $n$-superharmonic functions}

In this section, we establish a refined singularity estimate for nonnegative
$n$-superharmonic functions, which significantly improves Theorem
1.1 in \cite{MQ2021}. 

\subsection{Capacity, thinness and strong $\mathcal{E}$-set }

\begin{Def}

For a compact subset $K$ of a domain $\Omega$ in Euclidean space
$\mathbb{R}^{n}$, we define the $n$-capacity as 
\[
{\rm cap}_{n}(K,\Omega)=\inf\left\{ \int_{\Omega}|\nabla u|^{n}dx;u\in C^{\infty}_{0}(\Omega),u\ge1\,{\rm on}\,K\right\} .
\]
 For an arbitrary subset $E$ of $\Omega$, the $n$-capacity is defined
as 
\[
{\rm cap}_{n}(E,\Omega)=\inf_{{\rm open}G\subset\Omega\,{\rm that\,contains\,}E}\sup_{{\rm compact}\,K\subset G}{\rm cap}_{n}(K,\Omega).
\]

\[
c_{1,n}(K)=\inf\{\|u\|^{n}_{W^{1,n}}:u\in C^{\infty}_{0}(\mathbb{R}^{n}),u\ge1\,{\rm on}\,{\rm compact}\,{\rm set}\,K\},
\]
and the definition is extended to arbitrary sets similarly.

\end{Def}

In the following, we use $\mathring{D}$ to represent the interior
of a set $D$, which is an open subset of $D$. 

\begin{lem}\label{comparision of two capacities}

There is a uniform $C>0$ such that, $\forall E\subset B_{1}(0)$,
\[
C^{-1}{\rm cap}_{n}(E,B_{2}(0))\le c_{1,n}(E)\le C{\rm cap}_{n}(E,B_{2}(0)).
\]

\end{lem}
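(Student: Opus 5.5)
We argue first for compact sets $K\subset B_{1}(0)$, proving each inequality by turning a test function for one capacity into a test function for the other with comparable energy. Then we pass to arbitrary $E\subset B_{1}(0)$ through the outer-regular definitions.

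\emph{Lower bound for $c_{1,n}$.} Take $u\in C^{\infty}_{0}(\mathbb{R}^{n})$ with $u\ge1$ on $K$. Fix a cutoff $\eta\in C^{\infty}_{0}(B_{2}(0))$ with $\eta\equiv1$ on $B_{3/2}(0)$ and $|\nabla\eta|\le C$. Then $\eta u\in C^{\infty}_{0}(B_{2}(0))$ and $\eta u\ge1$ on $K$. Moreover
\[
\int|\nabla(\eta u)|^{n}\le C\int_{B_{2}}(|\nabla u|^{n}+|u|^{n})\le C\|u\|^{n}_{W^{1,n}}.
\]
Taking the infimum over $u$ gives ${\rm cap}_{n}(K,B_{2}(0))\le C\,c_{1,n}(K)$.

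\emph{Upper bound for $c_{1,n}$.} Take $u\in C^{\infty}_{0}(B_{2}(0))$ with $u\ge1$ on $K$, and extend it by zero to all of $\mathbb{R}^{n}$. The only missing ingredient is control of $\int|u|^{n}$. The Poincaré inequality on the bounded domain $B_{2}(0)$ for functions vanishing on the boundary gives $\int_{B_{2}}|u|^{n}\le C\int_{B_{2}}|\nabla u|^{n}$. Hence $\|u\|^{n}_{W^{1,n}}\le C\int|\nabla u|^{n}$, and $c_{1,n}(K)\le C\,{\rm cap}_{n}(K,B_{2}(0))$. Both constants depend only on $n$, and neither argument uses $K\subset B_{1}$ beyond the cutoff step.

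\emph{Arbitrary sets.} The main point is to check that the two set-function extensions are compatible. For ${\rm cap}_{n}(\cdot,B_{2}(0))$ the extension is an infimum over open $G\subset B_{2}(0)$ containing $E$ of the supremum over compact $K\subset G$. For $c_{1,n}$ the extension is, by "similarly", the same thing with open $G\subset\mathbb{R}^{n}$.

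For $E\subset B_{1}(0)$ we may restrict, in either infimum, to open $G\subset B_{3/2}(0)$. Any admissible $G$ can be replaced by $G\cap B_{3/2}(0)$, which still contains $E$. By monotonicity in $K$, this replacement does not increase the inner supremum, so the infimum is unchanged.

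For such $G$, every compact $K\subset G$ lies in $B_{3/2}(0)$. The compact-set comparison then holds with the same constant, since the cutoff $\eta\equiv1$ on $B_{3/2}$ still works. Taking the supremum over $K\subset G$ and then the infimum over $G$ preserves the two-sided inequality with the same $C$. This gives the lemma for all $E\subset B_{1}(0)$.
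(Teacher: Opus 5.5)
Your proof is correct and matches the paper's argument for compact sets: a cutoff times a $W^{1,n}$ test function gives one direction, and the Poincaré inequality on $B_{2}(0)$ gives the other. Your reduction of arbitrary $E\subset B_{1}(0)$ to the compact case, by restricting to open $G\subset B_{3/2}(0)$ where the compact comparison holds with a uniform constant, is valid and supplies the step the paper leaves implicit with ``We only prove for a compact subset.''
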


\begin{proof}

We only prove for a compact subset $E$. We fixed some $\phi\in C^{\infty}_{0}(B_{2}(0))$
and $\phi\equiv1$ in $B_{1}(0)$ with $|\nabla\phi|\le C$. 

For any $u\in C^{\infty}_{0}(\mathbb{R}^{n})$, $u\ge1$ on $E$,
it is easy to check
\[
\text{cap}_{n}(E,B_{2}(0))\le\int_{B_{2}(0)}|\nabla\phi u|^{n}dx\le C\|u\|^{n}_{W^{1,n}(\mathbb{R}^{n})}.
\]
So $\text{cap}_{n}(E,B_{2}(0))\le Cc_{1,n}(E)$. 

On the other hand for $u\in C^{\infty}_{0}(B_{2}(0))$, $u\ge1$ on
$E$, by Poincaré inequality 
\[
\|u\|^{n}_{L^{n}(B_{2})}\le C\|\nabla u\|^{n}_{L^{n}(B_{2})}.
\]
So $c_{1,n}(E)\le C\text{cap}_{n}(E,B_{2}(0))$. 

\end{proof}

We now introduce the notions of thinness that we use. Let
\[
A_{\alpha,\beta}(x_{0})=\{x\in\mathbb{R}^{n};\alpha\le|x-x_{0}|\le\beta\},
\]
 and $A_{\alpha,\beta}=A_{\alpha,\beta}(0)$. 

\begin{Def}\label{different thin notions}

Let $x_{0}\in\mathbb{R}^{n}$. 
\begin{itemize}
\item A set $E$ is called weakly $n$-thin at $x_{0}$, if 
\[
\sum^{\infty}_{i=1}{\rm cap}_{n}(E\cap A_{2^{-i-1},2^{-i}}(x_{0}),\,\,\mathring{A}_{2^{-i-2},2^{-i+1}}(x_{0}))<+\infty;
\]
\item A set $E$ is called weakly $n$-thin at infinity, if 
\[
\sum^{\infty}_{i=1}{\rm cap}_{n}(E\cap A_{2^{i},2^{i+1}},\,\,\mathring{A}_{2^{i-1},2^{i+2}})<+\infty;
\]
\item A set $E$ is called $n$-thin  at $x_{0}$, if 
\[
\sum^{\infty}_{i=1}i^{n-1}{\rm cap}_{n}(E\cap A_{2^{-i-1},2^{-i}}(x_{0}),\,\,\mathring{A}_{2^{-i-2},2^{-i+1}}(x_{0}))<+\infty;
\]
\item A set $E$ is called $n$-thin  at infinity, if 
\[
\sum^{\infty}_{i=1}i^{n-1}{\rm cap}_{n}(E\cap A_{2^{i},2^{i+1}},\,\,\mathring{A}_{2^{i-1},2^{i+2}})<+\infty;
\]
\item A set $E$ is called traditionally $n$-thin  at $x_{0}$, if 
\[
\sum^{\infty}_{i=1}{\rm cap}_{n}(E\cap B_{2^{-i}}(x_{0}),\,\,B_{2^{-i+1}}(x_{0}))^{\frac{1}{n-1}}<+\infty.
\]
\end{itemize}
\end{Def}

Notice that the traditionally $n$-thinness is the one used in \cite{KM1994}
or \cite{HKM1993}, to characterize an exceptional set $E$, such
that for an $n$-superharmonic function $v(x)$
\[
\lim_{x\to x_{0},x\notin E}v(x)=v(x_{0}).
\]
 While, $n$-thinness in our case, is used to characterize an exceptional
set $E$, such that for a positive $n$-superharmonic function $v(x)$
\[
\lim_{x\to x_{0},x\notin E}\frac{v(x)}{\log\frac{1}{|x|}}=m.
\]

\begin{Def}\label{strong e set}

A strong $\mathcal{E}$-set of $\mathbb{R}^{n}$ is a union of balls
$B_{i}=B(x_{i},r_{i})$
\[
E=\bigcup^{\infty}_{i=1}B_{i}
\]
such that $0\not\in\bar{B}_{i},i=1,\cdots,\infty$ and for some $\eta>0$
\[
\sum^{\infty}_{i=1}\left(\log_{+}\frac{|x_{i}|}{r_{i}}\right)^{1-n-\eta}<+\infty,
\]
where 
\[
\log_{+}x=\max\{\log x,0\}.
\]

\end{Def}

We compare different thinness notions and strong $\mathcal{E}$-set
notion.

\begin{lem}\label{thinness and strong epsilon comparison}

If $E$ is $n$-thin or traditionally $n$-thin at $x_{0}$, it is
weakly $n$-thin at $x_{0}$; If $E\subset B_{1}(0)$ is weakly $n$-thin
at $0$, it is contained in a strong $\mathcal{E}$-set. 

\end{lem}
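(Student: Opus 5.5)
The lemma has three parts, and I will treat them in order.

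\emph{$n$-thin implies weakly $n$-thin.} Write $c_i={\rm cap}_{n}(E\cap A_{2^{-i-1},2^{-i}}(x_{0}),\mathring{A}_{2^{-i-2},2^{-i+1}}(x_{0}))$. Since $i^{n-1}\ge1$, we have $\sum c_i\le\sum i^{n-1}c_i<\infty$.

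\emph{Traditionally $n$-thin implies weakly $n$-thin.} Here I need two facts. First, a comparison of condenser capacities: the $c_i$ are comparable to $t_i:={\rm cap}_n(E\cap B_{2^{-i}}(x_0),B_{2^{-i+1}}(x_0))$, with a constant depending only on $n$. By scaling invariance of ${\rm cap}_n$ it suffices to compare, for $F\subset A_{1/2,1}$, the capacities relative to $\mathring{A}_{1/4,2}$ and to $B_2$.
\begin{itemize}
\item Capacity relative to $B_2$ is at most capacity relative to $\mathring A_{1/4,2}$, since $\mathring A_{1/4,2}\subset B_2$ and so there are more test functions.
\item For the reverse inequality, multiply a test function by a fixed cutoff $\psi$ that equals $1$ on $A_{1/2,1}$ and vanishes near $B_{1/4}$. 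Then
\[
\int|\nabla(\psi u)|^n\le C\int|\nabla u|^n+C\int_{A_{1/4,1/2}}|u|^n,
\]
and the last term is controlled by $C\int_{B_2}|\nabla u|^n$ through the Poincaré inequality for $u\in C^\infty_0(B_2)$.
\end{itemize}
Monotonicity then gives $c_i\le C\,{\rm cap}_n(E\cap A_{2^{-i-1},2^{-i}},B_{2^{-i+1}})\le C t_i$. Second, I use a uniform bound $t_i\le {\rm cap}_n(\bar B_{2^{-i}},B_{2^{-i+1}})=\omega_{n-1}(\log2)^{1-n}=:K$. This gives $t_i\le K^{\frac{n-2}{n-1}}t_i^{\frac1{n-1}}$, so $\sum c_i\le C\sum t_i^{1/(n-1)}<\infty$.

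\emph{Weakly $n$-thin at $0$ implies containment in a strong $\mathcal E$-set.} This is the main step. Set $E_i=E\cap A_{2^{-i-1},2^{-i}}$. Rescaling by $2^{i}$ gives $F_i=2^iE_i\subset A_{1/2,1}$ with capacity $c_i$ relative to $\mathring A_{1/4,2}$, and $\sum c_i<\infty$. Since $c_i\to0$, for large $i$ I cover $F_i$ by balls using a covering lemma relating small $n$-capacity to a logarithmic Hausdorff content. Concretely, I want balls $B(y_{ij},\rho_{ij})$ covering $F_i$, with centers in $A_{1/4,2}$, radii $\rho_{ij}<1/8$, and
\[
\sum_j\left(\log\frac1{\rho_{ij}}\right)^{1-n-\eta}\le C c_i
\]
for some fixed $\eta>0$, say $\eta=1$. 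My first attempt at this is the standard argument of Hayman/Kuran type for the log-potential case. Take the capacitary potential $u$ of $F_i$, with $\int|\nabla u|^n\approx c_i$. At each point $y\in F_i$, the inequality $u\ge1$ together with a Poincaré/telescoping argument over dyadic balls $B(y,2^{-k})$ forces some scale $k$ at which $\int_{B(y,2^{-k})}|\nabla u|^n\gtrsim k^{-1-\eta'}$; this uses $\sum_k k^{-1-\eta'}<\infty$ together with Hölder. Applying Vitali to these balls gives $\sum_j k_j^{-1-\eta'}\lesssim c_i$. Then $(\log1/\rho)^{1-n-\eta}\le k^{-1-\eta'}$ as long as $n-1+\eta\ge1+\eta'$, which holds for $n\ge2$ after choosing $\eta$ suitably. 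This part is the main obstacle, and the exponent bookkeeping is exactly where care is needed. As a fallback, the crude choice of one ball per point with a single radius $\rho$ such that $c_i\sim(\log1/\rho)^{1-n}$ does not work, because $F_i$ need not be finitely coverable by few balls. So the capacity–content comparison really is needed.

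Scaling back, the balls $B(x,r)=2^{-i}B(y,\rho)$ satisfy $|x|/r=|y|/\rho\le 2/\rho$. Hence $\log_+(|x|/r)\le\log(2/\rho)$, which is comparable to $\log(1/\rho)$ since $\rho<1/8$. Because $1-n-\eta<0$, this is the direction needed for the sum to be bounded by $C\sum_i c_i<\infty$. The balls also avoid $0$, since $|x|\ge2^{-i}/4>r$. For the finitely many small $i$, where $c_i$ is not small, I cover $E_i$ by finitely many balls of radius $2^{-i}/8$; these contribute finitely much to the sum. The union of all these balls is then a strong $\mathcal E$-set containing $E$.
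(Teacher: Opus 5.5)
\textbf{Parts one and two.} These are correct and follow the paper's route. You also supply two justifications the paper leaves implicit: the comparison of capacities relative to $\mathring{A}_{1/4,2}$ and to $B_2$, and the passage from $\sum t_i^{1/(n-1)}$ to $\sum t_i$ via $t_i\le K$.

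\textbf{Part three: a different route.} The paper does not build the covering by hand. After rescaling, it passes to $c_{1,n}$ via Lemma~\ref{comparision of two capacities}. It then quotes the capacity--content inequality \cite[Corollary 5.1.14]{AH1996} for the gauge $h(r)=(\log_+\frac{2}{r})^{1-n-\eta}$, and sums using $\frac{n-1}{n-1+\eta}<1$. A self-contained covering is a legitimate alternative, but as written it has two errors. Both are fixable.

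\textbf{First error: the single-scale bound is false for $n\ge3$.} You claim $\int_{B(y,2^{-k})}|\nabla u|^n\gtrsim k^{-1-\eta'}$ at some scale $k$. Write $a_k=\int_{B(y,2^{-k})}|\nabla u|^n$. The scale-invariant Poincar\'e inequality only gives $|u_{B(y,2^{-k-1})}-u_{B(y,2^{-k})}|\le Ca_k^{1/n}$. Pigeonholing in $\sum_{k\ge k_0}a_k^{1/n}\ge c$ therefore yields only $a_k\ge\epsilon k^{-n-\eta'}$ for some $k$. This cannot be improved. For the capacitary potential of a tiny ball $\bar B(y,2^{-K})$ one has $a_k\approx(K-k)K^{-n}$, so $\max_k k^{1+\eta'}a_k\lesssim K^{2-n+\eta'}\to0$.

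With the correct bound, Vitali gives $\sum_j k_j^{-n-\eta'}\lesssim c_i$. This controls $\sum_j(\log\frac{1}{\rho_j})^{1-n-\eta}$ only when $\eta\ge1+\eta'$. So your choice $\eta=1$ fails, but $\eta=2$ works. That suffices, because Definition~\ref{strong e set} only asks for some $\eta>0$.

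If you want every $\eta>0$, pigeonhole instead in the Wolff potential of the capacitary measure $\mu$. This potential is $\gtrsim1$ on $F_i$ by \cite[Theorem 1.6]{KM1994} once $c_i$ is small. You then get $\mu(B(y,2^{-k}))^{\frac{1}{n-1}}\gtrsim k^{-1-\eta'}$, hence the gauge $(\log\frac{1}{\rho})^{1-n-(n-1)\eta'}$.

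\textbf{Second error: the final comparison runs the wrong way.} Since $1-n-\eta<0$, the bound $\log_+\frac{|x|}{r}\le\log\frac{2}{\rho}$ gives a \emph{lower} bound on $(\log_+\frac{|x|}{r})^{1-n-\eta}$. That is useless for summability. You need the lower bound
\[
\log_+\frac{|x|}{r}=\log\frac{|y|}{\rho}\ge\log\frac{1}{4\rho}\ge\frac{1}{3}\log\frac{1}{\rho}.
\]
This follows from $|y|\ge\frac{1}{4}$ (your centres lie in $A_{1/4,2}$, in fact in $F_i\subset A_{1/2,1}$) together with $\rho<\frac{1}{8}$.

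\textbf{A technical point.} For non-compact $F_i$, run the telescoping with a function $u\in W^{1,n}_0(\mathring{A}_{1/4,2})$ that is $\ge1$ a.e.\ on an open $G\supset F_i$ of almost minimal capacity. Then $\liminf_k u_{B(y,2^{-k})}\ge1$ at every $y\in F_i$, and the starting average $|u_{B(y,2^{-k_0})}|\le C2^{k_0}c_i^{1/n}$ is small for large $i$.

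With these corrections, your argument proves the lemma.
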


\begin{proof}

Obviously, $n$-thinness implies weakly $n$-thinness at a point.
Suppose $E$ is traditionally $n$-thin at $x_{0}$, then 
\[
\sum^{\infty}_{i=1}\text{cap}_{n}(E\cap B_{2^{-i}}(x_{0}),B_{2^{-i+1}}(x_{0}))<+\infty,
\]
which implies 
\[
\sum^{\infty}_{i=1}\text{cap}_{n}(E\cap A_{2^{-i-1},2^{-i}}(x_{0}),B_{2^{-i+1}}(x_{0}))<+\infty.
\]
Since 
\[
\text{cap}_{n}(E\cap A_{2^{-i-1},2^{-i}}(x_{0}),B_{2^{-i+1}}(x_{0}))\simeq\text{cap}_{n}(E\cap A_{2^{-i-1},2^{-i}}(x_{0}),\mathring{A}_{2^{-i-2},2^{-i+1}}(x_{0})),
\]
it is weakly $n$-thin at $x_{0}$. Here by $A\simeq B$ we mean there
exists $C>0$ such that $C^{-1}A\le B\le CA.$ 

Suppose $F$ is weakly $n$-thin at $0$. From Lemma \ref{comparision of two capacities},
\begin{align*}
 & c_{1,n}(2^{i}(F\cap A_{2^{-i-1},2^{-i}}))=c_{1,n}(2^{i}F\cap A_{2^{-1},1})\simeq\text{cap}_{n}(2^{i}F\cap A_{2^{-1},1},B_{2})\\
\simeq & \text{cap}_{n}(2^{i}F\cap A_{2^{-1},1},\mathring{A}_{2^{-2},2})=\text{cap}_{n}(F\cap A_{2^{-i-1},2^{-i}},\mathring{A}_{2^{-i-2},2^{-i+1}}).
\end{align*}

On the other hand from \cite[Corollary 5.1.14]{AH1996}, for some
small $\eta>0$ and $C>0$, for a set $\tilde{F}$
\[
\Lambda^{(\infty)}_{h}(\tilde{F})^{\frac{n-1}{n-1+\eta}}\le Cc_{1,n}(\tilde{F}),
\]
where $h(r)=\left(\log_{+}\frac{2}{r}\right)^{1-n-\eta}$ and 
\[
\Lambda^{(\infty)}_{h}(\tilde{F})=\inf\{\sum_{i}h(r_{i});\bigcup B_{r_{i}}(x_{i})\supset\tilde{F}\}.
\]
Here we make a remark that, although \cite[Corollary 5.1.14]{AH1996}
is stated for compact subsets, due to \cite[Remark in Page 138]{AH1996},
it holds for arbitrary subsets. So taking $\tilde{F}=2^{i}(F\cap A_{2^{-i-1},2^{-i}})$,
\[
\Lambda^{(\infty)}_{h}(2^{i}(F\cap A_{2^{-i-1},2^{-i}}))^{\frac{n-1}{n-1+\eta}}\le C\text{cap}_{n}(F\cap A_{2^{-i-1},2^{-i}},\mathring{A}_{2^{-i-2},2^{-i+1}}).
\]
Since $F$ is weakly $n$-thin, 
\[
\sum^{\infty}_{i=1}\Lambda^{(\infty)}_{h}(2^{i}(F\cap A_{2^{-i-1},2^{-i}}))^{\frac{n-1}{n-1+\eta}}<+\infty.
\]
Then we may cover $F\cap A_{2^{-i-1},2^{-i}}$ by countable many balls
$B^{i}_{j}=B_{r^{i}_{j}}(x^{i}_{j})$ such that 
\[
\sum^{\infty}_{i=1}\left(\sum^{\infty}_{j=1}\left(\log_{+}\frac{2^{-i}}{r^{i}_{j}}\right)^{1-n-\eta}\right)^{\frac{n-1}{n-1+\eta}}<+\infty.
\]
Since we may assume $|x^{i}_{j}|\simeq2^{-i}$, and $\frac{n-1}{n-1+\eta}<1$
we have 
\[
\sum^{\infty}_{i,j=1}\left(\log_{+}\frac{|x^{i}_{j}|}{r^{i}_{j}}\right)^{1-n-\eta}<+\infty.
\]
By relabeling these balls as $B_{i}=B(x_{i},r_{i})$, we can rewrite
it as 
\[
\sum^{\infty}_{i=1}\left(\log_{+}\frac{|x_{i}|}{r_{i}}\right)^{1-n-\eta}<+\infty.
\]
Then $F$ is contained in a strong $\mathcal{E}$-set. 

\end{proof}

Crucially, the role of a strong $\mathcal{E}$-set is not to be smaller
in capacity or measure, but to ensure that outside such a set, the
function enjoys asymptotically exact radial symmetry with error $o(1)$,
while maintaining a well-behaved structure. This type of asymptotic
radial symmetry is substantially stronger and is indispensable for
the precise geometric applications in later sections.

\subsection{Main singularity estimates}

We revise \cite[Theorem 1.1]{MQ2021}. 

\begin{thm}\label{MQ2021 thm}

Let $u$ be a nonnegative $n$-superharmonic function in $B_{2}(0)\subset\mathbb{R}^{n}$
and 
\[
-\Delta_{n}u=\mu\ge0
\]
 for a Radon measure $\mu\ge0$. Then there is a set $E\subset\mathbb{R}^{n}$,
which is $n$-thin at $0$, such that 
\[
\lim_{x\notin E,x\to0}\frac{u(x)}{\log\frac{1}{|x|}}=\liminf_{x\to0}\frac{u(x)}{\log\frac{1}{|x|}}=m\ge0
\]
 and 
\[
u(x)\ge m\log\frac{1}{|x|}-C
\]
 for $x\in B_{1}(0)\backslash\{0\}$ and some $C$. 

\end{thm}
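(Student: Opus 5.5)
Set $m:=\liminf_{x\to0}u(x)/\log\frac{1}{|x|}$. The plan is to split $u$ into a multiple of the fundamental solution, which accounts for the point mass of $\mu$ at the origin, plus a remainder $v$ that is asymptotically negligible on the scale $\log\frac{1}{|x|}$ outside a set $E$ that is $n$-thin at $0$. I will (a) prove the lower bound $u\ge m\log\frac{1}{|x|}-C$, (b) identify $m$ with the logarithmic mass carried by $\mu$ near $0$, and (c) control the upper deviation of $u$ through a Wolff potential estimate, localized to dyadic annuli.

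\textbf{Step (a): the lower bound.} Fix $\epsilon>0$. By the definition of $m$ there is $r_\epsilon$ with $u\ge(m-\epsilon)\log\frac1{|x|}-C_\epsilon$ on $B_{r_\epsilon}$. I compare $u$ on annuli $A_{\rho,1}$ with $h=(m-\epsilon')\log\frac{1}{|x|}-C$, which is $n$-harmonic away from $0$. On the inner sphere $|x|=\rho$, with $\epsilon'>\epsilon$, the lower bound above gives $u\ge h$ once $\rho$ is small. On $|x|=1$, $u\ge h$ because $u\ge0$ is lower semicontinuous and bounded below there. The comparison principle for $n$-superharmonic versus $n$-harmonic functions then yields $u\ge h$ on $A_{\rho,1}$. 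Letting $\rho\to0$ and then $\epsilon'\to0$ gives the stated lower bound.

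\textbf{Step (b): identifying $m$.} I would show that $m^{n-1}|\mathbb{S}^{n-1}|=\mu(\{0\})$, or at least that the relevant upper estimate holds with this $m$. The tool is the Kilpel\"ainen--Mal\'y pointwise estimate
\[
u(x)\le C\inf_{B(x,r)}u+C\int_0^{2r}\Big(\frac{\mu(B(x,t))}{t^{n-2}}\Big)^{\frac1{n-1}}\frac{dt}{t},
\]
applied at scale $r=|x|/4$. Since $r\simeq|x|$, the infimum over $B(x,|x|/4)$ is controlled by $\underline u$ at comparable radii, and these are $\le(m+o(1))\log\frac1{|x|}$ along a sequence of radii. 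A Harnack chain on the annulus then propagates this bound, except where the Wolff potential of $\mu$ restricted to the annulus is large.

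The constant $C$ in this estimate is too crude to produce the sharp limit. To get sharpness, I decompose $\mu=\mu(\{0\})\delta_0+\mu'$ and treat $u$ as $n$-superharmonic with the radial singular part split off, in the spirit of Serrin's isolated singularity theorem. The goal is to show that outside $E$ the quotient $u(x)/\log\frac1{|x|}$ is squeezed to $m$ from above.

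\textbf{Step (c): the exceptional set and its thinness.} For each dyadic annulus $A_i=A_{2^{-i-1},2^{-i}}$, define
\[
E_i=\Big\{x\in A_i:\ W^{\mu'_i}_{1,n}(x,2^{-i})>\delta_i\, i\Big\},
\]
where $\mu'_i$ is $\mu'$ restricted to $A_{2^{-i-2},2^{-i+1}}$ and $\delta_i\to0$ slowly. The capacitary weak-type inequality for Wolff potentials gives
\[
{\rm cap}_n\big(E_i,\mathring{A}_{2^{-i-2},2^{-i+1}}\big)\lesssim\frac{\mu'_i(\mathbb{R}^n)}{(\delta_i i)^{n-1}}.
\]
Hence $\sum_i i^{n-1}{\rm cap}_n(E_i,\cdot)\lesssim\sum_i\delta_i^{1-n}\mu'_i(\mathbb{R}^n)$. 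Because $\mu'(B_1\setminus\{0\})<\infty$ and the annuli overlap boundedly, choosing $\delta_i\to0$ slowly (in terms of the tails $\mu'(B_{2^{-i}}\setminus\{0\})\to0$) makes the sum finite. So $E=\bigcup E_i$ is $n$-thin at $0$. Off $E$, the contribution of $\mu'$ is $o(\log\frac1{|x|})$, and the contribution of the far part of $\mu$ outside the annulus is comparable to the radial infimum.

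\textbf{Main obstacle.} The hardest step is (b)--(c): showing that the far-field part contributes at most $(m+o(1))\log\frac1{|x|}$ with the exact constant, not $C\,m\log\frac1{|x|}$. The nonlinearity prevents splitting $u$ into a sum of potentials. My first attempt would be a comparison argument on the annulus $A_{2^{-i-2},2^{-i+1}}$. There I would compare $u$ with the solution $w_i$ of the Dirichlet problem having boundary values $u$ and right-hand side $\mu'_i$, and with the $n$-harmonic function having the same boundary data. I would then use the oscillation decay of $n$-harmonic functions on annuli, together with the fact that $\underline u(r)/\log\frac1r\to m$ along all $r$ (which follows from (a) and a convexity property of $\underline u$ in $\log\frac1r$), to get sharp radial asymptotics off $E$.
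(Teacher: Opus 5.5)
\textbf{There is a genuine gap: the sharp upper bound off $E$ is never proved.} Your Step (a) is correct. The bookkeeping in Step (c) is also sound: choosing $\delta_i\to0$ from the tails of $\sum_i\mu(A_{2^{-i-2},2^{-i+1}})<\infty$ does make $\bigcup_iE_i$ $n$-thin at $0$.

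What is missing is the inequality Step (c) is meant to feed, namely $u(x)\le(m+o(1))\log\frac1{|x|}$ for $x\notin E$. Your sentence that the far part of $\mu$ contributes something ``comparable to the radial infimum'' is exactly what needs proof. Comparability up to a multiplicative constant does not suffice: the Kilpel\"ainen--Mal\'y estimate plus a Harnack chain applied to $u$ itself only gives $u(x)\le C(m+o(1))\log\frac1{|x|}$.

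None of your proposed remedies closes this. Splitting off $\mu(\{0\})\delta_0$ does not split $u$, because $\Delta_n$ is nonlinear, as you note yourself. The Dirichlet-comparison and oscillation-decay plan is only announced, not carried out. Identifying $m$ with $\mu(\{0\})$ is not needed for this statement.

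\textbf{The missing idea.} Make the constant multiply an $O(1)$ quantity by subtracting the radial infimum at a comparable radius \emph{before} applying the pointwise estimate.

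Since $\underline u$ is nonincreasing, $v_x:=u-\underline u(7|x|)$ is a nonnegative $n$-superharmonic function on $B_{7|x|}\supset B(x,6|x|)$. The ball $B(x,2|x|)$ contains the sphere $\partial B_{|x|/2}$, so
$\inf_{B(x,2|x|)}v_x\le\underline u(|x|/2)-\underline u(7|x|)$.
This is bounded, because $r\underline u'(r\pm0)\to-m$ by log-concavity.

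Now split the Wolff potential. For scales $t\ge|x|/2$ it contributes at most $\mu(B_{5|x|})^{\frac1{n-1}}\log 8$. For $t<|x|/2$ and $x\in A_{2^{-i-1},2^{-i}}$, the ball $B(x,t)$ lies in $A_{2^{-i-2},2^{-i+1}}$. The Kilpel\"ainen--Mal\'y estimate applied to $v_x$ on $B(x,2|x|)$ therefore gives
\[
u(x)\le\underline u(7|x|)+C+C\,W^{\mu'_i}_{1,n}(x,|x|/2),
\]
with $C$ independent of $x$.

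Off your $E_i$ the right-hand side is at most $\underline u(|x|)+o(\log\frac1{|x|})=(m+o(1))\log\frac1{|x|}$. Together with Step (a), this proves the theorem.

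\textbf{Relation to the paper.} The paper does not reprove this statement; it is quoted from \cite{MQ2021}. It does use exactly this device in Section 2. Lemma \ref{u_r estimate in B_1} applies the pointwise estimate to $u_r(\xi)=u(r\xi)-\underline u(5r)$, whose infimum on $B_2(\xi)$ is bounded. Lemma \ref{local estimate of w} records the resulting bound $u\le\inf_{A_{r/2,r}}u+C_2$ off a set of small capacity. Your Step (c), with threshold $\delta_i i$ in place of a constant, is what upgrades this to an exceptional set that is $n$-thin at $0$.
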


Now we state the main singularity estimates.

\begin{thm}\label{main thm 3-weak thin version}

Let $u\ge0$ be $n$-superharmonic in $B_{2}(0)\subset\mathbb{R}^{n}$
with
\[
-\Delta_{n}u=\mu
\]
 for a nonnegative Radon measure $\mu$. Then there is a weakly $n$-thin
set $F$, such that 
\[
u(x)=\underline{u}(|x|)+o(1),x\notin F
\]
where 
\[
\underline{u}(r)=\inf_{|x|=r}u(x).
\]

\end{thm}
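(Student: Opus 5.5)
We prove Theorem \ref{main thm 3-weak thin version} by working dyadic annulus by dyadic annulus. On each annulus we control the oscillation of $u$ above its minimum in terms of the mass of $\mu$ nearby, and we define the exceptional set by a level-set condition whose capacity is summable. Put $A_i=A_{2^{-i-1},2^{-i}}$ and $\tilde A_i=\mathring{A}_{2^{-i-2},2^{-i+1}}$, and fix a sequence $\epsilon_i\downarrow0$ to be chosen below. Set
\[
F_i=\{x\in A_i;\ u(x)>\underline{u}(|x|)+\epsilon_i\},\qquad F=\bigcup_i F_i .
\]
By construction, $u(x)\le\underline u(|x|)+\epsilon_i$ on $A_i\setminus F$. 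Since $u$ is lower semicontinuous, $u(x)\ge\underline u(|x|)$ holds everywhere. Together these give $u=\underline u(|x|)+o(1)$ off $F$. What remains is to show $\sum_i{\rm cap}_n(F_i,\tilde A_i)<\infty$ for a suitable choice of $\epsilon_i$.

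\textbf{Step 1 (rescaling and oscillation).} Rescale $A_i$ to the fixed annulus $A_{1/2,1}$ by $v_i(y)=u(2^{-i}y)$. The equation $-\Delta_n$ is invariant under this scaling, so $-\Delta_n v_i=\mu_i$, and $\mu_i(\tilde A_0)=\mu(\tilde A_i)$ (the $n$-Laplacian is scale invariant in dimension $n$). Moreover $\sum_i\mu(\tilde A_i)\le 3\mu(B_1)<\infty$. Next compare with the $n$-harmonic function $h_i$ that is the solution on $\tilde A_0$ with the same boundary values as $v_i$. On one hand, Harnack's inequality for the nonnegative $n$-harmonic function $h_i - \inf$ does not give $o(1)$ oscillation directly. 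To get it, use the fact that $\underline{u}(r)$ is a nonnegative function that is $n$-superharmonic in $\log$ scale. More concretely, Theorem \ref{MQ2021 thm} gives $u= m\log\frac1{|x|}+O(1)$ off an $n$-thin set, with $\underline u(r)\ge m\log\frac1r-C$. The deviation $v_i-(m\log\frac1{|2^{-i}y|}+c_i)$ therefore stays bounded on most of the annulus and tends to a constant on average. I would combine this with the Kilpel\"ainen--Mal\'y potential estimate (Wolff potential)
\[
v_i(y)-\inf_{\tilde A_0}h_i\lesssim \operatorname{osc} h_i+W^{\mu_i}_{1,n}(y,1),
\]
where $W^{\mu}_{1,n}(y,1)=\int_0^1\left(\mu(B_t(y))/t^{0}\right)^{1/(n-1)}\frac{dt}{t}$ (the power of $t$ vanishes since $p=n$). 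This splits $u-\underline u$ into a "harmonic" part and a "potential" part.

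\textbf{Step 2 (capacity of level sets).} For the potential part, the standard capacitary weak-type estimate for Wolff potentials gives
\[
{\rm cap}_n(\{W^{\mu_i}_{1,n}>\epsilon\}\cap A_{1/2,1},\tilde A_0)\le C\epsilon^{1-n}\mu(\tilde A_i).
\]
For the harmonic part, the Harnack and H\"older estimates for $n$-harmonic functions on compact subannuli give $\operatorname{osc}_{A_{1/2,1}}(h_i-\text{radial})\to0$. To see this, note that $\underline u(2^{-i})-\underline u(2^{-i-1})\to m\log2$ by Theorem \ref{MQ2021 thm}. A compactness argument then applies: limits of $h_i-c_i$ are nonnegative $n$-harmonic functions on the annulus whose infimum on each sphere is affine in $\log r$ with the same $m$, which forces them to be exactly radial. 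Hence the harmonic oscillation is some $\delta_i\to0$. Choose $\epsilon_i\ge 2\delta_i$, tending to $0$ slowly enough that $\sum_i\epsilon_i^{1-n}\mu(\tilde A_i)<\infty$; this is possible because $\sum_i\mu(\tilde A_i)<\infty$. Scaling back, $F_i$ is contained in the potential level set, and summing gives weak $n$-thinness.

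\textbf{Main obstacle.} The hard step is the harmonic-part claim that the oscillation of $u$ on $A_i$ around its spherical minimum tends to $0$, rather than merely staying bounded as Harnack alone would give. The rigidity argument in Step 2 needs care: the limit profile must be radial with the spherical minimum attained along a whole sphere. I would try to deduce this from the exactness of $\liminf u/\log\frac1{|x|}=m$ combined with the comparison principle against $m\log\frac1{|x|}+c$ on the annuli.
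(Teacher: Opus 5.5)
Your reduction is fine. Defining $F_i=\{x\in A_i;\,u(x)>\underline u(|x|)+\epsilon_i\}$ and choosing $\epsilon_i\downarrow0$ slowly enough that $\sum_i\epsilon_i^{1-n}\mu(\tilde A_i)<\infty$ is essentially the diagonal step that ends the paper's proof. The gap is in the two estimates that are supposed to feed this reduction.

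\textbf{The potential part is wrong as stated.} The Kilpel\"ainen--Mal\'y bound $v(y)\le C(n)\bigl(\inf_{B(y,\rho)}v+W^{\mu}_{1,n}(y,2\rho)\bigr)$ holds only for nonnegative $n$-superharmonic $v$, and it carries a constant $C(n)>1$ in front of the infimum. So it cannot be applied to $v_i$ minus a radial profile, which is not $n$-superharmonic. Applied to $v_i-\inf_{\tilde A_0}h_i$, it yields only $O(1)$, since ${\rm osc}\,h_i\approx m\log8$. Worse, at radius $1$ and $y\in A_{1/2,1}$ the balls $B_t(y)$ with $t>|y|$ contain the origin. Hence $W^{\mu_i}_{1,n}(y,1)\ge\mu(\{0\})^{\frac{1}{n-1}}\log\frac1{|y|}$, and your bound ${\rm cap}_n(\{W^{\mu_i}_{1,n}>\epsilon\}\cap A_{1/2,1},\tilde A_0)\le C\epsilon^{1-n}\mu(\tilde A_i)$ is false whenever $\mu(\{0\})>0$. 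That is exactly the main case $m>0$: the left side stays bounded below while $\mu(\tilde A_i)\to0$. If you localize the potential to radius $\le1/8$, the difficulty moves to showing that infima of $u$ over small balls at scale $2^{-i}$ match the radial profile up to $o(1)$. The paper does this in Lemma \ref{u_r estimate further}, resting on the full blow-up convergence $u_r\to m\log\frac5{|\xi|}$ of Lemma \ref{main convergence}. That convergence uses renormalized-solution compactness, the Kichenassamy--V\'eron classification, and the weak Harnack inequality at the point where $u_r$ attains its spherical minimum.

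\textbf{The harmonic-part claim is asserted, not proved.} You rightly call it the heart of the matter. Even the input $\underline u(2^{-i})-\underline u(2^{-i-1})\to m\log2$ needs the log-concavity of $\underline u$ from Lemma \ref{radial symmetric n-superharmonic functions}; Theorem \ref{MQ2021 thm} alone does not give it.

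Your architecture can be completed by a local route that differs from the paper's.
\begin{enumerate}
\item Take $h_i=P(v_i,\tilde A_0)$, the Poisson modification. Let $\phi_i$ be the radial $n$-harmonic function on $\tilde A_0$ equal to $\underline v_i$ on $\partial\tilde A_0$. Then $\phi_i\le h_i\le v_i$, so $\phi_i(s)\le\inf_{|y|=s}h_i\le\underline v_i(s)$, and both bounds minus $\underline u(2^{-i})$ tend to $m\log\frac1s$.
\item Harnack and $C^{1,\alpha}$ estimates give locally uniform subsequential limits $G$ of $h_i-\underline u(2^{-i})$. Each such $G$ satisfies $G\ge m\log\frac1{|y|}$, with equality somewhere on each sphere.
\item The strong comparison principle forces $G\equiv m\log\frac1{|y|}$. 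When $m>0$ it applies because $|\nabla(m\log\frac1{|y|})|=m/|y|>0$; when $m=0$ use the strong minimum principle. Hence the spherical oscillation of $h_i$ on $A_{1/2,1}$ tends to $0$.
\item Control the remainder by an energy estimate rather than a Wolff potential. Test with $\min\{v_i-h_i,k\}$, after replacing $v_i$ by $\min\{v_i,M\}$ and letting $M\to\infty$. Using $(|a|^{n-2}a-|b|^{n-2}b)\cdot(a-b)\ge2^{2-n}|a-b|^n$, this gives ${\rm cap}_n(\{v_i-h_i>\lambda\},\tilde A_0)\le C\lambda^{1-n}\mu(A_{2^{-i-2},2^{-i+1}})$. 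These bounds are summable and close your argument.
\end{enumerate}
This route would avoid the paper's global blow-up and Liouville-type classification. None of these ingredients is in your proposal, however, so as it stands it is a plan rather than a proof.
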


Then apply Lemma \ref{thinness and strong epsilon comparison}, Theorem
\ref{main thm1 refined analytic result of u} holds.

\begin{cor}

Suppose that $(\mathbb{R}^{n},g=e^{2w}|dx|^{2})$ is complete with
nonnegative Ricci. Then there is a strong $\mathcal{E}$-set $E\subset\mathbb{R}^{n}$,
such that 
\[
w(x)=\underline{w}(|x|)+o(1),\,{\rm as}\,x\to\infty,x\notin E.
\]

\end{cor}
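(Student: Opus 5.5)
Following the strategy announced in the introduction, we invert the problem to the origin and apply Theorem \ref{main thm1 refined analytic result of u}.

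\textbf{Step 1 (inversion).} Set
\[
u(x)=w\!\left(\frac{x}{|x|^{2}}\right)-2\log|x|,\qquad x\neq0 .
\]
Since $\Delta_{n}$ is conformally invariant under inversion, and $\log|x|$ is $n$-harmonic away from $0$, $u$ still satisfies the quasilinear equation
\[
-\Delta_{n}u=|\nabla u|^{n-2}\,Ric_{\bar{g}}\left(\frac{\bar{\nabla}u}{|\bar{\nabla}u|_{\bar{g}}},\frac{\bar{\nabla}u}{|\bar{\nabla}u|_{\bar{g}}}\right)e^{2u}
\]
for the pulled-back metric $\bar{g}=e^{2u}|dx|^{2}$, which is isometric to $g$ on $B_{1}(0)\setminus\{0\}$. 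Because $Ric_{g}\ge0$, the right-hand side is a nonnegative Radon measure away from $0$. Hence $u$ is $n$-superharmonic in the punctured ball.

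\textbf{Step 2 (removing the puncture).} We must extend $u$ across $0$ as an $n$-superharmonic function and make it nonnegative. This is the input from \cite{MQ2021}: completeness with $Ric\ge0$ gives $u(x)\ge m\log\frac{1}{|x|}-C$ near $0$ with $m\ge0$. In the paper's notation $m=2-\ldots$; the invariant $\tilde m\in[0,1]$ appearing in Theorem \ref{main thm2 volume ratio} reflects exactly this bound. So $u$ is bounded below near $0$. A punctured-ball $n$-superharmonic function that is bounded below extends across the point, since points have zero $n$-capacity and the set is removable for superharmonic functions bounded below. After adding a constant on $B_{1/2}(0)$, we may assume $u\ge0$ there. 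The main obstacle is justifying this extension and lower bound. If needed, I would instead cite the setup of \cite[Theorem 1.1]{MQ2021}, i.e. Theorem \ref{MQ2021 thm}, which is stated precisely for this conformal factor.

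\textbf{Step 3 (applying the singularity estimate).} Theorem \ref{main thm1 refined analytic result of u}, after rescaling $B_{1/2}$ to $B_{1}$, gives a strong $\mathcal{E}$-set $E'$ with
\[
u(y)=\underline{u}(|y|)+o(1)\quad\text{as }y\to0,\ y\notin E'.
\]
Since $-2\log|y|$ is radial, this transfers directly: for $x=y/|y|^{2}$,
\[
w(x)=u(y)+2\log|y|,\qquad \underline{w}(|x|)=\underline{u}(|y|)+2\log|y|,
\]
so that $w(x)=\underline{w}(|x|)+o(1)$ as $x\to\infty$ with $x\notin E$, where $E$ is the image of $E'$ under inversion.

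\textbf{Step 4 (the exceptional set under inversion).} It remains to check that $E$ is a strong $\mathcal{E}$-set. Let $B(y_{i},r_{i})$ be a ball of $E'$ with $0\notin\bar{B}(y_{i},r_{i})$. Its image under inversion is the ball with center $x_{i}$ and radius $R_{i}$, where
\[
x_{i}=\frac{y_{i}}{|y_{i}|^{2}-r_{i}^{2}},\qquad R_{i}=\frac{r_{i}}{|y_{i}|^{2}-r_{i}^{2}},
\]
so that
\[
\frac{|x_{i}|}{R_{i}}=\frac{|y_{i}|}{r_{i}}.
\]
The ratio is preserved, the image ball again avoids $0$, and the summability condition of Definition \ref{strong e set} is unchanged. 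Finally, the part of $E'$ outside the small ball only affects $x$ in a bounded region, so it is irrelevant as $x\to\infty$.
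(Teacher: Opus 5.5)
Your proposal is correct and follows the paper's proof step for step. You invert, read off $-\Delta_n u\ge0$ from $Ric_g\ge0$ via $e^{2u}|dy|^2=\iota^*g$, control $u$ at the origin, apply Theorem \ref{main thm1 refined analytic result of u}, and transport the balls back; your computation $|x_i|/R_i=|y_i|/r_i$ supplies what the paper calls ``easy to check.''

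The only difference is the input at the origin. The paper cites \cite[Prop 8.1]{CHY2004} for $\lim_{y\to0}u(y)=+\infty$, which makes the extension across $0$ trivial since $\min\{u,k\}$ is constant near $0$. Your route, the lower bound from \cite[Theorem 1.3]{MQ2021} plus removability of polar sets, works equally well. Your fallback to Theorem \ref{MQ2021 thm} would not work, though, because that theorem already assumes $u$ is nonnegative and $n$-superharmonic across $0$.
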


\begin{proof}

We let 
\[
u(y)=w(\frac{y}{|y|^{2}})-2\log|y|,y\in B_{2}(0).
\]
 Then $g=e^{2w}|dx|^{2}=e^{2u(y)}|dy|^{2},$ where $x=\frac{y}{|y|^{2}.}$
Then 
\[
-\Delta^{y}_{n}u(y)=Ric_{g}(\frac{\nabla^{g}u}{|\nabla^{g}u|_{g}},\frac{\nabla^{g}u}{|\nabla^{g}u|_{g}})|\nabla_{y}u|^{n-2}e^{2u}\ge0.
\]
And from \cite[Prop8.1]{CHY2004}, 
\[
\lim_{y\to0}u(y)=+\infty.
\]
 Then from Theorem \ref{main thm1 refined analytic result of u},
we know 
\[
u(y)=\underline{u}(y)+o(1),y\to0,y\notin F
\]
for some strong $\mathcal{E}$-set $F$. Since $\underline{u}(y)=\underline{w}(\frac{y}{|y|^{2}})-2\log|y|$,
and it is easy to check that $F$ is transformed into another strong
$\mathcal{E}$-set $E$ under inversion. Then we proved the corollary.

\end{proof}

\subsection{Radial properties of $n$-superharmonic functions}

To prove Theorem \ref{main thm 3-weak thin version}, we next establish
regularity and monotonicity properties of $\underline{u}(r)$. 

\begin{lem}\label{radial symmetric n-superharmonic functions}

Let $f(|x|)\ge0$ be $n$-superharmonic in $0<|x|<\delta$. Then
\begin{itemize}
\item $f(r)$ is continuous for $r\in(0,\delta)$;
\item $f'(r\pm0)$ exists for every $r\in(0,\delta)$, and $f'(r-0)\ge f'(r+0)$
with strict inequality at most countably often;
\item $f''(r)$ is a Radon measure in $(0,\delta)$;
\item $rf'(r\pm0)$ is non-increasing in $r$;
\item 
\begin{align*}
\lim_{r\to0}\frac{f(r)}{\log\frac{1}{r}} & =\lim_{r\to0}(-f'(r\pm0)r)=m\ge0;
\end{align*}
\item $f(|x|)$ is $n$-superharmonic in $|x|<\delta.$ 
\end{itemize}
\end{lem}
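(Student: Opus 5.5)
The approach is to reduce everything to a one-dimensional statement. For a radial function $v(x)=f(|x|)$ the $n$-Laplacian is
\[
\Delta_{n}v=r^{1-n}\bigl(r^{n-1}|f'|^{n-2}f'\bigr)'=r^{1-n}\bigl((r|f'|)^{n-2}rf'\bigr)',
\]
at least when $f$ is smooth. So $n$-superharmonicity in the annulus should mean that $\phi(r):=|rf'(r)|^{n-2}rf'(r)$ is non-increasing, and since $t\mapsto|t|^{n-2}t$ is increasing, that $rf'$ is non-increasing. The change of variable $s=\log\frac1r$ makes this transparent: with $F(s)=f(e^{-s})$ we have $F'(s)=-rf'(r)$, so "$rf'$ non-increasing in $r$" is exactly "$F'$ non-decreasing in $s$", i.e. $F$ is convex on $(\log\frac1\delta,\infty)$.

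The plan is therefore to first prove that $F$ is convex for a merely $n$-superharmonic radial $f$. I would do this by comparison. Take $r_{1}<r_{2}$ in $(0,\delta)$ and the radial $n$-harmonic function $h(x)=a+b\log|x|$ that agrees with $f$ on $|x|=r_{1}$ and $|x|=r_{2}$. Here $f$ is constant on each sphere, since $f(|x|)$ is a function of $|x|$, and it is lower semicontinuous. The comparison principle for $n$-superharmonic functions on the annulus $A_{r_{1},r_{2}}$ then gives $f\ge h$ inside. To use this cleanly I would replace $f$ by a continuous $n$-superharmonic $\min(f,k)$, or take boundary values via $\liminf$; lower semicontinuity together with the superharmonic mean-value/regularity structure handles this. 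In the $s$ variable, $h$ is affine, so $F$ lies above its chords, which is concavity, not convexity. So the sign must be checked: superharmonic means $-\Delta_{n}\ge0$, so $\phi$ is non-increasing, so $rf'$ is non-increasing, so $F'$ is non-decreasing, which is convexity. Comparison gives $f\ge h$, where $h$ is the harmonic interpolant; in the variable $s$ that reads $F\ge$ chord, which would be concavity. This clash is the delicate point. Rechecking with the fundamental solution: $\log\frac1{|x|}$ is $n$-harmonic and $\min(\log\frac1{|x|},c)$ is superharmonic, and in $s$ this is $\min(s,c)$, which is concave. So superharmonicity is equivalent to concavity of $F$, and indeed $F'=-rf'$ is then non-increasing, consistent with "$rf'$ non-increasing" since $-rf'$ non-increasing means $rf'$ non-decreasing. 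The listed monotonicity has to be reconciled with this sign convention when writing the proof. In any case, the comparison argument gives that $F$ is concave in $s$, which is the robust statement to aim for.

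Concavity of $F$ on an open interval immediately yields the first three bullets: continuity, existence of one-sided derivatives with $F'(s-0)\ge F'(s+0)$ and jumps at most countable, and $F''\le0$ as a Radon measure. Translating back via $r=e^{-s}$ gives the statements for $f$, including that $f''$ is a Radon measure since $f'' = $ a smooth change of variables applied to $F''$ plus lower-order terms. The monotonicity bullet is the monotonicity of $F'(s\pm0)=-rf'(r\pm0)$.

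For the limit bullet, $F\ge0$ is concave on $(s_{0},\infty)$, so $F'$ is non-increasing and bounded below by $0$; otherwise $F$ would eventually become negative. Hence $F'(s)\to m\ge0$, and then $F(s)/s\to m$ by l'Hôpital or a Cesàro argument, which is the claimed limit. The last bullet, removability at the origin, should follow as in the standard removable-singularity argument for bounded-below $n$-superharmonic functions. Let $f_{\epsilon}(x)=\min\bigl(f(|x|),\,f(\epsilon)+m'(\log\frac{\epsilon}{|x|})\bigr)$, or more simply extend $f$ by $f(0)=\lim\inf$, possibly $+\infty$. A single point has zero $n$-capacity, so the nonnegative $n$-superharmonic function on the punctured ball extends via the lsc regularization to be $n$-superharmonic across $0$ (Heinonen–Kilpeläinen–Martio). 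Alternatively, one can verify directly that the approximations $\min(f,k)$ are supersolutions in the weak sense on the whole ball, since near $0$ they are constant in a capacity-zero neighbourhood sense. I expect the main obstacle to be the first step, deriving concavity (with correct sign conventions) from the potential-theoretic definition without assuming smoothness, namely justifying the comparison principle with lsc boundary data.
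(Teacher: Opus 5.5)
Your route is the paper's: comparison of $f(|x|)$ with the $n$-harmonic interpolant $a+b\log|x|$ on annuli yields concavity in the logarithmic variable, and the regularity bullets, the monotonicity of $rf'(r\pm0)$, the limit $m\ge0$ (via positivity and l'H\^opital) and the extension to the origin all follow from that concavity. The comparison is admissible directly in the definition of $n$-superharmonicity, since $f$ is constant on spheres and finite there (a sphere is not polar), and your appeal to removability of the polar set $\{0\}$ justifies the last bullet more cleanly than the paper's ``local maximum'' remark.

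There is no sign clash to reconcile: $F(s)=f(e^{-s})$ concave means $F'(s)=-rf'(r)$ is non-increasing in $s$, i.e.\ $rf'(r)$ is non-increasing in $r$, exactly as stated. So your opening claim of convexity was just a slip, and note that $F'(s\pm0)=-rf'(r\mp0)$, since $s\mapsto e^{-s}$ reverses orientation.
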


\begin{proof}

By comparison principle, $f(r)$ has the following logarithm concavity,
for $0<\alpha<r<\beta$, 
\[
f(r)\ge\frac{\log r-\log\beta}{\log\alpha-\log\beta}f(\alpha)+\frac{\log\alpha-\log r}{\log\alpha-\log\beta}f(\beta).
\]
Then if we denote $f(r)=g(\log r),$ $g(t)$ is concave in $t$. Then
$f(r)$ has similar regularity as a concave function does, which includes
the first three items. For the fourth item, simply note $rf'(r\pm0)=g'(\log r\pm0)$
is non-increasing in $r$. 

Then we may assume 
\[
\lim_{r\to0}(-rf'(r\pm0))=m\in[-\infty,+\infty).
\]
If $m<0$, there would be a contradiction with $f(r)>0$. So $m\ge0$.
Then from Lemma \ref{Generalized L'Hospital rule} below, we know
\[
\lim_{r\to0}\frac{f(r)}{\log\frac{1}{r}}=\lim_{r\to0}(-rf'(r\pm0))=m.
\]

For the last item, we notice that for $m\ge0$, $f'(r\pm0)\le0$.
Then $0$ is the local maximum point of $f(|x|)$. Thus, the $n$-superharmonicity
extends to the origin.

\end{proof}

\begin{lem}\cite[Theorem 6.3]{Gh2025}\label{Generalized L'Hospital rule}(Generalized
L'Hospital rule) Suppose that $-\infty\le a<b\le\infty$. Let $f$
and $g$ be real-valued functions on $(a,b)$ that are locally integrable
on $(a,b)$ such that $g>0$ a.e. on $(a,b)$. Suppose $h:(a,b)\to\mathbb{R}$
satisfies $f/g=h$ a.e. and 
\[
\lim_{x\to b}h(x)=A\in[-\infty,\infty].
\]
Then if $\int^{b}_{r}gdx=\infty$ $\forall r\in(a,b)$, then 
\[
\lim_{x\to b}\frac{\int^{x}_{r}fdx}{\int^{x}_{r}gdx}=A.
\]

\end{lem}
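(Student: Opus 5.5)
The plan is a direct $\varepsilon$-argument in the spirit of the Stolz--Ces\`aro theorem. It uses only that $f=hg$ almost everywhere and that the denominator $G(x):=\int^{x}_{r}g\,dx$ diverges. First fix $r\in(a,b)$. Since $g>0$ a.e. and $g$ is locally integrable, $G$ is finite, continuous, strictly increasing and positive on $(r,b)$, so the quotient is well defined there. The hypothesis $\int^{b}_{r}g\,dx=\infty$ says exactly that $G(x)\to+\infty$ as $x\to b$. Also, since $f=hg$ a.e., for any $c\in(r,b)$ and $x>c$ we have
\[
\int^{x}_{r}f\,dx=\int^{c}_{r}f\,dx+\int^{x}_{c}h\,g\,dx ,
\]
where the last integral makes sense because $hg=f$ is locally integrable.

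\emph{Case $A\in\mathbb{R}$.} Given $\varepsilon>0$, choose $c\in(r,b)$ with $|h-A|<\varepsilon$ on $(c,b)$. Then
\[
\frac{\int^{x}_{r}f}{G(x)}-A=\frac{\int^{c}_{r}f-A\int^{c}_{r}g}{G(x)}+\frac{\int^{x}_{c}(h-A)g}{G(x)} .
\]
The first term has a numerator independent of $x$, so it tends to $0$ as $G(x)\to\infty$. The second term is bounded in absolute value by $\varepsilon\int^{x}_{c}g/G(x)\le\varepsilon$, using $g\ge0$. Hence $\limsup_{x\to b}\bigl|\int^{x}_{r}f/G(x)-A\bigr|\le\varepsilon$, and since $\varepsilon$ is arbitrary the limit is $A$.

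\emph{Case $A=+\infty$.} The case $A=-\infty$ follows by replacing $f,h$ with $-f,-h$. Given $M>0$, pick $c$ with $h>M$ on $(c,b)$. Then
\[
\frac{\int^{x}_{r}f}{G(x)}\ge\frac{\int^{c}_{r}f}{G(x)}+M\,\frac{G(x)-G(c)}{G(x)}\longrightarrow M ,
\]
so $\liminf_{x\to b}\int^{x}_{r}f/G(x)\ge M$ for every $M$, and the limit is $+\infty$.

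There is no real obstacle here. The only points needing care are measure-theoretic: one must justify replacing $f$ by $hg$ inside the integrals, which is valid since they agree a.e. and $f\in L^{1}_{loc}$. One must also check that $G(x)>0$ for $x>r$, which follows from $g>0$ a.e. on an interval of positive length, so that the quotient is defined near $b$. In the application in Lemma \ref{radial symmetric n-superharmonic functions}, this rule is used with the variable $t=\log\frac{1}{r}\to+\infty$, $g\equiv1$, and $f$ the derivative of $f(e^{-t})$, which is $-rf'(r\pm0)$ a.e.
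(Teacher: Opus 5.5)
The paper does not prove this lemma. It imports it from \cite[Theorem 6.3]{Gh2025}, where it appears among L'Hospital-type rules for regulated functions, so there is no internal proof to compare against. Your argument is correct and complete.

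The splitting $\int_r^x f - A\,G(x) = \bigl(\int_r^c f - A\int_r^c g\bigr) + \int_c^x (h-A)g$ is the natural self-contained Stolz--Ces\`aro proof. It isolates exactly which hypotheses matter:
\begin{itemize}
\item $g\ge 0$ a.e., which gives $\bigl|\int_c^x (h-A)g\bigr|\le\varepsilon\int_c^x g\le\varepsilon G(x)$;
\item local integrability, which makes the head term a finite constant;
\item $G(x)\to\infty$, which kills that constant.
\end{itemize}
In the case $A=+\infty$ the same splitting gives $\liminf\ge M$, and $A=-\infty$ follows by symmetry.

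Your route actually gives slightly more than the statement. Since $h$ enters only inside integrals against $g\,dx$, it suffices that $f/g\to A$ in the essential sense near $b$, so which everywhere-defined representative $h$ is chosen does not matter.

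The proof also covers every use of the rule in the paper, including the case $A=-\infty$ needed a priori in Lemma \ref{radial symmetric n-superharmonic functions}. Your closing remark about that application is right. One point should be stated explicitly there: $t\mapsto f(e^{-t})$ is concave, hence locally absolutely continuous. This is what lets you write it as a constant plus the integral of its a.e.\ derivative $-rf'(r\pm0)$ before applying the rule with $g\equiv1$.
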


\begin{lem}\label{Lemma:u monotone.}Under the assumptions of Theorem
\ref{main thm 3-weak thin version}
\begin{itemize}
\item 
\begin{equation}
\lim_{r\to0}\frac{\underline{u}(r)}{\log\frac{1}{r}}=m=\liminf_{x\to0}\frac{u(x)}{\log\frac{1}{|x|}};\label{limit m}
\end{equation}
\item $\underline{u}(r)$ is continuous for $r>0;$ Either $\underline{u}(r)$
is strictly decreasing in $r$, or $\underline{u}(r)$ is constant
in some $(0,\delta)$; In particular
\[
\lim_{r\to0^{+}}\underline{u}(r)\ge0;
\]
\item $\underline{u}(r)-\underline{u}(5r)\to m\log5$ as $r\to0$. 
\end{itemize}
\end{lem}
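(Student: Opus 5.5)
The plan is to compare $u$ with the radial minorant and use the comparison principle on annuli. The core device is the function
\[
\underline{u}(r)=\min_{|x|=r}u(x)
\]
(a minimum, by lower semicontinuity), together with the logarithmic convexity inequality that $n$-superharmonic functions satisfy on annuli.

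\textbf{Step 1: logarithmic concavity.} For $0<\alpha<r<\beta<2$, let $h$ be the radial $n$-harmonic function $a+b\log|x|$ on $A_{\alpha,\beta}$ with boundary values $\underline{u}(\alpha)$ on $|x|=\alpha$ and $\underline{u}(\beta)$ on $|x|=\beta$. Then $u\ge h$ on $\partial A_{\alpha,\beta}$. The comparison principle for $n$-superharmonic versus $n$-harmonic functions gives $u\ge h$ inside, and taking the infimum over $|x|=r$ yields
\[
\underline{u}(r)\ge\frac{\log r-\log\beta}{\log\alpha-\log\beta}\,\underline{u}(\alpha)+\frac{\log\alpha-\log r}{\log\alpha-\log\beta}\,\underline{u}(\beta).
\]
So $g(t)=\underline{u}(e^{t})$ is concave on $(-\infty,\log 2)$. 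This gives continuity of $\underline{u}$ for $r>0$, and the one-sided derivatives $-r\underline{u}'(r\pm0)=-g'(\log r\pm0)$ are monotone in $r$.

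\textbf{Step 2: the limit $m$ and the monotonicity dichotomy.} As in Lemma \ref{radial symmetric n-superharmonic functions}, concavity of $g$ and $\underline{u}\ge0$ as $t\to-\infty$ force
\[
\lim_{r\to0}\bigl(-r\underline{u}'(r\pm0)\bigr)=m'\ge0.
\]
Lemma \ref{Generalized L'Hospital rule} then gives
\[
\lim_{r\to 0}\frac{\underline{u}(r)}{\log\frac1r}=m'.
\]
For the dichotomy, $-g'$ is nondecreasing as $t$ increases, and its limit at $-\infty$ is $m'\ge0$. If $g'(t_0)<0$ somewhere, then $g'<0$ for all $t>t_0$. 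Hence either $g'<0$ everywhere, so $\underline{u}$ is strictly decreasing, or $g'\equiv 0$ on some $(-\infty,t_1)$, so $\underline{u}$ is constant on $(0,e^{t_1})$. The latter happens exactly when $-g'$ attains its infimum value $m'=0$ on a half-line; a nondecreasing function that is $\ge 0$ and not identically $0$ near $-\infty$ gives strict decrease. In either case $\underline{u}$ is nonincreasing near $0$, so $\lim_{r\to0^+}\underline{u}(r)$ exists and is $\ge0$.

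\textbf{Step 3: identifying $m'$ with $m$.} Since
\[
\frac{\underline{u}(|x|)}{\log\frac1{|x|}}\le\frac{u(x)}{\log\frac1{|x|}},
\]
and the infimum over each sphere is attained, we have
\[
\liminf_{x\to0}\frac{u(x)}{\log\frac1{|x|}}=\liminf_{r\to0}\frac{\underline{u}(r)}{\log\frac1r}=m'.
\]
This equals $m$ from Theorem \ref{MQ2021 thm}, which proves (\ref{limit m}).

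\textbf{Step 4: the last item.} Write
\[
\underline{u}(r)-\underline{u}(5r)=\int_{\log r}^{\log 5r}\bigl(-g'(t)\bigr)\,dt,
\]
where $-g'(t)\to m$ uniformly on intervals of fixed length $\log5$ as $t\to-\infty$. Hence the difference tends to $m\log 5$.

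\textbf{Main obstacle.} The delicate point is Step 1. We need the comparison principle applied to a merely lower semicontinuous $n$-superharmonic $u$ on the closed annulus. This uses the definition of $n$-superharmonicity via comparison with $n$-harmonic functions continuous up to the boundary. Because $h$ is continuous and $u\ge h$ holds only in the $\liminf$ sense at the boundary, I would apply the comparison to $h-\varepsilon$ on slightly smaller annuli and use the continuity of $h$ in $\alpha,\beta$ before letting $\varepsilon\to0$.
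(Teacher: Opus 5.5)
Your proposal is correct. For the first two items it follows the paper. The paper simply invokes Lemma \ref{radial symmetric n-superharmonic functions} for $\underline{u}$, and your Step 1 (comparing $u$ itself with $a+b\log|x|$ on annuli) is exactly what justifies that, namely the concavity of $g(t)=\underline{u}(e^{t})$.

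The real difference is in the third item. The paper argues by blow-up. It rescales to $\underline{u}(r|\xi|)-\underline{u}(5r)$ on $B_{5}(0)$ and uses stability of renormalized solutions \cite{DMOP1999} to extract a limit solving $-\Delta_{n}\Gamma=\underline{\mu}(\{0\})\delta$ with zero boundary values. It identifies $\Gamma=m\log\frac{5}{|\xi|}$ through Serrin's bound and the Kichenassamy--V\'eron uniqueness theorem. Finally it passes from $L^{p}$ convergence to the value at $s=1$ using monotonicity of $\underline{u}$. You instead write $\underline{u}(r)-\underline{u}(5r)=\int_{\log r}^{\log5r}(-g'(t))\,dt$ and use $-g'(t)\to m$ as $t\to-\infty$. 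You already have that limit from Step 2; it is item 5 of Lemma \ref{radial symmetric n-superharmonic functions}. Since a finite concave function is locally absolutely continuous, this is fully rigorous and far more elementary.

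The paper's route buys a template for the later compactness argument on the nonradial rescalings $u_{r}$. It also ties $m$ to the point mass $\underline{\mu}(\{0\})$. Neither is needed for this lemma, and the latter is re-derived by a flux computation in the corollary closing Section 2.

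Two minor remarks:
\begin{itemize}
\item The ``main obstacle'' you flag is not one. Since $\underline{u}$ is a minimum, $h\le u$ holds pointwise on both boundary spheres. Also $h$ is continuous on the closed annulus $A_{\alpha,\beta}\subset B_{2}(0)$. So the defining comparison property of $n$-superharmonic functions applies directly, with no $\varepsilon$-shrinking needed.
\item To define $h$ at all, you should note that $\underline{u}(r)<\infty$. This holds because $\{u=+\infty\}$ has zero $n$-capacity, whereas a sphere does not.
\end{itemize}
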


\begin{proof}

Item 1 and 2 follow directly from Lemma \ref{radial symmetric n-superharmonic functions}.

For the third item, we consider, for $r>0$ small, 
\[
\underline{u}(r|\xi|)-\underline{u}(5r)
\]
which is $n$-superharmonic for $|\xi|\le5$. 
\[
-\Delta^{\xi}_{n}(\underline{u}(r|\xi|)-\underline{u}(5r))=\underline{\mu}_{r}(\xi)
\]
 with $\underline{\mu}_{r}(\Omega)=\underline{\mu}(r\Omega)$, where
$\underline{\mu}$ satisfies
\[
-\Delta^{x}_{n}\underline{u}(|x|)=\underline{\mu}(x).
\]
We know that $\underline{\mu}_{r}\to\underline{\mu}(\{0\})\delta(\xi),$
as $r\to0$. Notice that this convergence is in the ``converging
decomposition'' sense of \cite[(3.1)-(3.4)]{DMOP1999} or \cite[Definition 4.10]{Veron2017}.
Since $\underline{u}(r|\xi|)-\underline{u}(5r)=0$ on $\partial B_{5}$,
we can apply \cite[Theorem 3.2]{DMOP1999} or \cite[Theorem 4.3.8]{Veron2017}
to conclude that for any sequence $r_{i}\to0$
\[
\underline{u}(r_{i}|\xi|)-\underline{u}(5r_{i})
\]
subconverges to a renormalized solution, defined in \cite[Definition 2.13]{DMOP1999}
or \cite[Definition 4.9]{Veron2017}, $\Gamma(\xi)$ a.e. in $B_{5}(0)$
with 
\[
\begin{cases}
-\Delta^{\xi}_{n}\Gamma(\xi)=\underline{\mu}(\{0\})\delta(\xi), & \xi\in B_{5}(0)\\
\Gamma(\xi)|_{\partial B_{5}(0)}=0.
\end{cases}
\]
From \cite{Se1964,Se1965}, we know $\Gamma(\xi)\le C\log\frac{1}{|\xi|}+C$.
Then from the uniqueness result of \cite[Theorem 2.1]{KV1986}, we
know $\Gamma(\xi)=m\log\frac{5}{|\xi|}$, where $m=(\underline{\mu}(\{0\})/|\mathbb{S}^{n-1}|)^{1/(n-1)}$.
Moreover, the convergence is in $L^{p}(B_{5}(0))$ for $1\le p<n$.
Then it follows that $\underline{u}(rs)-\underline{u}(5r)$ converges
to $m\log\frac{5}{s}$ $L^{p}(B_{5}(0))$ as $r\to0$. Then
\begin{equation}
\int^{\frac{3}{2}}_{\frac{1}{2}}\left|\underline{u}(rs)-\underline{u}(5r)-m\log\frac{5}{s}\right|^{p}ds\to0,\,\,{\rm as}\,\,r\to0.\label{Lp convergence}
\end{equation}

For any $\varepsilon>0$, we choose $\delta_{1}>0$ such that when
$|s-1|<\delta_{1}$
\begin{equation}
\left|m\log\frac{5}{s}-m\log5\right|<\frac{\varepsilon}{2}.\label{log s - log 5}
\end{equation}
From (\ref{Lp convergence}), for $\varepsilon,\delta_{1}$ above,
there exists $\delta>0$ such that when $0<r<\delta$, 
\[
{\rm meas}\left\{ s\in\left[\frac{1}{2},\frac{3}{2}\right];\left|\underline{u}(rs)-\underline{u}(5r)-m\log\frac{5}{s}\right|>\frac{\varepsilon}{2}\right\} <\frac{\delta_{1}}{2}.
\]
Then when $0<r<\delta$, there exist $s_{1}\in(1-\frac{\delta_{1}}{2},1),s_{2}\in(1,1+\frac{\delta_{1}}{2})$
such that 
\[
\left|\underline{u}(rs_{1})-\underline{u}(5r)-m\log\frac{5}{s_{1}}\right|,\left|\underline{u}(rs_{2})-\underline{u}(5r)-m\log\frac{5}{s_{2}}\right|\le\frac{\varepsilon}{2}.
\]
So from (\ref{log s - log 5})
\[
\left|\underline{u}(rs_{1})-\underline{u}(5r)-m\log5\right|,\left|\underline{u}(rs_{2})-\underline{u}(5r)-m\log5\right|\le\varepsilon.
\]
Since $\underline{u}(rs)$ is monotonically decreasing in $s$, we
know 
\[
\left|\underline{u}(r)-\underline{u}(5r)-m\log5\right|\le\varepsilon.
\]

\end{proof}

Then one easy case of Theorem \ref{main thm1 refined analytic result of u}
can be proved.

\begin{thm}

Under the assumption of Theorem \ref{main thm1 refined analytic result of u}
and 
\[
\liminf_{x\to0}u(x)<+\infty,
\]
then 
\[
u(x)=\underline{u}(|x|)+o(1)
\]
for $x\to0,x\notin E$ for some traditionally $n$-thin set $E$. 

\end{thm}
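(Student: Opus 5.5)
The plan is to use the fine continuity of $n$-superharmonic functions at the origin, available once we know $u$ is finite there. Since $\liminf_{x\to0}u(x)<+\infty$, Theorem \ref{MQ2021 thm} forces $m=0$. By Lemma \ref{Lemma:u monotone.}, $\underline{u}(r)$ is monotone non-increasing in $r$, bounded along a sequence, hence has a finite limit $L:=\lim_{r\to0^{+}}\underline{u}(r)=\sup_{r}\underline{u}(r)$. In particular $\liminf_{x\to0}u(x)=L$.

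Next I will identify $u(0)$. An $n$-superharmonic function is lower semicontinuous with $u(0)=\liminf_{x\to0}u(x)$ (the essential liminf, which for superharmonic functions agrees with the pointwise one). Thus $u(0)=L<\infty$. Then I invoke the fine continuity result for $n$-superharmonic functions recalled after Definition \ref{different thin notions} (from \cite{KM1994,HKM1993}): there is a set $E$, traditionally $n$-thin at $0$, with
\[
\lim_{x\to0,\,x\notin E}u(x)=u(0)=L.
\]
Here one uses that $n$-superharmonic functions are $n$-finely continuous and that $n$-fine neighbourhoods of $0$ are complements of sets thin in the Wiener sense. The Wiener-type series with exponent $\frac{1}{n-1}$ is exactly the definition of traditional $n$-thinness. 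If needed, I will cite the Kilpeläinen--Malý Wiener criterion to pass from "$\{u>L+\varepsilon\}$ is thin at $0$ for each $\varepsilon$" to a single thin set, by a diagonal choice: take $E=\bigcup_k\big(\{u>L+1/k\}\cap B_{\rho_k}\big)$ with $\rho_k\downarrow0$ chosen so that the tails of the Wiener sums are summable. This diagonal step is the main technical point, but it is standard.

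Finally, for $x\notin E$ we have $u(x)\to L$ and $\underline{u}(|x|)\to L$, so $u(x)-\underline{u}(|x|)=o(1)$, which is the claim. The main obstacle is only justifying the fine-limit statement with a single exceptional set, together with the identity $u(0)=\liminf u$; both are classical nonlinear potential theory, so I expect the proof to be short.
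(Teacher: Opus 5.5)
Your proposal is correct and follows the paper's own argument, written out in more detail: $u(0)=\liminf_{x\to0}u(x)=\lim_{r\to0}\underline{u}(r)<\infty$, and fine continuity of $u$ at $0$ gives a traditionally $n$-thin set $E$ with $u(x)\to u(0)$ for $x\notin E$, so $u(x)-\underline{u}(|x|)=o(1)$. One small correction to your diagonal step: $n$-capacity decays only logarithmically across scales, so small tails alone do not bound the contribution of $\{u>L+1/k\}\cap B_{\rho_k}$ (with $\rho_k=2^{-i_k}$) to the Wiener terms at the $i_k$ coarser dyadic scales, and you should choose $i_k$ so that in addition $i_k\,{\rm cap}_n(\{u>L+1/k\}\cap B_{2^{-i_k}},B_{2^{-i_k+1}})^{\frac{1}{n-1}}\le2^{-k}$, which is possible because a summable sequence $a_i\ge0$ has $\liminf_{i}i\,a_i=0$.
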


\begin{proof}

From the assumption, there is a set $E$ which is traditionally $n$-thin
at $0$ such that 
\[
\lim_{x\to0,x\notin E}u(x)=\lim_{r\to0}\underline{u}(r)=u(0).
\]
Then we draw the conclusion. 

\end{proof}

\textbf{Below in this section, we assume}
\[
\liminf_{x\to0}u(x)=+\infty.
\]

\subsection{Estimates for scaled functions away from exceptional sets}

The proof of Theorem \ref{main thm 3-weak thin version} relies on
a compactness argument for scaled functions 
\[
u_{r}(\xi)=u(r\xi)-\underline{u}(5r),
\]
which converges strongly in $W^{1,p}_{loc}(\mathbb{R}^{n})$ for $1<p<n$
to $h(\xi)=m\log\frac{5}{|\xi|},$ where $m=(\mu(\{0\})/|\mathbb{S}^{n-1}|)^{1/(n-1)}$.
Moreover, the convergence is uniform in annuli $A_{1/2,1}\backslash E$,
where $E$ is a small set, whose size has control. In this subsection,
we do a priori estimate for $u_{r}(\xi)$ away from exceptional sets. 

\begin{lem}\label{u_r estimate in B_1}

There exists some constant $C_{1}>0$, independent of $r$ and $t\in(0,1]$,
such that
\[
u_{r}(\xi)\le C_{1}(\log\frac{1}{|\xi|}+1),\xi\in B_{1}(0)\backslash E_{r}
\]
 for some set $E_{r}$ which is $n$-thin at $0$. And moreover, for
fixed $t\in(0,1]$, $E_{r}$ has estimate
\[
\text{cap}_{n}(E_{r}\cap A_{2^{-1}t,t},\mathring{A}_{2^{-2}t,2t})\le C_{1}\mu_{r}(A_{2^{-2}t,2t})\to0,\,{\rm as}\,r\to0
\]
where 
\[
\mu_{r}=-\Delta^{\xi}_{n}u_{r}(\xi),
\]
which satisfies $\mu_{r}(\Omega)=\mu(r\Omega)$.

\end{lem}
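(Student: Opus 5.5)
The plan is to work annulus by annulus. The bound will come from the Kilpeläinen--Malý pointwise Wolff potential estimate for $n$-superharmonic functions, and the exceptional set $E_{r}$ will be a superlevel set of a localized Wolff potential. Write $\mu_{r}=-\Delta^{\xi}_{n}u_{r}$, so that $\mu_{r}(\Omega)=\mu(r\Omega)$, and set
\[
W^{\mu_{r}}(\xi,\rho)=\int^{\rho}_{0}\left(\frac{\mu_{r}(B(\xi,s))}{s^{n-n}}\right)^{\frac{1}{n-1}}\frac{ds}{s}.
\]
For $\xi\in A_{2^{-1}t,t}$ with $t=2^{-i}$, I take $\rho=|\xi|/2$. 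Then $B(\xi,\rho)\subset\mathring{A}_{2^{-2}t,2t}$, so this potential only sees $\mu_{r}$ restricted to that annulus. In particular it does not see the point mass $\mu(\{0\})\delta$.

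First, I will bound $u_{r}$ at a point by an infimum plus a potential. The Kilpeläinen--Malý estimate gives
\[
u_{r}(\xi)\le C\inf_{B(\xi,\rho)}u_{r}+C\,W^{\mu_{r}}(\xi,2\rho),
\]
with $u_{r}+$const $\ge0$ on the relevant region. I will shrink $\rho$ by a fixed factor if necessary so that $B(\xi,2\rho)$ also stays inside $\mathring{A}_{2^{-2}t,2t}$.

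Next, I will control that infimum by the radial minimum. The inequality $\inf_{B(\xi,\rho)}u_{r}\le$ average of $u_{r}$ over the ball is trivial. The weak Harnack inequality, applied along a fixed chain of balls covering the annulus $A_{2^{-1}t,t}$ and scaled to size $t$, then bounds this average by $C\inf_{A_{2^{-1}t,t}}u_{r}+C$. The constants are scale invariant, hence independent of $r$ and $t$.

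Then $\inf_{A_{2^{-1}t,t}}u_{r}\le\underline{u}(rt)-\underline{u}(5r)$. By Lemma \ref{radial symmetric n-superharmonic functions} applied to $\underline{u}$, $\underline{u}(e^{s})$ is concave in $s$ with slope tending to $-m$. Together with the third item of Lemma \ref{Lemma:u monotone.}, this should give
\[
\underline{u}(rt)-\underline{u}(5r)\le C\left(\log\frac{1}{t}+1\right)
\]
uniformly in small $r$. The main technical point in this step is to make the weak Harnack constants and the nonnegativity normalization ($u_{r}\ge-\underline{u}(5r)$, and $u\ge0$) uniform in $r$.

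Finally, I will define the exceptional set and estimate its capacity. On the $i$-th annulus, $t=2^{-i}$, set
\[
E_{r}\cap A_{2^{-1}t,t}=\{\xi\in A_{2^{-1}t,t}:W^{\mu_{r}|_{\mathring{A}_{2^{-2}t,2t}}}(\xi,2\rho)>i\}.
\]
Off $E_{r}$ the three steps combine to give
\[
u_{r}(\xi)\le C\left(\log\frac{1}{|\xi|}+1\right)+Ci\le C_{1}\left(\log\frac{1}{|\xi|}+1\right),
\]
since $i\simeq\log\frac{1}{|\xi|}$. The standard capacitary weak-type estimate for Wolff potentials (Hedberg--Wolff, as in \cite{AH1996}) gives
\[
{\rm cap}_{n}(E_{r}\cap A_{2^{-1}t,t},\mathring{A}_{2^{-2}t,2t})\le C\frac{\mu_{r}(A_{2^{-2}t,2t})}{i^{n-1}}\le C_{1}\mu_{r}(A_{2^{-2}t,2t}),
\]
which is the claimed capacity estimate.

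It remains to check the two limiting statements. $E_{r}$ is $n$-thin at $0$ because
\[
\sum_{i}i^{n-1}\cdot\frac{\mu_{r}(A_{2^{-i-2},2^{-i+1}})}{i^{n-1}}\le3\mu_{r}(B_{2}\setminus\{0\})<\infty,
\]
the annuli having bounded overlap. For fixed $t$, $\mu_{r}(A_{2^{-2}t,2t})=\mu(A_{2^{-2}rt,2rt})\to0$ as $r\to0$, since these annuli shrink to the origin while excluding it, and $\mu$ is a Radon measure.

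The step I expect to be the main obstacle is the capacitary weak-type bound in the last step. It has to hold in the localized form relative to the annulus $\mathring{A}_{2^{-2}t,2t}$ rather than to a ball. I would obtain it by scaling to $t=1$ and invoking Lemma \ref{comparision of two capacities} to pass between ${\rm cap}_{n}$ and $c_{1,n}$, where the Wolff-potential capacity estimate is standard.
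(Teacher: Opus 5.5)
Your argument is correct, but it takes a different route from the paper's.

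\textbf{The paper's route.} It applies the Kilpel\"ainen--Mal\'y estimate once, on a ball of fixed size around $\xi$ that contains the origin. The infimum term is then $O(1)$ by the third item of Lemma \ref{Lemma:u monotone.}, since $\inf_{B_1}u_r\le\underline{u}(r)-\underline{u}(5r)\to m\log 5$. All the logarithmic growth sits in the \emph{global} potential $W^{\mu_r}_{1,n}(\xi,3)$, which sees the point mass at $0$. Its bound $C(\log\frac{1}{|\xi|}+1)$ off an $n$-thin set, together with the annulus-wise capacity estimate, is quoted from \cite[Theorem 2.3.1]{LMQZ2025}.

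\textbf{Your route.} You localize, so your potential never sees the origin. The logarithm comes instead from two sources. The first is the infimum term, via a Harnack chain across the annulus and log-concavity of $\underline{u}$: since $-s\underline{u}'(s)$ is nondecreasing in $s$ with limit $m$ as $s\to0$, one gets $\underline{u}(rt)-\underline{u}(5r)\le(m+1)\log\frac{5}{t}$ for small $r$. The second is the threshold $i\simeq\log\frac{1}{|\xi|}$ in the definition of $E_r$.

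\textbf{Comparison.} Splitting the paper's global potential at scale $|\xi|/2$ gives a far part bounded by $\mu_r(B_4)^{1/(n-1)}\log\frac{6}{|\xi|}$ and a near part whose superlevel set is essentially your $E_r$, so the two proofs share a core. The paper's version is shorter given the citation; yours is self-contained and shows exactly where $m$ enters. The localized weak-type capacity bound you flag as the main obstacle is what the paper proves inside Lemma \ref{u_r estimate further}. There one solves $-\Delta_n v=\mu_r\lfloor A$ with zero data on $\partial A_{2^{-2}t,2t}$, uses the Kilpel\"ainen--Mal\'y lower bound $W\le Cv$, and applies Lemma \ref{capacity estimate in terms of measure}. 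No detour through $c_{1,n}$ is needed.

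\textbf{Three small repairs.}
\begin{enumerate}
\item No additive normalization is needed. By monotonicity of $\underline{u}$, $u_r(\eta)\ge\underline{u}(r|\eta|)-\underline{u}(5r)\ge0$ on $B_5$, so Kilpel\"ainen--Mal\'y and weak Harnack apply to $u_r$ itself. Applying them to $u(r\cdot)\ge0$ instead would leave an unbounded term $(C-1)\underline{u}(5r)$.
\item At $t=1$ ($i=0$) your threshold is $0$ and the capacity bound degenerates; use $i+1$.
\item The statement covers every $t\in(0,1]$, not only dyadic $t$. Define $E_r$ pointwise, e.g.\ $E_r=\{\xi\in B_1: W^{\mu_r}_{1,n}(\xi,|\xi|/8)>\log\frac{4}{|\xi|}\}$. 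Then for every $t$, $E_r\cap A_{2^{-1}t,t}$ lies in the superlevel set at height $\log\frac{4}{t}$ of the potential of $\mu_r\lfloor A_{3t/8,9t/8}$. The same capacity estimate and $n$-thinness follow.
\end{enumerate}
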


\begin{proof}

It is easy to prove that 
\[
\mu_{r}\to\mu(\{0\})\delta(x)
\]
 in the sense of distribution. 

From Item 3 of Lemma \ref{Lemma:u monotone.} and $\underline{u}_{r}(5)=0$,
we know $\inf_{B_{1}(0)}u_{r}\le C.$ Then for $\xi\in B_{1}(0)$,
$\inf_{B_{2}(\xi)}u_{r}\le C.$ Then from \cite[Theorem 1.6]{KM1994},
\begin{align*}
u_{r}(\xi) & \le C(n)(\inf_{B_{2}(\xi)}u_{r}+W^{\mu_{r}}_{1,n}(\xi,3))\\
 & \le C(1+W^{\mu_{r}}_{1,n}(\xi,3)).
\end{align*}
Then we estimate $W^{\mu_{r}}_{1,n}(\xi,3)$, which is more or less
standard. By for example \cite[Theorem 2.3.1]{LMQZ2025}, we see
\[
W^{\mu_{r}}_{1,n}(\xi,3)\le C(\log\frac{1}{|\xi|}+1)
\]
 for $\xi\in B_{1}(0)\backslash E_{r}$ for a set $E_{r}$ $n$-thin
at $0$. Then we know that 
\[
u_{r}(\xi)\le C(\log\frac{1}{|\xi|}+1)
\]
 for $\xi\in B_{1}(0)\backslash E_{r}$ for a set $E_{r}$ $n$-thin
at $0$. 

Furthermore, from the proof of \cite[Theorem 2.3.1]{LMQZ2025},  we
know that 
\[
\text{cap}_{n}(E_{r}\cap A_{2^{-1}t,t},\mathring{A}_{2^{-2}t,2t})\le C_{1}\mu_{r}(A_{2^{-2}t,2t})\to0
\]
 as $r\to0$. 

\end{proof}

Now we estimate $\underline{u}_{r}$ in $B_{\frac{1}{5r}}(0)\backslash B_{1}(0)$
for small $r>0$.

\begin{lem}\label{=00005Cunderline u estimate}

For $\eta\in B_{\frac{1}{5r}}(0)\backslash B_{1}(0)$, 

\[
\left|\underline{u}_{r}(|\eta|)\right|\le C(\left|\log|\eta|\right|+1).
\]

\end{lem}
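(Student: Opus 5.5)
Recall $\underline{u}_{r}(s)=\underline{u}(rs)-\underline{u}(5r)$, where $\underline{u}(\rho)=\inf_{|x|=\rho}u$. For $\eta\in B_{\frac{1}{5r}}(0)\backslash B_{1}(0)$ we have $r|\eta|\in[r,\frac15)$, so $r|\eta|$ stays inside $B_{1}(0)$. The plan is to use only the one-variable structure of $\underline{u}$ supplied by Lemma \ref{radial symmetric n-superharmonic functions} and Lemma \ref{Lemma:u monotone.}; no information about $u$ off its spherical infimum is needed.

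Since $\underline{u}$ is the radial profile of a nonnegative $n$-superharmonic function (that is how it is used in Lemma \ref{Lemma:u monotone.}), Lemma \ref{radial symmetric n-superharmonic functions} applies. Writing $\underline{u}(\rho)=g(\log\rho)$, the function $g$ is concave, and $\rho\,\underline{u}'(\rho\pm0)=g'(\log\rho\pm0)$ is non-increasing in $\rho$. Moreover $-\rho\,\underline{u}'(\rho\pm0)\to m$ as $\rho\to0$, so $-\rho\,\underline{u}'(\rho\pm0)\le m$ for all $\rho\in(0,1)$. By Lemma \ref{Lemma:u monotone.}, $\underline{u}$ is non-increasing, hence $-\rho\,\underline{u}'(\rho\pm0)\ge0$. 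Thus
\[
0\le-\rho\,\underline{u}'(\rho\pm0)\le m,\qquad \rho\in(0,1).
\]

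Next I integrate in $t=\log\rho$ between $\log(5r)$ and $\log(r|\eta|)$. Because $g$ is concave, it is locally Lipschitz and absolutely continuous, so integration against the one-sided derivatives is justified. This gives
\[
0\le\underline{u}(5r)-\underline{u}(r|\eta|)\le m\,\bigl|\log|\eta|-\log5\bigr|\quad\text{if }|\eta|\ge5,
\]
and the reversed-sign analogue
\[
0\le\underline{u}(r|\eta|)-\underline{u}(5r)\le m\log\frac{5}{|\eta|}\le m\log5\quad\text{if }1\le|\eta|\le5.
\]
In both cases
\[
|\underline{u}_{r}(|\eta|)|\le m\bigl(\log|\eta|+\log5\bigr)\le C\bigl(|\log|\eta||+1\bigr)
\]
with $C=m\log5+m$, which is independent of $r$.

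The only delicate point is the upper bound $-\rho\,\underline{u}'\le m$ holding uniformly down to $\rho=r$. This is precisely where monotonicity of $\rho\,\underline{u}'$ enters, together with the limit value $m$ at $0$ from Lemma \ref{radial symmetric n-superharmonic functions}. If one prefers to avoid derivatives, the same bound follows from log-concavity directly. Concavity of $g$ makes the difference quotient $(g(t_{2})-g(t_{1}))/(t_{2}-t_{1})$ bounded below by its limit slope $-m$ at $t\to-\infty$. It is bounded above by $0$ because $g$ is non-increasing. Taking $t_{1}=\log(5r)$ and $t_{2}=\log(r|\eta|)$, or the reverse, then yields the same estimate.
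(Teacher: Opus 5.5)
Your key step has the inequality backwards. Lemma \ref{radial symmetric n-superharmonic functions} says that $\rho\,\underline{u}'(\rho\pm0)=g'(\log\rho\pm0)$ is non-increasing in $\rho$. Hence $-\rho\,\underline{u}'(\rho\pm0)$ is non-decreasing, and its limit $m$ as $\rho\to0$ is its \emph{infimum}. So concavity gives $-\rho\,\underline{u}'(\rho\pm0)\ge m$, and the behaviour at $0$ gives no upper bound at all.

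Your derivative-free variant has the same reversal. Difference quotients of a concave $g$ decrease as the interval moves right, so they are bounded \emph{above} by the limiting slope $-m$ at $t\to-\infty$, not below. Combined with the correct inequality, your claim would force $\underline{u}(\rho)=m\log\frac{1}{\rho}+\mathrm{const}$ on $(0,1)$, which is false in general. Take the radial function $u(x)=m\log\frac{2}{|x|}+A(2^{a}-|x|^{a})$ with $a>0$. It is nonnegative and $n$-superharmonic in $B_{2}(0)$, since its flux $\rho^{n-1}|u'|^{n-2}u'=-(m+Aa\rho^{a})^{n-1}$ is decreasing. For it, $-\rho u'(\rho)=m+Aa\rho^{a}$ and
\[
|\underline{u}_{r}(s)|=m\log\frac{s}{5}+Ar^{a}(s^{a}-5^{a}),\qquad s\ge5 .
\]
For $s$ near $\frac{1}{5r}$ this exceeds your bound $m(\log s+\log5)\approx m\log\frac{1}{r}$ by roughly $A5^{-a}-2m\log5>0$ once $A$ is large. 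So your intermediate inequalities and the constant $C=m(1+\log5)$ are wrong.

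The argument is easily repaired, because the uniform slope bound comes from the \emph{outer} end of the range. Since $u$ is $n$-superharmonic in $B_{2}(0)$, $g$ is concave on $(-\infty,\log2)$ with finite one-sided derivatives there. Hence for $0<\rho\le1$,
\[
m\le-\rho\,\underline{u}'(\rho\pm0)\le-\underline{u}'(1+0)=:M<\infty .
\]
In the lemma's range, $1\le|\eta|<\frac{1}{5r}$ gives $r|\eta|<\frac15$ and $5r<1$. Integrating in $\log\rho$ exactly as you do then yields $|\underline{u}_{r}(|\eta|)|\le M(\log|\eta|+\log5)$, with $M$ depending on $u$ but not on $r$.

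With this fix, your route is a purely one-dimensional alternative to the paper's proof. The paper obtains the same control from the Wolff-potential bound $\underline{u}_{r'}(s)\le C(\log\frac1s+1)$ of Lemma \ref{u_r estimate in B_1}. It applies that bound at the scale $r'=r|\eta|$ with $s=|\eta|^{-1}$, and combines it with $\underline{u}_{r|\eta|}(1)\ge0$ and the monotonicity of $\underline{u}$.
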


\begin{proof}

From Lemma \ref{u_r estimate in B_1}, for $0<s<1$
\[
\underline{u}_{r}(s)\le C(\log\frac{1}{s}+1).
\]
Now we suppose that $\eta\in B_{\frac{1}{5r}}(0)\backslash B_{1}(0).$
We know, for some $C>0$, 
\[
\underline{u}_{r|\eta|}(|\eta|^{-1})\le C(\left|\log|\eta|\right|+1).
\]
Since 
\[
\underline{u}_{r|\eta|}(1)\ge0
\]
 we know 
\[
\underline{u}_{r|\eta|}(|\eta|^{-1})-\underline{u}_{r|\eta|}(1)\le C(\left|\log|\eta|\right|+1)
\]
which implies
\[
\underline{u}_{r}(1)-\underline{u}_{r}(|\eta|)\le C(\left|\log|\eta|\right|+1).
\]
Since $\eta\ge1$, we know $\underline{u}_{r}(|\eta|)\le\underline{u}_{r}(1)\le C.$
Then from
\[
\underline{u}_{r}(1)>0
\]
we know 
\[
\left|\underline{u}_{r}(|\eta|)\right|\le C(\left|\log|\eta|\right|+1).
\]

\end{proof}

\begin{lem}\label{local estimate of w}

For $r$ small, there are a uniform constant $C_{2}$ and a set $E_{r}\subset A_{\frac{r}{2},r}$
such that 
\[
u(x)\le\inf_{A_{\frac{r}{2},r}}u+C_{2},\,x\in A_{\frac{r}{2},r}\backslash E_{r}
\]
 and 
\[
\text{cap}_{n}(E_{r},\mathring{A}_{\frac{r}{4},2r})\le C_{2}\mu(A_{\frac{r}{4},2r}).
\]

\end{lem}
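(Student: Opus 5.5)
Since the assertion is invariant under the scaling $x=r\xi$ (capacity of condensers is scale invariant in the $n$-case), I will prove the scaled version: with $v(\xi)=u(r\xi)-\inf_{A_{\frac12,1}}u(r\cdot)$, we have $v\ge0$ on $A_{\frac12,1}$, $v$ is $n$-superharmonic in $\mathring{A}_{\frac14,2}$ with $-\Delta_{n}v=\mu_{r}$, and $\mu_{r}(A_{\frac14,2})=\mu(A_{\frac r4,2r})$. The target is a set $E\subset A_{\frac12,1}$ with $v\le C_{2}$ off $E$ and $\text{cap}_{n}(E,\mathring{A}_{\frac14,2})\le C_{2}\mu_{r}(A_{\frac14,2})$.

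\textbf{Step 1 (bound away from the potential).} For $\xi\in A_{\frac12,1}$ I apply the Kilpeläinen--Malý estimate \cite[Theorem 1.6]{KM1994} on a small ball $B_{2\rho}(\xi)\subset\mathring{A}_{\frac14,2}$, with $\rho=1/16$ fixed:
\[
v(\xi)\le C(n)\Big(\inf_{B_{\rho}(\xi)}v+W^{\mu_{r}}_{1,n}(\xi,2\rho)\Big).
\]
To control $\inf_{B_{\rho}(\xi)}v$ uniformly I use the weak Harnack inequality for nonnegative supersolutions: after subtracting the minimum over a slightly larger annulus (replacing $v$ by $u(r\cdot)-\inf_{A_{\frac14,2}}u(r\cdot)\ge 0$ if needed, which only adds the bounded difference $\inf_{A_{1/2,1}}-\inf_{A_{1/4,2}}$, controlled by Lemma \ref{Lemma:u monotone.} Item 3 and $\underline u(r/4)-\underline u(2r)\to m\log 8$), a chain of finitely many balls covering $A_{\frac12,1}$ and the weak Harnack inequality $\big(\fint_{B}v^{q}\big)^{1/q}\le C\inf_{\frac12B}v$ propagate smallness of $\inf$ from the point where the minimum is attained to every ball $B_{\rho}(\xi)$. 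Hence $\inf_{B_{\rho}(\xi)}v\le C$ uniformly in $r$, and $v(\xi)\le C+CW^{\mu_{r}}_{1,n}(\xi,2\rho)$.

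\textbf{Step 2 (exceptional set for the Wolff potential).} Let $E=\{\xi\in A_{\frac12,1}:W^{\mu_{r}}_{1,n}(\xi,2\rho)>1\}$. Off $E$ we get $v\le C_{2}$. For the capacity I use the standard weak-type estimate for Wolff potentials of measures supported in $A_{\frac14,2}$ (the same one behind \cite[Theorem 2.3.1]{LMQZ2025} used in Lemma \ref{u_r estimate in B_1}): $\text{cap}_{n}(\{W^{\nu}_{1,n}>\lambda\},\mathring{A}_{\frac14,2})\le C\lambda^{1-n}\nu(\cdot)$, applied with $\nu=\mu_{r}|_{A_{\frac14,2}}$ and $\lambda=1$; since $2\rho$ is small, $W^{\mu_{r}}_{1,n}(\xi,2\rho)$ only sees $\mu_{r}$ on $A_{\frac14,2}$. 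Alternatively, and more self-containedly, compare $v$ with the solution $\phi$ of $-\Delta_{n}\phi=\mu_{r}$ in $\mathring{A}_{\frac14,2}$ with zero boundary values: by comparison $\phi$ is controlled by the potential, and testing the condenser with $\min(\phi,1)$ gives $\int|\nabla\min(\phi,1)|^{n}\le\mu_{r}(A_{\frac14,2})$, i.e. $\text{cap}_{n}(\{\phi\ge1\})\le\mu_{r}(A_{\frac14,2})$.

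\textbf{Main obstacle.} The delicate part is the capacity bound being \emph{linear} in $\mu_{r}(A_{\frac14,2})$ rather than with the power $1/(n-1)$; it comes out linear because the threshold $\lambda$ is a fixed constant, so the truncation test function argument yields $\int|\nabla\min(\phi,\lambda)|^{n}=\int\min(\phi,\lambda)\,d\mu_r\le\lambda\mu_{r}$, and then $\text{cap}\le\lambda^{1-n}\mu_{r}$. The remaining care is ensuring Step 1's Harnack chain constant is uniform in $r$, which holds since the geometry is fixed after scaling. Scaling back gives the lemma.
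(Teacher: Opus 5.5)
Your argument is correct, but it is organized differently from the paper's. The paper obtains this lemma in two lines from Lemma \ref{u_r estimate in B_1}. There, Kilpel\"ainen--Mal\'y \cite[Theorem 1.6]{KM1994} is applied at $\xi\in B_1(0)$ on the \emph{large} ball $B_2(\xi)$, which contains the origin. Hence $\inf_{B_2(\xi)}u_r\le\inf_{B_1}u_r\le C$ follows at once from Item 3 of Lemma \ref{Lemma:u monotone.}, and no Harnack chain is needed. The price is that $W^{\mu_r}_{1,n}(\xi,3)$ sees the point mass at $0$; this produces the $C\log\frac{1}{|\xi|}$ term. The exceptional set and its dyadic capacity bound $\text{cap}_n(E_r\cap A_{t/2,t},\mathring{A}_{t/4,2t})\le C\mu_r(A_{t/4,2t})$ are imported from the proof of \cite[Theorem 2.3.1]{LMQZ2025}. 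Rescaling and $\underline{u}(5r)\le\underline{u}(r)=\inf_{A_{r/2,r}}u$ then finish.

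You instead localize. You use small balls inside the annulus, so only $\mu_r\lfloor A_{\frac14,2}$ enters the potential and the origin mass never appears. The local infimum is propagated from the minimum point on $\partial B_1$ by weak Harnack. The capacity bound comes from a Dirichlet problem, the Kilpel\"ainen--Mal\'y lower bound and Lemma \ref{capacity estimate in terms of measure}, which is the mechanism the paper itself uses later in Lemmas \ref{u_r estimate further} and \ref{area ratio go to 1}. Your route is self-contained and avoids the external Wolff-potential theorem. The paper's is shorter given Lemma \ref{u_r estimate in B_1}, which it needs anyway for the bound on $B_1$ with an $n$-thin exceptional set (used in Lemma \ref{global estimate for u_r_i}).

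One technical point in your Step 2 needs adjusting.
\begin{itemize}
\item \emph{Direction of the inequality.} What you need is $cW^{\mu_r}_{1,n}(\xi,R)\le\phi(\xi)$, so that $E\subset\{\phi>c\}$. This is the \emph{lower} Kilpel\"ainen--Mal\'y bound, not a comparison statement ``$\phi$ is controlled by the potential''.
\item \emph{Radius.} That lower bound requires $B_{3R}(\xi)\subset\mathring{A}_{\frac14,2}$. With $R=2\rho=\frac18$ this fails for $|\xi|$ near $\frac12$. Either take $\rho$ smaller, or split off the tail, using $\int_{R'}^{1/8}\mu_r(B(\xi,t))^{\frac{1}{n-1}}\frac{dt}{t}\le\mu_r(A_{\frac14,2})^{\frac{1}{n-1}}\log\frac{1}{8R'}$, which is bounded.
\item \emph{Support of the data.} Restrict $\mu_r$ to a compact sub-annulus such as $A_{\frac{5}{12},\frac{13}{12}}$. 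This is all the local potential sees, and Lemma \ref{capacity estimate in terms of measure} assumes compact support.
\end{itemize}
With these constant choices the rest of your proof goes through as written.
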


\begin{proof}

The proof is a direct consequence of Lemma \ref{u_r estimate in B_1}.
Since 
\[
u_{r}(\xi)\le C\left(\log\frac{1}{|\xi|}+1\right)\le C,\forall\xi\in A_{\frac{1}{2},1}\backslash\hat{E}_{r}
\]
 for some $\hat{E}_{r}\subset A_{\frac{1}{2},1}$ and 
\[
\text{cap}_{n}(\hat{E}_{r},\mathring{A}_{\frac{1}{4},2})\le C\mu_{r}(A_{\frac{1}{4},2}).
\]
We choose $E_{r}=r\hat{E}_{r}$, and $x\in A_{\frac{r}{2},r}\backslash E_{r}\iff$$\xi=\frac{x}{r}\in A_{\frac{1}{2},1}\backslash\hat{E}_{r}$
\begin{align*}
u(x) & =u_{r}(\xi)+\underline{u}(5r)\le C+\underline{u}(5r)=C+\underline{u}(r)+\underline{u}(5r)-\underline{u}(r)\\
 & \le C+\inf_{A_{\frac{r}{2},r}}u,
\end{align*}
where the last inequality follows from Lemma \ref{Lemma:u monotone.}.

\end{proof}

\begin{lem}\label{global estimate for u_r_i}

Fix $R>0$. There exists $C>0$ which does not depend on $R$, $r$
and $t\in(0,R]$ such that, for $r>0$ small 
\[
\left|u_{r}(\xi)\right|\le C(\left|\log\frac{1}{|\xi|}\right|+1),\xi\in B_{R}\backslash\hat{E}_{r}
\]
outside a set $\hat{E}_{r}$ with the following estimate, for any
$t\in(0,R]$
\begin{equation}
\text{cap}_{n}(\hat{E}_{r}\cap A_{2^{-1}t,t},\mathring{A}_{2^{-2}t,2t})\le C\mu_{r}(A_{2^{-2}t,2t})=C\mu(A_{2^{-2}rt,2rt}).\label{capacity estimate-1}
\end{equation}

\end{lem}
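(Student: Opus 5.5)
The plan is to split $B_R$ into $B_1$ and the shell $B_R\setminus B_1$, and to handle the upper and lower bounds separately.

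\textbf{Inside $B_1$.} Lemma \ref{u_r estimate in B_1} already gives the upper bound $u_r(\xi)\le C_1(\log\frac{1}{|\xi|}+1)$ off a set $E_r$. It also gives the capacity estimate on every dyadic annulus $A_{2^{-1}t,t}$ with $t\le1$.

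\textbf{Lower bound for $|\xi|\le 5$.} Since $u(r\xi)\ge\underline{u}(r|\xi|)$, we have $u_r(\xi)\ge\underline{u}_r(|\xi|)=\underline{u}(r|\xi|)-\underline{u}(5r)$. By the monotonicity in Lemma \ref{Lemma:u monotone.}, this is $\ge0$. So a lower bound is only needed for $|\xi|>5$.

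\textbf{Lower bound for $5<|\xi|<R$.} Here Lemma \ref{=00005Cunderline u estimate} gives $|\underline{u}_r(|\xi|)|\le C(\log|\xi|+1)$, since $R<\frac{1}{5r}$ once $r$ is small. Hence $u_r(\xi)\ge\underline{u}_r(|\xi|)\ge -C(\log|\xi|+1)$ everywhere, with no exceptional set needed.

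\textbf{Upper bound for $1\le|\xi|\le R$.} Fix a dyadic annulus $A_{2^{-1}t,t}$ with $1<t\le R$ and rescale. Set $s=tr$, so that $\xi\in A_{2^{-1}t,t}$ corresponds to $x=r\xi\in A_{s/2,s}$. Apply Lemma \ref{local estimate of w} at scale $s$. This is legitimate because $s\le Rr$ is small when $r$ is small, and the constant $C_2$ is uniform in the scale. Off a set $E_s\subset A_{s/2,s}$ with $\mathrm{cap}_n(E_s,\mathring{A}_{s/4,2s})\le C_2\mu(A_{s/4,2s})$, we get
\[
u(x)\le\inf_{A_{s/2,s}}u+C_2=\underline{u}(s)+C_2 ,
\]
where the equality uses the monotonicity of $\underline{u}$. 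Subtracting $\underline{u}(5r)$ gives
\[
u_r(\xi)\le \underline{u}_r(t)+C_2\le C(|\log t|+1)+C_2,
\]
again by Lemma \ref{=00005Cunderline u estimate}. Since $t\simeq|\xi|$, this is $\le C(|\log|\xi||+1)$.

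\textbf{The exceptional set and the capacity bound.} Define
\[
\hat{E}_r=E_r\cup\bigcup_{k}\frac{1}{r}E_{2^k r},
\]
where $E_r$ is the set from Lemma \ref{u_r estimate in B_1} and the union runs over $2^k\in(1,R]$. Capacity is invariant under dilation in dimension $n$, so
\[
\mathrm{cap}_n\Bigl(\tfrac{1}{r}E_s,\mathring{A}_{t/4,2t}\Bigr)=\mathrm{cap}_n(E_s,\mathring{A}_{s/4,2s})\le C\mu(A_{s/4,2s})=C\mu_r(A_{t/4,2t}),
\]
which is \eqref{capacity estimate-1}.

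\textbf{Main obstacle.} A general $t\in(0,R]$ is not dyadic, so $A_{2^{-1}t,t}$ meets two of the chosen dyadic annuli. I will handle this with finite overlap and with subadditivity and monotonicity of capacity in the ambient annulus. Concretely, the sets lying in $A_{2^{-1}t,t}$ come from at most two neighbouring annuli. Each of their capacities, relative to their own slightly larger enlarged annuli, is controlled by $\mu$ on a comparable annulus. Passing to the common open annulus $\mathring{A}_{2^{-2}t,2t}$ costs at most a dimensional constant, because for rings with comparable ratios the capacities are comparable. This gives the bound with $\mu_r(A_{2^{-2}t,2t})$, up to a harmless enlargement that I will absorb by choosing the dyadic grid slightly finer if necessary. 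Throughout, the constant stays independent of $R$, because every estimate is scale-invariant, and the bound depends on $R$ only through the logarithm.
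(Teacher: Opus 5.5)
Your lower bounds ($u_r\ge\underline{u}_r(|\xi|)\ge0$ for $|\xi|\le5$, and Lemma \ref{=00005Cunderline u estimate} beyond) are correct. So is your pointwise upper bound on a single annulus from Lemma \ref{local estimate of w} at scale $tr$; both are in line with the paper. A small slip: with $2^k\le R$ your dyadic annuli miss $A_{2^K,R}$ when $R$ is not a power of two, so take $2^{k-1}<R$.

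The gap is the step you flag yourself. For $t\in(2^k,2^{k+1})$, the two pieces of $\hat E_r\cap A_{2^{-1}t,t}$ come from $\frac1r E_{2^kr}$ and $\frac1r E_{2^{k+1}r}$. Their capacities are controlled by $\mu_r(A_{2^{k-2},2^{k+1}})$ and $\mu_r(A_{2^{k-1},2^{k+2}})$, and neither annulus lies in $A_{2^{-2}t,2t}$. The first contains the radii $[2^{k-2},t/4)$, the second the radii $(2t,2^{k+2}]$, and $\mu_r$ may concentrate there. Comparability of capacities lets you change the ambient ring on the left-hand side, but nothing bounds these measures by $\mu_r(A_{2^{-2}t,2t})$. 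A finer grid does not help. Lemma \ref{local estimate of w} at a scale $s$ only gives a bound by $\mu_r(A_{s/4,2s})$, and $A_{s/4,2s}\subset A_{t/4,2t}$ forces $s=t$. So your argument proves the estimate with an enlarged annulus such as $A_{t/8,4t}$ on the right, not \eqref{capacity estimate-1}. That weaker form would in fact suffice for the later use in Lemma \ref{a convergence result}.

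The missing idea is to define the exceptional set intrinsically, by a level condition, rather than as a union of sets obtained by existence at discrete scales. The paper sets
\[
\tilde E_r=\{\xi\in A_{2^{-1},R};\,u_r(\xi)>\underline{u}_r(1)+C_2\},
\]
and takes $\hat E_r=E_r\cup\tilde E_r$ with $E_r$ from Lemma \ref{u_r estimate in B_1}. For every $t\in[1,R]$, monotonicity gives $\underline{u}_r(1)\ge\underline{u}_r(t)=\inf_{A_{2^{-1}t,t}}u_r$. Hence $\tilde E_r\cap A_{2^{-1}t,t}$ is contained in $\{\xi\in A_{2^{-1}t,t};\,u_r(\xi)>\inf_{A_{2^{-1}t,t}}u_r+C_2\}$. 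That set lies inside the rescaled exceptional set of Lemma \ref{local estimate of w} taken at exactly the scale $tr$. Dilation invariance of $n$-capacity then gives \eqref{capacity estimate-1} for all such $t$ at once. Off $\tilde E_r$ one has $u_r\le\underline{u}_r(1)+C_2\le C$, which is the upper bound you need. Alternatively, the threshold $\underline{u}_r(|\xi|)+C_2$ on all of $B_R\setminus\{0\}$ gives the same containment for every $t\in(0,R]$, since $\underline{u}_r(|\xi|)\ge\underline{u}_r(t)$ for $\xi\in A_{2^{-1}t,t}$.
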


\begin{proof}From Lemma \ref{u_r estimate in B_1}, we only need
to prove that, there exists $C_{1}>0$ such that 
\[
\left|u_{r}(\xi)\right|\le C_{1}(\left|\log\frac{1}{|\xi|}\right|+1),\xi\in A_{2^{-1},R}\backslash\tilde{E}_{r}
\]
and (\ref{capacity estimate-1}) holds for $\tilde{E}_{r}$ with $t\in[1,R]$.
Then we can let $\hat{E}_{r}=E_{r}\cup\tilde{E}_{r}$, where $E_{r}$
is defined in Lemma \ref{u_r estimate in B_1}.

From Lemma \ref{=00005Cunderline u estimate}, we know 
\[
u_{r}(\xi)\ge-C_{1}(\left|\log\frac{1}{|\xi|}\right|+1),\,\forall\xi\in A_{\frac{1}{2},R}.
\]

We define
\[
\tilde{E}_{r}=\{\xi\in A_{2^{-1},R};u_{r}(\xi)>\underline{u}_{r}(1)+C_{2}\}
\]
where $C_{2}$ is the same constant as in Lemma \ref{local estimate of w}. 

Then for any $t\in[1,R]$, and $\xi\in A_{2^{-1}t,t}\cap\tilde{E}_{r}$,
\begin{align*}
u_{r}(\xi) & >\underline{u}_{r}(1)+C\ge\inf_{A_{2^{-1}t,t}}u_{r}+C,
\end{align*}
So 
\[
\text{cap}_{n}(\tilde{E}_{r}\cap A_{2^{-1}t,t},\mathring{A}_{2^{-2}t,2t})\le C\mu_{r}(A_{2^{-2}t,2t}).
\]

\end{proof}

\subsection{$W^{1,p}$ convergence}

In this subsection, we prove that $u_{r}$ subconverges in $W^{1,p}_{loc}(\mathbb{R}^{n}),1<p<n,$
norm to $\hat{h}(\xi)=\left(\mu(\{0\})/|\mathbb{S}^{n-1}|\right)^{\frac{1}{n-1}}\log(1/|\xi|)+\beta$,
where $\mu=-\Delta_{n}u$ and $\beta$ is decided in the next subsection.
For this purpose, we need to employ some auxiliary functions. For
given $k>0$ and $R>0$ large, we let 
\begin{align*}
u_{r,R}(\xi) & =u_{r}(\xi)-\underline{u}_{r}(5R),\\
u^{k}_{r,R}(\xi) & =\min\{k,u_{r,R}(\xi)\},
\end{align*}
where $\underline{u}_{r}(s)=\inf_{|\xi|=s}u_{r}(\xi)$. 

\begin{lem}\label{u_=00007Br.R=00007D W^=00007B1.p=00007D norm bound}

For $1\le p<n$, there exists $C(p,R)>0$, for $r$ small,

\[
\|u_{r,R}(\xi)\|_{W^{1,p}(B_{2R})}\le C(p,R).
\]

\end{lem}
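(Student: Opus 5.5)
We will bound $\|u_{r,R}\|_{L^p(B_{2R})}$ and $\|\nabla u_{r,R}\|_{L^p(B_{2R})}$ separately. For the gradient, the natural tool is the standard Boccardo--Gallou\"et type estimate for truncations of $n$-superharmonic functions, applied to $u^{k}_{r,R}$. For the zeroth order term, we will use the pointwise bounds already established away from exceptional sets, combined with the $L^{p}$ integrability of $\log\frac{1}{|\xi|}$.

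\textbf{Step 1: lower bound and localization.} By Lemma \ref{=00005Cunderline u estimate} and the monotonicity in Lemma \ref{Lemma:u monotone.}, we have $\underline{u}_r(5R)\le\underline{u}_r(|\xi|)$ for $|\xi|\le5R$. Hence $u_{r,R}\ge0$ on $B_{5R}$. Moreover $\underline{u}_r(5R)=O(\log R+1)$ uniformly in $r$, so $u_{r,R}$ differs from $u_r$ by a bounded constant. We will also use the fact that $\inf_{\partial B_{5R}}u_{r,R}=0$.

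\textbf{Step 2: gradient bound via truncation.} We test the equation $-\Delta_n u_{r,R}=\mu_r$ with $\phi^n u^{k}_{r,R}$, where $\phi\in C^\infty_0(B_{4R})$ and $\phi\equiv1$ on $B_{2R}$. This uses the standard weak formulation for truncations of superharmonic functions, as in \cite{KM1994} or \cite{HKM1993}. We obtain
\[
\int_{\{u_{r,R}<k\}}\phi^n|\nabla u_{r,R}|^n\le k\,\mu_r(B_{4R})+C\int_{B_{4R}}|\nabla u^k_{r,R}|^{n-1}\phi^{n-1}|\nabla\phi|\,u^k_{r,R}.
\]
Young's inequality absorbs the last term, leaving $Ck\mu_r(B_{4R})+C\int_{B_{4R}}(u^k_{r,R})^n$. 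Since $\mu_r(B_{4R})=\mu(B_{4Rr})\to\mu(\{0\})$, the measure term is bounded uniformly in $r$. This gives $\int_{B_{2R}\cap\{u_{r,R}<k\}}|\nabla u_{r,R}|^n\le C(R)(k+k^n\cdot{\rm meas}\,\text{stuff})$. It is better, however, to work on level bands $\{k\le u_{r,R}<k+1\}$ by testing with $\phi^n(u^{k+1}_{r,R}-u^k_{r,R})$. This yields $\int_{\{k\le u_{r,R}<k+1\}\cap B_{2R}}|\nabla u_{r,R}|^n\le C(R)$, provided the boundary term is controlled. We then sum over $k$ with Hölder's inequality, using weights ${\rm meas}\{u_{r,R}\ge k\}^{1-p/n}$, which is the classical argument.

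\textbf{Step 3: distribution function decay.} This is the main obstacle: we need ${\rm meas}(\{u_{r,R}>k\}\cap B_{4R})\le C(R)e^{-ck}$, or at least $\sum_k{\rm meas}\{u_{r,R}\ge k\}^{1-p/n}<\infty$ uniformly in $r$. We will obtain it from Lemma \ref{global estimate for u_r_i}. Outside $\hat E_r$ we have $u_{r,R}\le C(\log\frac{1}{|\xi|}+1)$, so that set contributes measure $\lesssim e^{-nk/C}$. On the dyadic annuli $A_{2^{-1}t,t}$, the exceptional set $\hat E_r$ has $n$-capacity $\le C\mu_r(A_{2^{-2}t,2t})$, which is summable over dyadic $t\le R$ by bounded overlap and is bounded by $C\mu_r(B_{2R})$. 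Capacity controls measure: scaling gives ${\rm meas}(\hat E_r\cap A_{t/2,t})\le Ct^n e^{-c\,{\rm cap}^{-1/(n-1)}}$, and in any case ${\rm meas}\le Ct^n$. Combining these bounds is delicate, because on $\hat E_r$ we have no pointwise bound. We will therefore instead use a weak-type estimate for $n$-superharmonic functions directly. Capacity estimates of superlevel sets, ${\rm cap}_n(\{u_{r,R}>k\}\cap B_{4R},B_{5R})\le C\mu_r(B_{5R})/k^{n-1}+C/k^{n-1}$, follow from testing with $u^k_{r,R}/k$. Together with the Moser--Trudinger-type bound ${\rm meas}(K)\le C\exp(-c\,{\rm cap}_n(K)^{-1/(n-1)})$, this yields exponential decay in $k$. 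That suffices for the band summation with any $p<n$.

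\textbf{Step 4: $L^p$ norm.} Finally, the $L^p$ bound follows from Step 3's distribution estimate, since $\int u^p=p\int k^{p-1}{\rm meas}\{u>k\}\,dk$. Alternatively, it follows from Poincaré's inequality applied to $\phi u_{r,R}$ once the gradient bound is in hand, using that $u_{r,R}$ is bounded on a positive-measure subset near $\partial B_{5R}$ by Lemma \ref{global estimate for u_r_i}. All constants depend only on $p$, $R$ and $\sup_r\mu_r(B_{5R})$, so the bound is uniform for small $r$.
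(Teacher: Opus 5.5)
Your argument has a genuine gap. The two estimates everything rests on are never established, and as written both are circular.

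In Step 3, the bound ${\rm cap}_n(\{u_{r,R}>k\}\cap B_{4R},B_{5R})\lesssim k^{1-n}$ does not follow from testing with $u^k_{r,R}/k$. Since $u_{r,R}$ does not vanish on $\partial B_{5R}$ (only its infimum there is $0$), you must use $\phi\, u^k_{r,R}/k$ with a cutoff $\phi\in C^\infty_0(B_{5R})$. Then the term $k^{-n}\int|\nabla\phi|^n(u^k_{r,R})^n$ is only $O(1)$. The truncation energy $\int\phi^n|\nabla u^k_{r,R}|^n$ contains the same cutoff term, of size $O(k^n)$. Making these $O(k^{-n})$ would require an $L^n$ bound for $u_{r,R}$ on $B_{5R}\setminus B_{4R}$, which is what you are trying to prove. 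Lemma \ref{capacity estimate in terms of measure} would give the $k^{1-n}$ decay, but it needs $\min\{u,\lambda\}\in W^{1,n}_0(\Omega)$. That Dirichlet structure (the component $\Omega_1$) only appears later, in Lemma \ref{mu_r^k estimate}.

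In Step 2, testing with $\phi^n(u^{k+1}_{r,R}-u^k_{r,R})$ leaves the term $n\int_{\{u_{r,R}\ge k\}}\phi^{n-1}|\nabla u_{r,R}|^{n-1}|\nabla\phi|$. The test function equals $\phi^n$ on all of $\{u_{r,R}\ge k+1\}$, so this term cannot be absorbed into the band energy. It is an $L^{n-1}$ gradient integral over $B_{4R}\setminus B_{2R}$, exactly the kind of quantity you are trying to bound. You flag this (``provided the boundary term is controlled'') but never control it.

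The missing idea is that no estimate involving $\mu_r$ is needed. The paper applies the weak Harnack inequality \cite[Theorem 3.51]{HKM1993} to the nonnegative $n$-superharmonic function $u_{r,R}$ in $B_{5R}$. By Lemma \ref{Lemma:u monotone.}, $\inf_{B_{2R}}u_{r,R}=\underline{u}(2rR)-\underline{u}(5rR)\le m\log5+1$. This gives, uniformly in $r$, $\int u_{r,R}^q\le C(R,q)$ for every $q<\infty$, on $B_{2R}$ and likewise on slightly larger balls inside $B_{5R}$.

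The gradient bound then follows from the argument of \cite[Theorem 7.46]{HKM1993}. Testing the supersolution inequality with $\phi^n(1+u_{r,R})^{-\epsilon}$ gives $\int\phi^n(1+u_{r,R})^{-1-\epsilon}|\nabla u_{r,R}|^n\le C\int|\nabla\phi|^n(1+u_{r,R})^{n-1-\epsilon}$, with no measure term. H\"older against the $L^q$ bounds then controls $\|\nabla u_{r,R}\|_{L^p}$ for $p<n$.

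To salvage your route instead, you would need two changes. First, derive Step 3 from the Kilpel\"ainen--Mal\'y bound $u_{r,R}\le C(1+W^{\mu_r}_{1,n}(\cdot,cR))$ on $B_{4R}$ (as in Lemma \ref{u_r estimate in B_1}), together with capacity estimates for level sets of the Wolff potential. Second, close Step 2 by applying H\"older band by band, using the resulting exponential decay of ${\rm meas}\{u_{r,R}\ge j\}$ and the a priori finiteness of the band energies. That is considerably longer than the paper's argument.
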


\begin{proof}

First note that 
\[
u_{r,R}(\xi)=u(r\xi)-\underline{u}(5rR).
\]
Since $R$ is fixed and $r$ can be very small, $rR$ is also very
small. Then from Lemma \ref{Lemma:u monotone.}, 
\[
u_{r,R}(\xi)\ge0,\forall\xi\in B_{5R}(0).
\]
 {}
\[
\inf_{B_{2R}(0)}u_{r,R}(\xi)=\underline{u}(2rR)-\underline{u}(5rR)\le m\log5+1
\]
 as $r\to0$. Here the readers are reminded that the case $m=0$ is
also included. Then from the weak Harnack inequality for positive
$p$-superharmonic functions, \cite[Theorem 3.51]{HKM1993}, we have
for any $q\ge1$, 
\[
\int_{B_{2R}}u_{r,R}(\xi)^{q}d\xi\le C(R,q)(\underline{u}(2rR)-\underline{u}(5rR))^{q}\le C(R,q).
\]
 Notice that \cite[Theorem 3.51]{HKM1993} is stated for $p$-supersolutions.
Nevertheless, it can apply to $p$-superharmonic function $u$ by
considering $p$-supersolutions $\min\{u,k\}$. 

Then from \cite[An alternative proof of Theorem 7.46]{HKM1993}, we
know for $1\le p<n$, 
\[
\int_{B_{2R}}|\nabla_{\xi}u_{r,R}|^{p}d\xi\le C(R,p).
\]
Then we finish the proof. 

\end{proof}

It is obvious that 
\[
-\Delta^{\xi}_{n}u_{r,R}(\xi)=-\Delta^{\xi}_{n}u_{r}(\xi)=\mu_{r}(\xi),\xi\in B_{5R},
\]
where $\mu_{r}(\Omega)=\mu(r\Omega)$. So $\mu_{r}\to\mu(\{0\})\delta(\xi)$
in the weak sense. We assume that 
\[
-\Delta^{\xi}_{n}u^{k}_{r,R}(\xi)=\mu^{k}_{r}(\xi).
\]

\begin{lem}\label{mu_r^k estimate}For $R>0$ fixed and $r$ small,
there exists $\rho$, which may depend on $r$ and $R$ such that
$\rho>5$, $\rho rR\to0$ as $r\to0$, and 

\[
\mu^{k}_{r}(B_{5R})\le C\mu_{r}(B_{2\rho R})=C\mu(B_{2\rho rR})\le C.
\]

\end{lem}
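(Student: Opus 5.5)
The plan is to estimate $\mu^{k}_{r}(B_{5R})$ by testing the equation $-\Delta^{\xi}_{n}u^{k}_{r,R}=\mu^{k}_{r}$ against a logarithmic cutoff supported in a much larger ball $B_{\rho R}$. I then control the resulting flux term by a Caccioppoli-type energy estimate for the truncation, and choose $\rho=\rho(r,R)$ to tend to infinity slowly enough that $\rho rR\to0$. Throughout, $v:=u_{r,R}$ and $v_{k}:=u^{k}_{r,R}=\min\{k,v\}$. By Lemma \ref{Lemma:u monotone.} we have $v\ge0$ on $B_{5\rho R}$ as long as $5\rho rR$ is small, so $0\le v_{k}\le k$ there. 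Also $v_{k}$ is a bounded $n$-supersolution in $W^{1,n}_{loc}$, so $\mu^{k}_{r}\ge0$ and the weak formulation with Lipschitz test functions is available.

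\textbf{Step 1 (test function).} I take the radial function $\varphi(\xi)=\min\{1,\log(\rho R/|\xi|)/\log(\rho/5)\}_{+}$. It equals $1$ on $B_{5R}$, is supported in $\bar B_{\rho R}$, and satisfies $\int|\nabla\varphi|^{n}d\xi=|\mathbb{S}^{n-1}|(\log(\rho/5))^{1-n}$. Then
\[
\mu^{k}_{r}(B_{5R})\le\int\varphi\,d\mu^{k}_{r}=\int|\nabla v_{k}|^{n-2}\nabla v_{k}\cdot\nabla\varphi\,d\xi\le\Big(\int_{A_{5R,\rho R}}|\nabla v_{k}|^{n}\Big)^{\frac{n-1}{n}}\Big(\int|\nabla\varphi|^{n}\Big)^{\frac{1}{n}}.
\]

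\textbf{Step 2 (energy of the truncation).} I test the equation $-\Delta_{n}v=\mu_{r}$ with $(k-v_{k})\,\eta^{n}$ (or, equivalently, $v_{k}\eta^{n}$), where $\eta$ is a logarithmic cutoff equal to $1$ on $A_{5R,\rho R}$ and supported in $B_{2\rho R}$. This is the standard Caccioppoli argument for truncations, as in \cite[Chapter 7]{HKM1993}. Using $0\le v_{k}\le k$, I expect to obtain
\[
\int_{A_{5R,\rho R}}|\nabla v_{k}|^{n}\le C\,k\,\mu_{r}(B_{2\rho R})+C\,k^{n}\int|\nabla\eta|^{n}.
\]
The last term is as small as I like, since logarithmic cutoffs in dimension $n$ have arbitrarily small energy.

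\textbf{Step 3 (choice of $\rho$).} Combining Steps 1 and 2, and using $\mu_{r}(B_{2\rho R})=\mu(B_{2\rho rR})\le\mu(B_{1})$, gives
\[
\mu^{k}_{r}(B_{5R})\le C\big(k\,\mu(B_{2\rho rR})+o_{\rho}(1)\big)^{\frac{n-1}{n}}(\log\rho)^{\frac{1-n}{n}},
\]
with constants depending on $k$ and $n$. I choose $\rho=\rho(r)\to\infty$ with $\rho rR\to0$, for instance $\rho=(rR)^{-1/2}$. Then the right-hand side is bounded, which gives the final inequality $\le C$. To obtain the stated comparison with $C\mu(B_{2\rho rR})$, I use that $\mu(B_{2\rho rR})\ge\mu(\{0\})>0$ when $m>0$; for $r$ small, the $o(1)$ factors can then be absorbed into $C\mu(B_{2\rho rR})$.

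\textbf{Main obstacle.} The delicate point is the degenerate case $\mu(\{0\})=0$, i.e.\ $m=0$ (recall that $\liminf u=+\infty$ is assumed). There the inequality $\mu^{k}_{r}(B_{5R})\le C\mu(B_{2\rho rR})$ must be proved without any positive lower bound. For this case I would use the standing assumption $u\to+\infty$ at $0$: $v\ge k$ on a neighbourhood of the origin, so there $v_{k}\equiv k$ and $\mu^{k}_{r}$ vanishes. The measure $\mu^{k}_{r}$ is then concentrated on $\{v<k\}$, where it is dominated by $\mu_{r}$, together with the free boundary $\partial\{v<k\}$. The free-boundary mass is controlled by a flux that, via Step 2, is bounded by $\mu_{r}$ on the enlarged ball. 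Hence I would first try to prove directly the comparison $\mu^{k}_{r}(B_{5R})\le C\mu_{r}(B_{2\rho R})$. I would do this by replacing the $k^{n}\int|\nabla\eta|^{n}$ term in Step 2 with a test function adapted to the level set $\{v=k\}$, following Kilpeläinen--Malý's treatment of truncations.
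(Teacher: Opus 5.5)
Your Step~2 rests on a false premise. Lemma~\ref{Lemma:u monotone.} gives $u_{r,R}\ge0$ only on $B_{5R}$, not on $B_{5\rho R}$. Because $\underline{u}$ is strictly decreasing, $\inf_{|\xi|=t}u_{r,R}=\underline{u}(rt)-\underline{u}(5rR)<0$ for every $t>5R$. So on $A_{5R,\rho R}$ the truncation $v_k$ is not confined to $[0,k]$, and the energy bound $\int_{A_{5R,\rho R}}|\nabla v_k|^n\le Ck\,\mu_r(B_{2\rho R})+o(1)$ is false.

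The radial model $u=m\log(2/|x|)$ makes this explicit:
\begin{itemize}
\item $u_{r,R}=m\log(5R/|\xi|)$ exactly;
\item $\int_{A_{5R,\rho R}}|\nabla v_k|^n=m^n|\mathbb{S}^{n-1}|\log(\rho/5)$;
\item $\mu^k_r$ is the uniform measure on $\{|\xi|=5Re^{-k/m}\}\subset B_{5R}$, with total mass $m^{n-1}|\mathbb{S}^{n-1}|=\mu(\{0\})$ for every $r$.
\end{itemize}
Your Step~3 bound, however, tends to $0$ as $\rho\to\infty$. In this model Step~1 is actually an equality: H\"older is sharp, and the logarithmically growing energy exactly cancels the factor $(\log\rho)^{(1-n)/n}$. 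So a logarithmic cutoff placed where $u_{r,R}<0$ cannot produce any gain.

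There are two further problems. First, a function equal to $1$ on $\{|\xi|=\rho R\}$ and vanishing outside $B_{2\rho R}$ has $n$-energy at least $|\mathbb{S}^{n-1}|(\log 2)^{1-n}$. So $k^n\int|\nabla\eta|^n$ cannot be made small under your support constraint. Second, the case $\mu(\{0\})=0$ is only sketched, and that sketch again leans on Step~2; yet there the comparison with $\mu(B_{2\rho rR})$ is the whole content.

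The missing idea is the localization that the paper's $\rho$ is built for: you must work on a region where $u_{r,R}$ genuinely has zero boundary values.
\begin{itemize}
\item The paper chooses $\rho$ with $\underline{u}(\rho rR)+C_2<\underline{u}(5rR)$ and $\rho rR\to0$. This is possible since $\underline{u}(5rR)\to+\infty$.
\item Lemma~\ref{local estimate of w} then gives $u_{r,R}<0$ on $A_{\rho R,2\rho R}$ outside an exceptional set whose capacity tends to $0$. That set therefore misses an entire sphere $\{|\xi|=s/r\}$ with $s\in(\rho rR,2\rho rR)$.
\item Hence the component $\Omega_1$ of $\{u_{r,R}>0\}$ containing $0$ satisfies $B_{5R}\subset\Omega_1\subset B_{2\rho R}$.
\item A fine-limit argument shows $u_{r,R}=0$ quasi-everywhere on $\partial\Omega_1$, so $u^k_{r,R}\in W^{1,n}_0(\Omega_1)$.
\end{itemize}
Only on $\Omega_1$ is reasoning with $0\le v_k\le k$ legitimate. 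There, three ingredients combine: the truncation estimate $\int_{\Omega_1}|\nabla u^k_{r,R}|^n\le Ck\,\mu_r(\Omega_1)$, the energy identity from testing with $u^k_{r,R}$, and the decomposition $\mu^k_r=\mu_r\lfloor\{u_{r,R}<k\}+\lambda^{k+}_{r,R}$. Together they bound $\lambda^{k+}_{r,R}(\Omega_1)$ and give $\mu^k_r(B_{5R})\le\mu^k_r(\Omega_1)\le C\mu_r(B_{2\rho R})$, for $m>0$ and $m=0$ alike.

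If you only want the uniform bound $\mu^k_r(B_{5R})\le C(R)$, which is what the subsequent compactness argument uses, your Step~1 idea does work with an ordinary cutoff instead. Combine $|\nabla v_k|\le|\nabla v|$ with the uniform $W^{1,n-1}$ bound for $u_{r,R}$ from the preceding lemma, applied on a larger ball. That route, however, gives no comparison with $\mu(B_{2\rho rR})$.
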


\begin{proof}

In the proof of this lemma, the $n$-thinness notion and fine topology
notion, under which convergence is allowed to hold outside the corresponding
thin set, are the traditional ones, which may be found in \cite[Chapter 12]{HKM1993}. 

Since $u_{r,R}$ is lower semicontinuous, $\{\xi;u_{r,R}(\xi)>0\}$
is an open subset. Since $0\in\{\xi;u_{r,R}(\xi)>0\}$, we choose
$\Omega_{1}$ as the connected component which contains $0$. It is
easy to see that $B_{5R}(0)\subset\Omega_{1}$. We first show that
$\Omega_{1}$ is bounded. 

Since 
\[
u_{r,R}(\xi)=u(r\xi)-\underline{u}(5rR),
\]
from Lemma \ref{local estimate of w}, for fixed $\rho>5$ with $\rho rR$
small, there is a subset $E_{2\rho rR}\subset A_{\rho rR,2\rho rR}$,
\begin{equation}
u(x)\le\underline{u}(2\rho rR)+C_{2},\,x\in A_{\rho rR,2\rho rR}\backslash E_{2\rho rR}\label{underline u 2rho r R}
\end{equation}
and 
\begin{equation}
\text{cap}_{n}(E_{2\rho rR},\mathring{A}_{\frac{\rho}{2}rR,4\rho rR})\le C_{2}\mu(A_{\frac{\rho}{2}rR,4\rho rR})\to0,\,{\rm as}\,r\to0.\label{n capacity go to 0}
\end{equation}

For fixed $R$, we may assume $rR$ small. Since we assume $\underline{u}(s)\to+\infty$
as $s\to0$ and $\underline{u}(x)$ is strictly monotonically decreasing,
we may choose $\rho\ge5$, which may depends on $r,R$, such that
\begin{align*}
\underline{u}(\rho rR)+C & <\underline{u}(5rR),\\
\lim_{r\to0}(\rho rR) & =0,
\end{align*}
 for $C=C_{2}$ given by (\ref{underline u 2rho r R}). Then 
\[
u(x)-\underline{u}(5rR)<0,\,x\in A_{\rho rR,2\rho rR}\backslash E_{2\rho rR}.
\]
Then from the capacity estimate (\ref{n capacity go to 0}), using
\cite[Lemma 5.1]{MQ2021}, we can find $s\in(\rho rR,2\rho rR)$ such
that
\[
\{|x|=s\}\cap E_{2\rho rR}=\emptyset.
\]
 Then 
\[
u_{r,R}(\xi)<0,\,{\rm for}\,|\xi|=sr^{-1}.
\]
So we know $\Omega_{1}\subset B_{sr^{-1}}(0)$ is bounded.

Second we show $n$-quasi everywhere on $\partial\Omega_{1}$
\[
u_{r,R}=0.
\]
We choose $x_{0}\in\partial\Omega_{1}$ such that $\Omega_{1}$ is
not traditionally $n$-thin at $x_{0}$. If $u_{r,R}(x_{0})>0$, then
there is a neighborhood $U_{x_{0}}$ of $x_{0}$ such that $u_{r,R}|_{U_{x_{0}}}>0$
which is a contradiction. If $u_{r,R}(x_{0})<0$, since the fine limit,
i.e. limit outside a traditionally thin set, 
\[
\finelim_{x\to x_{0}}u_{r,R}=u_{r,R}(x_{0})<0,
\]
we conclude that in any punctured neighborhood $(U_{x_{0}}\backslash x_{0})\cap\Omega_{1}$
there is some $x_{1}$ such that $u_{r,R}(x_{1})<0$. So we know $u_{r,R}(x_{0})=0$.
Then we know that $\forall x_{0}\in\partial\Omega$, 
\[
\finelim_{x\to x_{0}}u_{r,R}=0.
\]
So $u_{r,R}\in W^{1,p}_{0}(\Omega_{1})$ for any $1\le p<n$. Moreover
$u^{k}_{r,R}\in W^{1,n}_{0}(\Omega_{1})$ for any $k>0$ large. 

At last, we estimate $\mu^{k}_{r}$. For this we refer to \cite[Step 4]{Veron2017}.
We know $\mu^{k}_{r}$ has the decomposition in $\Omega_{1}$ as 
\[
\mu^{k}_{r}=\mu_{r}\lfloor\{u_{r,R}<k\}+\lambda^{k+}_{r,R},
\]
where $\lambda^{k+}_{r,R}$ is a nonnegative Radon measure supported
on $u_{r,R}=k$. Since $u^{k}_{r,R}\in W^{1,n}_{0}(\Omega_{1})$,
\[
\int_{\Omega_{1}}|\nabla_{\xi}u^{k}_{r,R}|^{n}dx=\int_{\{u_{r,R}<k\}}u_{r,R}d\mu_{r}+k\lambda^{k+}_{r,R}(\Omega_{1}).
\]
 Then from \cite[(4.3.25)]{Veron2017}, we have 
\[
\int_{\{u_{r,R}<k\}\cap\Omega_{1}}|\nabla_{\xi}u^{k}_{r,R}|^{n}dx\le Ck\|\mu_{r}(\Omega_{1})\|.
\]
Then we know 
\[
k\lambda^{k+}_{r,R}(\Omega_{1})\le Ck\|\mu_{r}(\Omega_{1})\|.
\]
Then $\mu^{k}_{r}(\Omega_{1})\le C\mu_{r}(\Omega_{1})$, which implies
the lemma. 

\end{proof}

Now we study the compactness of $u_{r,R}(\xi)$ in $W^{1,p}(B_{R})$
norm. We follow the proof of \cite{DMOP1999} or \cite[Theorem 4.3.8.]{Veron2017}. 

\begin{lem}\label{nabla u^k_=00007Br.R=00007D is a Cauchy sequence in measure}

There is a subsequence $r_{i}\to0$, such that $\nabla u^{k}_{r_{i},R}$
is a Cauchy sequence in measure, in $B_{R}(0)$. 

\end{lem}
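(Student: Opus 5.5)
We follow the standard Dal Maso--Murat--Orsina--Prignet argument (the proof of \cite[Theorem 4.3.8]{Veron2017}): a uniform energy bound for truncations plus strict monotonicity of the $n$-Laplacian. The only inputs from the present setting are Lemma \ref{u_=00007Br.R=00007D W^=00007B1.p=00007D norm bound} and Lemma \ref{mu_r^k estimate}.

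\textbf{Step 1: uniform bounds and a subsequence.} Lemma \ref{u_=00007Br.R=00007D W^=00007B1.p=00007D norm bound} bounds $u_{r,R}$ in $W^{1,p}(B_{2R})$ for $1\le p<n$. By Rellich and a diagonal choice, there is a sequence $r_i\to0$ along which $u_{r_i,R}$ converges in $L^p(B_{2R})$ and a.e. Hence $u^{k}_{r_i,R}$ converges a.e. as well, and, being bounded by $k$, in every $L^q$. Because $u^k_{r,R}\in W^{1,n}_0(\Omega_1)$, testing $-\Delta_n u^k_{r,R}=\mu^k_r$ with $u^k_{r,R}$ and using Lemma \ref{mu_r^k estimate} gives
\[
\int|\nabla u^k_{r,R}|^n\le k\,\mu^k_r(\Omega_1)\le Ck.
\]
Thus $\nabla u^k_{r_i,R}$ is bounded in $L^n$, uniformly in $i$. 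Here one needs $\mu^k_r(\Omega_1)$, not just $\mu^k_r(B_{5R})$, to be bounded; the end of the proof of Lemma \ref{mu_r^k estimate} gives exactly this.

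\textbf{Step 2: Cauchy in measure.} Fix $\lambda,\varepsilon>0$. For $i,j$ large,
\[
\{|\nabla u^k_{r_i,R}-\nabla u^k_{r_j,R}|>\lambda\}\cap B_R\subset\{|\nabla u^k_{r_i,R}|>M\}\cup\{|\nabla u^k_{r_j,R}|>M\}\cup\{|u^k_{r_i,R}-u^k_{r_j,R}|>\delta\}\cup G,
\]
where $G$ is the set on which the gradients are bounded by $M$, the functions are $\delta$-close, and yet the gradients differ by more than $\lambda$. The first two sets have measure at most $CkM^{-n}$ by Step 1. The third is small for $i,j$ large by a.e. convergence.

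For $G$, test the difference of the two equations with $T_\delta(u^k_{r_i,R}-u^k_{r_j,R})\phi$, where $T_\delta$ is truncation at level $\delta$ and $\phi\in C^\infty_0(B_{2R})$ is a cutoff equal to $1$ on $B_R$. The two functions live on different domains $\Omega_1$. We use the cutoff on $B_{2R}\subset B_{5R}\subset\Omega_1$ for every $r$, so the test function is admissible for both equations on $B_{5R}$.

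\textbf{Step 3: the key estimate on $G$.} The measure terms are bounded by
\[
\delta\bigl(\mu^k_{r_i}(B_{5R})+\mu^k_{r_j}(B_{5R})\bigr)\le C\delta
\]
by Lemma \ref{mu_r^k estimate}. The term containing $\nabla\phi$ is bounded by
\[
\delta\,\|\nabla u^k\|^{n-1}_{L^{n}}\,\|\nabla\phi\|_{L^n}\le C(k,R)\delta.
\]
Hence
\[
\int_{\{|u^k_{r_i,R}-u^k_{r_j,R}|<\delta\}\cap B_R}\bigl(|\nabla a|^{n-2}\nabla a-|\nabla b|^{n-2}\nabla b\bigr)\cdot(\nabla a-\nabla b)\le C(k,R)\delta,
\]
with $a=u^k_{r_i,R}$ and $b=u^k_{r_j,R}$. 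On the compact set $\{|\xi|,|\zeta|\le M,\ |\xi-\zeta|\ge\lambda\}$, strict monotonicity of $\xi\mapsto|\xi|^{n-2}\xi$ gives a lower bound $\gamma(M,\lambda)>0$ for the integrand. So ${\rm meas}(G)\le C(k,R)\delta/\gamma$.

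To conclude, choose $M$ large, then $\delta$ small, then $i,j$ large; this makes the measure of the whole set less than $\varepsilon$.

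\textbf{Main obstacle.} The delicate point is justifying the testing in Step 3 uniformly in $i$. The measures $\mu^k_{r_i}$ contain the singular parts $\lambda^{k+}$ concentrated on $\{u=k\}$, and we must check that pairing them with the bounded $W^{1,n}$ function $T_\delta(\cdot)\phi$ is legitimate. This holds because $\mu^k_r$ lies in $W^{-1,n'}$, since $u^k_{r,R}\in W^{1,n}_0$. The pairing is bounded by $\delta$ times the total mass, and that mass is controlled by Lemma \ref{mu_r^k estimate}. The resulting bounds depend on $k$ but not on $i$, which is all that is needed.
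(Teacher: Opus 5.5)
Your proposal is correct and follows essentially the same Dal Maso--Murat--Orsina--Prignet argument as the paper: a uniform $L^n$ bound on $\nabla u^k_{r,R}$, a splitting into the set where the gradients are large, the set where the truncations differ, and the remainder, and a monotonicity estimate obtained by testing with $\phi\,T_\delta(u^k_{r_i,R}-u^k_{r_j,R})$ against the uniformly bounded masses from Lemma \ref{mu_r^k estimate}. The differences are minor. You derive the energy bound globally on $\Omega_1$ rather than through the paper's local cutoff $\zeta^n u^k_{r,R}$. You pick the subsequence by Rellich applied to $u_{r,R}$, which makes it independent of $k$. You use qualitative strict monotonicity on a compact set, whereas the paper uses the quantitative bound $(|\xi|^{n-2}\xi-|\zeta|^{n-2}\zeta)\cdot(\xi-\zeta)\ge c|\xi-\zeta|^n$.
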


\begin{proof}

For $\zeta\in C^{\infty}_{0}(B_{4R}),0\le\zeta\le1,$ $\zeta=1$ in
$B_{2R}(0)$ and $|\nabla\zeta|\le CR^{-1}$, we let
\begin{align*}
\psi & =\zeta^{n}u^{k}_{r_{i},R},\\
\nabla\psi & =\zeta^{n}\nabla u^{k}_{r_{i},R}+n\zeta^{n-1}u^{k}_{r_{i},R}\nabla\zeta.
\end{align*}
From 
\[
\int_{B_{4R}(0)}|\nabla u^{k}_{r_{i},R}|^{n-2}\nabla u^{k}_{r_{i},R}\nabla\psi=\int\psi d\mu^{k}_{r_{i}},
\]
 we have 
\[
\int\zeta^{n}|\nabla u^{k}_{r_{i},R}|^{n}+n\int\zeta^{n-1}u^{k}_{r_{i},R}|\nabla u^{k}_{r_{i},R}|^{n-2}\nabla u^{k}_{r_{i},R}\nabla\zeta\le k\cdot\mu^{k}_{r_{i}}(B_{4R}(0)).
\]
Then 
\[
\int\zeta^{n}|\nabla u^{k}_{r_{i},R}|^{n}\le C\int|\nabla\zeta|^{n}(u^{k}_{r_{i},R})^{n}+k\cdot\mu^{k}_{r_{i}}(B_{4R}(0)).
\]
So we have $\|\nabla u^{k}_{r_{i},R}\|_{L^{n}(B_{2R})}\le C(R,n,k,\mu(\{0\}))$.

We let $\eta>0$ to be fixed for defining the convergence in measure.
For $\gamma>0$ small and $\kappa>0$ large we set subsets of $B_{R}$
\begin{align*}
E_{1}= & \left\{ \sup\left\{ \left|\nabla u^{k}_{r_{i},R}\right|,\left|\nabla u^{k}_{r_{j},R}\right|\right\} >\kappa\right\} ,\\
E_{2}= & \left\{ \left|u^{k}_{r_{i},R}-u^{k}_{r_{j},R}\right|>\gamma\right\} ,\\
E_{3}= & \left\{ \sup\left\{ \left|\nabla u^{k}_{r_{i},R}\right|,\left|\nabla u^{k}_{r_{j},R}\right|\right\} \le\kappa\right\} \\
 & \cap\left\{ \left|u^{k}_{r_{i},R}-u^{k}_{r_{j},R}\right|\le\gamma\right\} \\
 & \cap\left\{ \left|\nabla u^{k}_{r_{i},R}-\nabla u^{k}_{r_{j},R}\right|>\eta\right\} .
\end{align*}
Then 
\[
\left\{ x\in B_{R};\left|\nabla u^{k}_{r_{i},R}-\nabla u^{k}_{r_{j},R}\right|>\eta\right\} \subset E_{1}\cup E_{2}\cup E_{3}.
\]
Since $u^{k}_{r_{i},R},u^{k}_{r_{j},R}$ are bounded in $W^{1,n}(B_{2R})$,
we may fix $\kappa>0$ large, such that 
\[
{\rm meas}(E_{1})\le\frac{\eta}{3}.
\]
 Then we estimate ${\rm meas}(E_{3})$. We let $0\le\phi\in C^{\infty}_{0}(B_{2R})$
such that $0\le\phi\le1$ and $\phi\equiv1$ in $B_{R}$, $|\nabla\phi|\le CR^{-1}$.
We define 
\[
T_{\gamma}f=\begin{cases}
f, & {\rm if}\,|f|\le\gamma,\\
{\rm (sgn}f)\gamma & {\rm if}\,|f|>\gamma.
\end{cases}
\]
Then 
\begin{align*}
 & \int_{B_{2R}(0)}\left(-\Delta^{\xi}_{n}u^{k}_{r_{i},R}+\Delta^{\xi}_{n}u^{k}_{r_{j},R}\right)\phi\cdot T_{\gamma}\left(u^{k}_{r_{i},R}-u^{k}_{r_{j},R}\right)\\
= & \int_{B_{2R}(0)}\phi\left(\left|\nabla u^{k}_{r_{i},R}\right|^{n-2}\nabla u^{k}_{r_{i},R}-\left|\nabla u^{k}_{r_{j},R}\right|^{n-2}\nabla u^{k}_{r_{j},R}\right)\nabla\left(u^{k}_{r_{i},R}-u^{k}_{r_{j},R}\right)\\
 & +T_{\gamma}\left(u^{k}_{r_{i},R}-u^{k}_{r_{j},R}\right)\left(\left|\nabla u^{k}_{r_{i},R}\right|^{n-2}\nabla u^{k}_{r_{i},R}-\left|\nabla u^{k}_{r_{j},R}\right|^{n-2}\nabla u^{k}_{r_{j},R}\right)\cdot\nabla\phi.
\end{align*}

So we have 
\begin{align*}
 & \eta^{n}{\rm meas}\left(E_{3}\right)\\
\le & \int_{E_{3}}\left|\nabla\left(u^{k}_{r_{i},R}-u^{k}_{r_{j},R}\right)\right|^{n}\\
\le & \int_{B_{2R}(0)}\phi\left(\left|\nabla u^{k}_{r_{i},R}\right|^{n-2}\nabla u^{k}_{r_{i},R}-\left|\nabla u^{k}_{r_{j},R}\right|^{n-2}\nabla u^{k}_{r_{j},R}\right)\nabla\left(u^{k}_{r_{i},R}-u^{k}_{r_{j},R}\right)\\
\le & \gamma\left(\mu^{k}_{r_{i}}\left(B_{2R}\right)+\mu^{k}_{r_{j}}\left(B_{2R}\right)\right)+\frac{C}{R}\gamma\left(\|u^{k}_{r_{i},R}\|_{W^{1,n-1}\left(B_{2R}\right)}+\|u^{k}_{r_{j},R}\|_{W^{1,n-1}\left(B_{2R}\right)}\right).
\end{align*}
From Lemma \ref{mu_r^k estimate}, we know $\mu^{k}_{r_{i}}\left(B_{2R}\right)$
are uniformly bounded. So we can choose $\gamma$ small such that
${\rm meas}(E_{3})\le\eta/3.$ 

In the end, we estimate ${\rm meas}(E_{2})$. Since $\nabla u^{k}_{r_{i},R}$
subconverges weakly in $L^{n}(B_{2R})$, $u^{k}_{r_{i},R}$ subconverges
strongly in $L^{n}(B_{2R})$. Then we know $u^{k}_{r_{i},R}$ is a
Cauchy sequence in measure. Then we can choose $i,j$ sufficiently
large such that 
\[
{\rm meas}(E_{2})\le\eta/3.
\]
 Then we finish the whole proof. 

\end{proof}

\begin{lem}\label{nabla u_=00007Br.R=00007D converges in measure}

There is a subsequence $r_{i}$ such that, $\{\nabla u_{r_{i},R}\}$
is a Cauchy sequence in measure in $B_{R}(0)$.

\end{lem}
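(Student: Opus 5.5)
The plan is to pass from the truncations to the full functions by the standard truncation argument used in \cite{DMOP1999} and \cite[Theorem 4.3.8]{Veron2017}. Fix $\eta>0$ and $\varepsilon>0$. For $k>0$ large, write
\[
\{|\nabla u_{r_i,R}-\nabla u_{r_j,R}|>\eta\}\cap B_R\subset\{u_{r_i,R}>k\}\cup\{u_{r_j,R}>k\}\cup\{|\nabla u^k_{r_i,R}-\nabla u^k_{r_j,R}|>\eta\}.
\]
This inclusion holds because $\nabla u_{r,R}=\nabla u^k_{r,R}$ a.e. on $\{u_{r,R}\le k\}$. Recall that $u_{r,R}\ge0$ on $B_{5R}$.

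The first two sets are controlled uniformly in $i$ by Lemma \ref{u_=00007Br.R=00007D W^=00007B1.p=00007D norm bound}. Since $\|u_{r,R}\|_{L^1(B_{2R})}\le C(R)$, Chebyshev gives
\[
{\rm meas}\{u_{r,R}>k\}\le C(R)/k.
\]
We then choose $k$ so large that each of these two sets has measure less than $\varepsilon/3$ for all $r$ small.

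With this $k$ fixed, Lemma \ref{nabla u^k_=00007Br.R=00007D is a Cauchy sequence in measure} yields a subsequence along which $\nabla u^k_{r_i,R}$ is Cauchy in measure on $B_R$. For $i,j$ large, the third set therefore also has measure at most $\varepsilon/3$.

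The one point needing care is that the subsequence in Lemma \ref{nabla u^k_=00007Br.R=00007D is a Cauchy sequence in measure} depends on $k$. We handle this by a diagonal argument. Apply the lemma successively with $k=1,2,3,\dots$, each time refining the previous subsequence, and take the diagonal subsequence $r_i$. Along the diagonal, $\nabla u^k_{r_i,R}$ is Cauchy in measure for every integer $k$ at once.

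Combining the three estimates for arbitrary $\eta,\varepsilon$ shows that $\{\nabla u_{r_i,R}\}$ is Cauchy in measure in $B_R(0)$.

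The step I expect to be the main obstacle is the uniformity needed for the diagonal argument. Lemma \ref{nabla u^k_=00007Br.R=00007D is a Cauchy sequence in measure} must apply for each fixed $k$ with constants independent of $r$. This holds because Lemma \ref{mu_r^k estimate} bounds $\mu^k_r(B_{5R})$ uniformly in $r$ and $k$. The level-set estimate is uniform in $k$ by the $L^1$ bound above, so the choice of $k$ does not interfere with the subsequence extraction.
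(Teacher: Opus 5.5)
Your argument is correct and follows the paper's proof: you split $\{|\nabla u_{r_i,R}-\nabla u_{r_j,R}|>\eta\}$ into the truncated-gradient set and the two superlevel sets $\{u_{r_i,R}>k\}$, $\{u_{r_j,R}>k\}$, exactly as the paper does. The differences are minor. You bound the superlevel sets by Chebyshev from the uniform $W^{1,1}(B_{2R})$ bound instead of the estimate of \cite[Proposition 4.3.9]{Veron2017} on the $r$-dependent domain $\Omega_1$, which makes uniformity in $r$ immediate, and you spell out the diagonal extraction over $k$ that the paper leaves implicit.
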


\begin{proof}

Let $\sigma>0$, then
\[
\{|\nabla u_{r_{i},R}-\nabla u_{r_{j},R}|>\sigma\}\subset\{|\nabla u^{k}_{r_{i},R}-\nabla u^{k}_{r_{j},R}|>\sigma\}\cup\{u_{r_{i},R}>k\}\cup\{u_{r_{j},R}>k\}.
\]
 From Lemma \ref{nabla u^k_=00007Br.R=00007D is a Cauchy sequence in measure},
we know for given $\eta>0$, for $i,j$ large 
\[
{\rm meas}\{|\nabla u^{k}_{r_{i},R}-\nabla u^{k}_{r_{j},R}|>\sigma\}<\eta/3.
\]
 On the other hand, from \cite[Proposition 4.3.9]{Veron2017}, for
any $r>0$ there exists $C$ such that
\[
{\rm meas}\{\xi\in\Omega_{1};u_{r_{i},R}>k\}\le\frac{C\mu_{r_{i}}(\Omega_{1})}{k^{n-1}}
\]
 for any $\forall k\ge0$. 

Then we proved the lemma.

\end{proof}

\begin{lem}

\label{a convergence result}

For $1\le p<n$, $u_{r_{i}}$ subconverges in $W^{1,p}_{loc}(\mathbb{R}^{n})$
to 
\[
\hat{h}(\xi)=\left(\frac{\mu(\{0\})}{\left|\mathbb{S}^{n-1}\right|}\right)^{\frac{1}{n-1}}\log\frac{1}{|\xi|}+\beta.
\]

\end{lem}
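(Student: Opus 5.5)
The plan is to obtain strong convergence of the gradients from the Cauchy-in-measure property, identify the limit equation, and then pin down the limit by uniqueness and symmetry.

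\textbf{Step 1 (convergence in $W^{1,p}$ on a fixed ball).} Fix $R$ large. By Lemma \ref{u_=00007Br.R=00007D W^=00007B1.p=00007D norm bound}, $u_{r,R}$ is bounded in $W^{1,p}(B_{2R})$ for every $1\le p<n$. Hence, along a subsequence, $u_{r_i,R}$ converges weakly in $W^{1,p}$, strongly in $L^p$, and a.e. to some $v_R$. By Lemma \ref{nabla u_=00007Br.R=00007D converges in measure}, $\nabla u_{r_i,R}$ is Cauchy in measure in $B_R$, so it converges in measure; passing to a further subsequence, it converges a.e. to $\nabla v_R$. Fix $p<q<n$; the $L^q$ gradient bound gives uniform integrability of $|\nabla u_{r_i,R}|^p$. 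Vitali's theorem then yields strong convergence in $W^{1,p}(B_R)$.

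Since $u_{r}=u_{r,R}+\underline{u}_r(5R)$ and Lemma \ref{=00005Cunderline u estimate} shows $\underline{u}_r(5R)$ is bounded for fixed $R$, we may extract a further subsequence along which these constants converge. Taking $R=1,2,\dots$ and a diagonal subsequence gives $u_{r_i}\to h$ in $W^{1,p}_{loc}(\mathbb{R}^n)$. The constants are consistent in $R$, so $h$ is a single function on $\mathbb{R}^n\setminus\{0\}$.

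\textbf{Step 2 (limit equation).} The a.e. gradient convergence, together with uniform $L^{q/(n-1)}$ bounds on $|\nabla u_{r_i}|^{n-2}\nabla u_{r_i}$ with $q/(n-1)>1$ after choosing $q$ close to $n$, gives $|\nabla u_{r_i}|^{n-2}\nabla u_{r_i}\to|\nabla h|^{n-2}\nabla h$ in $L^1_{loc}$. Hence for $\varphi\in C_0^\infty(\mathbb{R}^n)$,
\[
\int|\nabla h|^{n-2}\nabla h\cdot\nabla\varphi=\lim_i\int\varphi\,d\mu_{r_i}=\mu(\{0\})\varphi(0),
\]
because $\mu_{r_i}\to\mu(\{0\})\delta$ weakly. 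To see this weak limit, note that $\mu_r(B_\rho)=\mu(B_{r\rho})\to\mu(\{0\})$ for each $\rho$, while the mass on compact sets avoiding $0$ tends to zero. So $-\Delta_n h=\mu(\{0\})\delta_0$ in $\mathbb{R}^n$.

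\textbf{Step 3 (identifying $h$).} Lemma \ref{global estimate for u_r_i} gives $|u_{r_i}(\xi)|\le C(|\log|\xi||+1)$ off sets $\hat E_{r_i}$. The capacities of these sets on each dyadic annulus tend to $0$, and hence so do their measures. Passing to the a.e. limit, $|h(\xi)|\le C(|\log|\xi||+1)$ a.e. on $\mathbb{R}^n$. Thus $h$ is $n$-harmonic on $\mathbb{R}^n\setminus\{0\}$ with an isolated singularity and at most logarithmic growth at both $0$ and $\infty$.

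I would then invoke the uniqueness of such fundamental solutions, as in \cite{KV1986} together with \cite{Se1964,Se1965}. First, $h-m\log\frac1{|\xi|}$ is bounded near $0$ with $m=(\mu(\{0\})/|\mathbb{S}^{n-1}|)^{1/(n-1)}$. For the behaviour at $\infty$, apply the same result after inversion: a logarithmically bounded $n$-harmonic function near $\infty$ has the form $c\log|\xi|+O(1)$. Flux conservation forces $c=-m$. Then a comparison/Liouville argument on $\mathbb{R}^n\setminus\{0\}$, comparing $h$ with $m\log\frac1{|\xi|}+\beta_\pm$ on large annuli, shows $h=m\log\frac1{|\xi|}+\beta$ for a constant $\beta$.

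A cleaner alternative is to use radial information directly. By Lemma \ref{Lemma:u monotone.}, $\underline{u}_{r_i}(s)-\underline{u}_{r_i}(5)\to m\log\frac5s$, which pins down the radial profile. Combined with $h\ge\lim\underline{u}_{r_i}$, this yields the form of $\hat h$ via the strong comparison principle for $n$-harmonic functions.

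\textbf{Main obstacle.} I expect Step 3 to be the hardest: the Liouville-type uniqueness on the punctured space with logarithmic bounds at both ends. The exceptional sets are only controlled in capacity, so I would rely on the a.e. bound plus local boundedness of $n$-harmonic functions, which holds away from $0$. The constant $\beta$ is left undetermined here, to be fixed in the next subsection.
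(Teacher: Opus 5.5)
Your route matches the paper's proof. The uniform $W^{1,p'}$ bound ($p<p'<n$), the Cauchy-in-measure property of the gradients and Vitali's theorem give $W^{1,p}_{loc}$ subconvergence. Passing to the limit in the weak formulation gives $-\Delta_n\hat h=\mu(\{0\})\delta$, where your limit $h$ is the paper's $\hat h$. The logarithmic bound of Lemma~\ref{global estimate for u_r_i} is then transferred to $\hat h$, and a Kichenassamy--V\'eron Liouville theorem finishes.

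Your transfer of the bound is a valid substitute for the paper's. At a.e.\ point where the bound fails for $\hat h$, the point lies in $\hat E_{r_i}$ for all large $i$. The set $\bigcup_j\bigcap_{i\ge j}\hat E_{r_i}$ is null, because $\mathrm{meas}(\hat E_{r_i}\cap A)$ is controlled by capacities tending to $0$. Continuity of $\hat h$ off the origin then upgrades a.e.\ to everywhere. The paper instead thins the sequence to $r_{i+1}\le 2^{-4}r_i$, so the limsup of the exceptional sets has zero capacity.

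\textbf{The Liouville step.} Your own sketch of this step does not close. Comparing $\hat h$ with $\Gamma+\beta_\pm$, where $\Gamma=m\log\frac{1}{|\xi|}$, on annuli only returns $\inf(\hat h-\Gamma)\le\hat h-\Gamma\le\sup(\hat h-\Gamma)$. The function $\hat h-\Gamma$ is not $n$-harmonic, and the isolated-singularity theory at $0$ and (after inversion) at $\infty$ controls it at each end separately. Nothing in that comparison forces the two ends to carry the same constant, which is the whole content of the claim.

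The paper does not prove this step either. It cites \cite[Theorem 2.2]{KV1986} for exactly this Liouville statement: an $n$-harmonic function on $\mathbb{R}^n\setminus\{0\}$ with $|\hat h|\le C(|\log|\xi||+1)$ equals $\alpha\log\frac{1}{|\xi|}+\beta$. You should do the same.

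A comparison that does work is with the dilates $\hat h(\lambda\xi)+m\log\lambda$. These are $n$-harmonic and differ from $\hat h$ by $\phi(\lambda\xi)-\phi(\xi)$, where $\phi=\hat h-\Gamma$. If $|\xi||\nabla\phi|\to0$ at $0$ and $\infty$, this difference vanishes at both ends, so $\phi$ is $0$-homogeneous and hence constant. But that gradient asymptotics is again Kichenassamy--V\'eron input.

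\textbf{The ``cleaner alternative''.} This fails as written, for two reasons. First, a one-sided bound $\hat h\ge\lim\underline{u}_{r_i}(|\xi|)$ plus strong comparison needs a contact point. The fact that $u_{r_i}(\hat\xi_i)=0$ at $|\hat\xi_i|=5$ does not survive a.e.\ or $W^{1,p}$ convergence. The paper extracts this information only later, in Lemma~\ref{main convergence}, via the weak Harnack inequality.

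Second, the exponent in Lemma~\ref{Lemma:u monotone.} is $(\underline{\mu}(\{0\})/|\mathbb{S}^{n-1}|)^{1/(n-1)}$. Its identification with $(\mu(\{0\})/|\mathbb{S}^{n-1}|)^{1/(n-1)}$ is Lemma~\ref{uniform convergence}, which relies on the present lemma, so using it here is circular.

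If you add the weak-Harnack contact, this route does close. Use strong comparison when the radial exponent is positive (legitimate since $\nabla\log|\xi|\neq0$), or the strong minimum principle when it is $0$. It then gives more than the lemma: $\hat h=\lim\underline{u}_{r_i}(|\xi|)$, hence $\beta=m\log 5$ and $\underline{\mu}(\{0\})=\mu(\{0\})$ at once, without \cite[Theorem 2.2]{KV1986}.
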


\begin{proof}

Since 
\[
u_{r_{i}}(\xi)=u_{r_{i},R}(\xi)+\underline{u}_{r_{i}}(5R),
\]
from Lemma \ref{u_=00007Br.R=00007D W^=00007B1.p=00007D norm bound}
we know $\|u_{r_{i}}\|_{W^{1,p}(B_{2R})}\le C(p,R).$ Notice for $p<p'<n$,
uniform $W^{1,p'}$ bound gives the uniform integrability of $|\nabla u_{r_{i}}|$.
Then we use Vitali's convergence theorem to conclude that $u_{r_{i}}$
subconverges in $W^{1,p}(B_{R})$ norm to some function $\hat{h}(\xi)$.
Since we can choose $R=R_{i}\to+\infty$, we may find $u_{r_{i}}$
converges in $W^{1,p}_{loc}(\mathbb{R}^{n})$ to $\hat{h}(\xi)$. 

It is straightforward to check that, for $\phi\in C^{\infty}_{0}(B_{R})$,
\[
\int_{B_{R}}|\nabla u_{r_{i}}|^{n-2}\nabla u_{r_{i}}\nabla\phi\to\int_{B_{R}}|\nabla\hat{h}|^{n-2}\nabla\hat{h}\nabla\phi.
\]
Hence we have 
\begin{equation}
-\Delta^{\xi}_{n}\hat{h}(\xi)=\mu(\{0\})\delta(\xi).\label{h(=00005Cxi) equation}
\end{equation}
and $\hat{h}(\xi)$ is $n$-harmonic in $\mathbb{R}^{n}\backslash\{0\}$,
which has a continuous representative. 

Notice that passing to a subsequence $r_{i+1}\le2^{-4}r_{i}$, we
may assume $u_{r_{i}}(\xi)\to\hat{h}(\xi)$ almost everywhere. Notice
that from Lemma \ref{global estimate for u_r_i}, we may assume in
any $A_{\frac{l}{2},l},l\le R$,
\[
|\hat{h}(\xi)|\le C\left(\left|\log\frac{1}{|\xi|}\right|+1\right),\,\xi\in A_{\frac{l}{2},l}\backslash\bigcap_{j\ge1}\bigcup_{i\ge j}\hat{E}_{r_{i}}.
\]
And we have 
\begin{align*}
\text{cap}_{n}\left(\left(\bigcap_{j\ge1}\bigcup_{i\ge j}\hat{E}_{r_{i}}\right)\cap A_{\frac{l}{2},l},\mathring{A}_{\frac{l}{4},2l}\right) & \le\inf_{j\ge1}\sum_{i\ge j}\text{cap}_{n}\left(\hat{E}_{r_{i}}\cap A_{\frac{l}{2},l},\mathring{A}_{\frac{l}{4},2l}\right)\\
 & \le C\inf_{j\ge1}\sum_{i\ge j}\mu_{r_{i}}\left(A_{\frac{l}{4},2l}\right)\\
 & =C\inf_{j\ge1}\sum_{i\ge j}\mu\left(A_{\frac{r_{i}}{4}l},2r_{i}l\right)\\
 & \le C\inf_{j\ge1}\mu\left(B_{2r_{j}l}\backslash\{0\}\right)\\
 & =0.
\end{align*}
Then
\[
|\hat{h}(\xi)|\le C\left(\left|\log\frac{1}{|\xi|}\right|+1\right),\xi\in\mathbb{R}^{n}\backslash\{0\}.
\]
Then from \cite[Theorem 2.2]{KV1986}, we know
\[
\hat{h}(\xi)=\alpha\log\frac{1}{|\xi|}+\beta.
\]
Then from (\ref{h(=00005Cxi) equation}) 
\[
\hat{h}(\xi)=\left(\frac{\mu(\{0\})}{\left|\mathbb{S}^{n-1}\right|}\right)^{\frac{1}{n-1}}\log\frac{1}{|\xi|}+\beta.
\]

\end{proof}

\subsection{Uniqueness of the limit and local uniform convergence away from exceptional
set}

Lemma \ref{a convergence result} can also be applied to $\underline{u}(|x|)$,
which is also a nonnegative $n$-superharmonic function. Note that
\[
\underline{u}(r|\xi|)=\underline{u}_{r}(\xi)+\underline{u}(5r)
\]
and
\begin{align*}
-\Delta_{n}\underline{u}(|x|) & =\underline{\mu}(x),\\
-\Delta^{\xi}_{n}\underline{u}_{r}(|\xi|) & =\underline{\mu}_{r}(\xi).
\end{align*}

\begin{lem}\label{uniform convergence}

We have
\[
\underline{\mu}(\{0\})=\mu(\{0\})
\]
 and as $r\to0$

\[
\underline{u}_{r}(|\xi|)\to h(\xi)=\left(\frac{\mu(\{0\})}{\left|\mathbb{S}^{n-1}\right|}\right)^{\frac{1}{n-1}}\log\frac{5}{|\xi|}
\]
in $W^{1,p}_{loc}(\mathbb{R}^{n})$. Moreover, the convergence is
uniform in $A_{\frac{1}{2},6}$. 

\end{lem}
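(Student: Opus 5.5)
The plan is to handle the radial function $\underline{u}(|x|)$ directly with Lemma \ref{radial symmetric n-superharmonic functions}, and then to identify its point mass at the origin with that of $\mu$.

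\textbf{Step 1: the radial limit.} Write $\underline{u}(r)=g(\log r)$ with $g$ concave. Put $\underline{m}=\lim_{r\to0}(-r\underline{u}'(r\pm0))$. By Lemma \ref{radial symmetric n-superharmonic functions}, $\underline{m}=\lim_{r\to 0}\underline{u}(r)/\log\frac1r$, and by (\ref{limit m}) this common value equals $m$.

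\textbf{Step 2: explicit convergence of $\underline{u}_r$.} We have $\underline{u}_r(s)=\underline{u}(rs)-\underline{u}(5r)=\int_{rs}^{5r}(-\underline{u}'(t))\,dt$. Since $-t\underline{u}'(t\pm0)\to\underline{m}$ and the substitution $t=r\tau$ keeps $\tau$ in a compact subset of $(0,\infty)$, we get $\underline{u}_r(s)\to\underline{m}\log\frac5s$ uniformly for $s$ in any compact subset of $(0,\infty)$. In particular the convergence is uniform on $A_{\frac12,6}$. The derivatives $\underline{u}_r'(s)=r\underline{u}'(rs)$ converge to $-\underline{m}/s$ pointwise and uniformly on compacts away from $0$.

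Near $0$ we use monotonicity of $t\underline{u}'$: for small $r$, $|s\,\underline{u}_r'(s)|\le\underline{m}+1$ on $(0,R)$, and consequently $|\underline{u}_r(s)|\le C(\log\frac{1}{s}+1)$. Both bounds are integrable in $L^p$ near the origin for $p<n$, because $\int_0 s^{-p}s^{n-1}\,ds<\infty$. Dominated convergence then gives $W^{1,p}_{loc}(\mathbb{R}^n)$ convergence to $\underline{m}\log\frac5{|\xi|}$.

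\textbf{Step 3: computing the point mass of $\underline{\mu}$.} For a radial $n$-superharmonic function the flux identity gives
\[
\underline{\mu}(B_\rho\setminus\{0\})+\underline{\mu}(\{0\})=|\mathbb{S}^{n-1}|\bigl(-\rho\underline{u}'(\rho+0)\bigr)^{n-1}.
\]
Letting $\rho\to0$ yields $\underline{\mu}(\{0\})=|\mathbb{S}^{n-1}|\underline{m}^{n-1}$.

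\textbf{Step 4 (main obstacle): $\mu(\{0\})=|\mathbb{S}^{n-1}|m^{n-1}$.} By Lemma \ref{a convergence result}, along a subsequence $u_{r_i}\to\hat h=m'\log\frac1{|\xi|}+\beta$ in $W^{1,p}_{loc}$, where $m'=(\mu(\{0\})/|\mathbb{S}^{n-1}|)^{1/(n-1)}$. We must show $m'=m$.

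\emph{Upper bound.} Lemma \ref{global estimate for u_r_i} gives uniform control of $u_{r_i}$ outside sets $\hat E_{r_i}$ whose capacity in each fixed annulus tends to $0$. By \cite[Lemma 5.1]{MQ2021}, for each $l$ we can choose spheres $|\xi|=s\approx l$ avoiding $\hat E_{r_i}$. We want $u_{r_i}\to\hat h$ uniformly off the exceptional set on such spheres. A direct route is available: $u_{r_i}\ge\underline{u}_{r_i}$ everywhere, so by Step 2 $\hat h\ge\underline{m}\log\frac5{|\xi|}$ a.e. Comparing growth as $|\xi|\to0$ gives $m'\ge m$.

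\emph{Lower bound.} Take points $x_r$ with $|x_r|=r$ and $u(x_r)=\underline{u}(r)$ (the infimum is attained by lower semicontinuity). By Lemma \ref{local estimate of w}, $u\le\underline{u}(r)+C_2$ on most of $A_{\frac r2,r}$ in the capacity sense. In the limit this gives $\hat h(\xi)\le m\log\frac5{|\xi|}+C$ for $\xi$ outside a capacity-null set in every annulus, as in the proof of Lemma \ref{a convergence result}. Letting $|\xi|\to0$ gives $m'\le m$.

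Hence $m'=m=\underline{m}$, and therefore $\underline{\mu}(\{0\})=\mu(\{0\})$. The constant in Step 2's limit then matches $h$ in the statement.
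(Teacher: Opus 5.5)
Your proof is correct, but for the convergence of $\underline{u}_r$ it takes a genuinely different route from the paper.

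\textbf{The paper's route.} The paper applies the compactness Lemma \ref{a convergence result} to $\underline{u}(|x|)$ as well as to $u$. Along a subsequence this gives $\underline{u}_{r_i}\to(\underline{\mu}(\{0\})/|\mathbb{S}^{n-1}|)^{1/(n-1)}\log\frac{1}{|\xi|}+\tilde\beta$ and $u_{r_i}\to\hat h$, both on all of $\mathbb{R}^n$. The ordering $\underline{u}_{r_i}\le u_{r_i}$, read off as $|\xi|\to0$ and as $|\xi|\to\infty$, forces the coefficients to be equal. This gives $\underline{\mu}(\{0\})=\mu(\{0\})$ with no flux computation. Uniform convergence on $A_{\frac12,6}$ is then recovered from a.e. convergence via Egorov's theorem, together with continuity and monotonicity of $\underline{u}_{r_i}$. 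Finally, the normalization $\underline{u}_r(5)=0$ pins down $\tilde\beta$ and removes the subsequence.

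\textbf{Your route.} Your Step 2 replaces all of this with the explicit representation $\underline{u}_r(s)=\int_s^5\bigl(-r\tau\,\underline{u}'(r\tau)\bigr)\frac{d\tau}{\tau}$ and the limit $-t\underline{u}'(t\pm0)\to\underline m$ from Lemma \ref{radial symmetric n-superharmonic functions}. The bound $0\le-s\underline{u}_r'(s)\le\underline m+1$, which follows from monotonicity of $t\underline{u}'$, supplies the domination needed for $W^{1,p}_{loc}$ convergence. This is more elementary, needs no subsequences, and yields uniform convergence on every compact subset of $\mathbb{R}^n\setminus\{0\}$. The price is that you must identify $\underline m$ separately, via the flux identity (essentially the paper's later corollary) and your Step 4.

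\textbf{Step 4.} Your second comparison there is redundant. Since $\hat h$ is a limit on all of $\mathbb{R}^n$, the inequality $m'\log\frac{1}{|\xi|}+\beta\ge\underline m\log\frac{5}{|\xi|}$ for all $\xi\ne0$ already gives $m'\le\underline m$ upon letting $|\xi|\to\infty$; this is exactly the paper's one-line argument. Your route via Lemma \ref{local estimate of w} is also valid, but it is longer. In addition, your labels ``upper/lower bound'' are reversed relative to what is actually proved. The remarks about spheres from \cite[Lemma 5.1]{MQ2021} and the minimizing points $x_r$ are never used and can be removed.
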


\begin{proof}

Apply Lemma \ref{a convergence result}, for any $\underline{u}_{r_{i}}(|\xi|),r_{i}\to0$,
we find a subsequence of $r_{i}$, still denoted by $r_{i}$, such
that $\underline{u}_{r_{i}}(|\xi|)$ converges to 
\[
\tilde{h}(\xi)=\left(\frac{\underline{\mu}(\{0\})}{\left|\mathbb{S}^{n-1}\right|}\right)^{\frac{1}{n-1}}\log\frac{1}{|\xi|}+\tilde{\beta}
\]
 a.e. and meanwhile $u_{r_{i}}(\xi)$ converges a.e. to 
\[
\left(\frac{\mu(\{0\})}{\left|\mathbb{S}^{n-1}\right|}\right)^{\frac{1}{n-1}}\log\frac{1}{|\xi|}+\beta.
\]
Then 
\[
\left(\frac{\underline{\mu}(\{0\})}{\left|\mathbb{S}^{n-1}\right|}\right)^{\frac{1}{n-1}}\log\frac{1}{|\xi|}+\tilde{\beta}\le\left(\frac{\mu(\{0\})}{\left|\mathbb{S}^{n-1}\right|}\right)^{\frac{1}{n-1}}\log\frac{1}{|\xi|}+\beta.
\]
So we must have $\underline{\mu}(\{0\})=\mu(\{0\})$ and $\tilde{\beta}\le\beta$.

As $\underline{u}_{r_{i}}(|\xi|)\to\tilde{h}(\xi)$ a.e. in $A_{\frac{1}{4},7}$,
by Egorov's theorem, for any $\varepsilon>0$, there is a subset $F\subset[\frac{1}{4},7]$
such that ${\rm meas}(F)<\varepsilon$, $\underline{u}_{r_{i}}(s)\to\left(\frac{\underline{\mu}(\{0\})}{\left|\mathbb{S}^{n-1}\right|}\right)^{\frac{1}{n-1}}\log\frac{1}{s}+\tilde{\beta}$
uniformly in $[\frac{1}{4},7]\backslash F$. Then we may choose $\delta>0$
such that when $r<\delta$, 
\[
\left|\underline{u}_{r_{i}}(s)-\left(\frac{\underline{\mu}(\{0\})}{\left|\mathbb{S}^{n-1}\right|}\right)^{\frac{1}{n-1}}\log\frac{1}{s}-\tilde{\beta}\right|\le\varepsilon,\forall s\in[\frac{1}{4},7]\backslash F.
\]

Now we consider when $s\in F\cap[\frac{1}{2},6]$. From Lemma \ref{Lemma:u monotone.},
we know $\underline{u}_{r_{i}}(s)$ is continuous and monotonically
decreasing.

If $s\in\partial F\cap[\frac{1}{2},6]$, then $s\in\partial F\cap(\frac{1}{4},7).$
Then there is $s_{n}\in[\frac{1}{4},7]\backslash F$ such that $s_{n}\to s$.
Since 
\[
\left|\underline{u}_{r_{i}}(s_{n})-\left(\frac{\underline{\mu}(\{0\})}{\left|\mathbb{S}^{n-1}\right|}\right)^{\frac{1}{n-1}}\log\frac{1}{s_{n}}-\tilde{\beta}\right|\le\varepsilon,
\]
we let $s_{n}\to s$, we have 
\[
\left|\underline{u}_{r_{i}}(s)-\left(\frac{\underline{\mu}(\{0\})}{\left|\mathbb{S}^{n-1}\right|}\right)^{\frac{1}{n-1}}\log\frac{1}{s}-\tilde{\beta}\right|\le\varepsilon.
\]

If $s\in\text{int}(F)\cap[\frac{1}{2},6]$, we let $s\in(s_{1},s_{2})\subset F$,
where $s_{1},s_{2}\in\partial F$. Since ${\rm meas}(F)$ is small,
we may assume $[s_{1},s_{2}]\subset(\frac{1}{4},7)$. There is a uniform
$C>0$ such that for $[s_{1},s_{2}]\subset(\frac{1}{4},7)$, 
\[
\osc_{[s_{1},s_{2}]}\left(\frac{\mu(\{0\})}{\left|\mathbb{S}^{n-1}\right|}\right)^{\frac{1}{n-1}}\log\frac{1}{|\xi|}\le C(s_{2}-s_{1}).
\]
Then 
\begin{align*}
 & \left|\underline{u}_{r_{i}}(s)-\left(\frac{\underline{\mu}(\{0\})}{\left|\mathbb{S}^{n-1}\right|}\right)^{\frac{1}{n-1}}\log\frac{1}{s}-\tilde{\beta}\right|\\
\le & \max\left\{ \left|\underline{u}_{r_{i}}(s_{1})-\left(\frac{\underline{\mu}(\{0\})}{\left|\mathbb{S}^{n-1}\right|}\right)^{\frac{1}{n-1}}\log\frac{1}{s_{2}}-\tilde{\beta}\right|,\left|\underline{u}_{r_{i}}(s_{2})-\left(\frac{\underline{\mu}(\{0\})}{\left|\mathbb{S}^{n-1}\right|}\right)^{\frac{1}{n-1}}\log\frac{1}{s_{1}}-\tilde{\beta}\right|\right\} \\
\le & \max\left\{ \left|\underline{u}_{r_{i}}(s_{1})-\left(\frac{\underline{\mu}(\{0\})}{\left|\mathbb{S}^{n-1}\right|}\right)^{\frac{1}{n-1}}\log\frac{1}{s_{2}}-\tilde{\beta}\right|+C\varepsilon,\left|\underline{u}_{r_{i}}(s_{2})-\left(\frac{\underline{\mu}(\{0\})}{\left|\mathbb{S}^{n-1}\right|}\right)^{\frac{1}{n-1}}\log\frac{1}{s_{1}}-\tilde{\beta}\right|+C\varepsilon\right\} \\
\le & C\varepsilon.
\end{align*}
So the convergence is uniform in $[\frac{1}{2},6]$. 

Since $\underline{u}_{r_{i}}(5)=0,$ we know 
\[
\tilde{h}(\xi)=\left(\frac{\mu(\{0\})}{\left|\mathbb{S}^{n-1}\right|}\right)^{\frac{1}{n-1}}\log\frac{5}{|\xi|}\overset{\bigtriangleup}{=}h(\xi).
\]
So the limit has uniqueness and then a.e. in $\mathbb{R}^{n}$ and
uniformly in $[\frac{1}{2},6]$, 
\[
\underline{u}_{r}(|\xi|)\to h(\xi).
\]

\end{proof}

Then we can prove the following main convergence result.

\begin{lem}\label{main convergence}

For $1\le p<n$, $u_{r}$ converges in $W^{1,p}_{loc}(\mathbb{R}^{n})$
to 
\[
h(\xi)=\left(\frac{\mu(\{0\})}{\left|\mathbb{S}^{n-1}\right|}\right)^{\frac{1}{n-1}}\log\frac{5}{|\xi|}.
\]

\end{lem}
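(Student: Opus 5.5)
The strategy is a subsequence-uniqueness argument. By Lemma \ref{a convergence result}, every sequence $r_i\to0$ has a subsequence along which $u_{r_i}\to\hat h=m\log\frac{1}{|\xi|}+\beta$ in $W^{1,p}_{loc}(\mathbb{R}^n)$, where $m=(\mu(\{0\})/|\mathbb{S}^{n-1}|)^{1/(n-1)}$. The target $h$ is exactly this function with $\beta=m\log5$. So it suffices to show that every subsequential constant $\beta$ equals $m\log5$. Once that is done, every sequence has a subsequence converging to the same $h$, and the whole family $u_r$ converges to $h$ as $r\to0$.

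\emph{Lower bound $\beta\ge m\log5$.} This is essentially contained in the proof of Lemma \ref{uniform convergence}. Pointwise $u_{r_i}(\xi)\ge\underline{u}_{r_i}(|\xi|)$. Passing to a further subsequence with a.e. convergence of both sides, we get $\hat h\ge\tilde h$ a.e. By Lemma \ref{uniform convergence}, $\tilde h=h=m\log\frac{5}{|\xi|}$, since $\underline{\mu}(\{0\})=\mu(\{0\})$ and the normalization $\underline{u}_{r}(5)=0$ fixes $\tilde\beta=m\log5$. Hence $\beta\ge m\log5$.

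\emph{Upper bound $\beta\le m\log5$.} Here I would use that $\inf_{|\xi|=5}u_{r_i}=\underline{u}_{r_i}(5)=0$ and show that $u_{r_i}$ cannot dip far below the continuous limit $\hat h$ on compact subsets of $\mathbb{R}^n\setminus\{0\}$. Concretely, I would prove
\[
\liminf_{i\to\infty}\inf_{A_{4,6}}u_{r_i}\ge\min_{A_{4,6}}\hat h-\varepsilon\quad\text{for every }\varepsilon>0.
\]
Applied on $|\xi|=5$, this gives $0\ge\hat h|_{|\xi|=5}=\beta-m\log5$, which is the desired bound.

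To prove the displayed lower semicontinuity statement, I would use that the Riesz measures $\mu_{r_i}$ restricted to the fixed annulus $A_{1/2,8}$ have total mass $\mu(A_{r_i/2,8r_i})\to0$.
\begin{itemize}
\item First normalize: $v_i=u_{r_i}-\underline{u}_{r_i}(8)\ge0$ in $B_8$, using monotonicity of $\underline u$ from Lemma \ref{Lemma:u monotone.}. By Lemma \ref{uniform convergence}, $\underline{u}_{r_i}(8)\to m\log\frac58$, so $v_i$ converges in $L^q(A_{1/2,8})$ to a continuous $n$-harmonic function.
\item Next, on a slightly larger annulus $A'=A_{3,7}$, let $H_i$ be the $n$-harmonic function with boundary values $\min\{v_i,K\}$, obtained by Perron's method. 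Here $K$ is large relative to $\sup_{A'}\hat h$. The comparison principle gives $v_i\ge H_i$ in $A'$, since $\min\{v_i,K\}$ is $n$-superharmonic with the same boundary values.
\item The difference $\min\{v_i,K\}-H_i\ge0$ is controlled in $L^{n-1}$ (weak type) by the total mass of its Riesz measure, which is at most $C\mu_{r_i}(A')\to0$ by the truncation estimate used in Lemma \ref{mu_r^k estimate}. Thus $H_i$ has the same $L^1$ limit as $\min\{v_i,K\}$.
\item The $H_i$ are uniformly bounded and $n$-harmonic, hence equicontinuous on $A_{4,6}$ by $C^{1,\alpha}$ estimates. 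Therefore $H_i\to\min\{\hat h,K\}+\text{const}$ uniformly on $A_{4,6}$, and $v_i\ge H_i$ yields the claim.
\end{itemize}

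The main obstacle is this upper bound. In particular, the delicate point is justifying that $\min\{v_i,K\}-H_i$ is small in measure using only the smallness of $\mu_{r_i}(A')$, given that the boundary data are merely lower semicontinuous.

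If the Perron route proves awkward, I would fall back on the Kilpel\"ainen--Mal\'y pointwise estimate. Its lower bound $u(\xi)\ge c\,W^{\mu}$ is not sharp enough, so instead I would apply the weak Harnack inequality to $v_i-(H_i-\varepsilon)$, using a comparison function built from the limit $\hat h$ itself.
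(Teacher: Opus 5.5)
Your lower bound $\beta\ge m\log5$ is the same as the paper's: it is the inequality $\tilde\beta\le\beta$ from the proof of Lemma \ref{uniform convergence}, which is why the paper can write a hypothetical wrong limit as $h+\varepsilon_0$ with $\varepsilon_0>0$.

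\textbf{Comparison of routes for the upper bound.} The paper does not use harmonic replacement. It takes a minimum point $\hat\xi_i$ of $u_{r_i}$ on $|\xi|=5$, so that $u_{r_i}(\hat\xi_i)=0$, with $\hat\xi_i\to\hat\xi$. It then applies the weak Harnack inequality to the nonnegative $n$-superharmonic function $u_{r_i}-\min_{B_\delta(\hat\xi)}\underline u_{r_i}$ on a small ball where $\osc_{B_\delta(\hat\xi)}h\le\varepsilon_0/M$. The $L^1$ convergence to $h+\varepsilon_0$ makes its mean at least $\varepsilon_0/2$. Weak Harnack bounds that mean by $C$ times its value at $\hat\xi_i$, which is $-C\min_{B_\delta(\hat\xi)}\underline u_{r_i}=O(\varepsilon_0/M)+o(1)$. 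So a \emph{constant} serves as the comparison function you were looking for in your fallback, and no Dirichlet problem or truncation is needed. Your route is heavier but proves more: a uniform lower bound on compact annuli. It can be completed, but one step is not justified as written.

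\textbf{The gap.} It is in your third bullet, the smallness of the Riesz measure $\nu_i$ of $w_i=\min\{v_i,K\}$ on $A'$. Lemma \ref{mu_r^k estimate} is a global estimate on $\Omega_1$ that rests on $u_{r,R}\in W^{1,p}_0(\Omega_1)$. Its bound $C\mu(B_{2\rho rR})$ tends to $\mu(\{0\})$, not to $0$. Locally, $\nu_i(A')\le C\mu_{r_i}(A')$ is false in general. Indeed $\nu_i=\mu_{r_i}\lfloor\{v_i<K\}+\lambda^K_i$ with $\lambda^K_i$ carried by $\{v_i=K\}$, and mass of $\mu_{r_i}$ just outside $A'$ can push that level set into $A'$. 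What is true, and suffices, is a local bound with an error term. For $D=A_{2,15/2}$ and $0\le\phi\in C^\infty_0(D)$ with $\phi=1$ on $A'$,
\[
\nu_i(A')\le\int\phi\,d\nu_i=\int\phi\,d\mu_{r_i}-\int_{\{v_i\ge K\}}|\nabla v_i|^{n-2}\nabla v_i\cdot\nabla\phi\le\mu_{r_i}(D)+C\int_{D\cap\{v_i\ge K\}}|\nabla v_i|^{n-1}.
\]
The right side tends to $0$ for three reasons:
\begin{enumerate}
\item $\mu_{r_i}(D)=\mu(r_iD)\to0$.
\item ${\rm meas}(D\cap\{v_i\ge K\})\to0$ by $L^1$ convergence, once $K$ exceeds the supremum of the limit on $D$.
\item $|\nabla v_i|^{n-1}$ is bounded in $L^q(D)$ for some $q>1$, by the uniform $W^{1,p}$ bound with $p\in(n-1,n)$.
\end{enumerate}

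\textbf{Making the rest of your argument rigorous.} Your worry about lower semicontinuous boundary data disappears if you define $H_i$ by $H_i-w_i\in W^{1,n}_0(A')$. This is legitimate because bounded $n$-superharmonic functions are $W^{1,n}_{\rm loc}$ supersolutions. Testing with $T_\gamma(w_i-H_i)\ge0$, then using monotonicity of $a\mapsto|a|^{n-2}a$ and Poincaré, gives ${\rm meas}\{w_i-H_i\ge\gamma\}\le C\gamma^{1-n}\nu_i(A')$. With this, the rest of your argument goes through.

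\textbf{Two small corrections.}
\begin{enumerate}
\item Your displayed inequality with $\min_{A_{4,6}}\hat h$ only yields $\beta\le m\log6$. What your bullets actually give, from $v_i\ge H_i$ and uniform convergence of $H_i$ on $A_{4,6}$, is $\liminf_i\inf_{|\xi|=5}v_i\ge$ (value of the limit on $|\xi|=5$). That is the statement you need.
\item Lemma \ref{uniform convergence} states uniform convergence of $\underline u_r$ only on $A_{1/2,6}$. So either normalize at a radius $\le6$, or note that the same monotonicity argument works on any compact annulus.
\end{enumerate}
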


\begin{proof}

For contradiction, we choose some $r_{i}\to0$, s.t. $u_{r_{i}}$
converges in $W^{1,p}_{loc}(\mathbb{R}^{n})$ to $h(\xi)+\varepsilon_{0},$
for some $\varepsilon_{0}>0$. Since $u_{r_{i}}$ is lower semi-continuous,
we find $|\hat{\xi}_{i}|=5$ s.t. $u_{r_{i}}(\hat{\xi}_{i})=\min_{\xi=5}u_{r_{i}}(\xi)=0.$
We assume $\hat{\xi}_{i}\to\hat{\xi}$. We find $\delta>0$ small
such that ${\rm meas}B_{\delta}(\hat{\xi})\le1$ and $\osc_{B_{\delta}(\hat{\xi})}h\le\varepsilon_{0}/M,$
for $M>10$ to be determined later. Since $u_{r_{i}}$ converges to
$h(\xi)+\varepsilon_{0}$ in $W^{1,p}(B_{\delta}(\hat{\xi}))$ and
$\underline{u}_{r_{i}}(\xi)$ converges to $h(\xi)$ uniformly in
$B_{\delta}(\hat{\xi})$, for $r_{i}$ small, 
\begin{align*}
 & \fint_{B_{\delta}(\hat{\xi})}\left|u_{r_{i}}(\xi)-\min_{B_{\delta}(\hat{\xi})}\underline{u}_{r_{i}}(|\xi|)\right|\\
\ge & \fint_{B_{\delta}(\hat{\xi})}\varepsilon_{0}-\fint_{B_{\delta}(\hat{\xi})}\left|u_{r_{i}}(\xi)-h(\xi)-\varepsilon_{0}\right|-\fint_{B_{\delta}(\hat{\xi})}\left|h(\xi)-\min_{B_{\delta}(\hat{\xi})}\underline{u}_{r_{i}}(|\xi|)\right|\\
\ge & \frac{\varepsilon_{0}}{2}.
\end{align*}
However, from weak Harnack inequality, \cite[Theorem 3.51]{HKM1993},
we have 
\begin{align*}
 & \fint_{B_{\delta}(\hat{\xi})}\left|u_{r_{i}}(\xi)-\min_{B_{\delta}(\hat{\xi})}\underline{u}_{r_{i}}(|\xi|)\right|\\
\le & C\inf_{B_{\frac{\delta}{2}}(\hat{\xi})}\left(u_{r_{i}}(\xi)-\min_{B_{\delta}(\hat{\xi})}\underline{u}_{r_{i}}(|\xi|)\right)\\
\le & C\left(u_{r_{i}}(\hat{\xi}_{i})-\min_{B_{\delta}(\hat{\xi})}\underline{u}_{r_{i}}(|\xi|)\right)\\
= & C\left(-\min_{B_{\delta}(\hat{\xi})}\underline{u}_{r_{i}}(|\xi|)\right)\\
\le & C\left|\underline{u}_{r_{i}}(5+\delta)-\left(\frac{\mu(\{0\})}{\left|\mathbb{S}^{n-1}\right|}\right)^{\frac{1}{n-1}}\log\frac{5}{5+\delta}\right|+C\left(\frac{\mu(\{0\})}{\left|\mathbb{S}^{n-1}\right|}\right)^{\frac{1}{n-1}}\left|\log\frac{5}{5+\delta}\right|.
\end{align*}
 As $r_{i}\to0$, we may assume 
\[
C\left|\underline{u}_{r_{i}}(5+\delta)-\left(\frac{\mu(\{0\})}{\left|\mathbb{S}^{n-1}\right|}\right)^{\frac{1}{n-1}}\log\frac{5}{5+\delta}\right|\le\frac{\varepsilon_{0}}{10}.
\]
For the second term 
\[
C\left(\frac{\mu(\{0\})}{\left|\mathbb{S}^{n-1}\right|}\right)^{\frac{1}{n-1}}\left|\log\frac{5}{5+\delta}\right|\le C\osc_{B_{\delta}(\hat{\xi})}h(\xi)\le\frac{C\varepsilon_{0}}{M}.
\]
We may choose $M$ big such that $C/M<\frac{1}{10}$. Then we get
a contradiction. 

\end{proof}

Next we estimate $u_{r}(\xi)$ further. For this we need to generalize
\cite[Theorem 3.9]{KM1994} a little.

\begin{lem}\label{capacity estimate in terms of measure}

( \cite[Lemma 2.2.2]{LMQZ2025})

Suppose that $u\ge0$ is $p$-superharmonic in $\Omega$ which satisfies
$-\Delta_{p}u=\mu,1<p\le n$, $\mu$ is a nonnegative Radon measure
which has compact support in $\Omega$ and $\min\{u,\lambda\}\in W^{1,p}_{0}(\Omega)$.
Then for $\lambda>0$ there holds
\[
\lambda^{p-1}{\rm cap}_{p}(\{x\in\Omega;u(x)>\lambda\},\Omega)\le\mu(\Omega).
\]

\end{lem}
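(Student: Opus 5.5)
The plan is to compare the $p$-capacity of the superlevel set with the energy of the truncation $v=\min\{u,\lambda\}$, and then to bound that energy by testing the equation $-\Delta_{p}u=\mu$ against $v$.

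\textbf{Step 1: the truncation is an admissible competitor.} Since $v\in W^{1,p}_{0}(\Omega)$ and $u$ is lower semicontinuous, the set $U_{\lambda}=\{u>\lambda\}$ is open and $v/\lambda=1$ on it. Every compact $K\subset U_{\lambda}$ therefore has $v/\lambda$ as an admissible function for ${\rm cap}_{p}(K,\Omega)$, after the usual density argument: admissible functions in $W^{1,p}_{0}(\Omega)$ that are $\ge1$ in a neighbourhood of $K$ may be used in place of $C^{\infty}_{0}$ functions. This gives
\[
{\rm cap}_{p}(K,\Omega)\le\lambda^{-p}\int_{\Omega}|\nabla v|^{p}dx .
\]
Taking the supremum over $K$ and using that $U_{\lambda}$ is itself open, the definition of capacity for arbitrary sets gives the same bound for ${\rm cap}_{p}(U_{\lambda},\Omega)$.

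\textbf{Step 2: the energy estimate.} We show
\[
\int_{\Omega}|\nabla v|^{p}dx\le\lambda\,\mu(\Omega).
\]
Formally one tests $-\Delta_{p}u=\mu$ with $v$. On $\{u<\lambda\}$ we have $\nabla v=\nabla u$, and on $\{u\ge\lambda\}$ we have $\nabla v=0$. Hence
\[
\int|\nabla v|^{p}=\int|\nabla u|^{p-2}\nabla u\cdot\nabla v=\int v\,d\mu\le\lambda\mu(\Omega),
\]
using $0\le v\le\lambda$.

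\textbf{Step 3: justifying the test.} To make Step 2 rigorous, since $u$ need not lie in $W^{1,p}$, we work with truncations $u_{k}=\min\{u,k\}$ for $k>\lambda$. These are $p$-supersolutions in $W^{1,p}_{\rm loc}$, and they satisfy $-\Delta_{p}u_{k}=\mu_{k}$ with $\mu_{k}\to\mu$ weakly, by the standard construction in \cite{HKM1993,KM1994}. Since $\min\{u_{k},\lambda\}=v$ for $k>\lambda$, the same computation applies to $u_{k}$, with $v$ as test function. The test is legitimate because $v\in W^{1,p}_{0}(\Omega)\cap L^{\infty}$ is nonnegative and $u_{k}$ is a supersolution.

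This gives $\int|\nabla v|^{p}\le\int v\,d\mu_{k}$, and we must show this is $\le\lambda\mu(\Omega)$. Because $\mu$ has compact support in $\Omega$, we may take $\mu_{k}=\mu\lfloor\{u<k\}+\lambda_{k}$, where $\lambda_{k}$ is concentrated on $\{u=k\}$. This decomposition is the one used from \cite[Step 4]{Veron2017} in the proof of Lemma \ref{mu_r^k estimate}. On $\{u=k\}$ we have $v=\lambda$, so
\[
\int v\,d\mu_{k}\le\lambda\mu(\{u<k\})+\lambda\lambda_{k}(\Omega).
\]
It therefore remains to bound the total mass $\mu_{k}(\Omega)$ by $\mu(\Omega)$.

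\textbf{Main obstacle: the total mass of $\mu_{k}$.} For this I would test $-\Delta_{p}u_{k}=\mu_{k}$ with $\min\{u_{k},\lambda\}/\lambda\to1$ as $\lambda\to0$. I would also use that $\mu_{k}(\Omega)$ equals the boundary flux, which does not increase under truncation. This relies on $\min\{u,\lambda\}\in W^{1,p}_{0}$ and on the compact support of $\mu$: away from ${\rm supp}\,\mu$, $u$ is $p$-harmonic, so the flux through a level set near $\partial\Omega$ is the same for $u$ and $u_{k}$.

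An alternative would be to cite the truncation estimate in \cite[Theorem 3.9]{KM1994} directly and adapt its proof. Combining Steps 1--3 gives $\lambda^{p}\,{\rm cap}_{p}(U_{\lambda},\Omega)\le\lambda\mu(\Omega)$, which is the claim.
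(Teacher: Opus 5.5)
The paper gives no argument of its own here; it only refers to \cite[Lemma 2.2.2]{LMQZ2025}. Your skeleton is the natural one: bound the capacity of the open superlevel set by $\lambda^{-p}$ times the energy of $\min\{u,\lambda\}$, then bound that energy by testing. Step~1 is correct. So is the reduction $\int_\Omega|\nabla v|^p=\int v\,d\mu_k\le\lambda\,\mu_k(\Omega)$, and the decomposition of $\mu_k$ taken from \cite{Veron2017} is not needed for it, since $0\le v\le\lambda$. (Testing the $W^{1,p}_{\rm loc}$ supersolution $u_k$ against $v$, which is not compactly supported, also needs $\nabla u_k\in L^p$ up to $\partial\Omega$. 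This is not among the hypotheses, though it is harmless in the applications.)

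The real gap is the step you flag yourself: $\mu_k(\Omega)\le\mu(\Omega)$ is never proved, and the flux heuristic is circular. Testing with $\min\{u_k,\lambda'\}/\lambda'$ and letting $\lambda'\to0$ identifies $\mu_k(\Omega)$ with the flux of $u$ through level sets $\{u=\lambda'\}$ near $\partial\Omega$. That flux is indeed the same for $u$ and $u_k$. But to conclude, you still need it to equal $\mu(\Omega)$. That means testing the equation of the \emph{untruncated} $u$ against $\min\{u,\lambda'\}/\lambda'\in W^{1,p}_0(\Omega)$, which is exactly the test Step~3 was introduced to avoid.

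With only the listed hypotheses this step is in fact false, because $W^{1,p}_0(\Omega)$ cannot see mass sitting on a polar part of $\partial\Omega$. Take $p=n$, $\Omega=B_1(0)\setminus\{0\}$ and $u=\log\frac{1}{|x|}$.
\begin{itemize}
\item Then $u\ge0$ is $n$-harmonic in $\Omega$, so $\mu=0$.
\item Since a point has zero $n$-capacity, $\min\{u,\lambda\}\in W^{1,n}_0(B_1)=W^{1,n}_0(\Omega)$.
\item Yet $\mu_k$ is the uniform measure on $\{|x|=e^{-k}\}$ with $\mu_k(\Omega)=|\mathbb{S}^{n-1}|$. Its mass escapes to $0\in\partial\Omega$, consistent with $\mu_k\to\mu$ weakly in $\Omega$.
\item Moreover $\lambda^{n-1}{\rm cap}_n(\{u>\lambda\},\Omega)=|\mathbb{S}^{n-1}|>0=\mu(\Omega)$.
\end{itemize}
So the lemma, read literally, needs an extra hypothesis that prevents flux from leaking through $\partial\Omega$, and that is where its real content lies.

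In every use of the lemma in the paper, the function solves a Dirichlet problem on a ball or an annulus with compactly supported datum, and there the clean argument is by approximation. Such a $u$ is obtained as the a.e.\ limit of $u_j\in W^{1,p}_0(\Omega)$ with $-\Delta_p u_j=\mu_j$, where $\mu_j$ are mollifications of $\mu$ with $\mu_j(\Omega)\le\mu(\Omega)$. For these, testing with $\min\{u_j,\lambda\}$ is legitimate and gives $\int|\nabla\min\{u_j,\lambda\}|^p\le\lambda\,\mu(\Omega)$. Weak lower semicontinuity passes this bound to $\min\{u,\lambda\}$, and your Step~1 then finishes the proof.
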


One may find the proof in \cite[Lemma 2.2.2]{LMQZ2025}. 

\begin{lem}\label{u_r estimate further}

For any $\varepsilon>0$, there exist a uniform constant $C$, and
$\delta(\varepsilon)>0$ such that if $0<r<\delta$, 
\[
\left|u_{r}(\xi)-h(\xi)\right|\le C\varepsilon,\,\forall\xi\in A_{\frac{1}{2},1}\backslash\grave{E}_{r}
\]
and 
\[
\text{cap}_{n}(\grave{E}_{r},\mathring{A}_{\frac{1}{4},2})\le C(\varepsilon)\mu_{r}(A_{\frac{1}{4},2}).
\]

\end{lem}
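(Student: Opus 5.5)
The plan is to compare $u_{r}$, on the annulus $D=\mathring{A}_{\frac{1}{4},2}$, with the $n$-harmonic function having the same boundary values (a Poisson modification). The two one-sided bounds are proved separately.

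\emph{Lower bound.} This follows from the radial part alone. By Lemma \ref{uniform convergence}, $\underline{u}_{r}(|\xi|)\to h(\xi)$ uniformly on $A_{\frac{1}{2},6}$. Since $u_{r}(\xi)\ge\underline{u}_{r}(|\xi|)$, there is $\delta_{0}(\varepsilon)$ such that for $0<r<\delta_{0}$ we get $u_{r}\ge h-\varepsilon$ on all of $A_{\frac{1}{4},2}$, with no exceptional set. (If needed, first rerun Lemma \ref{uniform convergence} on the slightly larger range $[\frac{1}{4},6]$.)

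\emph{Upper bound.} The exceptional set $\grave{E}_{r}$ comes only from this side. Put $M=\max_{A_{\frac{1}{4},2}}h+1$ and $\tilde{u}_{r}=\min\{u_{r},M\}$, so that $\tilde{u}_{r}\in W^{1,n}(D)$ is $n$-superharmonic with measure $\tilde{\mu}_{r}$. Let $v_{r}$ be the $n$-harmonic function in $D$ with $v_{r}-\tilde{u}_{r}\in W^{1,n}_{0}(D)$. Comparison gives $h-\varepsilon\le v_{r}\le\tilde{u}_{r}$, using that $h$ is $n$-harmonic in $D$ and $\tilde{u}_{r}\ge h-\varepsilon$ on $\partial D$. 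The proof then has two pieces.

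First I show that $v_{r}$ is uniformly close to $h$ on $A_{\frac{1}{2},1}$. We have $0\le v_{r}-(h-\varepsilon)\le\tilde{u}_{r}-h+\varepsilon$. Lemma \ref{main convergence} (with $p=1$) gives $\int_{D}|u_{r}-h|\to0$, hence $\int_{D}|v_{r}-h|\le C\varepsilon$ for $r$ small. Now cover $A_{\frac{1}{2},1}$ by finitely many balls $B_{\rho}(\xi_{j})$ with $B_{2\rho}(\xi_{j})\subset D$ and $\mathrm{osc}_{B_{2\rho}(\xi_{j})}h\le\varepsilon$. On each ball, apply the local maximum principle for the $n$-subsolution $v_{r}$ to $(v_{r}-\max_{B_{2\rho}}h)_{+}$. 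This gives $\sup_{B_{\rho}}v_{r}\le\max_{B_{2\rho}}h+C\rho^{-n}\int_{B_{2\rho}}|v_{r}-h|$, so $v_{r}\le h+C\varepsilon$ on $A_{\frac{1}{2},1}$, with $C$ depending only on the fixed geometry.

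Second, I control the set where $u_{r}$ exceeds $v_{r}$. Define
\[
\grave{E}_{r}=\{\xi\in A_{\tfrac{1}{2},1};\,u_{r}(\xi)-v_{r}(\xi)>\varepsilon\}\cup\{\xi\in A_{\tfrac{1}{2},1};\,u_{r}(\xi)>M\}.
\]
Off $\grave{E}_{r}$ we have $u_{r}\le v_{r}+\varepsilon\le h+C\varepsilon$, which together with the lower bound gives the claimed estimate. The capacity bound is the analogue of Lemma \ref{capacity estimate in terms of measure} for the difference $w=\tilde{u}_{r}-v_{r}\ge0$, which lies in $W^{1,n}_{0}(D)$. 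I test the two equations with $\min\{w,\lambda\}$ and use the monotonicity inequality
\[
|\nabla a-\nabla b|^{n}\le C(|\nabla a|^{n-2}\nabla a-|\nabla b|^{n-2}\nabla b)\cdot(\nabla a-\nabla b).
\]
This yields $\int|\nabla\min\{w,\lambda\}|^{n}\le C\lambda\,\tilde{\mu}_{r}(D)$, so $\mathrm{cap}_{n}(\{w>\lambda\},D)\le C\lambda^{1-n}\tilde{\mu}_{r}(D)$.

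Applying this with $\lambda=\varepsilon$ requires two further inputs.

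\begin{itemize}
\item The truncation does not increase the mass: $\tilde{\mu}_{r}(D)\le C\mu_{r}(A_{\frac{1}{4},2})$, as in the decomposition used in Lemma \ref{mu_r^k estimate}.
\item The set $\{u_{r}>M\}\cap A_{\frac{1}{2},1}$ has capacity $\le C\mu_{r}(A_{\frac{1}{4},2})$. This is already provided by Lemma \ref{global estimate for u_r_i} (or Lemma \ref{u_r estimate in B_1}) once $M$ exceeds the constant there, so I enlarge $M$ accordingly. I also replace the cut-off $\varepsilon$ in the definition of $\grave{E}_{r}$ by the appropriate comparable quantity.
\end{itemize}

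Together these give $\mathrm{cap}_{n}(\grave{E}_{r},D)\le C(\varepsilon)\mu_{r}(A_{\frac{1}{4},2})$ with $C(\varepsilon)\sim\varepsilon^{1-n}$.

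I expect the main obstacle to be this capacity estimate for the difference $\tilde{u}_{r}-v_{r}$, rather than for a single superharmonic function with zero boundary data as in Lemma \ref{capacity estimate in terms of measure}. One has to check two things. First, the nonlinear operator still lets us bound $\int|\nabla\min\{w,\lambda\}|^{n}$ by $\lambda$ times the measure, which is where the monotonicity inequality for $n\ge2$ enters. Second, the truncated measure $\tilde{\mu}_{r}$ and the boundary behaviour on $\partial D$ are handled correctly. If the direct difference estimate causes trouble, my fallback is to avoid differences. I would then apply Lemma \ref{capacity estimate in terms of measure} to $\tilde{u}_{r}-v_{r}$ viewed through the comparison $\{u_{r}>v_{r}+\varepsilon\}\subset\{z>\varepsilon\}$. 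Here $z$ solves $-\Delta_{n}z=\tilde{\mu}_{r}$ in $D$ with zero boundary values, and the inclusion would follow from a comparison argument.
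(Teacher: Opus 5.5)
Your Poisson-modification route differs from the paper's, and several parts of it are sound. The lower bound through $\underline{u}_r$ works, as does the comparison $h-\varepsilon\le v_r\le\tilde u_r$. So does the monotonicity argument giving $\mathrm{cap}_n(\{\tilde u_r-v_r>\lambda\},D)\le C\lambda^{1-n}\tilde\mu_r(D)$: since $\tilde u_r\in W^{1,n}(D)$, we have $\tilde\mu_r\in W^{-1,n'}(D)$, so testing with $\min\{w,\lambda\}\in W^{1,n}_0(D)$ is legitimate.

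One bookkeeping repair is needed in the first piece. Because $\rho\lesssim\varepsilon$, the global bound $\int_D|v_r-h|\le C\varepsilon$ only yields $\sup_{B_\rho}v_r\le\max_{B_{2\rho}}h+C\varepsilon^{1-n}$. Use instead $(v_r-\max_{B_{2\rho}}h)_+\le(u_r-h)_+$, whose average over $B_{2\rho}$ tends to $0$ as $r\to0$ for fixed $\rho$.

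The genuine gap is the first bullet, $\tilde\mu_r(D)\le C\mu_r(A_{\frac{1}{4},2})$. It is false in general, and Lemma \ref{mu_r^k estimate} does not transfer. There the extra mass $\lambda^{k+}_{r,R}$ was controlled because $u^k_{r,R}\in W^{1,n}_0(\Omega_1)$: zero boundary values make the energy identity and Veron's estimate available. On $D$, by contrast, $u_r$ has nonzero and possibly huge boundary values.

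For example, suppose $\mu_r$ has a small concentrated piece just inside $|\xi|=\frac{1}{4}$ and $\mu_r(A_{\frac{1}{4},2})=0$. Then $u_r$ is $n$-harmonic in $D$, but its logarithmic spike crosses the level $M$ inside $D$. Hence $\min\{u_r,M\}$ carries positive mass on $\{u_r=M\}\cap D$, while the right-hand side of your bound vanishes.

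What you get for free is only $\tilde\mu_r(K)\to0$ on compact $K\subset D$, from $W^{1,n-1}$ convergence of the truncations, and this comes with no rate. The linear rate in $\mu_r$ of an annulus is exactly what the next step needs, when these capacities are summed over dyadic annuli to build a weakly $n$-thin set. A bound by $\mu_r$ of a slightly larger annulus is plausible from a flux argument. It would require choosing the truncation domain so that a whole shell around its boundary avoids $\{u_r>M\}$, and you do not supply this.

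Your fallback does not help either. For the $n$-Laplacian there is no superposition principle giving $\tilde u_r-v_r\le z$, and $z$ is again driven by $\tilde\mu_r$.

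The paper avoids truncation and global boundary data entirely by staying local. On small balls $B_i$ covering $A_{\frac{1}{2},1}$, it combines the Kilpel\"ainen--Mal\'y upper estimate with the convergence of $\inf_{B_i}u_r$ and $\inf_{6B_i}u_r$ to get $u_r-\inf_{6B_i}u_r\le C(\varepsilon+W^{\mu_r}_{1,n}(\cdot,4r_i))$. It then dominates the Wolff potential by the zero-boundary solution in $8B_i$ with datum $\mu_r\lfloor 6B_i$, and applies Lemma \ref{capacity estimate in terms of measure} to that solution. As a result only $\mu_r(6B_i)$, with $6B_i\subset A_{\frac{1}{4},2}$, ever enters the estimate.
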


\begin{proof}

For any $\varepsilon>0$, we use $l(\varepsilon)\in\mathbb{N}$ balls
$B_{1},\cdots,B_{l}$ to cover $\bar{A}_{\frac{1}{2},1}$ and the
concentric balls $10B_{i}\subset A_{\frac{1}{4},2}.$ We may also
choose the radius $\text{rad}(B_{i})\le\kappa\varepsilon$ for some
uniform $\kappa>0$ such that ${\rm osc}_{10B_{i}}h(\xi)\le\varepsilon/2,\forall i=1,\cdots,l$. 

From Lemma \ref{main convergence}, $u_{r}(\xi)\to h(\xi)$ in $W^{1,p}(A_{\frac{1}{4},2})$.
For $B=B_{i}$ or $6B_{i}$, $\inf_{B}u_{r}\ge\inf_{B}\underline{u}_{r}\to\inf_{B}h(\xi)$
and $\|u_{r}(\xi)-h(\xi)\|_{L^{1}(B)}\to0$, as $r\to0$,  we can
choose $\delta>0$ small such that when $0<r<\delta$, 
\[
\left|\inf_{B_{i}}u_{r}(\xi)-\inf_{B_{i}}h(\xi)\right|\le\varepsilon,\,\,\left|\inf_{6B_{i}}u_{r}(\xi)-\inf_{6B_{i}}h(\xi)\right|\le\varepsilon.
\]
Then from \cite[Theorem 1.6]{KM1994}, for any $\xi\in B_{i}$, we
let $r_{i}=\text{rad}(B_{i})$
\begin{align*}
u_{r}(\xi)-\inf_{6B_{i}}u_{r}(\xi) & \le C(n)\left(\inf_{B_{2r_{i}}(\xi)}u_{r}(\xi)-\inf_{6B_{i}}u_{r}(\xi)+W^{\mu_{r}}_{1,n}(\xi,4r_{i})\right)\\
 & \le C(n)\left(\inf_{B_{i}}h(\xi)+\varepsilon-\inf_{6B_{i}}u_{r}(\xi)+W^{\mu_{r}}_{1,n}(\xi,4r_{i})\right).
\end{align*}

So 
\begin{align*}
u_{r}(\xi)-\inf_{6B_{i}}u_{r}(\xi) & \le C(n)\left(\inf_{B_{i}}h(\xi)-\inf_{6B_{i}}h(\xi)+C\varepsilon+W^{\mu_{r}}_{1,n}(\xi,4r_{i})\right)\\
 & \le C(\varepsilon+W^{\mu_{r}}_{1,n}(\xi,4r_{i})).
\end{align*}

Now we solve 
\[
\begin{cases}
-\Delta^{\xi}_{n}v(\xi) & =\nu_{r},\,x\in8B_{i}\\
v(\xi)|_{\partial(8B_{i})} & =0,
\end{cases}
\]
where $\nu_{r}=\mu_{r}\lfloor6B_{i}$. From \cite[Theorem 1.6]{KM1994},
we know 
\[
W^{\mu_{r}}_{1,n}(\xi,4r_{i})\le C(n)v(\xi).
\]
Then 
\[
u_{r}(\xi)-\inf_{6B_{i}}u_{r}(\xi)\le C\varepsilon+Cv(\xi).
\]
Then 
\begin{align*}
\left|u_{r}(\xi)-h(\xi)\right| & \le\left|u_{r}(\xi)-\inf_{6B_{i}}u_{r}\right|+\left|\inf_{6B_{i}}u_{r}-h(\xi)\right|\\
 & \le C_{3}\varepsilon+C_{3}v(\xi).
\end{align*}
From Lemma \ref{capacity estimate in terms of measure}, 
\[
\text{cap}_{n}\left(\left\{ \xi\in8B_{i};v(\xi)>\varepsilon\right\} ,8B_{i}\right)\le\frac{\nu_{r}(8B_{i})}{\varepsilon^{n-1}}=\frac{\mu_{r}(6B_{i})}{\varepsilon^{n-1}}.
\]
Then 
\[
\text{cap}_{n}\left(\left\{ \xi\in B_{i};\left|u_{r}(\xi)-h(\xi)\right|>2C_{3}\varepsilon\right\} ,8B_{i}\right)\le\frac{\mu_{r}(6B_{i})}{\varepsilon^{n-1}}.
\]
So 
\begin{align*}
\text{cap}_{n}\left(\left\{ \xi\in A_{\frac{1}{2},1};\left|u_{r}(\xi)-h(\xi)\right|>2C_{3}\varepsilon\right\} ,\mathring{A}_{\frac{1}{4},2}\right) & \le\frac{\sum^{l}_{i=1}\mu_{r}(6B_{i})}{\varepsilon^{n-1}}\\
 & \le C(\varepsilon)\mu_{r}(A_{\frac{1}{4},2}).
\end{align*}

\end{proof}

\subsection{Proof of Theorem \ref{main thm 3-weak thin version}}

Lemma \ref{u_r estimate further} implies the asymptotical radial
symmetry outside weakly $n$-thin set. 

\begin{lem}

Under the assumption of Theorem \ref{main thm 3-weak thin version}, 

\[
u(x)=\underline{u}(|x|)+o(1)
\]
for $x\to0,x\notin E$ for some $E$ which is weakly $n$-thin at
$0$. 

\end{lem}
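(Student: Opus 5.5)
The plan is to assemble the exceptional set dyadically from the sets $\grave{E}_{r}$ of Lemma \ref{u_r estimate further}, taking $r=2^{-i}$ and letting $\varepsilon=\varepsilon_{i}\to0$ slowly enough that the capacities stay summable.

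\textbf{Step 1: reduction to dyadic annuli.} By Lemma \ref{uniform convergence}, $\underline{u}_{r}(|\xi|)\to h(\xi)$ uniformly on $A_{\frac{1}{2},1}$. Note that $\underline{u}(|x|)=\underline{u}_{r}(|\xi|)+\underline{u}(5r)$ for $x=r\xi$. Hence, for $x\in A_{2^{-i-1},2^{-i}}$ with $r=2^{-i}$,
\[
|u(x)-\underline{u}(|x|)|\le|u_{r}(\xi)-h(\xi)|+|\underline{u}_{r}(|\xi|)-h(\xi)|,
\]
and the second term is $o(1)$ uniformly. So it suffices to control $|u_{r}-h|$ on $A_{\frac{1}{2},1}$ outside $\grave{E}_{2^{-i}}$, rescaled back by $2^{-i}$.

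\textbf{Step 2: a diagonal choice of $\varepsilon_{i}$.} Lemma \ref{u_r estimate further} gives, for each $\varepsilon$, a threshold $\delta(\varepsilon)$ and a constant $C(\varepsilon)$ such that
\[
{\rm cap}_{n}\bigl(2^{-i}\grave{E}_{2^{-i}},\mathring{A}_{2^{-i-2},2^{-i+1}}\bigr)\le C(\varepsilon)\,\mu(A_{2^{-i-2},2^{-i+1}}).
\]
Here I use the scale invariance of $n$-capacity. The annuli $A_{2^{-i-2},2^{-i+1}}$ overlap at most three times. Therefore
\[
\sum_{i\ge i_{0}}\mu(A_{2^{-i-2},2^{-i+1}})\le3\mu(B_{2^{-i_{0}+1}}\backslash\{0\})\to0 .
\]
Take $\varepsilon_{k}=1/k$ and choose indices $i_{1}<i_{2}<\cdots$ so that:
\begin{itemize}
\item $2^{-i_{k}}<\delta(\varepsilon_{k})$, and
\item $C(\varepsilon_{k})\,\mu(B_{2^{-i_{k}+1}}\backslash\{0\})\le2^{-k}$.
\end{itemize}
This is possible because $\mu(B_{s}\backslash\{0\})\to0$ as $s\to0$. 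For $i_{k}\le i<i_{k+1}$, use $\varepsilon=\varepsilon_{k}$ and set $F_{i}=2^{-i}\grave{E}_{2^{-i}}$. Define $F=\bigcup_{i}F_{i}$; the finitely many $i<i_{1}$ can be discarded or absorbed into $F$.

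\textbf{Step 3: verification.}
\begin{itemize}
\item \emph{Weak thinness.} Since $F_{i}\subset A_{2^{-i-1},2^{-i}}$, we have $F\cap A_{2^{-i-1},2^{-i}}\subset F_{i-1}\cup F_{i}\cup F_{i+1}$; I will adjust the annuli or use subadditivity for the boundary spheres. By subadditivity and monotonicity of capacity in the domain (up to the comparability constants from Lemma \ref{thinness and strong epsilon comparison}), the weak-thinness sum is bounded by
\[
C\sum_{k}C(\varepsilon_{k})\sum_{i_{k}\le i<i_{k+1}}\mu(A_{2^{-i-2},2^{-i+1}})\le C\sum_{k}2^{-k}<\infty .
\]
\item \emph{Asymptotic radial symmetry.} For $x\in A_{2^{-i-1},2^{-i}}\backslash F$ with $i_{k}\le i<i_{k+1}$, Step 1 gives $|u(x)-\underline{u}(|x|)|\le C/k+o(1)$, which tends to $0$.
\end{itemize}

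\textbf{Main obstacle.} The delicate point is that Lemma \ref{u_r estimate further} must hold for all $r<\delta(\varepsilon)$ with constants independent of $r$, including its dependence on Lemma \ref{main convergence} being a full limit rather than a subsequential one. The diagonal choice of $\varepsilon_{k}$ relies exactly on this uniformity, and on $C(\varepsilon)$ being finite for each fixed $\varepsilon$.
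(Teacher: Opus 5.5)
Your proposal is correct and follows essentially the same diagonal argument as the paper. You pick $\varepsilon_k\to0$, apply Lemma \ref{u_r estimate further} at the dyadic scales $r=2^{-i}$, and choose the switching indices so that the tail of $\sum_i C(\varepsilon_k)\,\mu(A_{2^{-i-2},2^{-i+1}})$ is at most $2^{-k}$, which needs only $\mu(B_s\backslash\{0\})\to0$. Your treatment of the boundary-sphere overlaps, via subadditivity and comparability of capacities in comparable annuli, is slightly more careful than the paper, which passes over this point. One harmless correction: the closed annuli $A_{2^{-i-2},2^{-i+1}}$ can overlap four times, not three.
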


\begin{proof}

We choose a sequence $0<\varepsilon_{n}\to0$. From Lemma \ref{u_r estimate further},
for any $\varepsilon_{n}>0$ there exist uniform constant $C$ and
$2^{-i_{n}}$ such that if $0<r<2^{-i_{n}}$
\begin{align*}
\left|u_{r}(\xi)-h(\xi)\right| & \le C\varepsilon_{n},\forall\xi\in A_{\frac{1}{2},1}\backslash\grave{E}_{r}\\
\left|\underline{u}_{r}(\xi)-h(\xi)\right| & \le\varepsilon_{n},\forall\xi\in A_{\frac{1}{2},1}
\end{align*}
and 
\[
\text{cap}_{n}(\grave{E}_{r},\mathring{A}_{\frac{1}{4},2})\le C(\varepsilon_{n})\mu_{r}(A_{\frac{1}{4},2}).
\]
Hence we know 
\[
\left|u_{r}(\xi)-\underline{u}_{r}(\xi)\right|\le C\varepsilon_{n},\forall\xi\in A_{\frac{1}{2},1}\backslash\grave{E}_{r}.
\]
Then for $x\in B_{2^{-i_{n}}}$, we assume $2^{-i-1}\le|x|<2^{-i},i\ge i_{n}$,
and let $\xi=2^{i}x$. We have 
\[
\left|u(x)-\underline{u}(|x|)\right|\le C\varepsilon_{n},\forall\xi\in A_{2^{-i-1},2^{-i}}\backslash E^{\varepsilon_{n}}_{i}
\]
where $E^{\varepsilon_{n}}_{i}\subset A_{2^{-i-1},2^{-i}}$ satisfies
\begin{align*}
\text{cap}_{n}(E^{\varepsilon_{n}}_{i},\mathring{A}_{2^{-i-2},2^{-i+1}}) & \le C(\varepsilon_{n})\mu_{2^{-i}}(A_{\frac{1}{4},2})\\
 & =C(\varepsilon_{n})\mu(A_{2^{-i-2},2^{-i+1}}),
\end{align*}
which implies 
\[
\sum_{i\ge i_{n}}\text{cap}_{n}(E^{\varepsilon_{n}}_{i},\mathring{A}_{2^{-i-2},2^{-i+1}})<+\infty.
\]
Then, we find $j_{n}\ge i_{n}$ such that 
\[
\sum_{i\ge j_{n}}\text{cap}_{n}(E^{\varepsilon_{n}}_{i},\mathring{A}_{2^{-i-2},2^{-i+1}})\le2^{-n}.
\]
 We require $j_{n+1}>j_{n}$ and define $E$ by 
\[
E\bigcap A_{2^{-j_{n+1}},2^{-j_{n}}}=\bigcup^{j_{n+1}-1}_{i\ge j_{n}}E^{\varepsilon_{n}}_{i}
\]
then $E$ is weakly $n$-thin at $0$ and 
\[
u(x)=\underline{u}(|x|)+o(1)
\]
if $x\not\in E$ and $x\to0$. 

\end{proof}

\begin{cor}

In Theorem \ref{MQ2021 thm} 
\[
m=\left(\frac{\mu(\{0\})}{|\mathbb{S}^{n-1}|}\right)^{\frac{1}{n-1}}.
\]

\end{cor}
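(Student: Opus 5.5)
The plan is to identify the constant $m$ of Theorem \ref{MQ2021 thm} with the limit of $\underline{u}(r)/\log\frac{1}{r}$, and then compute that limit from the blow-up profile obtained in Lemma \ref{uniform convergence}. We split into two cases according to whether $\liminf_{x\to0}u(x)$ is finite or infinite.

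\emph{Case $\liminf_{x\to0}u(x)=+\infty$.} By the first item of Lemma \ref{Lemma:u monotone.},
\[
m=\liminf_{x\to0}\frac{u(x)}{\log\frac{1}{|x|}}=\lim_{r\to0}\frac{\underline{u}(r)}{\log\frac{1}{r}},
\]
which is the same $m$ as in Theorem \ref{MQ2021 thm}. Set $a=(\mu(\{0\})/|\mathbb{S}^{n-1}|)^{1/(n-1)}$. Lemma \ref{uniform convergence} gives $\underline{u}_r(s)\to a\log\frac{5}{s}$ uniformly on $[\frac12,6]$. Evaluating at $s=1$ yields
\[
\underline{u}(r)-\underline{u}(5r)\to a\log 5 .
\]
The third item of Lemma \ref{Lemma:u monotone.} says the same difference tends to $m\log5$, so $m=a$.

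If one prefers to avoid quoting that item, the identification can be done directly. Put $r_k=5^{-k}r_0$ and telescope:
\[
\underline{u}(r_k)=\underline{u}(r_0)+\sum_{j<k}\bigl(\underline{u}(r_{j+1})-\underline{u}(r_j)\bigr)=ka\log5+o(k).
\]
Hence $\underline{u}(r_k)/\log\frac{1}{r_k}\to a$. Since the full limit $\lim_{r\to0}\underline{u}(r)/\log\frac1r$ exists, it must equal $a$.

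\emph{Case $\liminf_{x\to0}u(x)<\infty$.} Here $m=0$ directly from Theorem \ref{MQ2021 thm}. It remains to show $\mu(\{0\})=0$. Suppose instead $\mu(\{0\})>0$. The fundamental-solution comparison, i.e.\ the lower bound $u\ge c\log\frac1{|x|}-C$ from Theorem \ref{MQ2021 thm}, or the Wolff potential lower bound of \cite[Theorem 1.6]{KM1994} with $W^{\mu}_{1,n}(x,1)\ge(\mu(\{0\}))^{1/(n-1)}\log\frac1{|x|}$, would give $u\to\infty$. This contradicts $\liminf_{x\to0}u(x)<\infty$.

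The main obstacle is making sure that the "$m$" in item 3 of Lemma \ref{Lemma:u monotone.}, which was derived from $\underline{\mu}(\{0\})$, is tied correctly to $\mu(\{0\})$. This is exactly what the identity $\underline{\mu}(\{0\})=\mu(\{0\})$ in Lemma \ref{uniform convergence} provides, so I would cite it explicitly at that point.
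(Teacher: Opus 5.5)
Your proof is correct, but in the main case you identify $m$ by a different mechanism than the paper.

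The paper works only with the radial function $\underline{u}$. The flux identity $\underline{\mu}(B_s(0))=|\partial B_s(0)|\,(-\underline{u}'(s-0))^{n-1}$, combined with $\lim_{s\to0}(-s\underline{u}'(s\pm0))=m$ from Lemma \ref{radial symmetric n-superharmonic functions}, gives $m=(\underline{\mu}(\{0\})/|\mathbb{S}^{n-1}|)^{1/(n-1)}$ for any nonnegative radial $n$-superharmonic function. The only input it takes from the blow-up analysis is the identity $\underline{\mu}(\{0\})=\mu(\{0\})$ from Lemma \ref{uniform convergence}.

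You instead use the full conclusion $\underline{u}_r\to h$ of that lemma. You then turn the scale-$5$ increment $\underline{u}(r)-\underline{u}(5r)\to a\log5$ into a growth rate by telescoping. This avoids one-sided derivatives and fluxes, but it relies on the stronger compactness statement.

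Keep the telescoping version rather than quoting item 3 of Lemma \ref{Lemma:u monotone.} directly. That item is proved with the constant $(\underline{\mu}(\{0\})/|\mathbb{S}^{n-1}|)^{1/(n-1)}$, not with the liminf $m$ of item 1. It is the telescoping that links these two constants, not the identity $\underline{\mu}(\{0\})=\mu(\{0\})$ as your last paragraph says.

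Your case split is a real gain. The identity $\underline{\mu}(\{0\})=\mu(\{0\})$ was only proved under the standing assumption $\liminf_{x\to0}u(x)=+\infty$, and the paper's proof uses that assumption tacitly.

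In the finite case, drop the appeal to Theorem \ref{MQ2021 thm}. Its bound $u\ge m\log\frac{1}{|x|}-C$ is vacuous when $m=0$, so that alternative is circular. The argument that works is the Kilpel\"ainen--Mal\'y lower bound $u(x)\ge c\,W^{\mu}_{1,n}(x,\frac13)\ge c\,\mu(\{0\})^{1/(n-1)}\log\frac{1}{3|x|}$. The radius $\frac13$ is chosen so that $B(x,1)\subset B_2(0)$.
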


\begin{proof}

Since 
\[
\lim_{s\to0}\frac{\underline{u}(s)}{\log\frac{1}{s}}=\liminf_{x\to0}\frac{u(x)}{\log\frac{1}{|x|}}=m
\]
and 
\[
-\Delta_{n}\underline{u}(|x|)=\underline{\mu}
\]
 with $\underline{\mu}(\{0\})=\mu(\{0\})$, it suffices to prove that
\[
m=\left(\frac{\underline{\mu}(\{0\})}{|\mathbb{S}^{n-1}|}\right)^{\frac{1}{n-1}}.
\]

From Lemma \ref{radial symmetric n-superharmonic functions}, $\underline{u}(s)$
is continuous and $\underline{u}'(s\pm0)$ exists for every $s\in(0,\delta)$
for some $\delta>0$ and $\underline{u}'(s-0)=\underline{u}'(s+0)$
holds except at countably many points $s_{i}$, where $\underline{u}'(s-0)>\underline{u}'(s+0)$.
It is straightforward to check, for Radon measure $\underline{\mu}=-div(|\nabla\underline{u}|^{n-2}\nabla\underline{u})$
\[
\underline{\mu}(B_{s}(0))=\int_{\partial B_{s}(0)}(-u'(s-0))^{n-1}.
\]
So 
\[
\lim_{s\to0}(-u'(s-0))^{n-1}s^{n-1}|\mathbb{S}^{n-1}|=\underline{\mu}(\{0\})=\mu(\{0\}),
\]
which implies, by Lemma \ref{radial symmetric n-superharmonic functions}
\[
u(s)=\left(\frac{\mu(\{0\})}{|\mathbb{S}^{n-1}|}\right)^{\frac{1}{n-1}}\log\frac{1}{s}+o(\log\frac{1}{s})
\]
 and hence we draw the conclusion. 

\end{proof}

\section{Volume growth ratio estimate}

We revise \cite[Theorem 1.3]{MQ2021}. 

\begin{thm}Suppose that $(\mathbb{R}^{n},e^{2w}|dx|^{2})$ is complete
with nonnegative Ricci. Then there is a subset $E\subset\mathbb{R}^{n}$,
which is $n$-thin at $\infty,$ such that 
\[
\lim_{x\notin E,x\to\infty}\frac{w(x)}{\log\frac{1}{|x|}}=-\liminf_{x\to\infty}\frac{w(x)}{\log|x|}=\tilde{m}
\]
 and 
\begin{equation}
w(x)\ge\tilde{m}\log\frac{1}{|x|}-C\label{w lower bound}
\end{equation}
for some constant $C$, where $\tilde{m}\in[0,1]$ and $\tilde{m}=0$
if and only if $g$ is flat. 

\end{thm}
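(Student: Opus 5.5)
We pass to the inverted picture, exactly as in the proof of the Corollary after Theorem \ref{main thm 3-weak thin version}. Set $u(y)=w(y/|y|^{2})-2\log|y|$ for $y\in B_{2}(0)\backslash\{0\}$, so that $g=e^{2u}|dy|^{2}$. The conformal equation gives $-\Delta_{n}u=\mu\ge0$ on $B_{2}\backslash\{0\}$, and \cite[Prop8.1]{CHY2004} gives $u(y)\to+\infty$ as $y\to0$. Adding a constant, we may take $u\ge0$ near the origin. Since $u$ is bounded below near $0$, it extends as an $n$-superharmonic function across the point $0$, because a point has zero $n$-capacity. We then apply Theorem \ref{MQ2021 thm} to $u$ on a ball around $0$. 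This yields a set $F$, $n$-thin at $0$, and a number $m\ge0$ such that
\[
u(y)/\log(1/|y|)\to m \quad (y\to0,\ y\notin F), \qquad u(y)\ge m\log(1/|y|)-C .
\]

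The next step translates back to $w$. With $x=y/|y|^{2}$ we have $\log(1/|y|)=\log|x|$ and
\[
w(x)=u(y)-2\log|x|,
\]
so
\[
\frac{w(x)}{\log\frac{1}{|x|}}=2-\frac{u(y)}{\log|x|}\longrightarrow 2-m .
\]
We set $\tilde m:=m-2$. Then the convergence along $x\notin E:=\{y/|y|^{2}:y\in F\}$ is exactly the first claim, and the equality with $-\liminf_{x\to\infty}w/\log|x|$ follows from $m=\liminf_{y\to0}u/\log(1/|y|)$. The lower bound (\ref{w lower bound}) is the inversion of $u\ge m\log(1/|y|)-C$.

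We must also check that $E$ is $n$-thin at $\infty$. Inversion maps $A_{2^{-i-1},2^{-i}}$ to $A_{2^{i},2^{i+1}}$ and the enlarged annuli to each other. On these annuli the inversion is bi-Lipschitz after rescaling by $2^{i}$, and $\mathrm{cap}_{n}$ is conformally invariant. Hence the dyadic capacities agree up to a uniform constant, and the weighted series $\sum i^{n-1}\mathrm{cap}_{n}(\cdot)$ converges for $E$ exactly when it does for $F$.

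The main obstacle is the range $\tilde m\in[0,1]$, i.e.\ $m\in[2,3]$, together with the rigidity statement. For $\tilde m\le1$ we use completeness. If $\tilde m>1$, then by (\ref{w lower bound}) along a full-capacity set of directions one has $e^{w}\lesssim|x|^{-\tilde m}$ outside $E$. Using \cite[Lemma 5.1]{MQ2021} we choose a ray, or a radial path obtained by concatenating arcs of spheres avoiding $E$, on which this bound holds. Its $g$-length $\int^{\infty}e^{w}\,dr$ is then finite, contradicting completeness; I expect this path selection outside the thin set to be the delicate point. For $\tilde m\ge0$ and flatness, we compare with the $n$-Laplace mass. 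By the Corollary after Theorem \ref{main thm 3-weak thin version}, $m=(\mu(\{0\})/|\mathbb{S}^{n-1}|)^{1/(n-1)}$. On the $x$-side, $-\Delta_{n}w\ge0$ on all of $\mathbb{R}^{n}$, so $w$ is $n$-superharmonic and $\underline w$ is log-concave by Lemma \ref{radial symmetric n-superharmonic functions}. Hence $r\underline w'(r)$ is nonincreasing. At small $r$ it is $0$, since $w$ is smooth at $x=0$, and it tends to $-\tilde m$ at infinity. This gives $\tilde m\ge0$.

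Finally, $\tilde m=0$ forces $r\underline w'\equiv0$ and $\int_{\mathbb{R}^{n}}d(-\Delta_{n}w)=\lim|\mathbb{S}^{n-1}|(r|\underline w'|)^{n-1}=0$, using the flux formula from the proof of the last Corollary. Thus $|\nabla w|^{n-2}\mathrm{Ric}(\cdot,\cdot)e^{2w}\equiv0$. Then $w$ is $n$-harmonic on $\mathbb{R}^{n}$ and bounded below by $-C$, so it is constant by the Liouville theorem, and $g$ is flat. Conversely, flat $g$ with $w$ constant gives $\tilde m=0$.
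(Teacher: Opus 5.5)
Your overall route is the one the paper intends when it revises \cite[Theorem 1.3]{MQ2021}: invert to $u(y)=w(y/|y|^{2})-2\log|y|$, apply Theorem \ref{MQ2021 thm}, transport thinness by conformal invariance of $\mathrm{cap}_n$, then use completeness and a Liouville-type argument. However, three steps are wrong as written.

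\emph{The sign of $\tilde m$.} Your own display gives $w/\log\frac{1}{|x|}\to 2-m$, so $\tilde m=2-m$ and the range to prove is $m\in[1,2]$. With your choice $\tilde m:=m-2$, the first claim would read $\lim=-\tilde m$. Your later assertion $r\underline w'(r)\to-\tilde m$ is consistent only with $\tilde m=2-m$: from $\underline u(s)=\underline w(1/s)-2\log s$ one gets $r\underline w'(r)=-s\underline u'(s)-2\to m-2$.

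\emph{The completeness step.} For $\tilde m\le1$ you invoke (\ref{w lower bound}), but that gives $e^{w}\gtrsim|x|^{-\tilde m}$, which can never make a length finite. The upper bound you need is the other half of the first claim: $w(x)\le-(\tilde m-\epsilon)\log|x|$ for $x\notin E$ with $|x|$ large, where $\tilde m-\epsilon>1$. The path selection you call delicate is then elementary. Let $K\subset A_{2^{i},2^{i+1}}$ and let $\phi$ be admissible for $\mathrm{cap}_n(K,\mathring A_{2^{i-1},2^{i+2}})$. H\"older's inequality along each ray through $K$ gives $1\le(\log 8)^{\frac{n-1}{n}}\bigl(\int|\nabla\phi(t\theta)|^{n}t^{n-1}dt\bigr)^{\frac1n}$. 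Hence $\mathrm{cap}_n(K,\mathring A_{2^{i-1},2^{i+2}})\ge c\,\mathcal H^{n-1}(P(K))$ with $P(x)=x/|x|$. Thinness of $E$ makes $\sum_i\mathcal H^{n-1}(P(E\cap A_{2^{i},2^{i+1}}))$ finite, so by Borel--Cantelli almost every ray eventually misses $E$. Along such a ray $\int^{\infty}e^{w(t\theta)}dt<\infty$, contradicting completeness.

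\emph{The rigidity step.} The flux identity in the proof of the last Corollary computes the mass of $-\Delta_n\underline w$, not of $\mu=-\Delta_n w$. From $r\underline w'\equiv0$ it only says that the constant function $\underline w$ is $n$-harmonic. It does not give $|\nabla w|^{n-2}\mathrm{Ric}(\cdot,\cdot)e^{2w}\equiv0$; identifying that mass with $\mu(\mathbb{R}^n)$ would need a separate argument, such as $W^{1,p}$ convergence of the rescalings at infinity.

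The repair is short and avoids $\mu$ entirely. From $r\underline w'\equiv0$ and $\underline w(r)\to w(0)$ you get $\underline w\equiv w(0)$, i.e.\ $w\ge w(0)$ with equality at the origin. The strong minimum principle for $n$-superharmonic functions then makes $w$ constant. Alternatively, $\tilde m=0$ in (\ref{w lower bound}) makes $w$ bounded below. The Wolff-potential argument from the paper's proof that $\mu(\mathbb{R}^n)<\infty$ then gives $\int_1^R\mu(B_t)^{\frac{1}{n-1}}\frac{dt}{t}\le C$ for all $R$, hence $\mu=0$, and your Liouville step applies.

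\emph{The converse.} You also need to justify that flatness forces $\tilde m=0$. Use Harnack's inequality for the nonnegative $n$-harmonic function $w-\underline w(4R)$ on $B_{4R}$; here $\inf_{B_{4R}}w=\underline w(4R)$ because $\underline w$ is nonincreasing. This bounds $w(0)-\underline w(4R)$ by $C(\underline w(2R)-\underline w(4R))\le C\tilde m\log 2$, uniformly in $R$. So $\underline w$ is bounded below, and $\tilde m=0$.
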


As a consequence of (\ref{w lower bound}), we have the following
lemma.

\begin{lem}

Assume $w$ as in the above theorem and 
\[
-\Delta_{n}w=\mu,
\]
then $\mu(\mathbb{R}^{n})<+\infty$. 

\end{lem}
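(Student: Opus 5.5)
We pass to the inversion and use the mass of the point singularity at the origin. Set $u(y)=w(y/|y|^{2})-2\log|y|$ on $B_{2}(0)$, as in the proof of the Corollary after Theorem \ref{main thm 3-weak thin version}. Then $u$ is $n$-superharmonic in $B_{2}(0)\backslash\{0\}$ with $-\Delta^{y}_{n}u=\mu^{*}\ge0$. Since the $n$-Laplacian is conformally invariant under Möbius maps (the $n$-energy is), $\mu^{*}$ restricted to $B_{2}\backslash\{0\}$ is the push-forward of $\mu$ restricted to $\{|x|>1/2\}$ under the inversion. Because $\mu$ is a Radon measure, $\mu(B_{1}(0))<\infty$ trivially. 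So it suffices to show $\mu^{*}(B_{2}(0)\backslash\{0\})<\infty$.

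The plan is to use the lower bound (\ref{w lower bound}) to show that $u$ is bounded below near $0$:
\[
u(y)=w(y/|y|^{2})+2\log\frac{1}{|y|}\ge(2-\tilde{m})\log\frac{1}{|y|}-C\ge-C,
\]
since $\tilde{m}\le1$. Adding a constant, we get $u\ge0$ in $B_{2}(0)\backslash\{0\}$. Moreover $u\to+\infty$ at $0$ (by \cite[Prop8.1]{CHY2004}, or directly from the display when $\tilde m<2$). Hence defining $u(0)=+\infty$ gives a lower semicontinuous function.

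The key step is to show that this extension is $n$-superharmonic in the whole of $B_{2}(0)$. A point has zero $n$-capacity, and a function that is nonnegative, $n$-superharmonic off a polar set, and locally bounded below extends across that set as an $n$-superharmonic function (the removability theorem for superharmonic functions bounded below, \cite[Theorem 7.35]{HKM1993}). Alternatively, one can verify the comparison principle on balls containing $0$ directly, using $u(0)=+\infty$. Once the extension is in hand, the Riesz measure $\mu^{*}$ of $u$ on $B_{2}(0)$ is a Radon measure by \cite{KM1994}. In particular $\mu^{*}(\overline{B_{1}(0)})<\infty$, and so $\mu(\{|x|\ge1\})\le\mu^{*}(\overline{B_{1}(0)}\backslash\{0\})<\infty$.

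The main obstacle is justifying that the measure transforms correctly under inversion. We would check this via the weak formulation: for $\phi\in C^{\infty}_{0}(\{|x|>1/2\})$, the change of variables $x=y/|y|^{2}$ gives
\[
\int|\nabla_{x}w|^{n-2}\nabla_{x}w\cdot\nabla_{x}\phi\,dx=\int|\nabla_{y}u|^{n-2}\nabla_{y}u\cdot\nabla_{y}\tilde{\phi}\,dy,
\]
where $\tilde{\phi}(y)=\phi(y/|y|^{2})$. This holds because the conformal weights cancel in dimension $n$ and $-2\log|y|$ is $n$-harmonic away from $0$, which must be checked at the level of the vector field: $|\nabla(a+b)|^{n-2}\nabla(a+b)$ is not additive. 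The cleanest route is therefore to note that $\mu^{*}$ equals the curvature-density formula from the introduction, $Ric_{g}(\cdot,\cdot)|\nabla u|^{n-2}e^{2u}dy$. This is the same geometric quantity as for $w$, and both integrate to the same $\int Ric_{g}(\nu,\nu)|\nabla w|_{g}^{n-2}d\mu_{g}$-type expression, so the two measures correspond.
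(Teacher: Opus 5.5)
There is a genuine gap, and it sits at the step you flagged yourself: your proposed fix does not close it. Conformal invariance of the $n$-energy shows that the Riesz measure of $\tilde w(y)=w(y/|y|^{2})$ on $B_{2}(0)\backslash\{0\}$ is the push-forward of $\mu$. It says nothing about $u=\tilde w+2\log\frac{1}{|y|}$, because $|\nabla(a+b)|^{n-2}\nabla(a+b)$ is not additive. So your displayed weak identity is false with $u$ on the right; it holds with $\tilde w$.

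The appeal to the curvature formula does not help either. Express both measures on $\{|x|>1/2\}$. Then $d\mu=|a|^{n-4}Ric_{g}(a,a)\,dx$ and $d\mu^{*}=|a+b|^{n-4}Ric_{g}(a+b,a+b)\,dx$, where $a=\nabla_{x}w$ and $b=2x/|x|^{2}$. This is the Ricci curvature of one metric evaluated in two different directions with two different weights, because $w$ and $w+2\log|x|$ are different functions on $M$. No domination $\mu\le C\mu^{*}$ follows. The density of $\mu^{*}$ vanishes wherever $\nabla_{x}w=-2x/|x|^{2}$, or wherever $a+b$ is a null direction of $Ric_{g}$, while the density of $\mu$ need not. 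Only in the radial case, where $0\le-rw'\le1$, does one get $\mu\le\mu^{*}$. Hence $\mu^{*}(B_{1}\backslash\{0\})<\infty$, which your removability argument does establish correctly, does not give $\mu(\{|x|\ge1\})<\infty$.

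You also cannot rerun your argument with $\tilde w$, whose measure really is the push-forward. From (\ref{w lower bound}) you only get $\tilde w(y)\ge\tilde m\log|y|-C$, which tends to $-\infty$ when $\tilde m>0$. So $\tilde w$ is not bounded below near $0$, removability does not apply, and finiteness of its mass near $0$ is exactly the statement of the lemma. The inversion therefore reduces the lemma to itself.

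What is missing is a quantitative link between the logarithmic lower bound on $w$ and the mass of $\mu$. The paper obtains it from the Kilpel\"ainen--Mal\'y lower Wolff-potential bound applied to $z_{R}=w-\inf_{B_{2R}(0)}w\ge0$. This gives $\int_{1}^{R}\mu(B_{t}(0))^{\frac{1}{n-1}}\frac{dt}{t}\le Cz_{R}(0)$, and $z_{R}(0)\le C(1+\log R)$ by (\ref{w lower bound}). Hence $\mu(B_{T}(0))^{\frac{1}{n-1}}\log\frac{R}{T}\le C(1+\log R)$ for $1\le T<R$. Letting $R\to\infty$ bounds $\mu(B_{T}(0))$ uniformly in $T$.
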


\begin{proof}

For $R\ge2$ set 
\[
z_{R}(x):=w(x)-\inf_{B_{2R}(0)}w.
\]
Then $z_{R}\ge0$ in $B_{2R}(0)$ and $-\Delta_{n}z_{R}=\mu$. The
standard lower Wolff-potential estimate gives 
\[
\int^{R}_{1}\mu(B_{t}(0))^{\frac{1}{n-1}}\frac{dt}{t}\le Cz_{R}(0).
\]
Then (\ref{w lower bound}) implies 
\[
z_{R}(0)=w(0)-\inf_{B_{2R}(0)}w\le C(1+\log R).
\]
Therefore, 
\[
\int^{R}_{1}\mu(B_{t}(0))^{\frac{1}{n-1}}\frac{dt}{t}\le C(1+\log R).
\]

For $1\le T<R$, 
\[
\mu(B_{T}(0))^{\frac{1}{n-1}}\log\frac{R}{T}\le C(1+\log R).
\]
Letting $R\to\infty$ gives a bound for $\mu(B_{T}(0))$ independent
of $T$: 
\[
\sup_{T>0}\mu(B_{T}(0))<\infty.
\]
By monotone convergence, 
\[
\mu(\mathbb{R}^{n})=\lim_{T\to\infty}\mu(B_{T}(0))<\infty.
\]

\end{proof}

Since $e^{2w}|dx|^{2}$ is complete, we may assume $0\le\tilde{m}\le1$.
This section relates the asymptotic exponent $\tilde{m}$ of the conformal
factor to the asymptotic volume ratio of the manifold. The key is
to use the asymptotic radial symmetry of $w$ outside a strong $\mathcal{E}$-set
established in Section 2 to show that the volume growth is exactly
determined by the radial infimum $\underline{w}(r)=\inf_{|x|=r}w(x).$
Now we prove Theorem \ref{main thm2 volume ratio}.

\begin{proof}

Notice that the asymptotic volume ratio $\nu$ is independent of the
center $p$ of the geodesic ball. So we may choose $p=0$. We let
\[
\underline{w}(r)=\inf_{|x|=r}w(x).
\]
We define a $C^{0}$ conformal metric as 
\[
\underline{g}=e^{2\underline{w}(|x|)}|dx|^{2}.
\]
 We use $B^{\underline{g}}_{l}(0)$ to denote 
\[
\left\{ x;\int^{|x|}_{0}e^{\underline{w}(s)}ds<l\right\} .
\]
We define the $\underline{g}$-volume by 
\[
V_{\underline{g}}(B^{\underline{g}}_{l}(0))=n\omega_{n}\int^{\alpha}_{0}e^{n\underline{w}(s)}s^{n-1}ds
\]
where $\alpha$ satisfies 
\begin{equation}
l=l(\alpha)=\int^{\alpha}_{0}e^{\underline{w}(s)}ds.\label{l(=00005Calpha)}
\end{equation}
First we calculate 
\[
\lim_{l\to\infty}\frac{V_{\underline{g}}(B^{\underline{g}}_{l}(0))}{\omega_{n}l^{n}}.
\]

By Lemma \ref{Generalized L'Hospital rule}, 
\begin{align*}
\lim_{l\to+\infty}\frac{V_{\underline{g}}(B^{\underline{g}}_{l}(0))}{\omega_{n}l^{n}} & =\lim_{\alpha\to+\infty}\frac{\int^{\alpha}_{0}e^{n\underline{w}(s)}n\omega_{n}s^{n-1}ds}{\omega_{n}l(\alpha)^{n}}\\
 & =\lim_{\alpha\to+\infty}\frac{e^{n\underline{w}(\alpha)}n\omega_{n}\alpha^{n-1}}{n\omega_{n}l^{n-1}e^{\underline{w}(\alpha)}}\\
 & =\left(\lim_{\alpha\to+\infty}\frac{e^{\underline{w}(\alpha)}\alpha}{l}\right)^{n-1}\\
 & =\left(\lim_{\alpha\to+\infty}\frac{e^{\underline{w}(\alpha)}+\alpha e^{\underline{w}(\alpha)}\underline{w}'(\alpha)}{e^{\underline{w}(\alpha)}}\right)^{n-1}\\
 & =\left(\lim_{\alpha\to+\infty}\left(1+\alpha\underline{w}'(\alpha)\right)\right)^{n-1}.
\end{align*}
To calculate $\lim_{\alpha\to+\infty}\alpha\underline{w}'(\alpha),$
by inversion $u(\frac{x}{|x|^{2}})=w(x)+2\log|x|$, we know $u(y)$
is $n$-superharmonic in $B_{\delta}(0)$ and $\underline{w}(x)+2\log|x|=\underline{u}(\frac{1}{|x|})$.
Then apply Lemma \ref{radial symmetric n-superharmonic functions}
to $\underline{u}$ and then back to $\underline{w}$, we get, 
\[
\lim_{\alpha\to+\infty}\alpha\underline{w}'(\alpha)=-\tilde{m}.
\]
Then 
\[
\lim_{l\to+\infty}\frac{V_{\underline{g}}(B^{\underline{g}}_{l}(0))}{\omega_{n}l^{n}}=\nu=(1-\tilde{m})^{n-1}.
\]

Then we compare $V_{\underline{g}}(B^{\underline{g}}_{l}(0))$ with
$V_{g}(B^{g}_{l}(0))$, for which we need to use Theorem \ref{main thm1 refined analytic result of u}.
For $x\in\partial B^{\underline{g}}_{l}(0)$, we find a minimizing
$g$-geodesic $\gamma$ which joins $0$ with $x$. We use $d(\cdot,\cdot)$
to denote the distance function and $Length(\cdot,\cdot)$ to denote
the length of a curve induced by a certain metric. We know 
\[
d_{g}(0,x)=Length(\gamma,g)\ge l.
\]
So $B^{g}_{l}(0)\subset B^{\underline{g}}_{l}(0)$ and if we define
$\alpha$ by (\ref{l(=00005Calpha)}), then 
\begin{align*}
V_{g}(B^{g}_{l}(0)) & \le V_{g}(B^{\underline{g}}_{l}(0))=\int_{B_{\alpha}(0)}e^{nw}dx.
\end{align*}
We will show that
\begin{equation}
\lim_{\alpha\to+\infty}\frac{\int_{B_{\alpha}(0)}e^{nw}dx}{\int_{B_{\alpha}(0)}e^{n\underline{w}}dx}=1.\label{volume ratio go to 1}
\end{equation}
Obviously, 
\[
\frac{\int_{B_{\alpha}(0)}e^{nw}dx}{\int_{B_{\alpha}(0)}e^{n\underline{w}}dx}\ge1.
\]
On the other hand, since $w=\underline{w}+o(1)$ for $x\in\mathbb{R}^{n}\backslash E$,
where $E$ is a strong $\mathcal{E}$-set, i.e. 
\[
E=\bigcup^{\infty}_{i=1}B_{i}
\]
where $B_{i}=B_{r_{i}}(x_{i})$ with 
\begin{equation}
\sum_{i}\left(\log_{+}\frac{|x_{i}|}{r_{i}}\right)^{1-n-\eta}<+\infty\label{Strong epsilon set}
\end{equation}
 for some $\eta>0$. 
\begin{align*}
\lim_{\alpha\to+\infty}\frac{\int_{B_{\alpha}(0)}e^{nw}dx}{\int_{B_{\alpha}(0)}e^{n\underline{w}}dx} & =\lim_{\alpha\to+\infty}\frac{\int_{B_{\alpha}(0)\backslash E}e^{nw}dx+\int_{B_{\alpha}(0)\cap E}e^{nw}dx}{\int_{B_{\alpha}(0)}e^{n\underline{w}}dx}.
\end{align*}

Notice that 
\[
\frac{\int_{B_{\alpha}(0)\backslash E}e^{nw}dx}{\int_{B_{\alpha}(0)}e^{n\underline{w}}dx}\le\frac{\int_{B_{\alpha}(0)}e^{n\underline{w}+o(1)}dx}{\int_{B_{\alpha}(0)}e^{n\underline{w}}dx}.
\]
 It is easy to construct a continuous function $\zeta(l)=o(1)$ as
$l\to+\infty$ such that 
\[
n\underline{w}(l)+o(1)\le n\underline{w}(l)+\zeta(l).
\]
\begin{align}
\lim_{\alpha\to+\infty}\frac{\int_{B_{\alpha}(0)\backslash E}e^{nw}dx}{\int_{B_{\alpha}(0)}e^{n\underline{w}}dx} & \le\lim_{\alpha\to+\infty}\frac{\int_{B_{\alpha}(0)}e^{n\underline{w}+\zeta(|x|)}dx}{\int_{B_{\alpha}(0)}e^{n\underline{w}}dx}\nonumber \\
 & =\lim_{\alpha\to+\infty}\frac{e^{n\underline{w}(\alpha)+\zeta(\alpha)}n\omega_{n}\alpha^{n-1}}{e^{n\underline{w}(\alpha)}n\omega_{n}\alpha^{n-1}}\nonumber \\
 & =1.\label{volume ratio limit less than 1}
\end{align}

Now we estimate 
\[
\lim_{l\to+\infty}\frac{\int_{B_{\alpha}(0)\cap E}e^{nw}dx}{\int_{B_{\alpha}(0)}e^{n\underline{w}}dx}.
\]
Since from \cite[Theorem 1.3]{MQ2021}, 
\[
\underline{w}\ge\tilde{m}\log\frac{1}{r}-C,\,\tilde{m}\in[0,1]
\]
we have 
\[
\int_{B_{\alpha}(0)}e^{n\underline{w}}dx\to+\infty,{\rm as}\,l\to\infty.
\]
 Then by using Stolz's theorem, it suffices to prove that, uniformly
for $\delta\in(\frac{1}{2},1]$, 
\[
\frac{\int_{A_{\delta2^{k-1},\delta2^{k}}\cap E}e^{nw}dx}{\int_{A_{\delta2^{k-1},\delta2^{k}}}e^{n\underline{w}}dx}\to0,\,{\rm as}\,k\to+\infty.
\]
Hereinafter, for brevity, the reader may ignore $\delta$ and this
does not affect the reading.

For $B_{i}=B_{r_{i}}(x_{i})$ which belongs to the strong $\mathcal{E}$-set
$E$ with $B_{i}\cap A_{\delta2^{k-1},\delta2^{k}}\ne\emptyset$,
we may assume $r_{i}\ll|x_{i}|\sim2^{k}$. We fixed some $\delta_{1}>0$
and from \cite[Theorem 1.6]{KM1994}, for any $x\in B_{r_{i}}(x_{i})$,
\[
w(x)-\inf_{B_{3\delta_{1}2^{k}}(x)}w\le C\left(\inf_{B_{\delta_{1}2^{k}}(x)}w-\inf_{B_{3\delta_{1}2^{k}}(x)}w\right)+CW^{\mu}_{1,n}(x,2\delta_{1}2^{k}).
\]
Given $\varepsilon>0$, since $r_{i}\ll\delta_{1}2^{k}$, for $k$
large, we may assume
\[
\left|\inf_{B_{\delta_{1}2^{k}}(x)}w-\inf_{B_{\delta_{1}2^{k}}(x)}\underline{w}\right|<\varepsilon,\,\left|\inf_{B_{3\delta_{1}2^{k}}(x)}w-\inf_{B_{3\delta_{1}2^{k}}(x)}\underline{w}\right|<\varepsilon.
\]
Then 
\[
w(x)-\inf_{B_{3\delta_{1}2^{k}}(x)}\underline{w}\le C+CW^{\mu}_{1,n}(x,\delta_{1}2^{k+1}).
\]
Moreover, since 
\[
\inf_{A_{\delta2^{k-1},\delta2^{k}}}w=\inf_{A_{\delta2^{k-1},\delta2^{k}}}\underline{w}\,\,{\rm and}\,\,\left|\inf_{B_{3\delta_{1}2^{k}}(x)}\underline{w}-\inf_{A_{\delta2^{k-1},\delta2^{k}}}\underline{w}\right|\le C
\]
we have 
\[
\left|\inf_{B_{3\delta_{1}2^{k}}(x)}\underline{w}-\inf_{A_{\delta2^{k-1},\delta2^{k}}}w\right|\le C.
\]
Then
\begin{align*}
\int_{B_{r_{i}}(x_{i})}e^{nw}dx & \le C\int_{B_{r_{i}}(x_{i})}\exp(n\cdot\inf_{A_{\delta2^{k-1},\delta2^{k}}}w+C+n\cdot W^{\mu}_{1,n}(x,\delta_{1}2^{k+1}))dx\\
 & \le C\exp(n\cdot\inf_{A_{\delta2^{k-1},\delta2^{k}}}w)\int_{B_{r_{i}}(x_{i})}\exp(n\cdot W^{\mu}_{1,n}(x,\delta_{1}2^{k+1}))dx.
\end{align*}
Notice that for $\delta_{1}$ small, 
\begin{align*}
W^{\mu}_{1,n}(x,\delta_{1}2^{k+1}) & \le\int^{3r_{i}}_{0}+\int^{2^{k-3}}_{3r_{i}}\left(\mu(B(x,t))\right)^{\frac{1}{n-1}}\frac{dt}{t}\\
 & \le\mu(B(x,2^{k-3}))^{\frac{1}{n-1}}\log\frac{2^{k-3}}{3r_{i}}+W^{\mu}_{1,n}(x,3r_{i}).
\end{align*}
From \cite[Proposition 4.1]{MQ2021} or \cite[Lemma 2.2]{MW2025},
we know 
\begin{align*}
\int_{B_{3r_{i}}(x_{i})}\exp(n\cdot W^{\mu}_{1,n}(x,2^{k-3}))dx & \le\left(\frac{2^{k}}{3r_{i}}\right)^{n\mu(B(x_{i},2^{k-2}))^{\frac{1}{n-1}}}\int_{B_{3r_{i}}(x_{i})}\exp\left(n\cdot W^{\mu}_{1,n}(x,3r_{i})\right)dx\\
 & \le\left(\frac{2^{k}}{3r_{i}}\right)^{n\mu(B(x_{i},2^{k-2}))^{\frac{1}{n-1}}}Cr^{n}_{i}\\
 & \le2^{k\cdot n\mu(B(x_{i},2^{k-2}))^{\frac{1}{n-1}}}Cr^{n-n\mu(B(x_{i},2^{k-2}))^{\frac{1}{n-1}}}_{i}.
\end{align*}
Noticing that we may assume each $B_{i}$ intersects with at most
two $A_{\delta2^{k-1},\delta2^{k}}$. 
\begin{align*}
 & \frac{\int_{A_{\delta2^{k-1},\delta2^{k}}\cap E}e^{nw}dx}{\int_{A_{\delta2^{k-1},\delta2^{k}}}e^{n\underline{w}}dx}\\
\le & \frac{\sum_{B_{i}\cap A_{\delta2^{k-1},\delta2^{k}}\ne\emptyset}\int_{B_{r_{i}}(x_{i})}e^{nw}dx}{\int_{A_{\delta2^{k-1},\delta2^{k}}}e^{n\underline{w}}dx}\\
\le & \frac{C\exp(n\cdot\inf_{A_{\delta2^{k-1},\delta2^{k}}}w)\sum2^{k\cdot n\mu(B(x_{i},2^{k-2}))^{\frac{1}{n-1}}}r^{n-n\mu(B(x_{i},2^{k-2}))^{\frac{1}{n-1}}}_{i}}{\int_{A_{\delta2^{k-1},\delta2^{k}}}e^{n\underline{w}}dx}\\
\le & \frac{C}{|A_{\delta2^{k-1},\delta2^{k}}|}\sum2^{k\cdot n\mu(B(x_{i},2^{k-2}))^{\frac{1}{n-1}}}r^{n-n\mu(B(x_{i},2^{k-2}))^{\frac{1}{n-1}}}_{i}\\
\le & C\cdot\sum2^{k\left(n\mu(B(x_{i},2^{k-2}))^{\frac{1}{n-1}}-n\right)}r^{n-n\mu(B(x_{i},2^{k-2}))^{\frac{1}{n-1}}}_{i}\\
= & C\sum_{B_{i}\cap A_{\delta2^{k-1},\delta2^{k}}\ne\emptyset}\left(\frac{r_{i}}{|x_{i}|}\right)^{n-n\mu(B(x_{i},2^{k-2}))^{\frac{1}{n-1}}}.
\end{align*}
At last since $\mu(B(x_{i},2^{k-2}))^{\frac{1}{n-1}}$ is arbitrarily
small and from (\ref{Strong epsilon set}) we have
\[
\sum_{B_{i}\cap A_{\delta2^{k-1},\delta2^{k}}\ne\emptyset}\left(\frac{r_{i}}{|x_{i}|}\right)^{n-n\mu(B(x_{i},2^{k-2}))^{\frac{1}{n-1}}}
\]
 is arbitrarily small, for $k$ large. Then we know, uniformly for
$\delta\in(\frac{1}{2},1]$, 
\[
\lim_{k\to\infty}\frac{\int_{A_{\delta2^{k-1},\delta2^{k}}\cap E}e^{nw}dx}{\int_{A_{\delta2^{k-1},\delta2^{k}}}e^{n\underline{w}}dx}=0.
\]
 Then since 
\[
\int_{\mathbb{R}^{n}}e^{n\underline{w}}dx=\infty,
\]
 by Stolz's theorem, one easily get 
\begin{equation}
\lim_{l\to\infty}\frac{\int_{B^{\underline{g}}_{l}(0)\cap E}e^{nw}dx}{\int_{B^{\underline{g}}_{l}(0)}e^{n\underline{w}}dx}=0.\label{thit set volume ratio limit 0}
\end{equation}
Then combine (\ref{volume ratio limit less than 1}) and (\ref{thit set volume ratio limit 0}),
we have proved (\ref{volume ratio go to 1}). Then we know 
\[
\lim_{l\to\infty}\frac{\int_{B^{g}_{l}(0)}e^{nw}dx}{\omega_{n}l^{n}}\le\nu.
\]

Then we  prove the opposite side inequality. We  prove, uniformly
in $\overrightarrow{e}\in\mathbb{S}^{n-1}_{1}(0)\subset\mathbb{R}^{n}$,
\begin{equation}
\lim_{s\to+\infty}\frac{\int^{s}_{0}e^{w(t\overrightarrow{e})}dt}{\int^{s}_{0}e^{\underline{w}(t\overrightarrow{e})}dt}=1.\label{length ratio go to 1}
\end{equation}
It is obvious that the limit $\ge1$. For $\delta\in(\frac{1}{2},1]$,
we consider
\[
B_{r_{i}}(x_{i})\cap\{t\overrightarrow{e};t\in[\delta2^{k-1},\delta2^{k}]\}=\{t\overrightarrow{e};t\in(\alpha_{i},\beta_{i}),i=1,\cdots,n_{k}\}.
\]
Then, 
\[
\sum^{n_{k}}_{i=1}(\beta_{i}-\alpha_{i})\ll2^{k}.
\]

Consider 
\begin{align*}
\frac{\int^{\delta2^{k}}_{\delta2^{k-1}}e^{w(t\overrightarrow{e})}dt}{\int^{\delta2^{k}}_{\delta2^{k-1}}e^{\underline{w}(t\overrightarrow{e})}dt} & \le\frac{\int_{[\delta2^{k-1},\delta2^{k}]\backslash\cup_{i}[\alpha_{i},\beta_{i}]}e^{w(t\overrightarrow{e})}dt+\sum_{i}\int^{\beta_{i}}_{\alpha_{i}}e^{w(t\overrightarrow{e})}dt}{\int^{\delta2^{k}}_{\delta2^{k-1}}e^{\underline{w}(t\overrightarrow{e})}dt}.
\end{align*}
For the first term on the right, since 
\[
w(x)=\underline{w}(|x|)+o(1),\,\,x\to+\infty,x\notin\bigcup_{i}B^{e}_{r_{i}}(x_{i}).
\]
So for given $\varepsilon>0$, there exists $N>0$ such that if $k>N$,
\[
\frac{\int_{[\delta2^{k-1},\delta2^{k}]\backslash\cup_{i}[\alpha_{i},\beta_{i}]}e^{w(t\overrightarrow{e})}dt}{\int^{\delta2^{k}}_{\delta2^{k-1}}e^{\underline{w}(t\overrightarrow{e})}dt}\le\frac{\int^{\delta2^{k}}_{\delta2^{k-1}}e^{\underline{w}(t\overrightarrow{e})+\varepsilon}dt}{\int^{\delta2^{k}}_{\delta2^{k-1}}e^{\underline{w}(t\overrightarrow{e})}dt}\le e^{\varepsilon}.
\]
So we know that uniformly for $\delta\in(\frac{1}{2},1]$
\[
\limsup_{k\to+\infty}\frac{\int_{[\delta2^{k-1},\delta2^{k}]\backslash\cup_{i}[\alpha_{i},\beta_{i}]}e^{w(t\overrightarrow{e})}dt}{\int^{\delta2^{k}}_{\delta2^{k-1}}e^{\underline{w}(t\overrightarrow{e})}dt}\le1.
\]
For the second term 
\[
\sum_{i}\frac{\int^{\beta_{i}}_{\alpha_{i}}e^{w(t\overrightarrow{e})}dt}{\int^{\delta2^{k}}_{\delta2^{k-1}}e^{\underline{w}(t\overrightarrow{e})}dt}
\]

We notice that 
\begin{align*}
 & \int^{\beta_{i}}_{\alpha_{i}}e^{w(t\overrightarrow{e})}dt\\
= & \int_{B_{r_{i}}(x_{i})\cap\{t\overrightarrow{e};\delta2^{k-1}\le t\le\delta2^{k}\}}e^{w(t\overrightarrow{e})}dt\\
\le & \int_{B_{r_{i}}(x_{i})\cap\{t\overrightarrow{e};\delta2^{k-1}\le t\le\delta2^{k}\}}\exp(\inf_{A_{2^{k-1},2^{k}}}w+C+W^{\mu}_{1,n}(t\overrightarrow{e},\delta_{1}2^{k+1}))dt\\
\le & C\exp\left(\inf_{A_{\delta2^{k-1},\delta2^{k}}}w\right)\int_{B_{r_{i}}(x_{i})\cap\{t\overrightarrow{e};\delta2^{k-1}\le t\le\delta2^{k}\}}\exp(W^{\mu}_{1,n}(t\overrightarrow{e},\delta_{1}2^{k+1}))dt\\
\le & C\exp\left(\inf_{A_{\delta2^{k-1},\delta2^{k}}}w\right)\int_{B_{r_{i}}(x_{i})\cap\{t\overrightarrow{e};\delta2^{k-1}\le t\le\delta2^{k}\}}\exp\left(\mu(B(t\overrightarrow{e},2^{k-3}))^{\frac{1}{n-1}}\log\frac{2^{k-3}}{3r_{i}}+W^{\mu}_{1,n}(t\overrightarrow{e},3r_{i})\right)dt\\
\le & C\exp\left(\inf_{A_{\delta2^{k-1},\delta2^{k}}}w\right)\left(\frac{2^{k-3}}{3r_{i}}\right)^{\mu(\mathbb{R}^{n}\backslash B(0,2^{k-3}))^{\frac{1}{n-1}}}\int^{\beta_{i}}_{\alpha_{i}}\exp\left(W^{\mu}_{1,n}(t\overrightarrow{e},3r_{i})\right)dt
\end{align*}

From Lemma \ref{integral along a line} below, we have 
\begin{align*}
 & \int^{\beta_{i}}_{\alpha_{i}}\exp\left(W^{\mu}_{1,n}(t\overrightarrow{e},3r_{i})\right)dt\\
\le & \beta_{i}-\alpha_{i}+\frac{2^{\frac{1}{n-1}}}{\frac{1}{n-1}-\mu(A_{\delta2^{k-2},\delta2^{k+1}})^{\frac{1}{n-1}}}(3r_{i})^{\frac{1}{n-1}}(\beta_{i}-\alpha_{i})^{\frac{n-2}{n-1}}\mu(A_{\delta2^{k-2},\delta2^{k+1}})^{\frac{1}{n-1}}\\
\le & Cr_{i}.
\end{align*}

So 
\begin{align*}
\int^{\beta_{i}}_{\alpha_{i}}e^{w(t\overrightarrow{e})}dt & \le C\exp\left(\inf_{A_{\delta2^{k-1},\delta2^{k}}}w\right)\left(\frac{2^{k-3}}{3r_{i}}\right)^{\mu(\mathbb{R}^{n}\backslash B(0,2^{k-3}))^{\frac{1}{n-1}}}r_{i}.
\end{align*}
Then 
\begin{align*}
\sum_{i}\frac{\int^{\beta_{i}}_{\alpha_{i}}e^{w(t\overrightarrow{e})}dt}{\int^{\delta2^{k}}_{\delta2^{k-1}}e^{\underline{w}(t\overrightarrow{e})}dt} & \le C\sum_{B_{r_{i}}(x_{i})\cap\{t\overrightarrow{e};t\in[\delta2^{k-1},\delta2^{k}]\}\ne\emptyset}\frac{\exp\left(\inf_{A_{\delta2^{k-1},\delta2^{k}}}w\right)\left(\frac{2^{k-3}}{3r_{i}}\right)^{\mu(\mathbb{R}^{n}\backslash B(0,2^{k-3}))^{\frac{1}{n-1}}}r_{i}}{\int^{\delta2^{k}}_{\delta2^{k-1}}e^{\underline{w}(t\overrightarrow{e})}dt}\\
 & =C\sum_{B_{r_{i}}(x_{i})\cap\{t\overrightarrow{e};t\in[\delta2^{k-1},\delta2^{k}]\}\ne\emptyset}\frac{\left(\frac{2^{k-3}}{3r_{i}}\right)^{\mu(\mathbb{R}^{n}\backslash B(0,2^{k-3}))^{\frac{1}{n-1}}}r_{i}}{\int^{\delta2^{k}}_{\delta2^{k-1}}\exp\left(\underline{w}(t\overrightarrow{e})-\inf_{A_{\delta2^{k-1},\delta2^{k}}}w\right)dt}\\
 & \le C\sum_{B_{r_{i}}(x_{i})\cap\{t\overrightarrow{e};t\in[\delta2^{k-1},\delta2^{k}]\}\ne\emptyset}\frac{\left(\frac{2^{k-3}}{3r_{i}}\right)^{\mu(\mathbb{R}^{n}\backslash B(0,2^{k-3}))^{\frac{1}{n-1}}}r_{i}}{2^{k}}\\
 & \le C\sum_{B_{r_{i}}(x_{i})\cap\{t\overrightarrow{e};t\in[\delta2^{k-1},\delta2^{k}]\}\ne\emptyset}\left(\frac{r_{i}}{2^{k}}\right)^{1-\mu(\mathbb{R}^{n}\backslash B(0,2^{k-3}))^{\frac{1}{n-1}}}
\end{align*}
Again from (\ref{Strong epsilon set}), we know 
\[
\sum_{B_{r_{i}}(x_{i})\cap\{t\overrightarrow{e};t\in[\delta2^{k-1},\delta2^{k}]\}\ne\emptyset}\left(\frac{r_{i}}{2^{k}}\right)^{1-\mu(\mathbb{R}^{n}\backslash B(0,2^{k-3}))^{\frac{1}{n-1}}}
\]
is arbitrarily small and uniformly in $\overrightarrow{e}$. 

Then uniformly in $\overrightarrow{e}$
\[
\lim_{k\to\infty}\frac{\int^{\delta2^{k}}_{\delta2^{k-1}}e^{w(t\overrightarrow{e})}dt}{\int^{\delta2^{k}}_{\delta2^{k-1}}e^{\underline{w}(t\overrightarrow{e})}dt}=1
\]
and hence by Stolz's theorem again, (\ref{length ratio go to 1})
holds. 

Then from (\ref{length ratio go to 1}), 
\begin{equation}
B^{g}_{l}(0)\supset B^{\underline{g}}_{l-\varepsilon(l)}(0)\label{B^g contains B^=00005Cunderlineg_g}
\end{equation}
 for some $0<\varepsilon(l)=o(l)$. Then 
\[
\lim_{l\to\infty}\frac{V_{g}(B^{g}_{l}(0))}{\omega_{n}l^{n}}\ge\lim_{l\to\infty}\frac{V_{\underline{g}}(B^{\underline{g}}_{l-\varepsilon(l)}(0))}{\omega_{n}l^{n}}=\nu.
\]
 Then we proved the theorem.

\end{proof}

\begin{lem}\label{integral along a line}

Suppose that $\overrightarrow{e}$ is a unit vector, and $[\alpha,\beta]\subset\mathbb{R}^{+},D>1$,
and $\alpha>2D$. We let $\Omega$ be the $D$ neighborhood of $\{t\overrightarrow{e};t\in[\alpha,\beta]\}$
in $\mathbb{R}^{n}$. Assume $\mu(\Omega)^{\frac{1}{n-1}}<\frac{1}{n-1}$.
Then 
\[
\int^{\beta}_{\alpha}\exp\left(W^{\mu}_{1,n}\left(t\overrightarrow{e},D\right)\right)dt\le\beta-\alpha+\frac{2^{\frac{1}{n-1}}}{\frac{1}{n-1}-\mu(\Omega)^{\frac{1}{n-1}}}D^{\frac{1}{n-1}}(\beta-\alpha)^{\frac{n-2}{n-1}}\mu(\Omega)^{\frac{1}{n-1}}.
\]

\end{lem}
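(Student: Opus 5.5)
The plan is to prove the lemma by a Gronwall-type absorption argument in the scale variable $s$ of the Wolff potential. Write $a=\frac{1}{n-1}$, $M=\mu(\Omega)^{a}<a$, and
\[
W(x,s)=W^{\mu}_{1,n}(x,s)=\int^{s}_{0}\mu(B(x,\sigma))^{a}\frac{d\sigma}{\sigma}.
\]
For $0<s\le D$ define
\[
G(s)=\int^{\beta}_{\alpha}\left(e^{W(t\overrightarrow{e},s)}-1\right)dt .
\]
The claim is then exactly $G(D)\le\frac{2^{a}}{a-M}D^{a}(\beta-\alpha)^{1-a}M$. Since $s\mapsto W(x,s)$ is absolutely continuous with derivative $\mu(B(x,s))^{a}/s$, Fubini gives
\[
G(s)=\int^{s}_{0}\int^{\beta}_{\alpha}\mu(B(t\overrightarrow{e},\sigma))^{a}e^{W(t\overrightarrow{e},\sigma)}\,dt\,\frac{d\sigma}{\sigma}.
\]

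\textbf{Step 1 (split the exponential).} Write $e^{W}=1+(e^{W}-1)$ inside this integral.

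For the term coming from $1$, use H\"older in $t$ with exponents $1/a$ and $1/(1-a)$:
\[
\int^{\beta}_{\alpha}\mu(B(t\overrightarrow{e},\sigma))^{a}dt\le(\beta-\alpha)^{1-a}\Big(\int^{\beta}_{\alpha}\mu(B(t\overrightarrow{e},\sigma))dt\Big)^{a}.
\]
By Fubini, each $y$ lies in $B(t\overrightarrow{e},\sigma)$ only for $t$ in an interval of length at most $2\sigma$. Moreover, since $\sigma\le D$, every such ball lies in $\Omega$. Hence
\[
\int^{\beta}_{\alpha}\mu(B(t\overrightarrow{e},\sigma))dt\le2\sigma\mu(\Omega),
\]
so this term is at most $(\beta-\alpha)^{1-a}2^{a}\sigma^{a}M$. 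Integrating in $d\sigma/\sigma$ from $0$ to $s$ gives $A s^{a}$, where $A=\frac{2^{a}}{a}(\beta-\alpha)^{1-a}M$.

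For the remaining term, use the pointwise bound $\mu(B(t\overrightarrow{e},\sigma))^{a}\le M$. This gives at most $M\int^{s}_{0}G(\sigma)\frac{d\sigma}{\sigma}$. Together,
\[
G(s)\le A s^{a}+M\int^{s}_{0}G(\sigma)\frac{d\sigma}{\sigma}.
\]

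\textbf{Step 2 (Gronwall).} Set $H(s)=\int^{s}_{0}G(\sigma)\frac{d\sigma}{\sigma}$. Step 1 gives $sH'(s)\le A s^{a}+MH(s)$, i.e.
\[
(s^{-M}H)'\le A s^{a-1-M}.
\]
Since $a>M$, integrating from $0$ yields $H(s)\le\frac{A}{a-M}s^{a}$, provided $s^{-M}H(s)\to0$ as $s\to0$. Then
\[
G(D)\le AD^{a}+\frac{MA}{a-M}D^{a}=\frac{aA}{a-M}D^{a}=\frac{2^{a}}{a-M}D^{a}(\beta-\alpha)^{1-a}M,
\]
which is exactly the asserted inequality after adding back $\beta-\alpha$.

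\textbf{Main obstacle.} The delicate point is justifying the Gronwall step. We need $G(\sigma)<\infty$ and the boundary condition $s^{-M}H(s)\to0$, and neither is automatic: if $\mu$ has atoms or is very concentrated on the segment, $e^{W}$ could a priori fail to be integrable. I would first prove the estimate for $\mu$ replaced by a mollified, bounded-density measure $\mu_{\epsilon}$ with $\mu_{\epsilon}(\Omega)\le\mu(\Omega_{\epsilon})$, where $\Omega_{\epsilon}$ is a slightly larger neighborhood. For such measures $W\le Cs$ is bounded and $G(s)=O(s)$, so $H(s)=O(s)$ and the boundary term vanishes. The inequality for $\mu$ then follows by Fatou's lemma together with lower semicontinuity of $W$ under weak convergence of measures (using $\liminf_{\epsilon\to0}\mu_{\epsilon}(B)\ge\mu(\mathring{B})$ and the fact that $\mu(\partial B(x,\sigma))>0$ only for countably many $\sigma$). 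Finally, shrink $\Omega_{\epsilon}$ to $\Omega$: since $\Omega$ is compact, $\mu(\Omega_{\epsilon})\downarrow\mu(\Omega)$, and the strict condition $M<a$ keeps the constant $\frac{1}{a-M}$ stable in the limit.
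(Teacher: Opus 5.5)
Your argument is correct and gives exactly the stated constant, but it follows a different route from the paper.

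\textbf{The paper's route.} The paper argues pointwise in $t$. It pushes $\mu\lfloor\Omega$ forward under the nearest-point projection onto the line. This gives a measure $\hat\mu$ on the segment with $\mu(B(t\overrightarrow{e},s))\le\hat\mu(B(t\overrightarrow{e},s))$ for $s\le D$. Set $F_t(s)=\hat\mu(B(t\overrightarrow{e},s))^{\frac{1}{n-1}}$ and $\gamma_t=F_t(D)$. The paper then
\begin{enumerate}
\item integrates by parts to bound $W^{\mu}_{1,n}(t\overrightarrow{e},D)$ by $\gamma_t\log D+\int_0^D\log\frac{1}{s}\,dF_t(s)$;
\item applies Jensen's inequality to the probability measure $dF_t/\gamma_t$;
\item integrates by parts again to get, for a.e.\ $t$, the bound $e^{W^{\mu}_{1,n}(t\overrightarrow{e},D)}\le 1+D^{\gamma}\int_0^D F_t(s)s^{-\gamma-1}\,ds$, where $\gamma=\mu(\Omega)^{\frac{1}{n-1}}$;
\item only then integrates in $t$, using the same H\"older--Fubini covering estimate $\int_\alpha^\beta\hat\mu(B(t\overrightarrow{e},s))\,dt\le 2s\mu(\Omega)$ that you use.
\end{enumerate}
The projection is there to guarantee $\hat\mu(B(t\overrightarrow{e},s))\le C(t)s$ for a.e.\ $t$, by finiteness of the one-dimensional maximal function. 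This kills the boundary terms at $s=0$ without any approximation.

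\textbf{Your route.} You run a linear Gronwall inequality in the scale variable directly on the $t$-average $G(s)$.
\begin{itemize}
\item \emph{What it buys.} It uses only two averaged facts: the mass bound $\mu(B)^{\frac{1}{n-1}}\le M$ and the covering bound. It avoids Jensen and the Stieltjes integrations by parts. It also makes clear that the factor $\frac{1}{a-M}$ is simply the resolvent of $sH'\le As^{a}+MH$.
\item \emph{The cost.} Finiteness of $G$ and the vanishing of $s^{-M}H(s)$ at $0$ must be supplied by your mollification and Fatou step. The paper gets these for free from the one-dimensional structure of $\hat\mu$.
\end{itemize}
The two proofs are close relatives: the paper's pointwise bound is exactly what a Gronwall argument for $s\mapsto e^{W(t\overrightarrow{e},s)}-1$ at fixed $t$ produces.

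\textbf{A small simplification.} Your shrinking step $\mu(\Omega_\epsilon)\downarrow\mu(\Omega)$ assumes $\Omega$ is the closed $D$-neighborhood. It is simpler, and also covers the open reading, to mollify $\mu\lfloor\Omega$ instead of $\mu$. Only $\mu\lfloor\Omega$ enters $W(t\overrightarrow{e},\sigma)$ for $\sigma\le D$. The mollified measure then has total mass exactly $\mu(\Omega)$, so both facts in your Step 1 hold with $M$ itself and no shrinking is needed.
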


\begin{proof}

We use $P$ to denote the nearest point projection from $\Omega$
to $\{t\overrightarrow{e};t\in[\alpha-D,\beta+D]\}$. Then $\mu\lfloor\Omega$
is projected to be a measure $\hat{\mu}$ which is supported on $\{t\overrightarrow{e};t\in[\alpha-D,\beta+D]\}$.
It is easy to see that, $\forall t\in[\alpha,\beta]$, $\mu(B(t\overrightarrow{e},s))\le\hat{\mu}(B(t\overrightarrow{e},s)),\forall s\in[0,D]$. 

Then for $t\in[\alpha,\beta]$ a.e.
\begin{align*}
W^{\mu}_{1,n}\left(t\overrightarrow{e},D\right) & =\int^{D}_{0}\mu(B(t\overrightarrow{e},s))^{\frac{1}{n-1}}\frac{ds}{s}\le\int^{D}_{0}\hat{\mu}(B(t\overrightarrow{e},s))^{\frac{1}{n-1}}\frac{ds}{s}\\
 & =\hat{\mu}(B(t\overrightarrow{e},s))^{\frac{1}{n-1}}\log s|^{D}_{0}+\int^{D}_{0}\log\frac{1}{s}d\hat{\mu}(B(t\overrightarrow{e},s))^{\frac{1}{n-1}}.
\end{align*}
Since $\hat{\mu}(B(t\overrightarrow{e},s))\le C(t)s$ a.e., we have
\[
W^{\mu}_{1,n}(t\overrightarrow{e},D)\le\hat{\mu}(B(t\overrightarrow{e},D))^{\frac{1}{n-1}}\log D+\int^{D}_{0}\log\frac{1}{s}d\hat{\mu}(B(t\overrightarrow{e},s))^{\frac{1}{n-1}}.
\]
We denote $\gamma_{t}=\hat{\mu}(B(t\overrightarrow{e},D))^{\frac{1}{n-1}}.$
Then using Jensen's formula we get 
\begin{align*}
\exp\left(W^{\mu}_{1,n}(t\overrightarrow{e},D)\right) & \le D^{\gamma_{t}}\int^{D}_{0}\frac{1}{s^{\gamma_{t}}}\frac{1}{\gamma_{t}}d\hat{\mu}(B(t\overrightarrow{e},s))^{\frac{1}{n-1}}\\
 & =D^{\gamma_{t}}\left(\frac{1}{s^{\gamma_{t}}}\frac{1}{\gamma_{t}}\hat{\mu}(B(t\overrightarrow{e},s))^{\frac{1}{n-1}}|^{D}_{0}+\int^{D}_{0}\hat{\mu}(B(t\overrightarrow{e},s))^{\frac{1}{n-1}}\frac{1}{s^{\gamma_{t}+1}}ds\right).
\end{align*}
Since $\gamma_{t}\le\mu(\Omega)^{\frac{1}{n-1}}<\frac{1}{n-1}$, the
first term on the right 
\[
D^{\gamma_{t}}\frac{1}{s^{\gamma_{t}}}\frac{1}{\gamma_{t}}\hat{\mu}(B(t\overrightarrow{e},s))^{\frac{1}{n-1}}|^{D}_{0}=1,\,\,a.e..
\]
Then 
\[
\exp\left(W^{\mu}_{1,n}(t\overrightarrow{e},D)\right)\le1+D^{\gamma_{t}}\int^{D}_{0}\hat{\mu}(B(t\overrightarrow{e},s))^{\frac{1}{n-1}}\frac{1}{s^{\gamma_{t}+1}}ds.
\]
We let $\gamma=\mu(\Omega)^{\frac{1}{n-1}}\ge\gamma_{t}$. Then
\begin{align*}
\exp\left(W^{\mu}_{1,n}(t\overrightarrow{e},D)\right) & \le1+D^{-1}\int^{D}_{0}\hat{\mu}(B(t\overrightarrow{e},s))^{\frac{1}{n-1}}\left(\frac{D}{s}\right)^{\gamma_{t}+1}ds\\
 & \le1+D^{-1}\int^{D}_{0}\hat{\mu}(B(t\overrightarrow{e},s))^{\frac{1}{n-1}}\left(\frac{D}{s}\right)^{\gamma+1}ds\\
 & =1+D^{\gamma}\int^{D}_{0}\hat{\mu}(B(t\overrightarrow{e},s))^{\frac{1}{n-1}}\left(\frac{1}{s}\right)^{\gamma+1}ds.
\end{align*}

Then
\begin{align*}
\int^{\beta}_{\alpha}\exp\left(W^{\mu}_{1,n}(t\overrightarrow{e},D)\right)dt & \le(\beta-\alpha)+\int^{\beta}_{\alpha}D^{\gamma}\int^{D}_{0}\hat{\mu}(B(t\overrightarrow{e},s))^{\frac{1}{n-1}}\frac{1}{s^{\gamma+1}}dsdt\\
 & =\beta-\alpha+\int^{D}_{0}D^{\gamma}\frac{1}{s^{\gamma+1}}ds\int^{\beta}_{\alpha}\hat{\mu}(B(t\overrightarrow{e},s))^{\frac{1}{n-1}}dt\\
 & =\beta-\alpha+D^{\gamma}\int^{D}_{0}\frac{1}{s^{\gamma+1}}ds\left(\int^{\beta}_{\alpha}\hat{\mu}(B(t\overrightarrow{e},s))dt\right)^{\frac{1}{n-1}}(\beta-\alpha)^{\frac{n-2}{n-1}}\\
 & \le\beta-\alpha+D^{\gamma}(\beta-\alpha)^{\frac{n-2}{n-1}}\int^{D}_{0}\frac{(2s)^{\frac{1}{n-1}}}{s^{\gamma+1}}ds\cdot\mu(\Omega)^{\frac{1}{n-1}}\\
 & =\beta-\alpha+\frac{2^{\frac{1}{n-1}}}{\frac{1}{n-1}-\gamma}D^{\frac{1}{n-1}}(\beta-\alpha)^{\frac{n-2}{n-1}}\mu(\Omega)^{\frac{1}{n-1}}.
\end{align*}

\end{proof}

\section{Proof of Theorem \ref{main thm 3} and \ref{main thm 4 generalization of Chen-Zhu's theorem}}

We suppose that $(M^{n},g)$ is a complete noncompact locally conformally
flat manifold with nonnegative Ricci curvature. The classifications
of such manifolds are studied in \cite{Zhu1994,CH2006}. We prove
Theorem \ref{main thm 3} according to the following classification
theorem of Carron and Herzlich in \cite{CH2006}. 

\begin{thm}\label{Carron-Herzlich classification thm}\cite[Theorem A]{CH2006}
Let $(M,g)$ be a complete conformally flat manifold of dimension
$n\ge3$ with $Ric\ge0$. Then exactly one of the following holds:
\begin{enumerate}
\item $M$ is globally conformally equivalent to $\mathbb{R}^{n}$ with
a conformal non-flat metric with $Ric\ge0$;
\item $M$ is globally conformally equivalent to a spaceform of positive
curvature, endowed with a conformal metric with $Ric\ge0$;
\item $M$ is locally isometric to the cylinder $\mathbb{R}\times\mathbb{S}^{n-1}_{a}$,
where $\mathbb{S}^{n-1}_{a}$ is the round sphere of radius $a$;
\item $M$ is isometric to a complete flat manifold.
\end{enumerate}
\end{thm}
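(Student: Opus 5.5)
Since this is \cite[Theorem A]{CH2006}, we would simply cite it. If one wanted a self-contained argument, the plan would go through the developing map of the universal cover, combined with the Cheeger--Gromoll splitting theorem.

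\textbf{Developing map.} Let $\tilde M$ be the universal cover with the lifted metric $\tilde g$, which is complete, locally conformally flat and has $Ric\ge0$. Since $\tilde M$ is simply connected, there is a conformal immersion $\Phi:\tilde M\to\mathbb{S}^{n}$. It is equivariant with respect to a holonomy representation $\rho:\pi_{1}(M)\to{\rm Conf}(\mathbb{S}^{n})$. Because $R_{g}\ge0$, the Schoen--Yau theory applies. It gives that $\Phi$ is injective. It also gives that $\Lambda:=\mathbb{S}^{n}\backslash\Phi(\tilde M)$ is a closed set of Hausdorff dimension at most $\frac{n-2}{2}$, and that $\Lambda$ is the limit set of $\rho(\pi_{1}(M))$ when $M$ is compact.

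\textbf{Counting the points of $\Lambda$.} We would next show $\#\Lambda\le2$, which is where $Ric\ge0$ (not just $R\ge0$) is used. The idea is that each point of $\Lambda$ produces an end of $(\tilde M,\tilde g)$. Near an isolated point of $\Lambda$, completeness forces the conformal factor to blow up, and that neighbourhood becomes an end. If $\Lambda$ contains a non-isolated point, one would use the small dimension of $\Lambda$ to separate $\Lambda$ by small spheres, producing at least two ends. Two ends give a line, so the Cheeger--Gromoll splitting theorem yields $\tilde M=\mathbb{R}\times N$ with $N$ compact, hence exactly two ends. This contradicts having three or more points in $\Lambda$. I expect this step to be the main obstacle: turning the possibly non-discrete, fractal set $\Lambda$ into a count of metric ends needs careful use of the Schoen--Yau estimates.

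\textbf{Case $\Lambda=\emptyset$.} Then $\tilde M$ is conformal to $\mathbb{S}^{n}$, hence compact. So $\pi_{1}(M)$ is finite and acts freely by conformal maps, which can be conjugated to isometries. Thus $M$ is conformal to a spherical spaceform, which is case 2.

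\textbf{Case $\Lambda$ is one point.} Then $\tilde M$ is conformal to $\mathbb{R}^{n}$, and $\rho(\pi_{1}(M))$ consists of similarities of $\mathbb{R}^{n}$ that are isometries of the complete metric $\tilde g$. A strict similarity is impossible, because the volume ratio at infinity (e.g.\ via Theorem \ref{main thm2 volume ratio}, or Bishop--Gromov) must be preserved. So $\rho(\pi_{1}(M))$ consists of Euclidean isometries acting freely.
\begin{itemize}
\item If $\pi_{1}(M)$ is trivial and $g$ is non-flat, we are in case 1.
\item If some $\gamma\neq1$, it has a translational part. A free cocompact-in-a-direction action then produces lines in $\tilde M$. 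Splitting off all lines and using the rigidity of conformal flatness with $Ric\ge0$ for the product factors (via the Weyl and Cotton tensors of a product), $\tilde g$ should become flat, which is case 4.
\end{itemize}

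\textbf{Case $\Lambda$ is two points.} Then $\tilde M\cong\mathbb{R}^{n}\backslash\{0\}\cong\mathbb{R}\times\mathbb{S}^{n-1}$ has two ends and contains a line. Cheeger--Gromoll gives an isometric splitting $\tilde M=\mathbb{R}\times N$. Local conformal flatness of a product $\mathbb{R}\times N$ with $n\ge3$ forces $N$ to have constant curvature. Since $N$ is simply connected and compact, it is a round sphere $\mathbb{S}^{n-1}_{a}$, or else $N$ is flat and we are back in case 4. Since $M$ is covered by $\tilde M$, it is locally isometric to the cylinder, which is case 3.

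\textbf{Exclusivity.} The four alternatives are mutually exclusive. They are distinguished by $\#\Lambda$ together with flatness.
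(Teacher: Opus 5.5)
Citing \cite[Theorem A]{CH2006} is exactly what the paper does. It gives no proof of its own, so your first sentence is all this paper requires. The self-contained sketch, however, is not a proof: it breaks at precisely the two steps that carry the real content of the classification.

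\emph{Lines from the deck action.} In the case $\#\Lambda=1$ with $\pi_{1}(M)\neq1$, you assert that a free isometric action with a translational part ``produces lines'' in $\tilde M$. Nothing in your argument supports this, and for general complete metrics with $Ric\ge0$ it is false. Nabonnand's complete metric of positive Ricci curvature on $\mathbb{S}^{1}\times\mathbb{R}^{3}$ lifts to $\mathbb{R}^{4}$, where $\mathbb{Z}$ acts freely by isometries translating along one direction. Yet the lift contains no line, since a splitting $\mathbb{R}\times N$ would force $Ric(\partial_{t},\partial_{t})=0$. The Cheeger--Gromoll device of moving long minimizing segments back into a fixed compact set needs a cocompact action. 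Here the minimizing segments from $\gamma^{-k}p$ to $\gamma^{k}p$ can escape to infinity in the transverse directions. So ruling out nontrivial $\pi_{1}(M)$ for a non-flat conformal metric on $\mathbb{R}^{n}$ needs an argument that genuinely uses conformal flatness, and your sketch does not supply one.

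\emph{Counting $\Lambda$ by ends.} Your bound $\#\Lambda\le2$ needs each point of $\Lambda$ to give a separate end, i.e.\ it needs $\Lambda$ to be totally disconnected. The Schoen--Yau bound $\dim_{H}\Lambda\le\frac{n-2}{2}$ guarantees this only when $\frac{n-2}{2}<1$, i.e.\ $n=3$. For $n\ge4$ a round circle has Hausdorff dimension $1\le\frac{n-2}{2}$ and zero Newtonian capacity. Yet $\mathbb{S}^{n}$ minus a circle has a single end, so counting ends cannot exclude it; a further use of $Ric\ge0$ is needed.

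\emph{Smaller points.} Excluding strict similarities ``because the volume ratio is preserved'' is vacuous, since $\gamma$ is a $\tilde g$-isometry and preserves it automatically. The correct reason is that a similarity of ratio $\lambda\neq1$ has a fixed point in $\mathbb{R}^{n}$, contradicting freeness of the deck action. In the two-point case, $N$ is compact and simply connected, so it cannot be flat and must be a round sphere. Apart from that, your handling of $\Lambda=\emptyset$ and $\#\Lambda=2$ is sound.
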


Case 2 does not happen since the manifolds involved are compact.

If Case 3 happens, then 
\[
M=(\mathbb{R}\times\mathbb{S}^{n-1}_{a})/\Gamma,\Gamma\cong\pi_{1}(M).
\]
 Since $M$ is non-compact, the projection from $\Gamma$ to ${\rm Iso}(\mathbb{R})$
contains no nontrivial translations, otherwise the quotient would
be compact. Thus this projection has order at most two, while its
kernel is finite because it acts properly discontinuously on $\mathbb{S}^{n-1}_{a}$.
Therefore $M$ is a finite, possibly twisted, quotient of the cylinder,
and 
\[
V_{g}(B^{g}_{l}(p))=\frac{2a^{n-1}|\mathbb{S}^{n-1}|}{|\Gamma|}l+O(1),\,\int_{B^{g}_{l}(p)}R_{g}d\mu_{g}=\frac{2(n-1)(n-2)a^{n-3}|\mathbb{S}^{n-1}|}{|\Gamma|}l+O(1).
\]
Consequently $\nu=0$; if $n>3$, then 
\[
\lim_{l\to\infty}l^{2-n}\int_{B^{g}_{l}(p)}R_{g}d\mu_{g}=0=(n-1)(\nu^{\frac{n-3}{n-1}}-\nu)|\mathbb{S}^{n-1}|
\]
whereas if $n=3,$ then 
\[
\lim_{l\to\infty}l^{-1}\int_{B^{g}_{l}(p)}R_{g}d\mu_{g}=\frac{16\pi}{|\Gamma|}=\frac{16\pi}{|\pi_{1}(M)|}.
\]

If Case 4 happens, then trivially 
\[
\int_{B^{g}_{l}(p)}R_{g}=0.
\]
If $M$ is flat $\mathbb{R}^{n},$ $\nu=1$. Otherwise, $\nu=0$.
Then if $n>3$, there holds 
\[
\frac{1}{l^{n-2}}\int_{B^{g}_{l}(p)}R_{g}=0=(n-1)(\nu^{\frac{n-3}{n-1}}-\nu)|\mathbb{S}^{n-1}|.
\]

Then we only need to deal with Case 1 (the main case), i.e. $(M,g=e^{2w(x)}|dx|^{2})$
with $Ric_{g}\ge0$ and $g$ is not flat. We have 
\[
-\Delta_{n}w\ge0,\,\,x\in\mathbb{R}^{n}.
\]
 Then we have that 
\[
w(x)=\underline{w}(|x|)+o(1),x\in\mathbb{R}^{n}\backslash E
\]
for an $\mathcal{E}$ set $E$. We consider $\underline{g}=e^{2\underline{w}(|x|)}g_{\mathbb{R}^{n}}.$
By \cite[Theorem 1.3]{MQ2021}, we know 
\[
-\liminf_{x\to\infty}\frac{w(x)}{\log|x|}=\lim_{x\to\infty}\frac{\underline{w}(x)}{\log\frac{1}{|x|}}=\tilde{m}\in(0,1].
\]
We first show that the scalar curvature $\underline{R}$ of $\underline{g}$,
defined as 
\[
\underline{R}=-\frac{4(n-1)}{n-2}\underline{v}^{-\frac{n+2}{n-2}}\Delta\underline{v},{\rm where}\,\underline{v}=e^{\frac{n-2}{2}\underline{w}}
\]
and understood as a Radon measure, satisfies the conclusion of Theorem
\ref{main thm 3}. 

We write $\underline{g}$ as
\begin{align*}
\underline{g} & =dl^{2}+\psi(l)^{2}g_{\mathbb{S}^{n-1}}\\
 & =dl^{2}+\theta(l)^{2}l^{2}g_{\mathbb{S}^{n-1}}.
\end{align*}

\begin{lem}\label{psi theta information}$\psi(l)$ is concave, monotonically
nondecreasing in $l\in[0,+\infty)$. $0\le\psi'(l\pm0)\le1.$ $\psi''(l)$
is a nonpositive Radon measure. 

\begin{align}
\underline{R}(l) & =(n-1)\left((n-2)\frac{1-\psi'(l)^{2}}{\psi(l)^{2}}-2\frac{\psi''(l)}{\psi(l)}\right)\nonumber \\
 & =\frac{(n-1)\left((n-2)\left(1-l^{2}\theta'(l)^{2}-\theta(l)^{2}\right)-2l\theta(l)\left(n\theta'(l)+l\theta''(l)\right)\right)}{l^{2}\theta(l)^{2}}.\label{scalar curvature expression}
\end{align}
$\theta(l)$ has the regularity as a concave function does and is
monotonically decreasing in $l$. Moreover, 
\begin{align*}
\theta(0)\,\,\, & \le1,\\
\theta(\infty) & =\lim_{l\to\infty}\theta(l)=1-\tilde{m},\\
\lim_{l\to\infty}\,\,\, & l\theta'(l)=0.
\end{align*}

\end{lem}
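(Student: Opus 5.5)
The plan is to work entirely in the radial picture. Write $s=|x|$ and define $l(s)=\int_0^s e^{\underline{w}(\tau)}d\tau$, so that $\underline{g}=dl^2+\psi(l)^2g_{\mathbb{S}^{n-1}}$ with
\[
\psi(l(s))=s\,e^{\underline{w}(s)},\qquad \theta=\psi/l .
\]
Since $\underline{w}$ is, through the inversion $\underline{u}(1/s)=\underline{w}(s)+2\log s$, the radial infimum of an $n$-superharmonic function, Lemma \ref{radial symmetric n-superharmonic functions} applies to it. It tells us that $t\mapsto\underline{u}(e^{-t})$ is concave, so one-sided derivatives exist everywhere and second derivatives are Radon measures.

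First I would compute $\psi'$. Differentiating,
\[
\frac{d\psi}{dl}=\frac{d\psi/ds}{dl/ds}=\frac{e^{\underline{w}}(1+s\underline{w}'(s))}{e^{\underline{w}}}=1+s\underline{w}'(s\pm0).
\]
Set $\tau=\log s$. Then $s\underline{w}'(s)=\frac{d}{d\tau}\underline{w}$. We also have $\underline{w}(e^\tau)=\underline{u}(e^{-\tau})-2\tau$, and $\underline{u}(e^{-\tau})$ is concave in $\tau$. Hence $s\underline{w}'$ is non-increasing in $s$, i.e. $\psi'$ is non-increasing in $l$ (because $l$ is increasing in $s$). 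This gives concavity of $\psi$ and shows that $\psi''$ is a nonpositive Radon measure.

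Next I would identify the limits of $\psi'$. By Lemma \ref{radial symmetric n-superharmonic functions} applied near infinity, $s\underline{w}'\to-\tilde m$, so $\psi'(\infty)=1-\tilde m\ge0$; monotonicity then gives $\psi'\ge0$. At $s\to0$, $w$ is smooth, so $\underline{w}$ is Lipschitz near $0$ and $s\underline{w}'\to0$, giving $\psi'(0^+)=1$. By monotonicity, $0\le\psi'(l\pm0)\le1$.

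The scalar curvature formula is the standard one for the warped product $dl^2+\psi^2g_{\mathbb{S}^{n-1}}$, which first holds wherever $\underline{w}$ is $C^2$ and then in the measure sense. To justify the measure interpretation, I would check that it agrees with $-\frac{4(n-1)}{n-2}\underline{v}^{-\frac{n+2}{n-2}}\Delta\underline{v}$ as measures. This holds because $\underline{v}$ is radial and its distributional Laplacian involves only $\underline{v}''$ (a measure) times continuous coefficients, so the chain-rule change of variables $s\leftrightarrow l$ is legitimate: $l$ is a $C^1$ diffeomorphism with continuous positive derivative $e^{\underline{w}}$. Substituting $\psi=l\theta$ into the formula gives the second expression \eqref{scalar curvature expression}.

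For $\theta$, note that $\theta$ is the ratio of the concave function $\psi$ to $l$, so it has the regularity of a concave function. Because $\psi(0)=0$, concavity gives $\psi(l)/l$ non-increasing. Also $\theta(0)=\psi'(0)=1\le1$. The limit $\theta(\infty)=\lim\psi/l=\lim\psi'=1-\tilde m$ follows from l'Hospital in the form of Lemma \ref{Generalized L'Hospital rule}, since $l\to\infty$ by completeness. Finally,
\[
l\theta'=\psi'-\theta\to(1-\tilde m)-(1-\tilde m)=0.
\]

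The main obstacle is the measure-theoretic justification of the curvature formula together with the endpoint behavior at $s=0$. For the latter I would simply use the smoothness of $w$ near the origin. For the former, I would rely on the $C^1$ change of variables with continuous Jacobian $e^{\underline{w}}$.
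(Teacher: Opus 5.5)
Your proposal is correct and follows essentially the paper's argument. You get concavity of $\psi$ from the monotonicity of $s\underline{w}'(s)$, the endpoint values $\psi'(0^+)=1$ and $\psi'(\infty)=1-\tilde{m}$ from Lemma \ref{radial symmetric n-superharmonic functions} via the inversion, $\theta(\infty)$ by l'Hospital, and $l\theta'=\psi'-\theta\to0$. The only differences are cosmetic. You obtain $\theta$ nonincreasing from the chord-slope property of the concave $\psi$ with $\psi(0)=0$, where the paper uses $(l^{2}\theta')'\le0$; these are the same fact, since $(l^{2}\theta')'=l\psi''$. Also, your remark that $l\to\infty$ ``by completeness'' is best justified by your own bound $\psi'\ge0$: it makes $se^{\underline{w}(s)}$ nondecreasing, so $e^{\underline{w}(s)}\ge c/s$ for $s\ge1$.
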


\begin{proof}Since $\underline{w}(|x|)$ is $n$-superharmonic in
$\mathbb{R}^{n}$, we easily verify, in the sense of distribution,
\[
w''(r)+\frac{1}{r}w'(r)\le0.
\]
Then
\[
\psi''(l)=e^{-w(r)}r\left(w''+\frac{1}{r}w'\right)\le0.
\]
The expression (\ref{scalar curvature expression}) is straightforward
to check. 

Again easy calculation yields, in the sense of distribution, 
\begin{align}
\theta'(l)l+\theta(l) & =\underline{w}'(r)r+1,\label{=00005Ctheta'(l)l+=00005Ctheta(l)}\\
\theta''(l)l+2\theta'(l) & =e^{-w(r)}(\underline{w}'(r)+\underline{w}''(r)r).\label{=00005Ctheta''(l)+2=00005Ctheta'(l)}
\end{align}
So from (\ref{=00005Ctheta''(l)+2=00005Ctheta'(l)}), we have
\[
(\theta'(l)l^{2})'\le0.
\]
Since from (\ref{=00005Ctheta'(l)l+=00005Ctheta(l)})
\[
\liminf_{l\to0}\theta'(l)l^{2}=\liminf_{l\to0}(\underline{w}'(r)r+1-\theta(l))l=0,
\]
 we know $\theta'(l)\le0$. 
\begin{align*}
\theta(\infty)=\lim_{l\to\infty}\theta(l) & =\lim_{l\to\infty}\frac{e^{\underline{w}(r)}r}{l}=\lim_{r\to\infty}\left(e^{\underline{w}(r)}r'(l)+e^{\underline{w}(r)}r\underline{w}'(r)r'(l)\right)\\
 & =\lim_{r\to\infty}(1+rw'(r))\\
 & =1-\tilde{m},
\end{align*}
where the last equality follows from Lemma \ref{radial symmetric n-superharmonic functions}.
Then from (\ref{=00005Ctheta'(l)l+=00005Ctheta(l)}), 
\[
\lim_{l\to\infty}l\theta'(l)=0.
\]
And $0\le\psi'(l)=\theta'(l)l+\theta(l)\le\theta(0)\le1$. 

\end{proof}

We then have
\begin{align}
 & \int_{B^{\underline{g}}_{l}(0)}\underline{R}d\mu_{\underline{g}}\nonumber \\
= & (n-1)|\mathbb{S}^{n-1}|\int^{l}_{0}\left((n-2)\left(1-s^{2}\theta'(s)^{2}\right)-2s\theta(s)\left(n\theta'(s)+s\theta''(s)\right)-(n-2)\theta(s)^{2}\right)s^{n-3}\theta(s)^{n-3}ds.\nonumber \\
= & (n-1)|\mathbb{S}^{n-1}|(\int^{l}_{0}\left((n-2)\left(1-\theta(s)^{2}\right)-2s\theta(s)\theta'(s)+(n-2)s^{2}\theta'(s)^{2}\right)s^{n-3}\theta(s)^{n-3}ds\nonumber \\
 & -2l^{n-1}\theta(l)^{n-2}\theta'(l)).\label{=00005CunderlineR's expression}
\end{align}
 From Lemma \ref{psi theta information}, $l\theta(l)^{n-2}\theta'(l)\to0$
as $l\to\infty$. Then
\begin{align*}
 & \lim_{l\to+\infty}l^{2-n}\int_{B^{\underline{g}}_{l}(0)}\underline{R}d\mu_{\underline{g}}\\
= & \lim_{l\to+\infty}\frac{(n-1)|\mathbb{S}^{n-1}|\left((n-2)\left(1-\theta(l)^{2}\right)-2l\theta(l)\theta'(l)+(n-2)l^{2}\theta'(l)^{2}\right)l^{n-3}\theta(l)^{n-3}}{(n-2)l^{n-3}}\\
= & \lim_{l\to+\infty}\frac{(n-1)|\mathbb{S}^{n-1}|\left((n-2)\left(1-\theta(l)^{2}\right)\right)\theta(l)^{n-3}}{(n-2)}\\
= & \begin{cases}
(n-1)|\mathbb{S}^{n-1}|(1-\theta(\infty)^{2})\theta(\infty)^{n-3} & n>3,\\
8\pi(1-\theta(\infty)^{2}) & n=3,
\end{cases}\\
= & \begin{cases}
(n-1)|\mathbb{S}^{n-1}|(\nu^{\frac{n-3}{n-1}}-\nu) & n>3,\\
8\pi(1-\nu) & n=3.
\end{cases}
\end{align*}
Then we estimate the difference between $\int_{B^{g}_{l}(0)}Rd\mu_{g}$
and $\int_{B^{\underline{g}}_{l}(0)}\underline{R}d\mu_{\underline{g}}$
. 
\[
\int_{B^{g}_{l}}Rd\mu_{g}\le\int_{B^{\underline{g}}_{l}}R\mu_{g}.
\]
We let $e^{2w}g_{edu}=v^{\frac{4}{n-2}}g_{edu}$. Assume 
\[
l=\int^{r}_{0}e^{\underline{w}(t)}dt.
\]
Then 
\begin{align*}
\int_{B^{\underline{g}}_{l}}Rd\mu_{g} & =\frac{4(n-1)}{n-2}\left(\int_{B_{r}(0)}|\nabla v|^{2}-\int_{\partial B_{r}(0)}\frac{\partial v}{\partial\nu}vdS(\partial B_{r})\right),
\end{align*}
where $\nu$ is the unit outward normal of $\partial B_{r}(0)$. 

\begin{lem}\label{gradient ratio go to 1}
\[
\lim_{r\to\infty}\frac{\int_{B_{r}(0)}|\nabla v|^{2}}{\int_{B_{r}(0)}|\nabla\underline{v}|^{2}}=1.
\]

\end{lem}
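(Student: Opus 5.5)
Here $v=e^{\frac{n-2}{2}w}$ and $\underline{v}=e^{\frac{n-2}{2}\underline{w}}$, so $|\nabla v|^{2}=(\frac{n-2}{2})^{2}e^{(n-2)w}|\nabla w|^{2}$. The lower bound $\int_{B_r}|\nabla v|^2\ge\int_{B_r}|\nabla\underline v|^2$ (up to $1+o(1)$) is not automatic, so both directions must be proved; the plan is to show
\[
\int_{B_{r}(0)}e^{(n-2)w}|\nabla w|^{2}dx=(1+o(1))\int_{B_{r}(0)}e^{(n-2)\underline{w}}|\underline{w}'|^{2}dx.
\]
The denominator behaves like $\tilde m^{2}\int_1^r e^{(n-2)\underline w(s)}s^{n-3}ds\,|\mathbb S^{n-1}|$, because $s\underline w'(s)\to-\tilde m\neq0$ (Lemma \ref{radial symmetric n-superharmonic functions}), and it diverges. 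Indeed $e^{\underline w}s=\theta l$ and $\theta\to 1-\tilde m$, so the integrand is comparable to $l^{n-3}dl$ in the $\underline g$ picture when $\tilde m<1$. The borderline case $\tilde m=1$ will need separate care: there the divergence comes from $\int e^{(n-2)\underline w}s^{n-3}ds$ with $e^{\underline w}s\to$ something possibly decaying, and I would handle it using the volume/length comparisons already established.

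By Stolz's theorem, as in the proof of Theorem \ref{main thm2 volume ratio}, it then suffices to compare the two integrals on dyadic annuli $A_{2^{k-1},2^{k}}$, uniformly under the shift by $\delta\in(\frac12,1]$. On each annulus I rescale: set $w_k(\xi)=w(2^{k}\xi)-\underline w(2^{k})$. This is the inverted analogue of $u_r$, and by Lemma \ref{main convergence} $w_k\to-\tilde m\log|\xi|$ in $W^{1,p}_{loc}$ for $p<n$. I need the stronger statement that $\nabla w_k\to\nabla h$ in $L^{2}$ on $A_{1/4,4}$, weighted by $e^{(n-2)w_k}$. Since $n\ge3$ we have $2<n$ when $n\ge3$; so for $n\ge3$ the choice $p=2$ is admissible for $W^{1,p}$ convergence (for $n=3$, $2<3$ also works). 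Hence $\nabla w_k\to\nabla h$ in $L^{2}(A_{1/4,4})$.

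The main obstacle is the weight $e^{(n-2)w_k}$, which is unbounded on the exceptional set. The plan is to split $A_{1/2,1}$ into the complement of the strong $\mathcal E$-set $E$ and its intersection with $E$.
\begin{itemize}
\item Off $E$, the weight equals $e^{(n-2)\underline w_k}(1+o(1))$, which converges uniformly to $|\xi|^{-(n-2)\tilde m}$.
\item On $E$, I use H\"older: $\int_{E\cap A}e^{(n-2)w_k}|\nabla w_k|^2\le \|e^{(n-2)w_k}\|_{L^{q}(E\cap A)}\|\nabla w_k\|_{L^{p}}^{2}$ with $\frac1q+\frac2p=1$ and $p<n$ close to $n$. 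The factor $\|e^{(n-2)w_k}\|_{L^q(E\cap A)}$ is handled exactly like the volume estimate in the proof of Theorem \ref{main thm2 volume ratio}, via the Kilpel\"ainen--Mal\'y bound, the Wolff potential and the exponential integrability from \cite[Proposition 4.1]{MQ2021}. Since the mass $\mu(A)$ near infinity tends to $0$, $e^{q(n-2)W}$ is integrable, and we get a factor $\sum(r_i/|x_i|)^{n-o(1)}\to0$ from the strong $\mathcal E$-set condition.
\end{itemize}
The other half of the H\"older bound is $\|\nabla w_k\|_{L^p}$, which is uniformly bounded by Lemma \ref{u_=00007Br.R=00007D W^=00007B1.p=00007D norm bound}.

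Combining the two pieces, each annulus ratio tends to $1$, and summing via Stolz gives the lemma. A lower-order issue is the portion near the origin, $B_1$, which contributes a fixed finite amount and is negligible since the denominator diverges.
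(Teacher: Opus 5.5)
Your proposal is correct. It shares the paper's skeleton: divergence of the denominator, Stolz's theorem on dyadic annuli uniformly in $\delta$, a split along the strong $\mathcal{E}$-set, and control of the exceptional balls by the Kilpel\"ainen--Mal\'y bound, the Wolff-potential splitting and exponential integrability with small far-away mass.

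It handles both pieces differently, and more economically.

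\textbf{On the exceptional set.} The paper applies H\"older ball by ball, with exponents $\frac{n-2}{n+\varepsilon}$ and $\frac{2+\varepsilon}{n+\varepsilon}$. It therefore needs an auxiliary per-ball bound $\int_{B_{r_i}(x_i)}|\nabla w|^{\frac{2(n+\varepsilon)}{2+\varepsilon}}\le Cr_i^{\frac{\varepsilon(n-2)}{2+\varepsilon}}$, which it proves with the Duzaar--Mingione pointwise gradient potential estimate and Adams--Hedberg. You instead apply H\"older once on all of $2^{-k}E\cap A_{\frac{1}{2},1}$; equivalently, you use discrete H\"older $\sum_i a_i^{1/q}b_i^{2/p}\le(\sum_i a_i)^{1/q}(\sum_i b_i)^{2/p}$ over the balls. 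The gradient factor is then just the uniform $L^p$ bound on the annulus for some $2<p<n$, transported by inversion from the $W^{1,p}$ bound for $u_{r,R}$ in Section 2. All the smallness comes from $\bigl(\sum_i\rho_i^{\,n-o(1)}\bigr)^{1/q}\to0$, where $\rho_i$ are the rescaled radii. This removes the need for the Duzaar--Mingione lemma entirely.

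\textbf{Off the exceptional set.} The paper introduces a further set $E_\lambda=\{||\nabla w|-|\nabla\underline{w}||>\lambda 2^{-k}\}$ and estimates three pieces. You combine the $L^2$ convergence of $\nabla w_k$ (available since $2<n$) with the uniform convergence of the weight on $A_{\frac{1}{2},1}\setminus 2^{-k}E$. This uses the same information, packaged more efficiently. You should say explicitly two things:
\begin{itemize}
\item the rescaled denominator converges to $\int_{A_{\frac{1}{2},1}}|\xi|^{-(n-2)\tilde m}|\nabla h|^2>0$, because $s\underline{w}'(s)\to-\tilde m$, equivalently $\nabla\underline{w}_k\to\nabla h$ in $L^2$;
\item ${\rm meas}(2^{-k}E\cap A_{\frac{1}{2},1})\to0$, which lets you pass from $A\setminus 2^{-k}E$ to $A$ in the limit.
\end{itemize}

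\textbf{The case $\tilde m=1$.} This needs no separate care. The lower bound $\underline{w}\ge\tilde m\log\frac{1}{r}-C$ gives $e^{(n-2)\underline{w}}|\nabla\underline{w}|^2\ge c\,r^{-\tilde m(n-2)-2}$. Hence the denominator is at least $c\int^r s^{(1-\tilde m)(n-2)-1}ds$, which diverges (logarithmically) also when $\tilde m=1$. Equivalently, in your $\underline{g}$ picture the integrand is $\psi(l)^{n-3}\,dl$ with $\psi$ positive and nondecreasing, so its integral diverges for every $\tilde m\in(0,1]$.
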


\begin{proof}

It is equivalent to prove that 
\[
\lim_{r\to\infty}\frac{\int_{B_{r}(0)}e^{(n-2)w}|\nabla w|^{2}dx}{\int_{B_{r}(0)}e^{(n-2)\underline{w}}|\nabla\underline{w}|^{2}dx}=1.
\]
 From \cite[Theorem 1.3]{MQ2021} $\underline{w}(r)\ge\tilde{m}\log\frac{1}{r}-C,\,\,\tilde{m}\in(0,1]$,
and from Lemma \ref{radial symmetric n-superharmonic functions},
we know 
\[
\lim_{r\to\infty}r|\nabla\underline{w}|=\tilde{m}.
\]
Then 
\[
e^{(n-2)\underline{w}}|\nabla\underline{w}|^{2}\ge\frac{C\tilde{m}^{2}}{r^{\tilde{m}(n-2)+2}}
\]
and hence as $r\to+\infty$, for some $r_{0}>0$
\begin{equation}
\int_{B_{r}(0)}e^{(n-2)\underline{w}}|\nabla\underline{w}|^{2}\ge C\tilde{m}^{2}\int^{r}_{r_{0}}s^{n-1-\tilde{m}(n-2)-2}ds=C\tilde{m}^{2}\int^{r}_{r_{0}}s^{(1-\tilde{m})(n-2)-1}ds\to+\infty.\label{=00005Cunderlinewintegralinfinity}
\end{equation}
 Then by using Stolz's theorem, it suffices to prove that, uniformly
for $\delta\in(\frac{1}{2},1]$,
\[
\lim_{k\to\infty}\frac{\int_{A_{\text{\ensuremath{\delta}}2^{k-1},\delta2^{k}}}e^{(n-2)w}|\nabla w|^{2}}{\int_{A_{\text{\ensuremath{\delta}}2^{k-1},\delta2^{k}}}e^{(n-2)\underline{w}}|\nabla\underline{w}|^{2}}=1.
\]
For given small $\lambda>0$, we let 
\[
E_{\lambda}=\{x\in A_{\text{\ensuremath{\delta}}2^{k-1},\delta2^{k}};\left||\nabla w|-|\nabla\underline{w}|\right|>\lambda2^{-k}\}
\]
and divide the numerator into three parts,
\begin{align*}
\int_{A_{\text{\ensuremath{\delta}}2^{k-1},\delta2^{k}}}e^{(n-2)w}|\nabla w|^{2} & =\int_{A_{\text{\ensuremath{\delta}}2^{k-1},\delta2^{k}}\cap E}+\int_{E_{\lambda}\backslash E}+\int_{A_{\text{\ensuremath{\delta}}2^{k-1},\delta2^{k}}\backslash(E\cup E_{\lambda})}e^{(n-2)w}|\nabla w|^{2}.
\end{align*}
\begin{align*}
 & \frac{\int_{A_{\text{\ensuremath{\delta}}2^{k-1},\delta2^{k}}\cap E}e^{(n-2)w}|\nabla w|^{2}}{\int_{A_{\text{\ensuremath{\delta}}2^{k-1},\delta2^{k}}}e^{(n-2)\underline{w}}|\nabla\underline{w}|^{2}}\\
\le & \frac{\sum_{B_{r_{i}}(x_{i})\cap A_{\text{\ensuremath{\delta}}2^{k-1},\delta2^{k}}\ne\emptyset}\int_{B_{r_{i}}(x_{i})}e^{(n-2)w}|\nabla w|^{2}}{\int_{A_{\text{\ensuremath{\delta}}2^{k-1},\delta2^{k}}}e^{(n-2)\underline{w}}|\nabla\underline{w}|^{2}}\\
\le & \frac{\sum\int_{B_{r_{i}}(x_{i})}\exp\left((n-2)\inf_{A_{\text{\ensuremath{\delta}}2^{k-1},\delta2^{k}}}w+C+CW^{\mu}_{1,n}(x,\delta_{1}2^{k+1}))\right)|\nabla w|^{2}}{\int_{A_{\text{\ensuremath{\delta}}2^{k-1},\delta2^{k}}}e^{(n-2)\underline{w}}|\nabla\underline{w}|^{2}}\\
\le & \frac{C\exp((n-2)\inf_{A_{\text{\ensuremath{\delta}}2^{k-1},\delta2^{k}}}w)\sum\int_{B_{r_{i}}(x_{i})}\exp(CW^{\mu}_{1,n}(x,\delta_{1}2^{k+1}))|\nabla w|^{2}}{\int_{A_{\text{\ensuremath{\delta}}2^{k-1},\delta2^{k}}}e^{(n-2)\underline{w}}|\nabla\underline{w}|^{2}}\\
\le & \frac{C\exp((n-2)\inf_{A_{\delta2^{k-1},\delta2^{k}}}w)\sum(\int_{B_{r_{i}}(x_{i})}\exp(CW^{\mu}_{1,n}(x,\delta_{1}2^{k+1})))^{\frac{n-2}{n+\varepsilon}}(\int_{B_{r_{i}}(x_{i})}|\nabla w|^{\frac{2(n+\varepsilon)}{2+\varepsilon}})^{\frac{2+\varepsilon}{n+\varepsilon}}}{\int_{A_{\text{\ensuremath{\delta}}2^{k-1},\delta2^{k}}}e^{(n-2)\underline{w}}|\nabla\underline{w}|^{2}}
\end{align*}

\begin{lem}
\[
\int_{B_{r_{i}}(x_{i})}|\nabla w|^{\frac{2(n+\varepsilon)}{2+\varepsilon}}\le Cr^{\frac{\varepsilon(n-2)}{2+\varepsilon}}_{i}.
\]

\end{lem}

\begin{proof}

We invoke \cite[Theorem 1.1]{DM2011}, i.e. for $x\in B_{r_{i}}(x_{i})$
\begin{align*}
|\nabla w(x)| & \le C\fint_{B_{2^{k-3}}(x)}|\nabla w|dx+CW^{\mu}_{\frac{1}{n},n}(x,2^{k-2})\\
 & \le2^{-k}C+CW^{\mu}_{\frac{1}{n},n}(x,2^{k-2}).
\end{align*}
Here since $w(x)$ is a smooth function which is a conformal factor,
we may verify the assumption of \cite[Theorem 1.1]{DM2011}. Then
\begin{align*}
\int_{B_{r_{i}}(x_{i})}|\nabla w|^{\frac{2(n+\varepsilon)}{2+\varepsilon}} & \le\int_{B_{r_{i}}(x_{i})}\left(2^{-k}C+CW^{\mu}_{\frac{1}{n},n}(x,2^{k-2})\right)^{\frac{2(n+\varepsilon)}{2+\varepsilon}}\\
 & \le C\int_{B_{r_{i}}(x_{i})}\left(2^{-k\frac{2(n+\varepsilon)}{2+\varepsilon}}+W^{\mu}_{\frac{1}{n},n}(x,2^{k-2})^{\frac{2(n+\varepsilon)}{2+\varepsilon}}\right)dx\\
 & \le Cr^{n}_{i}2^{-k\frac{2(n+\varepsilon)}{2+\varepsilon}}+C\int_{B_{r_{i}}(x_{i})}W^{\mu}_{\frac{1}{n},n}(x,2^{k-2})^{\frac{2(n+\varepsilon)}{2+\varepsilon}}dx.
\end{align*}
For the first term 
\begin{align*}
Cr^{n}_{i}2^{-k\frac{2(n+\varepsilon)}{2+\varepsilon}} & =Cr^{\frac{\varepsilon(n-2)}{2+\varepsilon}}_{i}\left(\frac{r_{i}}{2^{k}}\right)^{\frac{2(n+\varepsilon)}{2+\varepsilon}}\ll r^{\frac{\varepsilon(n-2)}{2+\varepsilon}}_{i}.
\end{align*}
For the second term, from \cite[(2.6.1),Proposition 5.1.4 (a), (6.3.6)]{AH1996},
we have 
\begin{align*}
 & \text{meas}\left(\{x\in B_{r_{i}}(x_{i});W^{\mu}_{\frac{1}{n},n}(x,2^{k-2})\ge\lambda\}\right)\\
\le & \min\left\{ \text{meas}(B_{r_{i}}(x_{i})),\frac{C\mu(A_{2^{k-2},2^{k+1}})^{\frac{n}{n-1}}}{\lambda^{n}}\right\} .
\end{align*}
 Then we can estimate 
\[
\int_{B_{r_{i}}(x_{i})}W^{\mu}_{\frac{1}{n},n}(x,2^{k-2})^{\frac{2(n+\varepsilon)}{2+\varepsilon}}dx\le Cr^{\frac{\varepsilon(n-2)}{2+\varepsilon}}_{i}.
\]

\end{proof}

Then we continue to prove Lemma \ref{gradient ratio go to 1}
\begin{align*}
 & \frac{\int_{A_{\text{\ensuremath{\delta}}2^{k-1},\delta2^{k}}\cap E}e^{(n-2)w}|\nabla w|^{2}}{\int_{A_{\text{\ensuremath{\delta}}2^{k-1},\delta2^{k}}}e^{(n-2)\underline{w}}|\nabla\underline{w}|^{2}}\\
\le & \frac{C\sum((\frac{2^{k}}{3r_{i}})^{C\mu(B(x_{i},2^{k-2}))^{\frac{1}{n-1}}}r^{n}_{i})^{\frac{n-2}{n+\varepsilon}}(r^{\frac{\varepsilon(n-2)}{n+\varepsilon}}_{i})}{\int_{A_{\text{\ensuremath{\delta}}2^{k-1},\delta2^{k}}}e^{(n-2)(\underline{w}-\inf_{A_{\text{\ensuremath{\delta}}2^{k-1},\delta2^{k}}}w)}|\nabla\underline{w}|^{2}}\\
\le & \frac{C\sum((\frac{2^{k}}{3r_{i}})^{C\mu(B(x_{i},2^{k-2}))^{\frac{1}{n-1}}}r^{n}_{i})^{\frac{n-2}{n+\varepsilon}}(r^{\frac{\varepsilon(n-2)}{n+\varepsilon}}_{i})}{\int_{A_{\text{\ensuremath{\delta}}2^{k-1},\delta2^{k}}}|\nabla\underline{w}|^{2}}\\
\le & \frac{C\sum((\frac{2^{k}}{3r_{i}})^{C\mu(B(x_{i},2^{k-2}))^{\frac{1}{n-1}}}r^{n}_{i})^{\frac{n-2}{n+\varepsilon}}(r^{\frac{\varepsilon(n-2)}{n+\varepsilon}}_{i})}{2^{(n-2)k}}\\
\le & C2^{k(C\mu(B(x_{i},2^{k-2}))^{\frac{1}{n-1}}\frac{n-2}{n+\varepsilon}-(n-2))}\sum r^{n-2+\frac{n-2}{n+\varepsilon}(-C\mu(B(x_{i},2^{k-2}))^{\frac{1}{n-1}})}_{i}\\
\le & C\sum_{B_{r_{i}}(x_{i})\cap A_{\text{\ensuremath{\delta}}2^{k-1},\delta2^{k}}\ne\emptyset}\left(\frac{r_{i}}{2^{k}}\right)^{n-2-C\mu(B(x_{i},2^{k-2}))^{\frac{1}{n-1}}},
\end{align*}
in which the third inequality follows since 
\[
\lim_{r\to\infty}r|\nabla\underline{w}|=\tilde{m}>0,
\]
which implies 
\[
\int_{A_{\text{\ensuremath{\delta}}2^{k-1},\delta2^{k}}}|\nabla\underline{w}|^{2}\ge C2^{(n-2)k}.
\]

From (\ref{Strong epsilon set}), for given $\lambda>0$, there exists
$k_{0}>0$, such that when $k>k_{0}$, for $\delta\in(\frac{1}{2},1]$,
\begin{align*}
 & \frac{\int_{A_{\text{\ensuremath{\delta}}2^{k-1},\delta2^{k}}\cap E}e^{(n-2)w}|\nabla w|^{2}}{\int_{A_{\text{\ensuremath{\delta}}2^{k-1},\delta2^{k}}}e^{(n-2)\underline{w}}|\nabla\underline{w}|^{2}}\\
\le & C\sum_{B_{r_{i}}(x_{i})\cap A_{\text{\ensuremath{\delta}}2^{k-1},\delta2^{k}}\ne\emptyset}\left(\frac{r_{i}}{2^{k}}\right)^{n-2-C\mu(B(x_{i},2^{k-2}))^{\frac{1}{n-1}}}\le\lambda.
\end{align*}

Now we let 
\begin{align*}
w_{\delta,k}(\xi) & =w(\delta2^{k}\xi)-\underline{w}(\delta2^{k}),\\
\underline{w}_{\delta,k}(s) & =\underline{w}(\delta2^{k}s)-\underline{w}(\delta2^{k}),
\end{align*}
and consider 
\[
\frac{\int_{E_{\lambda}\backslash E}e^{(n-2)w}|\nabla w|^{2}}{\int_{A_{\delta2^{k-1},\delta2^{k}}}e^{(n-2)\underline{w}}|\nabla\underline{w}|^{2}}=\frac{\int_{\delta^{-1}2^{-k}(E_{\lambda}\backslash E)}e^{(n-2)w_{\delta,k}}|\nabla_{\xi}w_{\delta,k}|^{2}}{\int_{A_{\frac{1}{2},1}}e^{(n-2)\underline{w}_{\delta,k}}|\nabla_{\xi}\underline{w}_{\delta,k}|^{2}}.
\]
By an inversion argument, as $k\to\infty$, $w_{\delta,k}(\xi),\underline{w}_{\delta,k}(|\xi|)\to\tilde{m}\log\frac{1}{|\xi|}$
in $W^{1,p}(A_{\frac{1}{2},1})$ for $1<p<n$, uniformly in $\delta\in(1/2,1]$.
Then for given small $\lambda>0$, there exists $k_{0}>0$ such that
when $k\ge k_{0}$, 
\[
\|\nabla_{\xi}w_{\delta,k}(\xi)-\nabla_{\xi}\underline{w}_{\delta,k}(|\xi|)\|_{L^{1}(A_{\frac{1}{2},1})}\le\lambda^{2}.
\]
Then 
\[
\text{meas}(\delta^{-1}2^{-k}(E_{\lambda}\backslash E))\le\frac{1}{\lambda}\int_{\delta^{-1}2^{-k}(E_{\lambda}\backslash E)}\left|\nabla_{\xi}w_{\delta,k}(\xi)-\nabla_{\xi}\underline{w}_{\delta,k}(|\xi|)\right|\le\lambda.
\]
Since on $\delta^{-1}2^{-k}(E_{\lambda}\backslash E)$, $w_{\delta,k}(\xi)=\underline{w}(\xi)+o(1)\le C$,
for $2<p<n$, 
\begin{align*}
\int_{\delta^{-1}2^{-k}(E_{\lambda}\backslash E)}e^{(n-2)w_{\delta,k}}|\nabla_{\xi}w_{\delta,k}|^{2} & \le C\text{meas}(\delta^{-1}2^{-k}(E_{\lambda}\backslash E))^{1-\frac{2}{p}}\|\nabla_{\xi}w_{\delta,k}\|^{2}_{L^{p}(A_{\frac{1}{2},1})}\\
 & \le C\lambda^{1-\frac{2}{p}}.
\end{align*}
Since
\[
\int_{A_{\frac{1}{2},1}}e^{(n-2)\underline{w}_{\delta,k}}|\nabla_{\xi}\underline{w}_{\delta,k}|^{2}>C>0
\]
for some uniform $C$, we know there exists $k_{0}$ such that when
$k>k_{0}$, for $\delta\in(\frac{1}{2},1]$, 
\[
\frac{\int_{\delta^{-1}2^{-k}(E_{\lambda}\backslash E)}e^{(n-2)w_{\delta,k}}|\nabla_{\xi}w_{\delta,k}|^{2}}{\int_{A_{\frac{1}{2},1}}e^{(n-2)\underline{w}_{\delta,k}}|\nabla_{\xi}\underline{w}_{\delta,k}|^{2}}\le C\lambda^{1-\frac{2}{p}},
\]
which implies 
\[
\frac{\int_{E_{\lambda}\backslash E}e^{(n-2)w}|\nabla w|^{2}}{\int_{A_{\frac{1}{2},1}}e^{(n-2)\underline{w}}|\nabla\underline{w}|^{2}}\le C\lambda^{1-\frac{2}{p}}.
\]

In the end we consider 
\begin{align*}
 & \left|\frac{\int_{A_{\delta2^{k-1},\delta2^{k}}\backslash(E\cup E_{\lambda})}e^{(n-2)w}|\nabla w|^{2}}{\int_{A_{\delta2^{k-1},\delta2^{k}}}e^{(n-2)\underline{w}}|\nabla\underline{w}|^{2}}-1\right|\\
= & \left|\frac{\int_{A_{\delta2^{k-1},\delta2^{k}}\backslash(E\cup E_{\lambda})}e^{(n-2)w}|\nabla w|^{2}-\int_{A_{\delta2^{k-1},\delta2^{k}}}e^{(n-2)\underline{w}}|\nabla\underline{w}|^{2}}{\int_{A_{\delta2^{k-1},\delta2^{k}}}e^{(n-2)\underline{w}}|\nabla\underline{w}|^{2}}\right|\\
= & \frac{\left|\int_{A_{\delta2^{k-1},\delta2^{k}}\backslash(E\cup E_{\lambda})}\left(e^{(n-2)w}|\nabla w|^{2}-e^{(n-2)\underline{w}}|\nabla\underline{w}|^{2}\right)\right|+\int_{E\cup E_{\lambda}}e^{(n-2)\underline{w}}|\nabla\underline{w}|^{2}}{\int_{A_{\delta2^{k-1},\delta2^{k}}}e^{(n-2)\underline{w}}|\nabla\underline{w}|^{2}}.
\end{align*}
Since $\text{meas}(E\cup E_{\lambda})/\text{meas}(A_{\delta2^{k-1},\delta2^{k}})$
is small, we may assume, for $k>k_{0}$, and $\delta\in(\frac{1}{2},1]$
\[
\frac{\int_{E\cup E_{\lambda}}e^{(n-2)\underline{w}}|\nabla\underline{w}|^{2}}{\int_{A_{\delta2^{k-1},\delta2^{k}}}e^{(n-2)\underline{w}}|\nabla\underline{w}|^{2}}<\lambda.
\]
 On the other hand 
\begin{align*}
\left|e^{(n-2)w}|\nabla w|^{2}-e^{(n-2)\underline{w}}|\nabla\underline{w}|^{2}\right| & \le|e^{(n-2)w}-e^{(n-2)\underline{w}}||\nabla w|^{2}+e^{(n-2)\underline{w}}||\nabla w|^{2}-|\nabla\underline{w}|^{2}|\\
 & \le o(1)e^{(n-2)\underline{w}}(|\nabla\underline{w}|+2^{-k}\lambda)^{2}+e^{(n-2)\underline{w}}2^{-k}\lambda(2|\nabla\underline{w}|+2^{-k}\lambda)\\
 & \le C2^{-2k}e^{(n-2)\underline{w}}(o(1)+\lambda),
\end{align*}
since 
\[
|\nabla\underline{w}|\le C2^{-k},\forall x\in A_{\delta2^{k-1},\delta2^{k}}.
\]
Then for $k>k_{0}$, $\delta\in(\frac{1}{2},1]$, 
\[
\frac{\left|\int_{A_{\delta2^{k-1},\delta2^{k}}\backslash(E\cup E_{\lambda})}\left(e^{(n-2)w}|\nabla w|^{2}-e^{(n-2)\underline{w}}|\nabla\underline{w}|^{2}\right)\right|}{\int_{A_{\delta2^{k-1},\delta2^{k}}}e^{(n-2)\underline{w}}|\nabla\underline{w}|^{2}}<C(\lambda+o(1)).
\]
Then we proved the Lemma \ref{gradient ratio go to 1}.

\end{proof}

\begin{lem}\label{f'(yN) small}

Let $f$ be a positive continuous function on $[R_{0},\infty)$. Suppose
that its finite one-sided derivatives exist everywhere, that
\[
f'(r-0)\le f'(r+0),
\]
with strict inequality at most at countably many points, and that
\[
\lim_{r\to\infty}f(r)=1.
\]
Then there is an increasing sequence $r_{j}\to\infty$ such that $f$
is differentiable at every $r_{j}$ and
\[
r_{j}f'(r_{j})\to0,\qquad\frac{r_{j+1}}{r_{j}}\to1.
\]

\end{lem}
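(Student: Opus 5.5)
Pass to logarithmic variables: set $g(t)=f(e^{t})$ for $t\ge T_{0}=\log R_{0}$. Then $g$ is continuous with finite one-sided derivatives $g'(t\pm0)=e^{t}f'(e^{t}\pm0)$, $g'(t-0)\le g'(t+0)$ with strict inequality at most countably often, and $g(t)\to1$. The two required properties become $g'(t_{j})\to0$ and $t_{j+1}-t_{j}\to0$ along points $t_{j}=\log r_{j}\to\infty$ of differentiability, since $r_{j+1}/r_{j}=e^{t_{j+1}-t_{j}}$. Because $g$ has finite one-sided derivatives everywhere, it is differentiable off a countable set, and it is locally Lipschitz on compact intervals. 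I will also use that $g$ is the integral of its derivative on each interval; I will justify this by a standard fact (a continuous function with finite Dini derivatives everywhere is absolutely continuous on compact intervals), or more cheaply by the mean value inequality for one-sided derivatives, which is all that is needed below.

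The construction goes block by block. Fix $k\ge1$ and set $\varepsilon_{k}=1/k$. Since $g(t)\to1$, choose $T_{k}$ increasing with $|g(t)-1|\le\varepsilon_{k}^{3}$ for $t\ge T_{k}$. Partition $[T_{k},T_{k+1}]$ into consecutive intervals $I$ of length $h\in[\varepsilon_{k}/2,\varepsilon_{k}]$. On each such $I=[a,a+h]$, the oscillation of $g$ is at most $2\varepsilon_{k}^{3}$. The one-sided mean value inequality says that $\inf_{(a,a+h)}g'(\cdot\pm0)\le (g(a+h)-g(a))/h\le\sup g'(\cdot\pm0)$. A refinement of it is the Darboux-type statement that there is a point $s$ in the interior with $\min(g'(s-0),g'(s+0))\le$ the slope $\le\max(g'(s-0),g'(s+0))$. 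Hence there is $s\in I$ with one one-sided derivative of absolute value at most $|{\rm slope}|\le 4\varepsilon_{k}^{2}$.

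Passing from a one-sided to a two-sided derivative is where I expect the main obstacle, because the mean value point might be one of the countably many kinks, and the monotone-jump hypothesis $g'(t-0)\le g'(t+0)$ must be used. I plan to handle it as follows. Apply the mean value inequality on the two halves $[a,a+h/2]$ and $[a+h/2,a+h]$. This gives a point where some one-sided derivative is at most $8\varepsilon_{k}^{2}$ and a point where some one-sided derivative is at least $-8\varepsilon_{k}^{2}$. If there are no differentiability points with $|g'|\le\varepsilon_{k}$ in $I$, then, since the differentiability set is co-countable and hence dense, the set where $g'>\varepsilon_{k}$ and the set where $g'<-\varepsilon_{k}$ cover a co-countable part of $I$. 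In that case $g'(\cdot\pm0)$ must cross from values $\le\varepsilon_k$ to values $\ge$ or $\le$. Only upward jumps are allowed at kinks, and one-sided derivatives are semicontinuous limits of difference quotients. So a downward passage from $>\varepsilon_{k}$ to $<-\varepsilon_{k}$ must happen continuously through differentiability points. This is a contradiction unless $g'>\varepsilon_{k}$ almost everywhere on a subinterval of length comparable to $h$, which would force an increase of size $\gtrsim\varepsilon_{k}h\gg\varepsilon_{k}^{3}$ and contradict the oscillation bound. Making this crossing argument rigorous is the delicate step. If it resists, I would fall back on the integral identity $g(b)-g(a)=\int_{a}^{b}g'$ together with Chebyshev: the set $\{|g'|>\varepsilon_{k}\}\cap I$ cannot have full measure when $g'$ has mostly one sign. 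Controlling sign changes then still needs the one-sided jump condition in the way just described.

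Having obtained in each block interval $I$ a differentiability point $s_{I}$ with $|g'(s_{I})|\le\varepsilon_{k}$, list all the $s_{I}$ over all $k$ in increasing order as $t_{j}$. Consecutive points lie in adjacent intervals of length at most $\varepsilon_{k}$, so $t_{j+1}-t_{j}\le2\varepsilon_{k}\to0$, including across block boundaries. Also $|g'(t_{j})|\to0$. Setting $r_{j}=e^{t_{j}}$ gives $r_{j}f'(r_{j})=g'(t_{j})\to0$ and $r_{j+1}/r_{j}\to1$, as required.
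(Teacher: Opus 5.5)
There is a genuine gap, and it sits in the one step that carries the content of the lemma. Your outer construction is the same as the paper's and is fine: logarithmic variable, blocks with $|g-1|\le\varepsilon_k^{3}$, subintervals of length in $[\varepsilon_k/2,\varepsilon_k]$, one point per subinterval, then ordering. The missing step is that in each short interval $I$ you need a point where $g$ is \emph{differentiable} and $|g'|$ is small, and you never establish this.

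\begin{itemize}
\item Your intermediate claim does not follow. The Darboux-type mean value point $s$ only gives $g'(s-0)\le\sigma\le g'(s+0)$, where $\sigma$ is the slope. Both one-sided derivatives can then be far from $\sigma$: for $|t|$ on $[-1,1]$ we have $\sigma=0$, while every one-sided derivative is $\pm1$. So ``one one-sided derivative of absolute value at most $|\sigma|$'' is false.
\item The crossing argument is left unfinished, and the justification you give is wrong: one-sided derivatives are not semicontinuous in general.
\item Your fallback rests on a false ``standard fact''. A continuous function with finite derivatives everywhere need not be locally Lipschitz, of bounded variation, or absolutely continuous. For example, $t^{2}\sin(t^{-2})$ is differentiable on $[-1,1]$ with non-integrable derivative. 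Likewise $g(t)=1+e^{-t}(t-1)^{2}\sin\bigl((t-1)^{-2}\bigr)$ on $[0,\infty)$ satisfies every hypothesis of the lemma. So neither $g(b)-g(a)=\int_a^b g'$ nor the Chebyshev argument is available.
\end{itemize}

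The missing idea is the one the paper uses. If $c$ is an interior local maximum of $g(t)-st$, then $g'(c-0)\ge s\ge g'(c+0)$. The hypothesis $g'(c-0)\le g'(c+0)$ then forces $g$ to be differentiable at $c$ with $g'(c)=s$.

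With this observation your crossing step closes directly.
\begin{itemize}
\item Suppose differentiability points $x<y$ in $I=[a,a+h]$ satisfy $g'(x)>\varepsilon_k$ and $g'(y)<-\varepsilon_k$. Then $\max_{[x,y]}g$ is attained in $(x,y)$, at a differentiability point with $g'=0$.
\item Otherwise, let $c$ be the supremum of the differentiability points with $g'<-\varepsilon_k$ (or $c=a$ if there are none). Assuming no differentiability point has $|g'|\le\varepsilon_k$, we get $g'<-\varepsilon_k$ at every differentiability point of $[a,c)$ and $g'>\varepsilon_k$ at every one of $(c,a+h]$.
\item Use the monotonicity theorem: a continuous $\phi$ with $D^{+}\phi\ge0$ off a countable set is nondecreasing. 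Apply it to $-g-\varepsilon_k t$ on $[a,c]$ and to $g-\varepsilon_k t$ on $[c,a+h]$. This gives $\osc_I g\ge\varepsilon_k h/2\ge\varepsilon_k^{2}/4$, which contradicts $\osc_I g\le 2\varepsilon_k^{3}$ once $k$ is large.
\end{itemize}

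The paper packages the same dichotomy differently. Either $F$ has an interior local maximum in $I$, where $F'=0$. Or $F$ is quasi-convex on $I$, hence monotone on a subinterval $J$ of length at least $|I|/2$. For a monotone function, Lebesgue's theorem then gives a differentiability point with $|F'|\le\osc_J F/|J|$.
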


\begin{proof}Define
\[
F(s)=f(e^{s}).
\]
The assumptions on the one-sided derivatives are preserved under this
change of variables, and
\[
F(s)\to1\qquad\text{as }s\to\infty.
\]

We first record the local selection argument that will be used below.
Let $I=[a,b]$ be a compact interval. We claim that there is a point
$c\in(a,b)$ at which $F$ is differentiable and
\[
|F'(c)|\le\frac{2\osc_{I}F}{b-a}.
\]

Indeed, if $F$ has a local maximum at an interior point $c$, then
\[
F'(c-0)\ge0,\qquad F'(c+0)\le0.
\]
The assumed inequality $F'(c-0)\le F'(c+0)$ forces both one-sided
derivatives to be zero. Thus $F$ is differentiable at $c$ and the
claim holds. If $F$ has no local maximum in $(a,b)$, then it is
monotone on a subinterval $J\subset I$ of length at least $(b-a)/2$;
the only possible change of monotonicity is through a local minimum.
The one-sided mean-value inequality on $J$ gives a differentiability
point $c\in J$ such that
\[
|F'(c)|\le\frac{\osc_{J}F}{|J|}\le\frac{2\osc_{I}F}{b-a}.
\]
This proves the local claim.

Choose an increasing sequence $S_{k}\to\infty$ such that
\[
S_{k+1}-S_{k}\ge1,\qquad\sup_{s\ge S_{k}}|F(s)-1|\le k^{-3}.
\]
Partition every interval $[S_{k},S_{k+1}]$ into finitely many consecutive
subintervals $I_{k,m}$ whose lengths satisfy
\[
\frac{1}{2k}\le|I_{k,m}|\le\frac{1}{k}.
\]
For example, one may divide $[S_{k},S_{k+1}]$ into $\lceil k(S_{k+1}-S_{k})\rceil$
equal pieces.

On every $I_{k,m}$,
\[
\osc_{I_{k,m}}F\le2k^{-3}.
\]
Applying the local claim gives a differentiability point $s_{k,m}\in I_{k,m}$
such that
\[
|F'(s_{k,m})|\le\frac{4k^{-3}}{|I_{k,m}|}\le8k^{-2}.
\]
List all these points in increasing order as $s_{j}$. The size of
the partition intervals implies
\[
s_{j+1}-s_{j}\to0,\qquad F'(s_{j})\to0.
\]
Set $r_{j}=e^{s_{j}}$. Then
\[
\frac{r_{j+1}}{r_{j}}=e^{s_{j+1}-s_{j}}\to1,\qquad r_{j}f'(r_{j})=F'(s_{j})\to0.
\]

\end{proof}

\begin{lem}\label{v v' ratio go to 1}

Assume that $g=e^{2w}|dx|^{2}$ is nonflat. Let
\[
h(r)=\fint_{\mathbb{S}^{n-1}}v^{2}(r\Theta)dS(\Theta),\qquad\underline{h}(r)=\underline{v}^{2}(r)=e^{(n-2)\underline{w}(r)}.
\]
Then there exists an increasing sequence $r_{j}\to\infty$ such that
\[
\frac{r_{j+1}}{r_{j}}\to1
\]
and
\[
\frac{\int_{\partial B_{r_{j}}(0)}v\frac{\partial v}{\partial\nu}dS}{\int_{\partial B_{r_{j}}(0)}\underline{v}\frac{\partial\underline{v}}{\partial\nu}dS}\to1.
\]
Moreover, if
\[
l_{j}=L(r_{j}),\qquad L(r)=\int^{r}_{0}e^{\underline{w}(t)}dt,
\]
then
\[
\frac{l_{j+1}}{l_{j}}\to1.
\]

\end{lem}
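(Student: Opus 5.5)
We rewrite the boundary integrals in terms of the spherical means. Since $\partial_\nu(v^2)=2v\partial_\nu v$, we have
\[
\int_{\partial B_r}v\frac{\partial v}{\partial\nu}dS=\frac{1}{2}|\mathbb{S}^{n-1}|r^{n-1}h'(r),\qquad \int_{\partial B_r}\underline{v}\frac{\partial \underline{v}}{\partial\nu}dS=\frac{1}{2}|\mathbb{S}^{n-1}|r^{n-1}\underline{h}'(r).
\]
So the claim becomes $rh'(r_j)/(r\underline{h}'(r_j))\to1$ along a suitable sequence. Because $g$ is nonflat, $\tilde m\in(0,1]$ and $r\underline{w}'(r\pm0)\to-\tilde m$ by Lemma \ref{radial symmetric n-superharmonic functions}. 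Hence
\[
r\underline{h}'(r\pm0)=(n-2)\,r\underline{w}'(r\pm0)\,\underline{h}(r)=-(n-2)\tilde m\,\underline{h}(r)(1+o(1)).
\]
In particular the denominator is comparable to $\underline h(r)$ and has a definite sign.

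Set $f(r)=h(r)/\underline{h}(r)$. First I show $f(r)\to1$. The bound $f\ge1$ is trivial. For the upper bound, split $\mathbb{S}^{n-1}$ into directions $\Theta$ with $r\Theta\notin E$, where $e^{(n-2)(w-\underline w)}=1+o(1)$, and the remaining directions. On the exceptional part I bound $e^{(n-2)w}$ by $e^{(n-2)\inf w+C}e^{(n-2)W^\mu_{1,n}}$, exactly as in the proof of Theorem \ref{main thm2 volume ratio}. I then integrate over the spherical slice of $E$. A pure spherical slice may be awkward to control, so I would first prove $f\to1$ in an averaged sense over $r\in[\delta 2^{k-1},\delta2^k]$, reusing the annulus estimate from the volume argument with $n$ replaced by $n-2$. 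Continuity of $h$ then upgrades this along a dense sequence, and possibly everywhere using the monotonicity/log-concavity structure of $v^2$ means.

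Next I compute
\[
rf'=\frac{rh'}{\underline h}-f\,\frac{r\underline h'}{\underline h}=\frac{rh'}{\underline h}+(n-2)\tilde m f+o(1).
\]
If $rf'(r_j)\to0$, then $rh'(r_j)/\underline h(r_j)\to-(n-2)\tilde m$, which gives the ratio $\to1$. To get such $r_j$, I apply Lemma \ref{f'(yN) small} to $f$. This requires the one-sided derivatives of $f$ to satisfy $f'(r-0)\le f'(r+0)$ with countably many jumps. The function $h$ is smooth since $w$ is smooth. For $\underline h$, the kinks satisfy $\underline{w}'(r-0)\ge\underline{w}'(r+0)$ (concavity in $\log r$), so $1/\underline h$ has $(1/\underline h)'(r-0)\le(1/\underline h)'(r+0)$. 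Multiplying by $h>0$ preserves this. Lemma \ref{f'(yN) small} then supplies differentiability points $r_j$ with $r_jf'(r_j)\to0$ and $r_{j+1}/r_j\to1$.

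Finally, for $l_j=L(r_j)$ I write
\[
\frac{l_{j+1}}{l_j}-1=\frac{\int_{r_j}^{r_{j+1}}e^{\underline w}dt}{\int_0^{r_j}e^{\underline w}dt}.
\]
Since $\underline w(t)=\underline w(r_j)+o(1)$ for $t\in[r_j,r_{j+1}]$, the numerator is $(r_{j+1}-r_j)e^{\underline w(r_j)}(1+o(1))$. The proof of Theorem \ref{main thm2 volume ratio} gives $L(r)\sim re^{\underline w(r)}/(1-\tilde m)$ when $\tilde m<1$; when $\tilde m=1$, $L(r)/ (re^{\underline w(r)})\to\infty$. In either case the ratio tends to $0$.

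The main obstacle is the first step, proving $f(r)\to1$ on spheres rather than on annuli. Exceptional balls can be large relative to a single sphere, while the tools available (the capacity bound and the exponential Wolff potential integrability of \cite{MQ2021}) are volume estimates. If pointwise convergence fails, I would restrict the construction in Lemma \ref{f'(yN) small} to radii where $f$ is close to $1$. The averaged estimate shows these radii fill most of each dyadic range, so $r_{j+1}/r_j\to1$ can still be arranged.
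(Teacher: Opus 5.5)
Apart from the step you flag yourself, your outline is the paper's proof. The flux identity, $f=h/\underline h$, the sign $f'(r-0)\le f'(r+0)$, the use of Lemma \ref{f'(yN) small}, the identity $h'/\underline h'=f+rf'/(r(\log\underline h)')$, and the estimate of $L(r_{j+1})-L(r_j)$ are all done there the same way.

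The genuine gap is the claim $f(r)\to1$ as $r\to\infty$ along \emph{all} radii, and it is needed twice. First, it is a hypothesis of Lemma \ref{f'(yN) small}: that proof extracts the points $r_j$ from the smallness of $\osc F$, $F(s)=f(e^{s})$, on \emph{every} short interval. Second, it is needed at the end, since your identity only gives $h'(r_j)/\underline h'(r_j)=f(r_j)+o(1)$.

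None of your suggested routes supplies it:
\begin{itemize}
\item The annulus estimate (the volume argument with $e^{nw}$ replaced by $e^{(n-2)w}$) only yields $f\to1$ in measure on dyadic ranges, and continuity of $h$ does not upgrade this.
\item There is no monotonicity or concavity of the spherical mean of $v^{2}=e^{(n-2)w}$ to exploit: $v$ is superharmonic, but $v^{2}$ need not be.
\item The fallback fails too. Lemma \ref{f'(yN) small} finds its points only at interior local maxima or on monotone pieces of small oscillation. A function with only convex kinks can tend to $1$ in measure while consisting of narrow V-shaped dips toward $1$ with $|F'|\ge2\varepsilon_{0}$, separated by thin steep spikes that rise above $1+\varepsilon_{0}$ before their smooth maxima. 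Such an $F$ has no point with both $F\le1+\varepsilon_{0}$ and $|F'|\le\varepsilon_{0}$. Knowing that good radii fill most of each dyadic range controls $f(r_j)$ but says nothing about $r_jf'(r_j)$.
\end{itemize}

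The missing idea is supplied in the paper by Lemma \ref{area ratio go to 1}. It answers your worry that the available tools are only volume estimates: $n$-capacity controls $(n-1)$-dimensional measure on spheres.
\begin{itemize}
\item Upper bound: with $w_{r}(\xi)=w(r\xi)-\underline w(5r)\le C_{4}(1+W^{\mu_{r}}_{1,n}(\xi,\frac18))$, compare with the solution of $-\Delta_{n}v=\mu_{r}\lfloor A_{\frac38,3}$ and apply Lemma \ref{capacity estimate in terms of measure}. This gives $\mathrm{cap}_{n}(\{W^{\mu_{r}}_{1,n}\ge\lambda\}\cap A_{\frac12,2},\mathring{A}_{\frac14,4})\le C\mu_{r}(A_{\frac38,3})\lambda^{1-n}$.
\item Lower bound: for $D\subset\mathbb{S}^{n-1}$, solve $-\Delta_{n}\tilde v=\mathcal{H}^{n-1}\lfloor D$ and bound $\sup\tilde v$ by its Wolff potential. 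This gives $\mathrm{cap}_{n}(D,\mathring{A}_{\frac14,4})\ge C(1+\log\frac{1}{|D|})^{1-n}$.
\item Combining the two yields exponential integrability on the sphere itself: $|\{\Theta\in B^{g_{\mathbb{S}^{n-1}}}_{\rho_{i}}(p_{i}):w_{r}(\Theta)>\lambda\}|\le\exp(1-\lambda/(C\mu_{r}(A_{\frac38,3})^{\frac{1}{n-1}}))$.
\item Hence, for each cap cut out by the strong $\mathcal{E}$-set, $\int_{B^{g_{\mathbb{S}^{n-1}}}_{\rho_{i}}(p_{i})}e^{(n-2)(w(r\Theta)-\underline w(r))}dS\le C\rho_{i}^{\,n-1-C\mu_{r}(A_{\frac38,3})^{\frac{1}{n-1}}}$.
\end{itemize}
Since $\mu(\mathbb{R}^{n})<\infty$ forces $\mu_{r}(A_{\frac38,3})\to0$, and the strong $\mathcal{E}$ condition makes the relevant tail sums small, $f(r)\to1$ along every sequence $r\to\infty$. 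With that lemma in place, the rest of your argument goes through.
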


\begin{proof}By Lemma \ref{area ratio go to 1},
\[
f(r):=\frac{h(r)}{\underline{h}(r)}=\frac{\fint_{\mathbb{S}^{n-1}}e^{(n-2)w(r\Theta)}dS(\Theta)}{e^{(n-2)\underline{w}(r)}}\to1.
\]

The function $h$ is continuously differentiable. The radial function
$\underline{h}$ is positive and continuous, its finite one-sided
derivatives exist, and
\[
\underline{h}'(r-0)\ge\underline{h}'(r+0),
\]
with strict inequality at most at countably many points. Hence
\[
f'(r-0)-f'(r+0)=-\frac{h(r)}{\underline{h}(r)^{2}}\left(\underline{h}'(r-0)-\underline{h}'(r+0)\right)\le0.
\]
Thus $f$ satisfies the assumptions of Lemma \ref{f'(yN) small}.
We obtain an increasing sequence $r_{j}$ such that
\[
r_{j}f'(r_{j})\to0,\qquad\frac{r_{j+1}}{r_{j}}\to1.
\]
At these points $\underline{h}$ is also differentiable, since a strict
jump of its one-sided derivatives would produce a strict jump of the
one-sided derivatives of $f$.

Since the metric is nonflat, Theorem 3.1 gives $\widetilde{m}>0$.
The radial asymptotics give
\[
r\underline{w}'(r)\to-\widetilde{m}
\]
at differentiability points. Therefore
\[
r(\log\underline{h})'(r)=(n-2)r\underline{w}'(r)\to-(n-2)\widetilde{m}\ne0.
\]

At every point where the relevant derivatives exist, the correct identity
is
\[
f'(r)=(\log\underline{h})'(r)\left(\frac{h'(r)}{\underline{h}'(r)}-f(r)\right).
\]
Equivalently,
\[
\frac{h'(r)}{\underline{h}'(r)}=f(r)+\frac{rf'(r)}{r(\log\underline{h})'(r)}.
\]
Consequently,
\[
\frac{h'(r_{j})}{\underline{h}'(r_{j})}\to1.
\]

Finally,
\[
\int_{\partial B_{r}(0)}v\frac{\partial v}{\partial\nu}dS=\frac{r^{n-1}|\mathbb{S}^{n-1}|}{2}h'(r),
\]
and the corresponding identity holds for $\underline{v}$ and $\underline{h}$.
The common factor cancels, proving the boundary-flux ratio.

It remains to compare the radial geodesic radii. The radial asymptotics
used in the proof of Theorem \ref{main thm2 volume ratio} give
\[
\frac{re^{\underline{w}(r)}}{L(r)}\to1-\widetilde{m}.
\]
Moreover, $\underline{w}$ is eventually nonincreasing. Thus
\begin{align*}
0 & \le\frac{L(r_{j+1})-L(r_{j})}{L(r_{j})}\\
 & =\frac{\int^{r_{j+1}}_{r_{j}}e^{\underline{w}(t)}dt}{L(r_{j})}\\
 & \le\left(\frac{r_{j+1}}{r_{j}}-1\right)\frac{r_{j}e^{\underline{w}(r_{j})}}{L(r_{j})}\to0.
\end{align*}
Therefore $l_{j+1}/l_{j}\to1$.

\end{proof}

\begin{lem}\label{area ratio go to 1}
\[
\lim_{r\to\infty}\frac{\fint_{\mathbb{S}^{n-1}(0)}e^{(n-2)w(r\Theta)}dS(\Theta)}{e^{(n-2)\underline{w}(r)}}=1.
\]

\end{lem}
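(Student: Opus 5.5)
The lower bound is immediate: since $w(r\Theta)\ge\underline{w}(r)$ for every $\Theta$, the ratio is $\ge1$ for every $r$. For the upper bound I will split the sphere $\partial B_{r}(0)$ into the part outside the strong $\mathcal{E}$-set $E$ given by the Corollary to Theorem \ref{main thm1 refined analytic result of u}, and the part inside $E$. On $\partial B_{r}\setminus E$ we have $w=\underline{w}(r)+o(1)$, so
\[
\fint_{\mathbb{S}^{n-1}}e^{(n-2)(w(r\Theta)-\underline{w}(r))}\chi_{\{r\Theta\notin E\}}\,dS(\Theta)\le e^{o(1)}\to1 .
\]
It therefore suffices to show that
\[
\frac{1}{r^{n-1}}\int_{\partial B_{r}\cap E}e^{(n-2)(w-\underline{w}(r))}\,dS\to0 .
\]

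For a ball $B_{r_{i}}(x_{i})\subset E$ meeting $\partial B_{r}$, with $r\sim2^{k}$ and $r_{i}\ll|x_{i}|$, I will reuse the pointwise estimate from the proof of Theorem \ref{main thm2 volume ratio}. From \cite[Theorem 1.6]{KM1994} and the comparability of infima on nearby balls with $\underline{w}$, we get
\[
w(x)\le\underline{w}(r)+C+W^{\mu}_{1,n}(x,\delta_{1}2^{k+1}),\qquad x\in B_{r_{i}}(x_{i}).
\]
The Wolff potential splits as
\[
W^{\mu}_{1,n}(x,\delta_{1}2^{k+1})\le\epsilon_{k}\log\frac{2^{k}}{3r_{i}}+W^{\mu}_{1,n}(x,3r_{i}),\qquad\epsilon_{k}=\mu(\mathbb{R}^{n}\setminus B_{2^{k-3}})^{\frac{1}{n-1}}\to0,
\]
where $\epsilon_{k}\to0$ because $\mu(\mathbb{R}^{n})<\infty$ by the Lemma following the revised Theorem 1.3 of \cite{MQ2021}. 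So the task reduces to bounding
\[
\int_{\partial B_{r}\cap B_{r_{i}}(x_{i})}\exp\big((n-2)W^{\mu}_{1,n}(x,3r_{i})\big)\,dS
\]
by $Cr_{i}^{n-1}$.

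\textbf{Main obstacle.} This surface integral is the step I expect to be hardest. I will prove a hypersurface analogue of Lemma \ref{integral along a line}. Project $\mu\lfloor B_{4r_{i}}(x_{i})$ radially onto the sphere $\partial B_{r}$; since $r_{i}\ll r$, the sphere is nearly flat at this scale. The projection dominates ball masses, and for the projected measure $\hat{\mu}$ we have $\hat{\mu}(B(x,s))\lesssim s^{n-1}$-type control after integrating over $x$ on the sphere. Then apply Jensen's inequality to the exponential exactly as in Lemma \ref{integral along a line}, with exponent $(n-2)\gamma$ small, and use Fubini in $(x,s)$. The output is
\[
\int_{\partial B_{r}\cap B_{r_{i}}(x_{i})}\exp\big((n-2)W^{\mu}_{1,n}(x,3r_{i})\big)\,dS\le C\big(r_{i}^{n-1}+r_{i}^{n-1}\mu(B_{4r_{i}}(x_{i}))^{\frac{1}{n-1}}\big)\le Cr_{i}^{n-1}.
\]
The smallness of $\mu$ on far annuli is what keeps the Jensen exponent below the critical value. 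If the codimension-one case gives trouble, I would instead use an $L^{q}$ trace-type bound for $\exp(cW)$, or average over $r$ in a dyadic window, though the statement is a pointwise limit in $r$.

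Collecting the pieces, the contribution of $E$ is bounded by
\[
C\sum_{B_{i}\cap\partial B_{r}\neq\emptyset}\Big(\frac{r_{i}}{2^{k}}\Big)^{n-1-(n-2)\epsilon_{k}}.
\]
The condition $\sum_{i}(\log_{+}\frac{|x_{i}|}{r_{i}})^{1-n-\eta}<\infty$ makes the tail over balls with $|x_{i}|\sim2^{k}$ tend to zero, since any power of $\frac{r_{i}}{|x_{i}|}$ is dominated by that logarithmic weight. This bound is uniform in $r\in[2^{k-1},2^{k}]$, which gives the limit $1$.
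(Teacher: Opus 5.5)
Your argument is correct, and its outer structure is the same as the paper's: the trivial lower bound, the splitting of $\partial B_r$ along the strong $\mathcal{E}$-set, a per-ball bound of the form $Cr_i^{n-1}(|x_i|/r_i)^{c\epsilon}$ with $\epsilon\to0$ from $\mu(\mathbb{R}^n)<\infty$, and summation via $\sum_i(\log_+\frac{|x_i|}{r_i})^{1-n-\eta}<\infty$. The genuine difference is how you obtain exponential integrability of the Wolff potential on the spherical caps.

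\emph{The paper's route.} It works with the rescaled $w_r(\xi)=w(r\xi)-\underline{w}(5r)$, uses $w_r\le C_4(1+W^{\mu_r}_{1,n}(\xi,\frac18))$, and then argues by capacity. Superlevel sets of $W^{\mu_r}_{1,n}$ have $n$-capacity $\lesssim\mu_r(A_{\frac38,3})/\lambda^{n-1}$, by comparison with a Dirichlet potential and Lemma \ref{capacity estimate in terms of measure}. Every $D\subset\mathbb{S}^{n-1}$ satisfies $\mathrm{cap}_n(D,\mathring{A}_{\frac14,4})\gtrsim(1+\log\frac{1}{|D|})^{1-n}$. Combining the two gives an exponential distribution bound $\exp\bigl(1-\lambda/(C\mu_r(A_{\frac38,3})^{\frac{1}{n-1}})\bigr)$ for superlevel sets on each cap, which is then integrated.

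\emph{Your route.} You split $W$ at scale $3r_i$ and prove a codimension-one analogue of Lemma \ref{integral along a line} by radial projection, Jensen and Fubini. This works, with three small adjustments.
\begin{enumerate}
\item Radial projection onto $\partial B_r$ is only $(1+O(r_i/r))$-Lipschitz for points inside $B_r$, so you must enlarge the Wolff radius slightly.
\item Lebesgue differentiation on the sphere gives $\hat\mu(B(x,s))\le C(x)s^{n-1}$ for a.e.\ $x$. Hence the boundary terms in the integrations by parts vanish provided the Jensen exponent satisfies $c\gamma<1$. Here $c$ must include the constant in front of $W$ from \cite[Theorem 1.6]{KM1994}, which you dropped; this is harmless since $\gamma\to0$.
\item H\"older's inequality together with $\int_{S_i}\hat\mu(B(x,s))\,dS(x)\le Cs^{n-1}\hat\mu(\partial B_r)$ then gives exactly your bound $|S_i|+Cr_i^{n-1}\gamma$.
\end{enumerate}

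\emph{Comparison.} Your approach is more elementary and self-contained. It needs no capacity lower bound for spherical sets, and its critical exponent $1$ is less restrictive than the $\frac{1}{n-1}$ of the line case. The paper's approach instead gives a scale-free, Trudinger-type distribution estimate on arbitrary subsets of the sphere. It needs neither localization of $\mu$ to $B_{4r_i}(x_i)$ nor splitting of the potential at the scale of each ball.
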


\begin{proof}
\begin{align*}
 & \frac{\fint_{\mathbb{S}^{n-1}(0)}e^{(n-2)w(r\Theta)}dS(\Theta)}{e^{(n-2)\underline{w}(r)}}\\
= & \fint_{\mathbb{S}^{n-1}(0)}e^{(n-2)(w(r\Theta)-\underline{w}(r))}dS(\Theta)\\
= & \frac{1}{|\mathbb{S}^{n-1}(0)|}\left(\int_{\mathbb{S}^{n-1}(0)\backslash r^{-1}(E)}+\int_{\mathbb{S}^{n-1}(0)\cap r^{-1}(E)}\right)e^{(n-2)(w(r\Theta)-\underline{w}(r))}dS(\Theta)\ge1.
\end{align*}
For $\varepsilon>0$, there exists $M$ such that when $r>M$,
\[
\int_{\mathbb{S}^{n-1}(0)\backslash r^{-1}(E)}e^{(n-2)(w(r\Theta)-\underline{w}(r))}dS(\Theta)\le\int_{\mathbb{S}^{n-1}(0)\backslash r^{-1}(E)}e^{o(1)}dS(\Theta)\le(1+\varepsilon)|\mathbb{S}^{n-1}(0)|.
\]
For the strong $\mathcal{E}$-set $E$, we may assume 
\[
\mathbb{S}^{n-1}(0)\cap r^{-1}(E)=\bigcup_{i}B^{g_{\mathbb{S}^{n-1}}}_{r_{i}}(p_{i}),p_{i}\in\mathbb{S}^{n-1}.
\]
for $\sum_{i}|B^{g_{\mathbb{S}^{n-1}}}_{r_{i}}(p_{i})|$ small. 

Notice that $w_{r}(\xi)=w(r\xi)-\underline{w}(5r)\ge0$ in $\xi\in A_{\frac{1}{4},4}$.
Since as $r\to\infty$, $w_{r}(\xi)\to\tilde{m}\log\frac{5}{|\xi|}$
in $W^{1,p}(A_{\frac{1}{4},4})$, it is easy to show that $w_{r}(\xi)\le C_{4}(1+W^{\mu_{r}}_{1,n}(\xi,\frac{1}{8}))$,
where $\mu=-\Delta_{n}w$ and $\mu_{r}(D)=\mu(rD)$. Then for $\Theta\in\mathbb{S}^{n-1}(0)$,
$w_{r}(\Theta)\le C_{4}(1+W^{\mu_{r}}_{1,n}(\Theta,\frac{1}{4}))$.
Notice, for $\lambda>2C_{4}$, 
\[
|\{\Theta\in B^{g_{\mathbb{S}^{n-1}}}_{r_{i}}(p_{i});w_{r}(\Theta)>\lambda\}|\le\left|\left\{ \Theta\in B^{g_{\mathbb{S}^{n-1}}}_{r_{i}}(p_{i});W^{\mu_{r}}_{1,n}(\Theta,\frac{1}{8})\ge\frac{\lambda}{2C_{4}}\right\} \right|.
\]

We let 
\[
\begin{cases}
-\Delta^{\xi}_{n}v(\xi)=\mu_{r}\lfloor A_{\frac{3}{8},3}, & \xi\in A_{\frac{1}{4},4},\\
v(\xi)=0, & \xi\in\partial A_{\frac{1}{4},4}.
\end{cases}
\]
From Lemma \ref{capacity estimate in terms of measure}, we have 
\[
\text{cap}_{n}\left(\{\xi\in A_{\frac{1}{4},4};v(\xi)\ge s\},\mathring{A}_{\frac{1}{4},4}\right)\le\frac{C\mu_{r}(A_{\frac{3}{8},3})}{s^{n-1}}.
\]
On the other hand, from \cite[Theorem 1.6]{KM1994}, for $\xi\in A_{\frac{1}{2},2},$
\[
CW^{\mu_{r}}_{1,n}(\xi,\frac{1}{8})\le v(\xi).
\]
Then 
\[
\text{cap}_{n}\left(\left\{ \xi\in A_{\frac{1}{2},2};W^{\mu_{r}}_{1,n}(\xi,\frac{1}{8})\ge\frac{\lambda}{2C_{4}}\right\} ,\mathring{A}_{\frac{1}{4},4}\right)\le\frac{C\mu_{r}(A_{\frac{3}{8},3})}{\lambda^{n-1}}.
\]
And hence 
\begin{equation}
\text{cap}_{n}\left(\left\{ \Theta\in B^{g_{\mathbb{S}^{n-1}}}_{r_{i}}(p_{i});W^{\mu_{r}}_{1,n}(\Theta,\frac{1}{8})\ge\frac{\lambda}{2C_{4}}\right\} ,\mathring{A}_{\frac{1}{4},4}\right)\le\frac{C\mu_{r}(A_{\frac{3}{8},3})}{\lambda^{n-1}}.\label{capacity of B_ri  upper bound}
\end{equation}

On the other hand, we calculate the lower bound of the capacity $\text{cap}_{n}\left(D\subset\mathbb{S}^{n-1},\mathring{A}_{\frac{1}{4},4}\right)$
in term of its $n-1$ dimensional Hausdorff measure. We let 
\[
\begin{cases}
-\Delta^{\xi}_{n}\tilde{v}(\xi)=\mathcal{H}^{n-1}\lfloor D & \xi\in A_{\frac{1}{4},4},\\
\tilde{v}(\xi)=0 & \xi\in\partial A_{\frac{1}{4},4}.
\end{cases}
\]
Then we know 
\[
\text{cap}_{n}(D,\mathring{A}_{\frac{1}{4},4})\ge\frac{|D|}{(\sup\tilde{v})^{n-1}}.
\]
Obviously $\sup\tilde{v}=\sup_{D}\tilde{v}$. For $\xi\in D$, 
\begin{align*}
\tilde{v}(\xi) & \le C\inf_{B_{\frac{1}{4}}(\xi)}\tilde{v}+CW^{\mathcal{H}^{n-1}\lfloor D}_{1,n}(\xi,\frac{1}{2})\\
 & \le C\left(\frac{|D|}{\text{cap}_{n}(B_{\frac{1}{4}}(\xi),\mathring{A}_{\frac{1}{4},4})}\right)^{\frac{1}{n-1}}+CW^{\mathcal{H}^{n-1}\lfloor D}_{1,n}(\xi,\frac{1}{2})\\
 & \le C|D|^{\frac{1}{n-1}}+CW^{\mathcal{H}^{n-1}\lfloor D}_{1,n}(\xi,\frac{1}{2}).
\end{align*}
Note that 
\begin{align*}
W^{\mathcal{H}^{n-1}\lfloor D}_{1,n}(\xi,\frac{1}{2}) & =\int^{\frac{1}{2}}_{0}\mathcal{H}^{n-1}(D\cap B_{t}(\xi))^{\frac{1}{n-1}}\frac{dt}{t}\\
 & \le\int^{\left(\frac{|D|}{\omega_{n-1}}\right)^{\frac{1}{n-1}}}_{0}\omega^{\frac{1}{n-1}}_{n-1}dt+\int^{\frac{1}{2}}_{\left(\frac{|D|}{\omega_{n-1}}\right)^{\frac{1}{n-1}}}|D|^{\frac{1}{n-1}}\frac{dt}{t}\\
 & \le C|D|^{\frac{1}{n-1}}(1+\log\frac{1}{|D|}).
\end{align*}
So 
\[
\sup\tilde{v}\le C|D|^{\frac{1}{n-1}}(1+\log\frac{1}{|D|})
\]
and hence 
\[
\text{cap}_{n}(D,\mathring{A}_{\frac{1}{4},4})\ge\frac{C}{(1+\log\frac{1}{|D|})^{n-1}}.
\]
Then together with (\ref{capacity of B_ri  upper bound}), we have
\[
\left|\left\{ \Theta\in B^{g_{\mathbb{S}^{n-1}}}_{r_{i}}(p_{i});W^{\mu_{r}}_{1,n}(\Theta,\frac{1}{8})\ge\frac{\lambda}{2C_{4}}\right\} \right|\le\exp\left(1-\frac{\lambda}{C\mu_{r}(A_{\frac{3}{8},3})^{\frac{1}{n-1}}}\right).
\]
 Then for $\lambda>2C_{4}$,
\[
|\{\Theta\in B^{g_{\mathbb{S}^{n-1}}}_{r_{i}}(p_{i});w_{r}(\Theta)>\lambda\}|\le\exp\left(1-\frac{\lambda}{C\mu_{r}(A_{\frac{3}{8},3})^{\frac{1}{n-1}}}\right).
\]
 Now 
\begin{align*}
 & \int_{B^{g_{\mathbb{S}^{n-1}}}_{r_{i}}(x_{i})}e^{(n-2)(w(r\Theta)-\underline{w}(r))}dS(\Theta)\\
\le & C\int_{B^{g_{\mathbb{S}^{n-1}}}_{r_{i}}(x_{i})}e^{(n-2)w_{r}(\Theta)}dS(\Theta)\\
\le & C\int^{+\infty}_{0}\left|\left\{ w_{r}(\Theta)>\frac{1}{n-2}\log t\right\} \right|dt\\
\le & Cr^{n-1-C\mu_{r}(A_{\frac{3}{8},3})^{\frac{1}{n-1}}}_{i}+\int^{\infty}_{r^{-(n-1)C\mu_{r}(A_{\frac{3}{8},3})^{\frac{1}{n-1}}}_{i}}\exp\left(1-\frac{\log t}{C\mu_{r}(A_{\frac{3}{8},3})^{\frac{1}{n-1}}}\right)dt\\
\le & Cr^{n-1-C\mu_{r}(A_{\frac{3}{8},3})^{\frac{1}{n-1}}}_{i}.
\end{align*}
Then from (\ref{Strong epsilon set}), as $r\to\infty$, 
\[
\sum\int_{B^{g_{\mathbb{S}^{n-1}}}_{r_{i}}(x_{i})}e^{(n-2)(w(r\Theta)-\underline{w}(r))}dS(\Theta)\to0.
\]
Then we proved the lemma.

\end{proof}

Then we let $r_{i}\to+\infty$ be the sequence in Lemma \ref{v v' ratio go to 1}.
Since $\int_{\partial B_{r_{i}}(0)}\frac{\partial\underline{v}}{\partial\nu}\underline{v}dS(\partial B^{e}_{t_{i}})\le0$,
we have
\[
\frac{\int_{B_{r_{i}}(0)}|\nabla v|^{2}-\int_{\partial B_{r_{i}}(0)}\frac{\partial v}{\partial\nu}vdS(\partial B^{e}_{t_{i}})}{\int_{B_{r_{i}}(0)}|\nabla\underline{v}|^{2}-\int_{\partial B_{r_{i}}(0)}\frac{\partial\underline{v}}{\partial\nu}\underline{v}dS(\partial B^{e}_{t_{i}})}\to1
\]
 as $r_{i}\to\infty.$ For 
\[
l_{i}=\int^{r_{i}}_{0}e^{\underline{w}(t)}dt,
\]
 we know 
\begin{equation}
\frac{\int_{B^{\underline{g}}_{l_{i}}(0)}Rd\mu_{g}}{\int_{B^{\underline{g}}_{l_{i}}(0)}\underline{R}d\mu_{\underline{g}}}\to1,{\rm as}\,l_{i}\to\infty.\label{R integral =00005CunderlineRintegralratioin the same domain}
\end{equation}
 Then 
\[
\lim_{l_{i}\to\infty}\frac{\int_{B^{\underline{g}}_{l_{i}}(0)}Rd\mu_{g}}{l^{n-2}_{i}}=\lim_{l_{i}\to\infty}\frac{\int_{B^{\underline{g}}_{l_{i}}(0)}\underline{R}d\mu_{\underline{g}}}{l^{n-2}_{i}}.
\]
 From Lemma \ref{v v' ratio go to 1}, we know $\lim_{l_{i}\to\infty}l_{i+1}/l_{i}=1$,
which implies 
\begin{equation}
\lim_{l\to\infty}\frac{\int_{B^{\underline{g}}_{l}(0)}Rd\mu_{g}}{l^{n-2}}=\lim_{l\to\infty}\frac{\int_{B^{\underline{g}}_{l}(0)}\underline{R}d\mu_{\underline{g}}}{l^{n-2}}=\begin{cases}
(n-1)|\mathbb{S}^{n-1}|(\nu^{\frac{n-3}{n-1}}-\nu) & n>3,\\
8\pi(1-\nu) & n=3.
\end{cases}\label{R limit go to the right one}
\end{equation}
In the end we  prove that 
\[
\lim_{l\to\infty}\frac{\int_{B^{g}_{l}(0)}Rd\mu_{g}}{l^{n-2}}=\lim_{l\to\infty}\frac{\int_{B^{\underline{g}}_{l}(0)}Rd\mu_{g}}{l^{n-2}}.
\]
 It is obvious that 
\[
\frac{\int_{B^{g}_{l}(0)}Rd\mu_{g}}{\int_{B^{\underline{g}}_{l}(0)}Rd\mu_{g}}\le1.
\]
 For opposite, from (\ref{B^g contains B^=00005Cunderlineg_g}), we
have 
\[
\lim_{l\to\infty}\frac{\int_{B^{g}_{l}(0)}Rd\mu_{g}}{l^{n-2}}\ge\lim_{l\to\infty}\frac{\int_{B^{\underline{g}}_{l}(0)}Rd\mu_{g}}{l^{n-2}}.
\]
Then Theorem \ref{main thm 3} follows.

\begin{proof}(of Theorem \ref{main thm 4 generalization of Chen-Zhu's theorem})
We only need to consider the case when $(M,g)$ is isometric to $(\mathbb{R}^{n},e^{2w}|dx|^{2})$,
since in Case 3 of Theorem \ref{Carron-Herzlich classification thm},
the scalar curvature is a positive constant, which does not satisfy
(\ref{Scalar curvature decay assumption 1}) or (\ref{Scalar curvature decay assumption 2}).
From Theorem \ref{main thm1 refined analytic result of u},
\[
-\liminf_{x\to\infty}\frac{w(x)}{\log|x|}=\tilde{m}=1-\nu{}^{\frac{1}{n-1}}.
\]
 Then from Theorem \ref{main thm 3}, 
\[
\lim_{l\to\infty}\frac{\int_{B^{g}_{l}(0)}R_{g}}{l^{n-2}}=\begin{cases}
(n-1)(\nu^{\frac{n-3}{n-1}}-\nu)|\mathbb{S}^{n-1}|, & n>3\\
8\pi(1-\nu), & n=3.
\end{cases}
\]
From Theorem \ref{main thm2 volume ratio}, 
\[
\lim_{l\to\infty}\frac{V_{g}(B^{g}_{l}(0))}{\omega_{n}l^{n}}=\nu.
\]

If $\nu>0$, then 
\[
\lim_{l\to\infty}\frac{l^{2}\int_{B^{g}_{l}(0)}R_{g}}{V_{g}(B^{g}_{l}(0))}=\begin{cases}
\frac{(n-1)}{\omega_{n}}(\nu^{\frac{-2}{n-1}}-1)|\mathbb{S}^{n-1}|, & n>3\\
6(\frac{1}{\nu}-1), & n=3.
\end{cases}
\]
If there holds 
\begin{equation}
\frac{\int_{B^{g}_{l}(0)}R_{g}}{V_{g}(B^{g}_{l}(0))}=o(\frac{1}{l^{2}})\label{Rg decay condition 1}
\end{equation}
or 
\begin{equation}
\int^{\infty}_{0}\frac{l}{V_{g}(B^{g}_{l}(0))}\left(\int_{B^{g}_{l}(0)}R_{g}\right)dl<+\infty,\label{Rg decay condition 2}
\end{equation}
then $\nu$ has to be $1$, which implies that $(M,g)$ is isometric
to $(\mathbb{R}^{n},|dx|^{2})$. 

Now we consider $\nu=0$ case. 

\begin{lem}

If $\nu=0$ and either (\ref{Rg decay condition 1}) or (\ref{Rg decay condition 2})
holds, then either
\[
\frac{\int_{B^{\underline{g}}_{l}(0)}R_{\underline{g}}}{V_{\underline{g}}(B^{\underline{g}}_{l}(0))}=o(\frac{1}{l^{2}})
\]
or 
\[
\int^{\infty}_{0}\frac{l}{V_{\underline{g}}(B^{\underline{g}}_{l}(0))}\left(\int_{B^{\underline{g}}_{l}(0)}R_{\underline{g}}\right)dl<+\infty,
\]
holds respectively.

\end{lem}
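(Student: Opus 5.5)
The plan is to compare the two ratios directly, using the asymptotic equivalences already established for the main case $(\mathbb{R}^{n},e^{2w}|dx|^{2})$. Since $\nu=0$ and the metric is nonflat, we have $\tilde m=1$. I would show that, as $l\to\infty$,
\[
\int_{B^{\underline{g}}_{l}(0)}R_{\underline{g}}d\mu_{\underline{g}}\le(1+o(1))\int_{B^{g}_{l+\varepsilon(l)}(0)}R_{g}d\mu_{g},\qquad V_{\underline{g}}(B^{\underline{g}}_{l}(0))\ge(1+o(1))V_{g}(B^{g}_{l'}(0)),
\]
with $l'$ comparable to $l$. These bounds would transfer each hypothesis, with at most a harmless reparametrization of $l$.

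\textbf{Volumes.} In the proof of Theorem \ref{main thm2 volume ratio} we showed $B^{g}_{l}(0)\subset B^{\underline{g}}_{l}(0)$, together with (\ref{volume ratio go to 1}):
\[
V_{g}(B^{\underline{g}}_{l}(0))=(1+o(1))V_{\underline{g}}(B^{\underline{g}}_{l}(0)).
\]
The proof of (\ref{volume ratio go to 1}) used only that $\int e^{n\underline{w}}=\infty$, which holds because $\underline{w}\ge-\log r-C$. Hence $V_{g}(B^{g}_{l}(0))\le(1+o(1))V_{\underline{g}}(B^{\underline{g}}_{l}(0))$, which is the direction we need.

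\textbf{Curvature integrals.} From (\ref{R integral =00005CunderlineRintegralratioin the same domain}) along the sequence $l_{i}$ with $l_{i+1}/l_{i}\to1$, combined with monotonicity of $l\mapsto\int_{B^{\underline{g}}_{l}}$ (both integrands are nonnegative), I get
\[
\int_{B^{\underline{g}}_{l}}\underline{R}\,d\mu_{\underline{g}}\le(1+o(1))\int_{B^{\underline{g}}_{l^{+}}}R\,d\mu_{g},\qquad l^{+}=(1+o(1))l.
\]
Then (\ref{B^g contains B^=00005Cunderlineg_g}) gives $B^{\underline{g}}_{l^{+}}\subset B^{g}_{l^{+}+\varepsilon(l^{+})}$, which bounds the right-hand side by $\int_{B^{g}_{(1+o(1))l}}R_{g}$. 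The case $\tilde m=1$ deserves a check: the leading limits in (\ref{R limit go to the right one}) may vanish, so the ratio statement must be understood before normalization. Lemma \ref{gradient ratio go to 1} and Lemma \ref{v v' ratio go to 1} were proved for all $\tilde m>0$, and $\int|\nabla\underline v|^{2}\to\infty$ by (\ref{=00005Cunderlinewintegralinfinity}), so the ratio still tends to $1$. Adding the nonpositive boundary term keeps the ratio comparison valid.

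\textbf{Transferring the hypotheses.} Combining the two comparisons, the $\underline g$-quantity at radius $l$ is bounded by $(1+o(1))$ times the $g$-quantity at radius $(1+o(1))l$, with a volume factor $V_{g}(B^{g}_{(1+o(1))l})/V_{g}(B^{g}_{l})$. Under (\ref{Rg decay condition 1}) this factor stays harmless provided $V_{g}$ is doubling at ratio $1+o(1)$. That holds by Bishop--Gromov since $Ric\ge0$: $V(B_{sl})\le s^{n}V(B_{l})$. Thus $o(l^{-2})$ transfers directly. Under (\ref{Rg decay condition 2}), I would substitute $l\mapsto(1+o(1))l$ in the integral and again use Bishop--Gromov to keep the integrand comparable up to a constant, so finiteness is preserved.

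\textbf{Main obstacle.} The hard step is that the curvature comparison (\ref{R integral =00005CunderlineRintegralratioin the same domain}) holds only along the subsequence $r_{i}$. I plan to handle this by interpolating with monotonicity and $l_{i+1}/l_{i}\to1$, as sketched above. A second point to verify is that the $o(1)$ errors from the exceptional set $E$ are uniform when $\tilde m=1$.
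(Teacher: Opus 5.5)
Your argument is correct. Its skeleton matches the paper's: volumes are compared through $B^{g}_{l}(0)\subset B^{\underline g}_{l}(0)$ and (\ref{volume ratio go to 1}). The same-domain curvature comparison along $l_i$ is then extended to all $l$ using nonnegativity of both curvature integrands, $l_{i+1}/l_i\to1$, and $B^{\underline g}_{l-\varepsilon(l)}(0)\subset B^{g}_{l}(0)$.

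Where you differ is in absorbing the mismatch of radii.
\begin{itemize}
\item The paper fixes $a\in(0,1)$ and takes $l_i\le al<l_{i+1}$, so that $B^{\underline g}_{l_{i+1}}(0)\subset B^{g}_{l}(0)$. It then proves a doubling bound for the model, $V_{\underline g}(B^{\underline g}_{l}(0))\le C(a)V_{\underline g}(B^{\underline g}_{al}(0))$, from $\psi(s)/s$ being nonincreasing. Thus the $\underline g$-ratio at $al$ is at most $C(a)$ times the $g$-ratio at $l$, and a linear change of variables settles both conditions.
\item You instead enlarge the radius on the $g$ side and pay with Bishop--Gromov for $g$. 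This needs no new estimate on $\underline g$, and the constant can be taken arbitrarily close to $1$.
\end{itemize}

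The one loose spot is the integral condition. Your enlarged radius (essentially $l_{i+1}$ plus a correction) is not a regular function of $l$, so ``substituting $l\mapsto(1+o(1))l$'' is not literally a change of variables. Since that radius is at most $2l$ for large $l$, monotonicity of $s\mapsto\int_{B^{g}_{s}(0)}R_g$ together with Bishop--Gromov gives
\[
\frac{l}{V_{\underline g}(B^{\underline g}_{l}(0))}\int_{B^{\underline g}_{l}(0)}R_{\underline g}\le C\,\frac{l}{V_{g}(B^{g}_{2l}(0))}\int_{B^{g}_{2l}(0)}R_{g}.
\]
After this, $s=2l$ is a legitimate substitution, and the same bound handles the $o(l^{-2})$ condition too.

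Likewise, when invoking the inclusion, use $B^{\underline g}_{s}(0)\subset B^{g}_{(1+\delta)s}(0)$ for large $s$ rather than $B^{g}_{s+\varepsilon(s)}(0)$, since $\varepsilon$ need not be monotone.

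Your worry about $\tilde m=1$ is harmless. Lemmas \ref{gradient ratio go to 1} and \ref{v v' ratio go to 1} are proved for every $\tilde m>0$, and the model Dirichlet energy still diverges (like $\log r$). This is exactly the setting in which the paper uses the same-domain curvature ratio along $l_i$.
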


\begin{proof}

First we will show that for every fixed $a\in(0,1)$, there exists
$C_{a}>0$ such that 
\begin{equation}
V_{\underline{g}}(B^{\underline{g}}_{l}(0))\le C(a)V_{\underline{g}}(B^{\underline{g}}_{al}(0)).\label{slow variation of V_=00005Cunderlineg}
\end{equation}
Indeed since $\psi(s)/s$ is nonincreasing, 
\[
V_{\underline{g}}(B^{\underline{g}}_{al}(0))\ge Cal\psi(al)^{n-1},
\]
while 
\[
V_{\underline{g}}(B^{\underline{g}}_{l}(0))-V_{\underline{g}}(B^{\underline{g}}_{al}(0))\le(1-a)l\psi(l)^{n-1}.
\]

On one hand, from (\ref{slow variation of V_=00005Cunderlineg})
\begin{equation}
V_{g}(B^{g}_{l}(0))\subset V_{g}(B^{\underline{g}}_{l}(0))\le CV_{\underline{g}}(B^{\underline{g}}_{l}(0))\le C(a)V_{\underline{g}}(B^{\underline{g}}_{al}(0)).\label{volume transfered to =00005Cunderlineg}
\end{equation}

On the other hand we assume $l_{i}\le al<l_{i+1}$. So for $l$ large,
$l_{i}/l_{i+1}$ is close to $1$ and so
\[
\frac{l}{l_{i+1}}=\frac{1}{a}\frac{al}{l_{i}}\frac{l_{i}}{l_{i+1}}\ge\frac{2}{1+a}>1.
\]
So $B^{\underline{g}}_{l_{i+1}}(0)\subset B^{g}_{l}(0)$. Then 
\begin{equation}
\int_{B^{g}_{l}}R_{g}\ge\int_{B^{\underline{g}}_{l_{i+1}}}R_{g}\ge C\int_{B^{\underline{g}}_{l_{i+1}}}R_{\underline{g}}\ge C\int_{B^{\underline{g}}_{al}}R_{\underline{g}}.\label{scalar curvature integral transfered to =00005Cunderlineg}
\end{equation}

So from (\ref{volume transfered to =00005Cunderlineg})(\ref{scalar curvature integral transfered to =00005Cunderlineg}),
we see that, there holds
\[
\frac{\int_{B^{\underline{g}}_{al}}R_{\underline{g}}}{V_{\underline{g}}(B^{\underline{g}}_{al}(0))}=o\left(\frac{1}{l^{2}}\right)
\]
or 
\[
\int^{\infty}_{0}\frac{al}{V_{\underline{g}}(B^{\underline{g}}_{al}(0))}\left(\int_{B^{\underline{g}}_{al}(0)}R_{\underline{g}}\right)dl<+\infty,
\]
respectively, which ends the proof. 

\end{proof}

Then from (\ref{=00005CunderlineR's expression}), we have, as $r\to\infty$,
\[
\frac{l^{2}\int^{l}_{0}\left((n-2)\frac{1-\psi'(s)^{2}}{\psi(s)^{2}}-2\frac{\psi''(s)}{\psi(s)}\right)\psi(s)^{n-1}ds}{\int^{l}_{0}\psi(s)^{n-1}ds}=o(1)
\]
or 
\[
\int^{\infty}_{0}\frac{l\int^{l}_{0}\left((n-2)\frac{1-\psi'(s)^{2}}{\psi(s)^{2}}-2\frac{\psi''(s)}{\psi(s)}\right)\psi(s)^{n-1}ds}{\int^{l}_{0}\psi(s)^{n-1}ds}dl<+\infty.
\]
From Lemma \ref{psi theta information}, $(n-2)\frac{1-\psi'(s)^{2}}{\psi(s)^{2}}$
and $-2\frac{\psi''(s)}{\psi(s)}$ are both nonnegative. We must have,
as $l\to\infty$
\begin{equation}
\frac{l^{2}\int^{l}_{0}(1-\psi'(s)^{2})\psi(s)^{n-3}ds}{\int^{l}_{0}\psi(s)^{n-1}ds}=o(1)\label{equivalent condition 1}
\end{equation}
or 
\begin{equation}
\int^{\infty}_{0}\frac{l\int^{l}_{0}\frac{1-\psi'(s)^{2}}{\psi(s)^{2}}\psi(s)^{n-1}ds}{\int^{l}_{0}\psi(s)^{n-1}ds}dl<+\infty.\label{equivalent condtion 1}
\end{equation}

However, 
\begin{align*}
 & \lim_{l\to\infty}\frac{l^{2}\int^{l}_{0}(1-\psi'(s)^{2})\psi(s)^{n-3}ds}{\int^{l}_{0}\psi(s)^{n-1}ds}\\
= & \lim_{l\to\infty}\frac{l^{2}(1-\psi'(l)^{2})\psi(l)^{n-3}+2l\int^{l}_{0}(1-\psi'(s)^{2})\psi(s)^{n-3}ds}{\psi(l)^{n-1}}\\
\ge & \lim_{l\to\infty}\frac{l^{2}(1-\psi'(l)^{2})}{\psi(l)^{2}}\\
= & +\infty,
\end{align*}
 as when $\nu=0$, we have 
\begin{align*}
\lim_{l\to\infty}\frac{l}{\psi(l)} & =+\infty,\\
\lim_{l\to\infty}\psi'(l) & =0.
\end{align*}
So both (\ref{equivalent condition 1}) and (\ref{equivalent condtion 1})
fail and we finish the proof.

\end{proof}

\paragraph{Acknowledgements:}

The idea of this paper originates from the author\textquoteright s
prior collaborative research with Professor Jie Qing. The author is
deeply grateful to Professor Jie Qing for numerous profound insights.
Special thanks are extended to Professors Gang Tian, Xiping Zhu and
Xiao Zhong for their valuable comments, as well as to Yalong Shi,
Mingxiang Li, Mijia Lai, Jialong Deng, Guoyi Xu and Chi Li for their
helpful feedback. The author is supported by the Natural Science Foundation
of Tianjin, No. 22JCJQJC00130 and Fundamental Research Funds for the
Central Universities.


\begin{thebibliography}{CQY2000A}
\bibitem[AH1996]{AH1996}Adams, D.R. and Hedberg, L. I. (1996): Function
spaces and potential theory (Vol. 314). Springer. https://doi.org/10.1007/978-3-662-03282-4

\bibitem[CH2006]{CH2006}Carron, G. and Herzlich, M.: Conformally
flat manifolds with nonnegative Ricci curvature. Compos. Math. 142(3),
798\textendash 810 (2006)

\bibitem[CHY2004]{CHY2004}Chang, S.-Y.A., Hang, F.B. and Yang, P.:
On a class of locally conformally flat manifolds. IMRN 4, 185\textendash 209
(2004)

\bibitem[CQY2000A]{CQY2000A}Chang, S.-Y.A., Qing, J. and Yang, P.:
On the Chern-Gauss-Bonnet integral for conformal metrics on $\mathbb{R}^{4}$.
Duke Math. J. 103, No. 3, 523-544 (2000)

\bibitem[CQY2000B]{CQY2000B}Chang, S.-Y.A., Qing, J. and Yang, P.:
Compactification of a class of conformally flat 4-manifold. Invent.
Math. 142:1 (2000), 65\textendash 93

\bibitem[CV1935]{CV1935} Cohn-Vossen S.: Kürzeste Wege und Totalkrümmung
auf Flächen. Compositio Math. 2(1935),69\textendash 133

\bibitem[CZ2002]{CZ2002}Chen, B. and Zhu, X.: A gap theorem for complete
noncompact manifolds with nonnegative Ricci curvature. Commun. Geom.
Anal. 10(1), 217\textendash 239 (2002)

\bibitem[DMOP1999]{DMOP1999}Dal Maso G., Murat F., Orsina L. and
Prignet A.: Renormalized solutions of elliptic equations with general
measure data. Ann. Scuola Norm. Sup. Pisa Cl. Sci. 28, pp. 741\textendash 808(1999)

\bibitem[De2026]{De2026}Deng J.:Cohn-{}-Vossen-Type Inequalities
for Three-Manifolds and Locally Conformally Flat Manifolds. arXiv:2606.01368

\bibitem[DM2011]{DM2011}Duzaar, F. and Mingione, G.: Gradient estimates
via non-linear potentials. Amer. J. Math. 133 (2011), no. 4, 1093\textendash 1149

\bibitem[Fi1965]{Finn1965}Finn, R.: On a class of conformal metrics,
with application to differential geometry in the large. Comment. Math.
Helv.40 (1965),1\textendash 30

\bibitem[Gh2025]{Gh2025} Ghatasheh, A.: Mean value theorems and L'Hospital-type
rules for regulated functions. arXiv:2305.17572v2

\bibitem[GPZ1994]{GPZ1994}Greene, R., Petersen, P., and Zhu S.: Riemannian
manifolds of faster-than-quadratic curvature decay. Int. Math. Res.
Not. 9 (1994)

\bibitem[GW1982]{GW1982} Greene, R.E. and Wu, H., Gap theorems for
noncompact Riemannian manifolds. Duke Math. J. 49 (1982), 731\textendash 756

\bibitem[Ha1960]{Ha1960} Hayman, W. K.: Slowly growing integral and
subharmonic functions. Comment Math. Helv. 34 (1960) 75-84

\bibitem[HK1976]{HK1976}Hayman, W.K. and Kennedy, P.B.: Subharmonic
Functions, vol. 1. Academic Press, London, New York, San Francisco
(1976)

\bibitem[HKM1993]{HKM1993}Heinonen, J., Kilpeläinen, T. and Martio,
O.: Nonlinear Potential Theory of Degenerate Elliptic Equations. Oxford
Univ. Press, Oxford (1993)

\bibitem[HL2025]{HL2025}Hsiao, M., Lee, M.: Gap theorem on locally
conformally flat manifold. arXiv:2504.08189v1

\bibitem[Hu1957]{Huber1957}Huber, A.: On subharmonic functions and
differential geometry in the large. Comment.Math.Helv.32 (1957),13\textendash 72

\bibitem[KM1994]{KM1994}Kilpeläinen, T. and Malý, J.: The Wiener
test and potential estimates for quasilinear elliptic equations. Acta.
Math. 172, 137\textendash 161 (1994)

\bibitem[KV1986]{KV1986} Kichenassamy, S. and Veron, L.: Singular
solutions of the $p$-Laplace equation. Math. Ann. 275, 599\textendash 615
(1986)

\bibitem[LWX2025]{LWX2025} Li, M., Wei,J. and Xu, X.: On geometry
of $Q^{(2k)}_{g}$-curvature. arXiv:2506.20165v1

\bibitem[Liu2022]{Liu2022}Liu, G. Cohn\textendash Vossen inequality
on certain noncompact Kähler manifolds. Mathematische Zeitschrift
(2022) 302:1025\textendash 1034 

\bibitem[LMQZ2025]{LMQZ2025} Liu, H., Ma, S., Qing, J. and Zhong,
S.: On the asymptotic behavior of $p$-superharmonic functions at
singularities. To appear in Calc.Var.Par.Diff.Equ.

\bibitem[Ma2016]{Ma2016}Ma, L., Gap theorems for locally conformally
flat manifolds. J. Differential Equations 260 (2016), no. 2, 1414\textendash 1429

\bibitem[Maz2011]{Maz2011}Mazya, V. G. and Shaposhnikova T. O.: Sobolev
Spaces (2nd ed.){[}M{]}. Heidelberg: Springer, 2011

\bibitem[MQ2021]{MQ2021}Ma, S. and Qing, J.: On $n$-superharmonic
functions and some geometric applications. Calc. Var. Partial Differential
Equations 60:6 (2021)

\bibitem[MQ2022]{MQ2022}Ma, S. and Qing, J.: On Huber-type theorems
in general dimensions. Adv. Math. 395 (2022)

\bibitem[MSY1981]{MSY1981}Mok, N., Siu, Y. and Yau, S.: The Poincaré-Lelong
equation on complete Kähler manifolds. Compositio Math. 44 (1981),
no. 1-3, 183\textendash 218

\bibitem[MW2025]{MW2025} Ma, S. and Wang, Z.: $N$-Laplacian and
$N/2$-Hessian Type Equations with Exponential Reaction Terms and
Measure Data. Potential Analysis (2026) 64:27

\bibitem[MW2025]{MuWa2025}Munteanu, O. and Wang, J.: Sharp integral
bound of scalar curvature on $3$-manifolds. Arxiv:2505.10520v1

\bibitem[Ni2012]{Ni2012}Ni, L.: An optimal gap theorem. Invent. Math.
189 (2012), no. 3, 737\textendash 761

\bibitem[Se1964]{Se1964} Serrin, J.: Local behavior of solutions
of quasi-linear equations. Acta Math. 111 (1964) 247\textendash 302

\bibitem[Se1965]{Se1965} Serrin, J.: Isolated singularities of solutions
of quasi-linear equations. Acta Math. 113 (1965) 219\textendash 240

\bibitem[Ve2017]{Veron2017}Veron, L.: Local and Global Aspects of
Quasilinear Degenerate Elliptic Equations. World Scientific, New Jersey
(2017)

\bibitem[Xu2020]{Xu2020}Xu, G.: Integral of scalar curvature on non-parabolic
manifolds. J. Geom. Anal. 30 (2020), no. 1, 901\textendash 909, https://doi.org/10.1007/s12220-019-00174-7,
DOI 10.1007/s12220-019-00174-7

\bibitem[Xu2024]{Xu2024}Xu, G.: Integral of scalar curvature on manifolds
with a pole. Proceedings AMS, 152 (2024), no. 11, 4865\textendash 4872

\bibitem[Ya2013]{Yang2013}Yang, B.: On a problem of Yau regarding
a higher dimensional generalization of the Cohn\textendash Vossen
inequality. Math. Ann. 355, 765\textendash 781 (2013). https://doi.org/10.1007

\bibitem[Yau1992]{Yau1992}Yau, S.T.: Open problems in geometry. Chern\textendash a
great geometer of the twentieth century. International Press, Hong
Kong, 1992, 275-319

\bibitem[Zh2022]{Zh2022}Zhu, B.: Geometry of positive scalar curvature
on complete manifold. J. Reine Angew. Math. 791 (2022), 225\textendash 246

\bibitem[Zh1994]{Zhu1994}Zhu, S.: The classification of complete
locally conformally flat manifolds of nonnegative Ricci curvature.
Pac. J. Math. 163(1), 189\textendash 199 (1994)

\end{thebibliography}
\end{document}